\documentclass[a4paper,reqno]{amsart}
\usepackage{amsmath}
\usepackage{amssymb}
\usepackage{amsthm}
\usepackage{latexsym}
\usepackage{amscd}
\usepackage{xypic}
\xyoption{curve}
\usepackage{ifthen} 
\usepackage{hyperref} 
\usepackage{graphicx}
\usepackage{enumerate}
\usepackage{enumitem}
\usepackage{rotating}
\usepackage{xcolor}
\usepackage{mathtools}
\usepackage{mathrsfs}
\usepackage{todonotes}

\usepackage{float}
\restylefloat{table}

\numberwithin{equation}{section}

\def\cocoa{{\hbox{\rm C\kern-.13em o\kern-.07em C\kern-.13em o\kern-.15em A}}}

\newtheorem{theorem}{Theorem}[section]

\newtheorem{lemma}[theorem]{Lemma}
\newtheorem{proposition}[theorem]{Proposition}
\newtheorem{corollary}[theorem]{Corollary}

\theoremstyle{definition}
\newtheorem{remark}[theorem]{Remark}
\newtheorem{definition}[theorem]{Definition}

\newtheorem{construction}[theorem]{Construction}

\newcommand {\sHom}{\mathcal{H}\kern -0.25ex{\mathit om}}
\newcommand {\sExt}{\mathcal{E}\kern -0.25ex{\mathit xt}}
\newcommand {\sTor}{\mathcal{T}\kern -0.25ex{\mathit or}}

\newcommand {\Hilb}{\mathcal{H}\kern -0.25ex{\mathit ilb\/}}

\newcommand {\bA}{\mathbb{A}}

\def\a#1{{{\bA}^{#1}_{k}}}

\usepackage{tikz}
\usepackage{scalerel}
\usepackage{pict2e}
\usepackage{tkz-euclide}
\usetikzlibrary{calc}
\usetikzlibrary{patterns, arrows.meta}
\usetikzlibrary{shadows}
\usetikzlibrary{external}
\usepackage{tikz-cd}

\usepackage{caption}

\usepackage{subcaption}

\usepackage{pgfplots}
\pgfplotsset{compat=newest}
\usepgfplotslibrary{statistics}
\usepgfplotslibrary{fillbetween}
\usepackage[framemethod=TikZ]{mdframed}

\usepackage{witharrows}

\usepackage{hyperref}
\hypersetup{
    colorlinks=true,
    allcolors=blue}

\catcode`\@=11
\catcode`\@=12

		\title[Equivariant Poisson structures]{Equivariant Poisson structures on moment-angle manifolds and their quotients}
		
		\keywords{}
		
		\author[V. Aggarwal]{Vansh Aggarwal}
		\author[M. Poddar]{Mainak Poddar}

\begin{document}

\begin{abstract}
We construct several families of torus equivariant Poisson structures on moment-angle manifolds and their quotients such as quasitoric manifolds. We study the symplectic foliation and Poisson cohomology groups associated with these structures.

\end{abstract}

\maketitle

\section{Introduction}

A Poisson structure on a smooth manifold $P$ is a Lie bracket on the ring of smooth functions $C^{\infty}(P)$, 
\begin{equation}\label{poisson bracket}
   \{,\}:C^\infty(P)\times C^\infty(P)\rightarrow C^\infty(P) 
\end{equation}
such that for any $f \in C^{\infty}(P)$, 
\begin{equation}\label{def of X_f} X_f:= \{f, \cdot\}: C^{\infty}(P) \to C^{\infty}(P) \end{equation} is a derivation. The derivation $X_f$ is called the {\it Hamiltonian vector field} of the function $f$.

A Poisson structure on $P$ can also be specified by a bivector field 
$\Pi \in H^0(\wedge^2 TP)$ where 
\begin{equation}\label{bracket and bivector}
    \{f,g\} = \Pi(df, dg) \quad \forall \; f,\, g\, \in C^{\infty}(P) \,.
\end{equation}
The Jacobi condition \eqref{Jacobi id} on the Poisson bracket $\{ \, \}$ is equivalent to the vanishing of the Schouten bracket (see Section \ref{Schouten}) on the bivector field $\Pi$. In other words, a Poisson structure on $P$ is a bivector field $\Pi$ such that  
\begin{equation}\label{Jacobi on Pi}
[\Pi,\Pi] =0 \,.    
\end{equation} 

The bivector field $\Pi$ is naturally identified with a vector bundle morphism \begin{equation}\label{Poisson str as map}
   \Pi^{\sharp}: T^{*}P \to TP \,. 
\end{equation}
 The Poisson structure $\Pi$ is a {\it symplectic structure} if the morphism \eqref{Poisson str as map} is an isomorphism. Under this isomorphism,  the Poisson bivector field transforms into a closed nondegenerate $2$-form $\Omega$, called the symplectic form.  By the foundational work of Weinstein \cite{Wei}, every Poisson manifold admits a possibly singular foliation whose leaves are symplectic manifolds. 

 Continuous (families of) symmetries of a symplectic or Poisson manifold are of interest from both mathematical and physical viewpoints. In the physical context, Noether's theorem relates symmetries to conserved quantities. In Mathematics, symmetries may be used to obtain new symplectic or Poisson manifolds through appropriate quotient constructions or reductions, the latter terminology is motivated by dimension reduction in Physics.

 The action of a Lie group $G$ on a symplectic manifold $P$ is said to be Hamiltonian if it preserves the symplectic form, and the fundamental vector fields are Hamiltonian in a cohesive fashion
 that can be described in terms of a moment map $\mu: P \to Lie(G)^*$, see \cite[Section 22]{Ana} for a precise definition. 

An extremely well-studied class of symplectic manifolds is that of {\it symplectic toric manifolds}. A symplectic toric manifold is a compact connected symplectic manifold of dimension $2n$, equipped with a Hamiltonian action of an $n$-dimensional torus $\mathbb{T}^n$. The image of the moment map in this case is a polytope in $\mathbb{R}^n$.

A celebrated work of Delzant (\cite{Del}) gives a classification of symplectic toric manifolds in terms of the moment polytopes. The classification requires  the moment polytope to be simple with integral edge vectors at each vertex that form a basis of the character lattice of $\mathbb{T}^n$. Such polytopes are known as Delzant polytopes. 

Delzant used the symplectic reduction technique due to Marsden, Weinstein, and Meyer \cite{Ana}. The standard action of $\mathbb{T}^m$ on 
$\mathbb{C}^m$ equipped with the standard symplectic structure $$\Omega_0 = \frac{\iota}{2} \sum_{i=1}^m dz_i\wedge d\bar{z_i}$$ is Hamiltonian. Given a Delzant polytope $\Delta$ in $\mathbb{R}^n$ with $m$ codimension one faces, $m>n$, one naturally obtains a smooth submanifold $\mathcal{Z}_{\Delta}$ of $\mathbb{C}^m$ of dimension $m+n$, called a moment-angle manifold, and a subtorus $\mathbb{T}^{m-n}$ of $\mathbb{T}^{m}$ that acts freely on $\mathcal{Z}_{\Delta}$ such that the quotient 
$M=\mathcal{Z}_{\Delta}/\mathbb{T}^{m-n}$ inherits a symplectic structure from $\Omega_0$. The induced action of $\mathbb{T}^n = \mathbb{T}^m/ \mathbb{T}^{m-n}$ on $M$ is Hamiltonian with $M/\mathbb{T}^n \cong \Delta$.
This yields a one-to-one correspondence between Delzant polytopes in $\mathbb{R}^n$ and symplectic toric manifolds of dimension $2n$. 

However, there exists a wider class of smooth compact $2n$-dimensional manifolds that admit a smooth locally standard action of the $n$-dimensional torus such that the orbit space is a simple polytope $\Delta$  of dimension $n$. 
These are referred to as either toric or quasitoric manifolds by various authors in the literature \cite{Dav, Buc}. They share various nice topological properties with symplectic toric manifolds. Moreover, they can be realized as the quotient of a moment-angle manifold $\mathcal{Z}_{\Delta} \subset \mathbb{C}^m$ by the action of a suitable subtorus $\mathbb{T}^{m-n}$ of
$\mathbb{T}^m$, where, as earlier, $m$ is the number of codimension one faces of $\Delta$. 

It is known that a quasitoric manifold may not admit a symplectic structure in general. In this article, we construct families of torus invariant Poisson structures on moment-angle manifolds,  quasitoric manifolds and other intermediate quotient spaces. We also investigate some properties of these Poisson structures, such as symplectic foliations and Poisson cohomology. We hope that 
this large class of examples will provide a nice testing ground for new invariants and phenomena in Poisson geometry like the toric manifolds and toric varieties do in symplectic geometry and algebraic geometry, respectively. 

Our main tool is the theory of Poisson reduction developed by  Marsden and Ratiu (\cite{Mar}), which is a generalization of symplectic reduction.
A key point in our construction is that we do not stick to the standard symplectic structure $\Omega_0$ on $\mathbb{C}^m$ but cast our net for a wider class of $\mathbb{T}^m$-invariant Poisson structures on $\mathbb{C}^m$ that admit a reduction. Note that a Poisson structure 
$\Pi$ is said to be invariant under the action of a group $G$ on $P$ if each element of $G$ preserves
$\Pi$. 

Whereas the standard Poisson structure corresponding to $\Omega_0$ is a non-degenerate bivector that pairs radial directions with their corresponding angular directions, in our families, we additionally try to pair angular with other angular directions, and radial with other radial directions. In Section \ref{str on Cm}, we construct 6 such families of  Poisson structures on $\mathbb{C}^m$.  We could analytically determine certain sufficient conditions on the coefficient functions in the bivectors that ensure their integrability, but for necessary and sufficient conditions, we used some computational methods (Python).   
 The newly constructed structures are not reducible in general, and we need additional constraints on the coefficients of the terms of the Poisson bivector. To obtain these, we first study the tangential frames on $\mathcal{Z}_{\Delta}$ and the subtori that act freely on $\mathcal{Z}_{\Delta}$ (see Sections \ref{free torus action} and \ref{tangent spaces of Z}.) 
 We use this knowledge to construct 4 reducible subfamilies of the above Poisson structures
 in Section \ref{Reducible structures}.
 
 Next,  
in Section~\ref{Symplectic leaves} we describe the symplectic foliations of the
reducible structures on $\mathbb{C}^m$ and on $\mathcal{Z}_{\Delta}$. The key
observation is that the Hamiltonian vector fields of a purely angular structure
such as $\Pi_1'$ are tangent to the $\mathbb{T}^m$-orbits
$\mathbb{O}_p=\pi^{-1}(p)$, while those of a purely radial structure such as
${}_K\Pi_3^R$ are combinations of the Euler fields $\mathcal{P}_i$, whose
flows preserve every angular coordinate and the set of vanishing coordinates.
Consequently, the leaves of $\Pi_1'$ are orbits of connected subgroups of
$\mathbb{T}^m$ (in general immersed, possibly dense) inside a single torus
orbit, and the leaves of ${}_K\Pi_3^R$ lie inside a single ``slice''
$\mathbb{S}^I_t$ transverse to the orbits, where, in the radial coordinates
$p_i$, they are open subsets of affine subspaces; under $\pi$ they map into the
relative interiors of the faces of $\Delta$. For the mixed structure
${}_K\Pi_4^R$ the radial and angular Hamiltonian distributions are pointwise
complementary and their flows commute, so each leaf is a product of a radial
leaf and an angular leaf. All of these descriptions restrict to
$\mathcal{Z}_{\Delta}$, since the bivectors are tangent to it.

Sections~\ref{P cohom Cm}--\ref{P cohom of quotients} concern the first Poisson cohomology
of these structures. We first prove a general criterion
(Theorem~\ref{10.9}): if $X_1,\dots,X_n$ are Poisson vector fields and there
are functions $c_1,\dots,c_n$, Casimir on a dense open set, with $X_i(c_j)=0$
for $i\neq j$ and $X_i(c_i)$ nonvanishing almost everywhere, then
$\sum_i f_iX_i$ is Poisson if and only if every $f_i$ is a Casimir. Applying
this with the radial fields $\mathcal{P}_i$ and the Casimirs $p_i$, or with
the angular fields $\Theta_i$ and the angle functions $\theta_i$, we obtain
for each of $\Pi_1$, $\Pi_3$, $\Pi_3^R$, $\Pi_4$, $\Pi_4^R$ explicit families
of Poisson vector fields on $\mathbb{C}^m$ with Casimir coefficients, and we
show that distinct members of each family are not cohomologous, since
Hamiltonian vector fields of an angular (resp.\ radial) structure are angular
(resp.\ radial). In particular, $H^1$ of each of these structures is
infinite-dimensional. On $\mathcal{Z}_{\Delta}$ the role of the
$\mathcal{P}_i$ is played by the vector fields $(P\zeta)^I_u$ built from the
tangent frames of Section~\ref{tangent spaces of Z}, and the same conclusions hold
for the restricted structures. Finally, for the quotient
$\mathcal{O}=\mathcal{Z}_{\Delta}/\mathbb{T}^d_D$ we show that averaging over
$\mathbb{T}^d_D$ followed by pushforward along the quotient map induces a map
$\mathcal{H}_*\colon H^k_{\Pi}(\mathcal{Z}_{\Delta})\to
H^k_{\overline{\Pi}}(\mathcal{O})$, we characterise when two
$\mathbb{T}^d_D$-invariant Poisson vector fields have the same image under
$\mathcal{H}_*$ (Theorem~\ref{12.29}), and we use this to exhibit
infinite families of pairwise distinct classes in $H^1$ of the reduced
structures $\overline{\Pi_1'}$, ${}_K\overline{\Pi_3^R}$ and
${}_K\overline{\Pi_4^R}$ on $\mathcal{O}$, including on quasitoric manifolds.

 
  Some related work has been published in the literature on invariant real Poisson structures in complex toric manifolds \cite{Can1, Can2}.  The invariance is with respect to the action of a complex torus.   There is also related work on torus invariant holomorphic Poisson structures on complex toric varieties  \cite{Hon}.




 \section{Preliminaries on Poisson manifolds}
Let $(P, \{\,\})$ be a Poisson manifold. Denote the set of Hamiltonian vector fields on $P$ by 
$\operatorname{Ham}(\mathfrak{X}^1(P),\Pi)$. Let 
$$\mathfrak{h} : C^{\infty}(P) \to \operatorname{Ham}(\mathfrak{X}^1(P),\Pi)$$
be the map that takes $f $ to $X_f$. 
 The Jacobi identity
\begin{equation}\label{Jacobi id}
   \{ f,\{g,h\}\}+\{h, \{f,g\}\}+\{ g,\{h,f\}\}=0 
\end{equation}
implies that  
\begin{equation}\label{Lie bracket Ham vf}
     X_{\{f,g\}}=[X_{f},X_{g}]  \,.
\end{equation}
Hence, $\operatorname{Ham}(\mathfrak{X}^1(P),\Pi)$ is a subalgebra of the Lie algebra $\mathfrak{X}(P)$ of smooth vector fields on $P$, and  $\mathfrak{h}$ is a Lie algebra homomorphism.

\begin{definition}\label{Poisson vf}   $X \in \mathfrak{X}(P)$ is called a Poisson vector field if the Lie derivative 
$$
L_{X}(\Pi)=0
$$
or, equivalently,
$$
X\{f,g\}=\{Xf,g\}+\{f,Xg\} \quad \forall f,g \in C^\infty(P) \,.
$$
\end{definition}

 The set of Hamiltonian vector fields $(\operatorname{Ham}(\mathfrak{X}^1(P),\Pi))$ is a Lie subalgebra of Poisson vector fields $\operatorname{Pois}(\mathfrak{X}^1(P),\Pi)$. 



\begin{definition}\label{Casimir function}
A smooth function $g \in C^\infty(P)$  is called a Casimir function if its Poisson bracket with every other smooth function on $P$ vanishes,
\[
    \{g, f\} = \Pi(dg, df) = 0 \quad \text{for all } f \in C^\infty(P), 
\]
or equivalently, if $X_g=0$.
\end{definition}

 The set of Casimir functions, denoted by $\operatorname{Cas}(P,\Pi)$, is a Poisson subalgebra of the algebra of $C^\infty(P)$.


\subsection{ Symplectic Foliation}\label{subsec: symp foliation}

\begin{theorem}[Weinstein splitting theorem,\cite{Cran}]  Let $(P,\Pi)$ be a Poisson manifold. For each point $p \in P$ there exists an open chart $(U_{p},x_{1},\ldots,x_{n},y_{1},\ldots,y_{n},$ 
$ q_{1},\ldots, q_{r})$
 such that $q(p):=(q_1(p),\dots,q_r(p))=0$ and
$$
\Pi|_{U_{p}}=\sum_{i=1}^{n}\frac{\partial}{\partial x_{i}} \wedge\frac{\partial}{\partial y_{i}} +\sum_{1\leq j<k\leq r}f_{jk}(q) \frac{\partial}{\partial q_{k}} \wedge \frac{\partial}{\partial q_{j}}
$$
where $f_{jk}$ are smooth functions of $q_{i}$'s satisfying $f_{jk}(0)=0$.
\end{theorem}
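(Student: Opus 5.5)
The plan is to argue by induction on the rank of $\Pi$ at $p$, splitting off one symplectic pair of coordinates at a time.

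\emph{Base case.} If $\Pi_p=0$, take any chart centred at $p$. Call all of its coordinates $q_j$, with $n=0$. Then $\Pi=\sum_{j<k} f_{jk}(q)\,\partial_{q_k}\wedge\partial_{q_j}$, and $f_{jk}(0)=0$ holds because $\Pi_p=0$.

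\emph{Splitting off one pair.} If $\Pi_p\neq 0$, choose a function $y_1$ with $X_{y_1}(p)\neq 0$; this is possible since $\Pi^\sharp_p\neq 0$. By the flow-box theorem, choose $x_1$ with $X_{y_1}(x_1)=-1$ near $p$, i.e. $\{x_1,y_1\}=1$ up to sign convention. Set $X=X_{x_1}$ and $Y=X_{y_1}$. By \eqref{Lie bracket Ham vf},
$$[X,Y]=X_{\{x_1,y_1\}}=X_1=0,$$
and $X,Y$ are linearly independent at $p$, since $dx_1(Y)\neq 0$ while $dx_1(X)=0$. So the Frobenius/commuting flow-box theorem gives coordinates $(t,s,z_1,\dots,z_{N-2})$ with $X=\partial_t$ and $Y=\partial_s$, after adjusting signs. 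Comparing $X(x_1)=0$, $X(y_1)=\pm1$, $Y(x_1)=\mp1$ and $Y(y_1)=0$ shows that we can take $x_1,y_1$ themselves as two of the coordinates. The functions $z_i$ are then constant along both flows, which gives $\{x_1,z_i\}=\{y_1,z_i\}=0$.

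\emph{The bivector in the new chart.} In the coordinates $(x_1,y_1,z)$ we have
$$\Pi=\partial_{x_1}\wedge\partial_{y_1}+\sum_{i<j}\{z_i,z_j\}\,\partial_{z_i}\wedge\partial_{z_j}.$$
The cross terms vanish by the brackets above. The key point is that the coefficients $\{z_i,z_j\}$ depend only on $z$. Indeed, by the Jacobi identity \eqref{Jacobi id},
$$X\{z_i,z_j\}=\{x_1,\{z_i,z_j\}\}=\{\{x_1,z_i\},z_j\}+\{z_i,\{x_1,z_j\}\}=0,$$
and the same computation works for $y_1$. Hence
$$\Pi=\partial_{x_1}\wedge\partial_{y_1}+\Pi',$$
where $\Pi'$ is a Poisson bivector in the variables $z$ alone. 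Its rank at $p$ is two less than that of $\Pi$.

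\emph{Induction.} Apply the induction hypothesis to $\Pi'$ on a transversal slice $\{x_1=y_1=\text{const}\}$. Then assemble the coordinates. Finally, translate the $q$ coordinates so that $q(p)=0$. At that point the rank of $\Pi'$ has dropped to zero, so $f_{jk}(0)=0$.

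\emph{Main obstacle.} The step I expect to be hardest is the simultaneous straightening in the second step. One must check that $x_1$ and $y_1$ can themselves serve as coordinates alongside the common invariants $z$ of the two commuting flows. I would handle it by constructing the $z_i$ directly: take a hypersurface through $p$ transverse to $X$ and $Y$, and extend functions from it by the commuting flows.
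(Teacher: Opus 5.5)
The paper does not prove this statement; it quotes it from \cite{Cran}, so there is no in-paper argument to compare against. Your proposal is the classical inductive proof due to Weinstein, and it is correct. What remains are details to tighten.

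\begin{itemize}
\item For the independence of $X$ and $Y$ at $p$ you also need $X_p\neq 0$. This holds because $X(y_1)=\{x_1,y_1\}=1$.

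\item In your last paragraph, a hypersurface is the wrong object. To straighten two commuting fields you need a codimension-two submanifold $S$ through $p$ that is complementary to $\operatorname{span}(X_p,Y_p)$. The level set $S=\{x_1=x_1(p),\,y_1=y_1(p)\}$ works, since $(dx_1\wedge dy_1)(X,Y)=1$. Because $X(x_1)=Y(y_1)=0$, $X(y_1)=1$ and $Y(x_1)=-1$, the parametrization $(t,s,\sigma)\mapsto\phi^X_t\phi^Y_s(\sigma)$ gives $y_1=y_1(p)+t$ and $x_1=x_1(p)-s$ directly. This settles your ``main obstacle'' immediately.

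\item From $\partial_{x_1}\{z_i,z_j\}=\partial_{y_1}\{z_i,z_j\}=0$ you may conclude that $\{z_i,z_j\}$ depends only on $z$ only if the $(x_1,y_1)$-fibres of the chart are connected. Shrink the chart to a box.

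\item Justify that $\Pi'$ is Poisson. Its coefficients do not involve $x_1,y_1$, so $[\partial_{x_1}\wedge\partial_{y_1},\Pi']=0$, and hence $[\Pi',\Pi']=[\Pi,\Pi]=0$.

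\item State the induction for Poisson manifolds of every dimension, since you apply it on an $(N-2)$-dimensional slice.

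\item Extend the coordinates produced on the slice to $U_p$ as functions of $z$ alone. Then the change of variables has the form $(x_1,y_1,z)\mapsto(x_1,y_1,\phi(z))$, which leaves $\partial_{x_1}$ and $\partial_{y_1}$ unchanged.
\end{itemize}
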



Let $S_{p} =\{s \in U_{p}|q(s)=0\}$.
Then, $ S_p$ is a regular submanifold of $P$ such that the Poisson bivector restricted to $S_p$ is non-degenerate and induces a symplectic form on it. In fact, $S_p$ is an open subset of the leaf through $p$ of a possibly singular symplectic foliation of $P$. Note that the tangent bundle of $S_p$ is generated by Hamiltonian vector fields. Since $\operatorname{Ham}(\mathfrak{X}^1(P),\Pi)$  is a Lie subalgebra of  $(\mathfrak{X}(P))$, it induces an integrable (singular) foliation on $P$ by the Stefan-Sussmann integrability theorem. 
The leaves of the foliation have the following characterisation. Define an equivalence relation on $P$ by declaring $x \sim y$ if and only if there exists a piece-wise smooth path between them such that tangent vectors to the path are made up of Hamiltonian directions:
$$
x\sim y \iff \exists \, g_{1},\dots,g_{k}\in C^\infty(P)\quad \text{such that} \quad \phi_{X_{g_{k}}}\circ\dots\circ \phi_{X_{g_{1}}}(x)=y \,,
$$
where $\phi_{X_{g_{i}}}$ is the flow produced by the Hamiltonian vector field $X_{g_{i}}$. The equivalence classes, often called orbits, are the desired leaves.




\subsection{Schouten Bracket}\label{Schouten}

Some concepts in Poisson geometry can be studied using multivector fields and a Lie bracket defined on the algebra of multivector fields. This bracket is called the Schouten bracket, which, when restricted to the Lie algebra of vector fields, is the same as the Lie bracket on $\mathfrak{X}(P)$.


A degree $k$ multivector field   $X^k\in \mathfrak{X}^k(P)=\Gamma(\wedge^k(TP))$ can be regarded as a $C^\infty(P)$ multilinear alternating map on $\Omega^1(P)$, the space of one forms,
$$
X^k:\Omega^1(P)\times\dots \times \Omega^1(P)\rightarrow C^\infty(P)\,.
$$
It can also be thought of as an alternating multi-derivation on $C^\infty(P)$:
$$
L_{X^k}:C^\infty(P)\times\dots \times C^\infty(P) \rightarrow C^\infty(P)
$$
where
$$
L_{X^k}(f_{1},\ldots, f_{k})=X^k(df_{1},\ldots, df_{k}) \quad 
$$
and
$$
L_{X^k}(f_{1},\dots,f_{r}h_{r},\dots,f_{k})=f_{r}L_{X^k}(f_{1},\dots,h_{r},\dots,f_{k})+h_{r}L_{X^k}(f_{1},\dots,f_{r},\dots,f_{k})
$$

\begin{definition}   Given $\alpha \in \mathfrak{X}^{k+1}(P)$ and $\beta \in \mathfrak{X}^{l+1}(P)$, the Schouten bracket  $[\alpha,\beta]\in \mathfrak{X}^{k+l+1}(P)$ is a $(k+l+1)$ order multivector field such that 
$$
L_{[\alpha,\beta]}=L_{\alpha}\circ L_{\beta}-(-1)^{kl}L_{\beta}\circ L_{\alpha}
$$
where 
$$L_{\alpha}\circ L_{\beta}(f_{1},\dots,f_{k+l+1})=\sum_{\sigma \in S_{k,l+1}}(-1)^{\sigma}L_{\alpha}(f_{\sigma(1)},\dots,f_{\sigma(k)},L_{\beta}(f_{\sigma(k+1)},\dots,f_{\sigma(k+l+1)}))$$
\end{definition}
The above definition is an extension of the definition of the Lie bracket of vector fields,
$$
L_{[X,Y]}=L_{X}L_{Y}-L_{Y}L_{X} \,.
$$
such that it satisfies the graded Leibniz identity, i.e. if $\alpha \in \mathfrak{X}^{k+1}(P)$, $\beta \in \mathfrak{X}^{l+1}(P)$ and $\gamma \in \mathfrak{X}^{m+1}(P)$, then
$$
[\alpha,\beta\wedge\gamma]=[\alpha,\beta]\wedge\gamma +(-1)^{k(l+1)}\beta\wedge[\alpha,\gamma]
$$
Using the Schouten bracket, we can rephrase the condition for a vector field $X$ to be Poisson as 
\begin{equation}\label{Poisson vf Schouten} [X,\Pi]=0
\end{equation}
Moreover, the Jacobiator  $J : C^\infty(P)\times C^\infty(P)\times C^\infty(P) \rightarrow C^\infty(P)$,
$$
J(f,g,h) := \{f,\{g,h\}\}+\{h,\{f,g\}\}+\{g,\{h,f\}\}\,,
$$
satisfies 
\begin{equation} {2}J=[\Pi,\Pi]. 
\end{equation}

\subsection{Poisson Cohomology}


The Poisson bivector $\Pi$ induces a differential on the graded algebra of multivector fields $\mathfrak{X}^\bullet(P) = \bigoplus_{k} \Gamma(\bigwedge^k TP)$. This differential $d_\Pi \colon \mathfrak{X}^k(P) \to \mathfrak{X}^{k+1}(P)$, called the Poisson coboundary operator,  is defined by taking the Schouten bracket with $\Pi$,
\begin{equation}
    d_\Pi(\alpha) = [\Pi, \alpha]
\end{equation}
for any $\alpha \in \mathfrak{X}^k(P)$.

By the graded Jacobi identity of the Schouten bracket and the integrability condition $[\Pi, \Pi] = 0$, 
\begin{equation}
    d_\Pi(d_\Pi(X)) = [\Pi, [\Pi, X]] = \frac{1}{2} [[\Pi, \Pi], X] = 0.
\end{equation}
Thus, $d_\Pi^2 = 0$, making $(\mathfrak{X}^\bullet(P), d_\Pi)$ a cochain complex, known as the Lichnerowicz-Poisson complex.

The cohomology of this complex is the \emph{Poisson cohomology} of $(P, \Pi)$, denoted by $H^\bullet_\Pi(P)$. The $k$-th Poisson cohomology group is given by
\begin{equation}
    H^k_\Pi(P) = \frac{\operatorname{Ker}(d_\Pi \colon \mathfrak{X}^k(P) \to \mathfrak{X}^{k+1}(P))}{\operatorname{Im}(d_\Pi \colon \mathfrak{X}^{k-1}(P) \to \mathfrak{X}^k(P))}.
\end{equation}

The lower degree Poisson cohomology groups have important geometric interpretations:
\begin{itemize}
    \item $H^0_\Pi(P)$ consists of Casimir functions.
    \item $H^1_\Pi(P)$ is the quotient of the space of Poisson vector fields by the space of Hamiltonian vector fields, measuring the nontrivial symmetries of the Poisson structure.
\end{itemize}

\subsection{Reduction of Poisson structure}\label{subsec: reduction}

Let $(P,\{\, \}_{P})$ be a Poisson manifold, $M \subset P$ a submanifold and $i: M \rightarrow P$ the inclusion. Let $G$ be a Lie group acting on $P$ and let $\mathfrak{g}$ be the Lie algebra of $G$. Each element $X\in \mathfrak{g}$ generates a fundamental vector field $X^\#$ on $P$.  One can think of
$$
E_p=\{X^\#_p|X\in\mathfrak{g}\} \subseteq T_pP
$$
 as the subspace of tangent vectors at $p$ that are tangent to the orbits produced by the action of $G$ on $P$.
 
Let $E$ be the distribution given by the union of $E_p$'s. Consider the following conditions on this distribution.
\begin{itemize}
\item (A1) $E \cap TM$ is an integrable distribution on $TM$, and thus defines a foliation $\Phi$ on M.
\item (A2) The space of orbits $O=M/\Phi$ is a manifold with the projection map $\mathscr{Q}: M\rightarrow O=M/\Phi$ a submersion, in other words, the foliation $\Phi$ is regular.
\item (A3) For $f,h\in C^\infty(P)$, if $df$ and $dh$ vanish on $E$, then $d\{f,h\}_P$ also vanishes on $E$.
\item (A4) ${\displaystyle \Pi^{\sharp}_p(E^0_p)\subset T_pM + E_p \quad \forall p\in M}$
where $E^0_{p}=\{\alpha_{p}\in T^*_{p}P|\alpha_{p}(E_{p})=0\}$, and $\Pi^{\sharp}_p : T^*_pP\rightarrow T_pP$ is the linear map induced by the bundle map $\Pi^{\sharp}$.
\end{itemize}
\begin{theorem}[\cite{Mar}]\label{2.5}  Assuming $(A1)-(A3)$, $O=M/\Phi$ inherits a unique Poisson structure induced by $\{\,\}_P$ if and only if (A4) holds.
\end{theorem}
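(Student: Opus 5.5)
The plan is to prove the Marsden--Ratiu criterion by identifying smooth functions on $O=M/\Phi$ with smooth functions on $M$ that are constant along the leaves of $\Phi$, and defining the reduced bracket through extensions to $P$. By (A2), $\mathscr{Q}$ is a surjective submersion, so $C^\infty(O)\cong\{g\in C^\infty(M): dg|_{E\cap TM}=0\}$. Given $\bar f\in C^\infty(O)$, I will call $f\in C^\infty(P)$ an \emph{admissible extension} if $f\circ i=\bar f\circ\mathscr{Q}$ and $df|_E=0$ along $M$; more precisely, I will work with extensions defined near $M$ and use bump functions to globalize. The candidate bracket is
\[
\{\bar f,\bar h\}_O\circ\mathscr{Q}=\{f,h\}_P\circ i .
\]
The statement is then that this candidate is well defined exactly when (A4) holds. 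Once it is well defined, the Poisson axioms follow from those of $\{\,\}_P$, and uniqueness is automatic because the formula forces the value.

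First I will establish that admissible extensions exist. Locally near a point of $M$ I will choose adapted coordinates in which $\Phi$ is straightened; then (A1) and the regularity in (A2) let me extend $\bar f\circ\mathscr{Q}$ off $M$ so that $df$ annihilates $E_p$ for $p\in M$. Since $E$ is spanned by fundamental vector fields, I can also ask for $G$-invariance locally, i.e. $X^\#f=0$, if that is convenient. Second, I will check that $\{f,h\}_P\circ i$ is constant along the leaves of $\Phi$, so that it descends to $O$. This is where (A3) enters: $df,dh$ vanish on $E$, so $d\{f,h\}_P$ vanishes on $E$, in particular on $E\cap TM$. I expect to need the condition ``vanish on $E$'' only along $M$, which will require a small localization argument.

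The heart of the proof is independence of the choice of extension. Suppose $f$ and $f'$ are two admissible extensions of $\bar f$. Then $\delta=f-f'$ vanishes on $M$ and $d\delta|_E=0$ along $M$, so $d\delta_p\in (T_pM)^0\cap E_p^0=(T_pM+E_p)^0$ for $p\in M$. Therefore
\[
\{\delta,h\}_P(p)=d\delta_p\bigl(-\Pi^\sharp_p(dh_p)\bigr),
\]
and $dh_p\in E^0_p$. If (A4) holds, $\Pi^\sharp_p(dh_p)\in T_pM+E_p$, which is annihilated by $d\delta_p$, so the bracket is independent of the extension.

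The converse is the main obstacle. Assuming well-definedness, I must show $\Pi^\sharp_p(E^0_p)\subset T_pM+E_p$. I will argue by contradiction. Suppose some $\alpha\in E^0_p$ has $\Pi^\sharp_p\alpha\notin T_pM+E_p$. Then there is $\beta\in(T_pM+E_p)^0$ with $\beta(\Pi^\sharp_p\alpha)\neq0$. I will realize $\beta=d\delta_p$ with $\delta$ vanishing on $M$ and $d\delta|_E=0$ along $M$, and realize $\alpha=dh_p$ for an admissible extension $h$ of some $\bar h$. Then $f$ and $f+\delta$ extend the same $\bar f$ but give different brackets with $h$ at $p$, contradicting well-definedness.

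The delicate part is constructing these functions with prescribed differentials in the chosen covector subspaces. Any $\alpha\in E_p^0$ can be written as the differential of some function, but $h$ must also restrict to a $\Phi$-basic function on $M$. That restriction only constrains $\alpha|_{T_pM}$ to kill $E_p\cap T_pM$, which $\alpha\in E^0_p$ already does. So I will do the construction in local adapted coordinates given by (A1) and (A2), taking linear functions cut off by bumps, and check that both constraints can be imposed simultaneously.

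Finally, I will verify the Jacobi identity and the Leibniz rule. Brackets of admissible extensions are again admissible by (A3), so Jacobi on $O$ pulls back to Jacobi on $P$. The Leibniz rule follows because products of admissible extensions are admissible.
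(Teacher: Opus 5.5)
The paper gives no proof of this statement; it is quoted from Marsden--Ratiu. Your outline is essentially their argument. In particular, the independence computation (pairing $d\delta_p\in(T_pM+E_p)^0$ with $\Pi^\sharp_p(dh_p)\in T_pM+E_p$) and the converse (via some $\beta\in(T_pM+E_p)^0$ with $\beta(\Pi^\sharp_p\alpha)\neq0$) are the right steps.

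\textbf{The gap is in how you use (A3).} In the paper, (A3) is a hypothesis only about functions whose differentials annihilate $E$ on $P$, i.e.\ the $G$-invariant ones. Your admissible extensions annihilate $E$ only at points of $M$. No ``localization argument'' can bridge this: on vectors of $E_p$ not tangent to $M$, the covector $d\{f,h\}_P$ at $p$ involves second derivatives of $f$ and $h$ transverse to $M$.

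Here is a concrete example. Let $P=\mathbb{R}^2$ with $\Pi=\partial_x\wedge\partial_y$, let $G=\mathbb{R}$ act by translations in $y$ (so $E=\operatorname{span}\{\partial_y\}$), and let $M=\{y=0\}$.
\begin{itemize}
\item Then $E\cap TM=0$ and $O=M$.
\item Since $TM+E=TP$, (A4) holds.
\item (A3) holds because invariant functions depend on $x$ only, so their brackets vanish.
\end{itemize}
Now $f=x$ and $g=y^2$ are admissible in your sense, extending $\bar f=x$ and $\bar g=0$. But $\{f,g\}_P=2y$ satisfies $d\{f,g\}_P(\partial_y)=2$ along $M$, so it is not admissible. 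With $h=x$ one gets $\{\{f,g\}_P,h\}_P=-2$ on $M$, while $\{\{\bar f,\bar g\}_O,\bar h\}_O=0$. So ``brackets of admissible extensions are admissible'' is false for your class. Both the Jacobi step and your justification of descent fail as written.

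\textbf{What is missing} is an existence statement for extensions to which (A3) actually applies. For every $\bar f\in C^\infty(O)$ you need an extension whose differential annihilates $E$ on a neighbourhood of each point of $M$.
\begin{itemize}
\item In the paper's setting, average an arbitrary extension over the compact torus $\mathbb{T}^d_D$. It preserves $\mathcal{Z}_\Delta$, and its orbits there are the leaves of $\Phi$, so the average still extends $\bar f\circ\mathscr{Q}$.
\item In general, construct it locally on the local leaf space of $E$ in a flow box. This requires reading (A3) for local functions, which is harmless since brackets are local.
\end{itemize}
Your bump-function globalization does not provide such extensions, because it destroys invariance off $M$.

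With these extensions, order the proof as follows. First, (A4) gives independence over your whole class of extensions; your computation for this is fine. Then evaluate every bracket on invariant extensions $F,H$. For these, (A3) makes $\{F,H\}_P$ invariant again, which yields descent and legitimizes the Jacobi pull-back.

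Similarly, if ``induced'' is read as in the paper (extensions that are $G$-invariant), the converse needs invariant $\delta$ and $h$ with the prescribed differentials at $p$. These again must be built on the local leaf space of $E$, not from linear functions multiplied by arbitrary bump functions.
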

The induced Poisson bracket $\{\,\}_{O}$ on $O$  is defined as follows. For $f_j\in C^\infty(O)$, $j=1,2$, we consider a smooth extension $F_j$ of $ f_j\circ\mathscr{Q}=\mathscr{Q}^*(f_j)$  with differential $dF_j$ vanishing along $E$. In other words, the extensions $F_1$ and $F_2$ are $G$-invariant. Then $\{f_1,f_2\}_{O}$ is the smooth function defined by
$$
\mathscr{Q}^*\{f_1,f_2\}_{O}=i^*\{F_1,F_2\}_{P}\, .
$$

Condition (A4) makes  $\{\,\}_{O}$ independent of the choice of smooth extensions.

We need to examine the constraints on the group action imposed by the assumptions (A1)-(A3).
Since $E$ is produced by a Lie group action, $E$ is integrable. Since $M$ is a submanifold, $TM$ is also integrable. Therefore, $E\cap TM$ is integrable and (A1) holds for any group action. The foliation produced by an integrable distribution is regular if the action is free and proper. So, (A2) holds if $G$ is compact and its action is free.

Let $f\in C^{\infty}(P)$ be a smooth function such that $df$ vanishes on $E$. 
This implies that $df(X^\#)=X^{\sharp}(f)=0\quad\forall X\in\mathfrak{g}$.  Since $E_p$  is generated by $\{X^\#_p : X \in \mathfrak{g} \} $, 
$$
df_p(v)=0\quad \forall \, v\in E_p, \, p\in P \iff df(X^\#)=X^\#(f)=0\quad\forall X\in\mathfrak{g} \,.
$$
Thus, (A3) can be reformulated as:  
If $f,h\in C^\infty(P)$ and $X^\#(f)=X^\#(h)=0$ for all $X\in\mathfrak{g}$ then $X^\#(\{f,h\}_P)=0$ for all $X\in \mathfrak{g}$. 

Let $\Pi$ be the bivector field of the Poisson structure $\{\,\}_P$. Note that $$(L_{X^\#}\Pi)(f, h)=[X^\#,\Pi](f,h)=X^\#(\Pi(f,h))-\Pi(X^\# f,h)-\Pi(f,X^\#h)$$ for all $f,h\in C^{\infty}(P)$.
Thus, (A3) holds if
\begin{equation}\label{action derivative of Pi}
L_{X^\#}\Pi=0\quad \forall X\in \mathfrak{g} \,,
\end{equation}
that is,  if $X^\#$ is a Poisson vector field on P for each $X\in\mathfrak{g}$.

Let $\Psi_g$ be the diffeomorphism of $P$ produced by the action of $g \in G$.  Suppose that for each $g\in G$, $\Psi_g$ is a Poisson diffeomorphism, that is,
\begin{equation}\label{invariance of Pi}
d\Psi_g(\Pi)=\Pi \,.
\end{equation}
Then,  for each $X\in \mathfrak{g}$, $\operatorname{exp}(tX)$ is a Poisson diffeomorphism for all $t$ for which the exponential map is defined. It follows that 
$L_{X^{\sharp}} \Pi = 0$ for each $X\in \mathfrak{g}$. Thus we have the following lemma which will be used extensively in the sequel.

\begin{lemma}\label{2.6} 
If $\Psi_g$ is a Poisson diffeomorphism for all $g\in G$, then (A3) holds.
\end{lemma}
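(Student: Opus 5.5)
The plan is to pass from the group-level invariance \eqref{invariance of Pi} to the infinitesimal condition \eqref{action derivative of Pi}, and then to check that \eqref{action derivative of Pi} yields the reformulated version of (A3) given just above the lemma.

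\emph{Step 1: from invariance to vanishing Lie derivative.} Fix $X\in\mathfrak{g}$. Its fundamental vector field $X^\#$ has flow $\phi_t=\Psi_{\exp(tX)}$ (up to the sign convention for fundamental fields, which is irrelevant here). This flow is defined for all $t$ because it comes from a group action. By hypothesis each $\phi_t$ is a Poisson diffeomorphism, so $(\phi_t)_*\Pi=\Pi$, equivalently $\phi_t^*\Pi=\Pi$, for every $t$. By the definition of the Lie derivative,
\[
L_{X^\#}\Pi=\frac{d}{dt}\Big|_{t=0}\phi_t^*\Pi=\frac{d}{dt}\Big|_{t=0}\Pi=0 .
\]
This is exactly \eqref{action derivative of Pi}, i.e.\ $X^\#$ is a Poisson vector field in the sense of Definition~\ref{Poisson vf}.

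\emph{Step 2: from the Lie derivative to (A3).} Take $f,h\in C^\infty(P)$ with $X^\#f=X^\#h=0$ for all $X\in\mathfrak{g}$. Apply the identity
\[
(L_{X^\#}\Pi)(f,h)=X^\#(\Pi(f,h))-\Pi(X^\#f,h)-\Pi(f,X^\#h)
\]
recorded before the lemma. The left side vanishes by Step 1, and the last two terms vanish by the assumption on $f$ and $h$. Hence $X^\#\{f,h\}_P=0$ for every $X\in\mathfrak{g}$. By the equivalence shown above (vanishing of $d\{f,h\}_P$ on $E_p$ is the same as $X^\#\{f,h\}_P=0$ for all $X$), this is (A3).

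\emph{Main obstacle.} The only point that needs care is Step 1: identifying the flow of $X^\#$ with $t\mapsto\Psi_{\exp(tX)}$, and then differentiating the pullback of a bivector field. I would handle this with the standard formula $L_Y\Pi=\frac{d}{dt}\big|_{t=0}\phi_t^*\Pi$, valid for any complete vector field $Y$ with flow $\phi_t$. Alternatively, one can evaluate on functions directly. Invariance gives $\Pi(d(f\circ\phi_t),d(h\circ\phi_t))=\Pi(df,dh)\circ\phi_t$, and differentiating this at $t=0$ gives $\Pi(d(X^\#f),dh)+\Pi(df,d(X^\#h))=X^\#\Pi(df,dh)$. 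That is the derivation identity characterizing Poisson vector fields, and it gives Step 2 directly.
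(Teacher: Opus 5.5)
Your proof is correct and follows the same route as the paper: invariance of $\Pi$ under the complete flow $\Psi_{\exp(tX)}$ gives $L_{X^\#}\Pi=0$, and the identity $(L_{X^\#}\Pi)(f,h)=X^\#(\Pi(f,h))-\Pi(X^\#f,h)-\Pi(f,X^\#h)$ then yields the reformulated (A3). The only difference is that you spell out the identification of the flow and the differentiation step, which the paper asserts in a single line.
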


We recall that if a Poisson bivector $\Pi$ satisfies \eqref{invariance of Pi}, then the bivector is said to be invariant under the group action.

\begin{lemma}\label{lem: Poisson on submanifold}
 If $(P,\{\}_{P})$ is a Poisson manifold and $M\subset P$ is a submanifold of $P$ such that,
 $$
	\Pi^{\#}\big(T_{p}^*(P)\big)\subset T_{p}(M)\iff \Pi_{p}\in \wedge^2 T_{p}(M) \quad \forall p\in M
 $$
 then $M$ inherits a unique Poisson structure induced by $\{\}_{P}$. 
\end{lemma}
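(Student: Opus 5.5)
The plan is to define the bracket on $M$ by extension, show that the definition is independent of the extension, and then read off the Leibniz and Jacobi identities from those of $\{\,\}_P$. This could be done by invoking Theorem~\ref{2.5} with the trivial group $G=\{e\}$, so that $E=0$, $E^0=T^*P$, and (A4) becomes exactly $\Pi^\sharp(T^*_pP)\subset T_pM$. I would nevertheless argue directly, because Theorem~\ref{2.5} is phrased for extensions of functions defined on all of $O=M$, and for a merely immersed or non-closed submanifold global extensions need not exist.

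First I record the equivalence stated in the lemma. The image of $\Pi_p^\sharp$ is the smallest subspace $V\subset T_pP$ with $\Pi_p\in\wedge^2V$. Hence $\Pi_p^\sharp(T_p^*P)\subset T_pM$ holds if and only if $\Pi_p\in\wedge^2T_pM$; this is linear algebra, for instance by writing $\Pi_p$ in a basis adapted to $T_pM$. Consequently $\Pi_p$ determines a well-defined bivector $\Pi^M_p\in\wedge^2T_pM$, and I define $\Pi^M$ pointwise by this restriction. $\Pi^M$ is smooth because, in a submanifold chart $(x_1,\dots,x_k,y_1,\dots,y_l)$ with $M=\{y=0\}$, the hypothesis forces every coefficient of $\Pi$ involving a $\partial_{y_j}$ to vanish on $M$. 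So $\Pi^M$ is just the restriction of the $\partial_{x_a}\wedge\partial_{x_b}$ coefficients.

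Next I identify the bracket. For $f,g\in C^\infty(M)$ and local extensions $F,G$ near $p$ I have
\[
\{F,G\}_P(p)=\Pi_p(dF_p,dG_p)=\Pi^M_p(df_p,dg_p),
\]
since $\Pi_p\in\wedge^2T_pM$ only sees the restrictions of $dF_p,dG_p$ to $T_pM$, and these are $df_p,dg_p$. This gives independence of the extension, and uniqueness: any bracket for which $i$ is a Poisson map must be $i^*\{F,G\}_P$. Bilinearity, skew-symmetry and the Leibniz rule are immediate.

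The main step is the Jacobi identity, equivalently $[\Pi^M,\Pi^M]=0$. I plan to show that $i^*\{F,\{G,H\}_P\}_P=\{f,\{g,h\}_M\}_M$ locally. The key point is that $\{G,H\}_P$ is itself a local extension of $\{g,h\}_M$, by the computation above applied at every point of $M$ near $p$. Applying the extension-independence again, $\{F,\{G,H\}_P\}_P$ restricts to $\{f,\{g,h\}_M\}_M$. Summing cyclically, the Jacobiator on $M$ is the restriction of the Jacobiator of $\Pi$ on $P$, which vanishes. I expect the only delicate point to be that extensions are local. This is handled by working in a submanifold chart, where the extension is $F(x,y)=f(x)$; all identities are pointwise, so local extensions suffice.
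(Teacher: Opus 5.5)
Your proof is correct, but it follows a different route from the paper's. The paper's proof is a one-line reduction. It takes the zero distribution $E_\emptyset$ (equivalently, the trivial group) and observes that (A1)--(A3) hold trivially. It notes that (A4) becomes exactly the hypothesis $\Pi^\sharp_p(T^*_pP)\subset T_pM$. It then invokes the Marsden--Ratiu theorem (Theorem~\ref{2.5}) as a black box. That is precisely the option you mention in your first paragraph and then set aside.

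You instead prove this special case from scratch:
\begin{enumerate}
\item You identify $\Pi_p$ with a bivector $\Pi^M_p\in\wedge^2T_pM$ and check smoothness in a slice chart.
\item You get independence of the extension from $\Pi_p(dF_p,dG_p)=\Pi^M_p(df_p,dg_p)$.
\item You obtain Jacobi by noting that $\{G,H\}_P$ is itself a local extension of $\{g,h\}_M$, so the Jacobiator on $M$ is the restriction of the Jacobiator on $P$.
\end{enumerate}
This is what the Marsden--Ratiu argument reduces to when $E=0$.

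The paper's route is shorter and fits the framework it reuses in Lemma~\ref{2.8} with a nontrivial group. Your route is self-contained and needs only local extensions. It also makes explicit that the induced structure is the restricted bivector $\Pi|_M$ viewed in $\wedge^2TM$. That is how the paper actually uses it later, for instance when it writes and computes with $\Pi_1|_{\mathcal{Z}_\Delta}=\sum a_{ij}|_{\mathcal{Z}_\Delta}\Theta_i\wedge\Theta_j$.

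Your stated reason for avoiding Theorem~\ref{2.5} is a fair technical point about how the paper phrases the extension step. It is moot for the paper's application, however, since $\mathcal{Z}_\Delta$ is a compact embedded submanifold of $\mathbb{C}^m$ and global extensions exist there.
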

\begin{proof}
	Let $E_{\emptyset}$ be a distribution generated by zero tangent vectors.
	$$
		(E_{\emptyset})_{p}=\{0\in T_{p}(P)\}\quad \forall p\in P
	$$
	Since this distribution together with $M$ satisfy conditions $(A1)-(A4)$, by Theorem \ref{2.5}, the statement holds true. 
\end{proof}
If $\{\}_{M}$ denote the induced Poisson structure, then for any $f,h\in C^{\infty}(M)$
$$
	\{f,h\}_{M}=i^*\Big(\{F,H\}_{P}\Big)
$$ where $F,H\in C^{\infty}(P)$ are smooth extensions of $f$ and $h$ respectively. In other words $f=F|_{M}=i^*(F)$ and $h=H|_{M}=i^*(H)$.\\ 
\begin{remark} We refer the condition:
$$
	\Pi^{\sharp}\big(T_{p}^*(P)\big)\subset T_{p}(M)\iff \Pi_{p}\in T_{p}(M)\wedge T_{p}(M) \quad \forall p\in M
$$
as $(A4)'$. For any distribution $E$, $(A 4)'\implies (A 4)$
\end{remark}
\begin{lemma}\label{2.8}
	Assuming $(A1)-(A3)$, if $(A 4)'$ holds, then $O=M/\Phi$ inherits a unique Poisson structure induced from $\{\}_{M}$ such that if $\overline{f},\overline{h}\in C^{\infty}(O)$,
 $$
	\mathscr{Q}^*\Big(\{\overline{f},\overline{h}\}_{O}\Big)=\{f,h\}_{M}
 $$
 where $f=\mathscr{Q}^*(\overline{f})$ and $h=\mathscr{Q}^*(\overline{h})$
\end{lemma}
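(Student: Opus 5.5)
Since $(A4)'$ implies $(A4)$ by the preceding remark, Theorem \ref{2.5} applies under $(A1)$--$(A3)$: the orbit space $O=M/\Phi$ carries a unique Poisson structure $\{\,\}_O$ with $\mathscr{Q}^*\{\overline f,\overline h\}_O=i^*\{F,H\}_P$. Here $F,H$ are $G$-invariant smooth extensions to $P$ of $f=\mathscr{Q}^*\overline f$ and $h=\mathscr{Q}^*\overline h$. Independently, $(A4)'$ is exactly the hypothesis of Lemma \ref{lem: Poisson on submanifold}, so $M$ carries the induced bracket $\{f,h\}_M=i^*\{F',H'\}_P$ for \emph{any} smooth extensions $F',H'$ of $f,h$. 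The plan is to compare these two formulas and show that the bracket on $O$ is the one induced from $\{\,\}_M$ in the stated sense.

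\textbf{Step 1: the formula.} The $G$-invariant extensions $F,H$ chosen in Theorem \ref{2.5} are in particular extensions of $f$ and $h$. Lemma \ref{lem: Poisson on submanifold} allows any extensions in its formula, so taking $F'=F$ and $H'=H$ gives $i^*\{F,H\}_P=\{f,h\}_M$. Hence $\mathscr{Q}^*\{\overline f,\overline h\}_O=\{f,h\}_M$.

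\textbf{Step 2: uniqueness.} Since $\mathscr{Q}$ is a surjective submersion by $(A2)$, the map $\mathscr{Q}^*$ is injective on $C^\infty(O)$. So at most one bracket on $O$ can satisfy the displayed identity, and Step 1 shows that $\{\,\}_O$ does.

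\textbf{Step 3: a self-contained check (optional).} One can also verify directly that $\{f,h\}_M$ is constant along the leaves of $\Phi$, so that it descends to $O$. Let $v\in E\cap TM$; near a point of $M$ it is given by fundamental vector fields $X^\#$. The functions $F,H$ are $G$-invariant, so by the reformulation of $(A3)$ we have $X^\#\{F,H\}_P=0$. Restricting to $M$ gives $v(\{f,h\}_M)=0$. Thus $\{f,h\}_M$ is leafwise constant and equals $\mathscr{Q}^*$ of a smooth function on $O$, smooth because $\mathscr{Q}$ is a submersion. The Jacobi identity and the Leibniz rule then pass down through the injective algebra map $\mathscr{Q}^*$.

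\textbf{Main obstacle.} The only delicate point is the existence of $G$-invariant smooth extensions of functions on $M$ that are constant on the leaves of $\Phi$. I take this existence as already supplied by the construction in Theorem \ref{2.5}; it can also be obtained by averaging an arbitrary extension over a compact $G$. Once such extensions exist, the argument reduces to observing that one extension serves both formulas.
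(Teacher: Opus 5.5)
Your proof is correct and matches the paper's. Both invoke Theorem \ref{2.5} (via $(A4)'\Rightarrow(A4)$) and Lemma \ref{lem: Poisson on submanifold}, and note that the same $G$-invariant extensions $F,H$ serve in both formulas, so $\mathscr{Q}^*\{\overline f,\overline h\}_O=i^*\{F,H\}_P=\{f,h\}_M$. Your uniqueness remark via injectivity of $\mathscr{Q}^*$ and the optional leafwise-constancy check are sound additions, with one caveat: the averaging aside for producing invariant extensions needs $M$ to be $G$-invariant (as $\mathcal{Z}_\Delta$ is), and does not hold in the general Marsden--Ratiu setting.
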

\begin{proof}
	By Theorem \ref{2.5}, $O$ inherits a Poisson structure $\{\}_{O}$, such that for any $\overline{f},\overline{h}\in C^{\infty}(O)$
	$$
		\mathscr{Q}^*\Big(\{\overline{f},\overline{h}\}_{O}\Big)=i^*\Big(\{F,H\}_{P}\Big)
	$$
	where $F$ and $H$ are $G$-invariant extension of $\mathscr{Q}^*(f)$ and $\mathscr{Q}^*(h)$ respectively.
	By Lemma \ref{lem: Poisson on submanifold}, $M$ inherits a Poisson structure $\{\}_{M}$, such that
	$$
			\{f,h\}_{M}=i^*\Big(\{F,H\}_{P}\Big)
	$$
	$\implies \mathscr{Q}^*\Big(\{\overline{f},\overline{h}\}_{O}\Big)=\{f,h\}_{M}$
	
\end{proof}

\section{Poisson Structures on $\mathbb{C}^m$}\label{str on Cm}

Let $z_i=x_i+ \iota y_i$, where $\iota= \sqrt{-1}$ and $i=1, \ldots, m$, be the standard  complex coordinates on $\mathbb{C}^m$. Let $ r_i= \sqrt{x_i^2 + y_i^2}$ and $p_i= r_i^2$. Let $\theta_i$ be the angular coordinate defined by $x_i = r_i\cos \theta_i $ and $ y_i = r_i \sin \theta_i$.


The standard Poisson structure on $\mathbb{C}^m$ is given by the bivector

\begin{equation}
\Pi_{st}=\sum_{q=1}^m\frac{\partial}{\partial x_{q}}\wedge \frac{\partial}{\partial y_{q}} =-{2\iota} \sum_{q=1}^m\frac{\partial}{\partial z_{q}}\wedge \frac{\partial}{\partial \overline{z}_{q}}
\end{equation}

Define 
$$\mathbb{C}_1^m := \{ (z_1, \ldots, z_m) \in \mathbb{C}^m \, \mid \,  z_i \neq 0 \, \forall i \}$$

$$
\mathcal{P}_{i}=\frac{1}{2}\left( x_{i}\frac{\partial}{\partial x_{i}}+y_{i}\frac{\partial}{\partial y_{i}} \right)\quad \mathcal{P}_{i}|_{\mathbb{C}_1^m} =p_{i}\frac{\partial}{\partial p_{i}}\quad 1\leq i\leq m
$$
$$
\
\Theta_{j}=\left(- y_{j}\frac{\partial}{\partial x_{j}}+x_{j}\frac{\partial}{\partial y_{j}}\right)\quad \Theta_{j}|_{\mathbb{C}_1^m}=\frac{\partial}{\partial \theta_{j}}\quad 1\leq j\leq m
$$

Note that 
$$
\Pi_{st}|_{\mathbb{C}^m_{1}}=\sum_{q=1}^m 2\, \frac{\partial}{\partial p_{q}}\wedge \frac{\partial}{\partial \theta_{q}}
$$

We construct several $(S^1)^m$ equivariant families of Poisson structures on $\mathbb{C}^m$. Below is a list of bivector fields and some constraints on them so that they satisfy the vanishing Schouten bracket condition \eqref{Jacobi on Pi}. For simplicity, we often restrict our attention to a subset of the solution space of \eqref{Jacobi on Pi}.

\subsection{Angular Poisson structures}

\begin{equation}
\Pi_{1}=\sum_{1\leq i<j\leq m} a_{ij}\Theta_{i}\wedge \Theta_{j} 
\end{equation}

\begin{equation*}
\Pi_{1}|_{\mathbb{C}^m_{1}}=\sum_{1\leq i<j\leq m} a_{ij}\frac{\partial}{\partial \theta_{i}}\wedge \frac{\partial}{\partial \theta_{j}}
\end{equation*}
where each $a_{ij}\in C^\infty(\mathbb{C}^m)$ and $\Theta_{k}(a_{ij})=0 \quad \forall1\leq k \leq m$.
In other words, each $a_{ij}$ is a function of $p_{1},\dots,p_m$ only.

\subsection{Augmented Angular Poisson structures}
\begin{equation*}
\Pi_{2}=\Pi_{st}+\Pi_{1}
\end{equation*}
\begin{equation}
\Pi_{2}=\sum_{q=1}^m\frac{\partial}{\partial x_{q}}\wedge \frac{\partial}{\partial y_{q}}+\sum_{1\leq i<j\leq m} b_{ij}\Theta_{i}\wedge \Theta_{j}
\end{equation}
\begin{equation*}
\Pi_{2}|_{\mathbb{C}^m_{1}}=\sum_{q=1}^m2\frac{\partial}{\partial p_{q}}\wedge \frac{\partial}{\partial \theta_{q}}+\sum_{1\leq i<j\leq m} b_{ij}\frac{\partial}{\partial \theta_{i}}\wedge \frac{\partial}{\partial \theta_{j}}
\end{equation*}
For simplicity, we assume that $b_{ij}$ is a $C^\infty$ function that depends only on $p_i$ and $p_j$, that is,
$$
b_{ij}=b_{ij}(x_{i}^2+y_{i}^2, x_{j}^2+y_{j}^2)
$$
\vspace{1cm}
\subsection{Radial Poisson structures}
\begin{equation}
\Pi_{3}=\sum_{1\leq i<j\leq m} {c_{ij}}\mathcal{P}_{i}\wedge \mathcal{P}_{j}
\end{equation}
\begin{equation*}
\Pi_{3}|_{\mathbb{C}^m_{1}}=\sum_{1\leq i<j\leq m} c_{ij}p_{i}p_{j}\frac{\partial}{\partial p_{i}}\wedge \frac{\partial}{\partial p_{j}}
\end{equation*}
Again, we  only consider a subset of the solution space of \eqref{Jacobi on Pi},  and  assume that the  $c_{ij}$'s are constant functions.

\subsection{Radial-Angular Poisson structures}

\begin{equation*}
\Pi_{4}=\Pi_{3}+\Pi_{1}
\end{equation*}
\begin{equation}
\Pi_{4}=\sum_{1\leq i<j\leq m} {e_{ij}}\mathcal{P}_{i}\wedge \mathcal{P}_{j}+\sum_{1\leq r<s\leq m} d_{rs}\Theta_{r}\wedge \Theta_{s}
\end{equation}
\begin{equation*}
\Pi_{4}|_{\mathbb{C}^m_{1}}=\sum_{1\leq i<j\leq m} e_{ij}p_{i}p_{j}\frac{\partial}{\partial p_{i}}\wedge \frac{\partial}{\partial p_{j}}+\sum_{1\leq r<s\leq m} d_{rs}\frac{\partial}{\partial \theta_{r}}\wedge \frac{\partial}{\partial \theta_{s}}
\end{equation*}
where $e_{ij}$ and $d_{rs}$ are constant functions.

\vspace{0.5cm}

 We don't know whether the structures of type $\Pi_3$ and $\Pi_4$ are reducible or not in the sense of Section \ref{subsec: reduction} with $P=\mathbb{C}^m$ and $M=\mathcal{Z}_{\Delta}$ (moment-angle manifold associated to $\Delta$) and $G$ a Lie subgroup of $(S^1)^m$ acting in the standard manner on $\mathbb{C}^m$.

We construct two more families  of bivector fields containing radial terms that are reducible under some restrictions.
To begin with, choose at most 3 distinct $i_1,i_2,i_3 \in\{1,\dots,m\}$ and let $R=\{i_1,i_2,i_3\}$. Define bivector fields $\Pi^R_3$ and $\Pi^R_4$ on $\mathbb{C}^m$ as follows.

\subsection{Reducible Radial Poisson structures}
\begin{equation}
\Pi^R_3= \sum_{1\leq k<j\leq 3} {c^R_{i_ji_k}}\mathcal{P}_{i_{j}}\wedge \mathcal{P}_{i_{k}}
\end{equation}
$$
\Pi^R_{3}|_{\mathbb{C}^m_{1}}=\sum_{1\leq k<j\leq 3} c^R_{i_ji_k}p_{i_j}p_{i_k}\frac{\partial}{\partial p_{i_j}}\wedge \frac{\partial}{\partial p_{i_k}}
$$
In this case, a sufficient condition on the coefficient functions to satisfy the condition \eqref{Jacobi on Pi} is
$$
c^R_{i_ji_k}=c^R_{i_ji_k}(x_{l}^2+y_{l}^2)\in C^{\infty}(\mathbb{C}^m)
$$
where $l\in\{1,\dots,m\} \setminus \{ i_j,i_k\}$. In other words, $$\mathcal{P}_{i_{j}}(c^R_{i_ji_k})=\mathcal{P}_{i_{k}}(c^R_{i_ji_k})=0$$ for each pair $(k,j)\in \{(1,2),(2,3),(1,3)\}$.

\vspace{1cm}
\subsection{Reducible Radial-Angular Poisson structures}
\begin{equation*}
\Pi_{4}^R=\Pi^R_{3}+\Pi_{1}
\end{equation*}
\begin{equation}
\Pi^R_{4}=\sum_{1\leq k<j\leq 3} {e^R_{i_ji_k}}\mathcal{P}_{i_{j}}\wedge \mathcal{P}_{i_{k}}+\sum_{1\leq r<s\leq m} d_{rs}^R\Theta_{r}\wedge \Theta_{s}
\end{equation}
\begin{equation*}
\Pi^R_{4}|_{\mathbb{C}^m_{1}}=\sum_{1\leq k<j\leq 3} e^R_{i_ji_k}p_{i_j}p_{i_k}\frac{\partial}{\partial p_{i_j}}\wedge \frac{\partial}{\partial p_{i_k}}+\sum_{1\leq r<s\leq m} d_{rs}^R\frac{\partial}{\partial \theta_{r}}\wedge \frac{\partial}{\partial \theta_{s}}
\end{equation*}
In this case, a sufficient condition on the coefficient functions to satisfy the condition \eqref{Jacobi on Pi} is,
$$
e^R_{i_ji_k}=e^R_{i_ji_k}(x_{l}^2+y_{l}^2)\in C^{\infty}(\mathbb{C}^m)
$$
where $l\in\{1,\dots,m\} \setminus 
\{i_j,i_k\}$; in other words, $$\mathcal{P}_{i_{j}}(e^R_{i_ji_k})=\mathcal{P}_{i_{k}}(e^R_{i_ji_k})=0$$ for each pair $(k,j)\in \{(1,2),(2,3),(1,3)\}$. The $d_{rs}^R$ are constant functions.
\begin{remark}
$|R|=3$ is not a necessary condition under which $\Pi_{3}^R$ and $\Pi_{4}^R$ (with additional constraints mentioned in Section \ref{Reducible structures}) become reducible. One can construct $\Pi_{3}^R$ and $\Pi_{4}^R$ with $|R|>3$ such that under additional constraints, they become reducible in the sense of \ref{subsec: reduction} with $P=\mathbb{C}^m$ and $M=\mathcal{Z}_{\Delta}$ and $G$ a Lie subgroup of $(S^1)^m$ acting in the standard manner on $\mathbb{C}^m$.

\end{remark}
\subsection{Computational verification}
      We have written Python code that compute the necessary and sufficient constraints on the coefficient functions of the bivector fields $\Pi$ listed above so that $[\Pi,\Pi]$ vanishes when restricted to $\mathbb{C}^m_{1}$, for $m=3,4,5$. The functions are expressed in terms of the radial coordinates $p_i$ and the angular coordinates $\theta_j$. This is sufficient since, by continuity, the trivector field $[\Pi,\Pi]$ is then zero on the whole of $\mathbb{C}^m$.
      The code can be generalized in a straightforward manner for other $m$. The output of the code is a list of functions that should vanish. The code is availabe at, \\
   \url{ https://github.com/Lexion-research/Math-project}

     \subsection{Analytic verification}
    We can analytically verify that the sufficiently simplified conditions on the coefficient functions of bivectors mentioned in Sections 3.1-3.6, satisfy \eqref{Jacobi on Pi}. But first, we need to prove a few lemmas and corollaries.

\begin{lemma}\label{3.1}
 The Lie bracket of radial and angular vector fields defined above is zero.
$$
[\mathcal{P}_{i},\mathcal{P}_{k}]=0 \quad [\Theta_{j},\Theta _{l}]=0\quad [\mathcal{P}_{i},\Theta _{j}]=0 \quad \forall 1\leq i,j,k,l\leq m
$$
\end{lemma}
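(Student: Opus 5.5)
The plan is to compute the brackets directly in the global Cartesian coordinates $(x_1,y_1,\dots,x_m,y_m)$ on $\mathbb{C}^m$, where all the fields $\mathcal{P}_i$ and $\Theta_j$ are globally defined and smooth. No polar coordinates will be used, since they are only valid on $\mathbb{C}^m_1$. Every field involved is linear: each has the form $\sum_{a,b} A_{ab}\, u_b\, \partial/\partial u_a$ for a constant matrix $A$, where $u=(x_1,y_1,\dots,x_m,y_m)$. For linear fields $X_A$ and $X_B$ one has $[X_A,X_B]=-X_{[A,B]}$ (the sign convention does not matter here). The lemma therefore reduces to showing that the corresponding $2m\times 2m$ matrices pairwise commute.

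First, for $i\neq k$ (and likewise for the $\Theta$'s, and for mixed pairs with different indices), the two fields involve disjoint sets of coordinates. Each field's coefficients depend only on the variables it differentiates, namely $x_i,y_i$ in one case and $x_k,y_k$ in the other. Hence each field annihilates the other's coefficients and the bracket vanishes. In matrix terms, the matrices are supported in different diagonal $2\times 2$ blocks, so they commute.

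Next, for equal indices, only the $(x_i,y_i)$ block is relevant. There $\mathcal{P}_i$ corresponds to $\tfrac12 I_2$ and $\Theta_i$ corresponds to the rotation generator $J=\begin{pmatrix}0&-1\\ 1&0\end{pmatrix}$. The scalar matrix $\tfrac12 I_2$ commutes with $J$, which gives $[\mathcal{P}_i,\Theta_i]=0$. The brackets $[\mathcal{P}_i,\mathcal{P}_i]$ and $[\Theta_i,\Theta_i]$ vanish trivially by antisymmetry.

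As a check, and to avoid relying on the linear-field formula, I would also verify $[\mathcal{P}_i,\Theta_i]=0$ by hand. Applying $\mathcal{P}_i$ to the coefficients $-y_i$ and $x_i$ of $\Theta_i$ gives $-\tfrac12 y_i$ and $\tfrac12 x_i$, that is, $\tfrac12\Theta_i$. Applying $\Theta_i$ to the coefficients $\tfrac12 x_i$ and $\tfrac12 y_i$ of $\mathcal{P}_i$ gives $\tfrac12(-y_i)$ and $\tfrac12 x_i$, that is, again $\tfrac12\Theta_i$. The difference is zero. No step is a genuine obstacle; the only care needed is to work in Cartesian coordinates so that the identity holds on all of $\mathbb{C}^m$. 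Alternatively, one can note that on the dense open set $\mathbb{C}^m_1$ the fields are $p_i\partial_{p_i}$ and $\partial_{\theta_j}$ in the coordinates $(p,\theta)$, where the brackets obviously vanish, and then conclude by continuity.
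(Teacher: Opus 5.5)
Your proof is correct and is essentially the paper's approach: a direct computation in the global Cartesian coordinates $(x_1,y_1,\dots,x_m,y_m)$. The paper expands $\mathcal{P}_i\mathcal{P}_k h$ on a test function $h$ and dismisses the remaining brackets with ``similarly''. Your linear-field/matrix-commutator bookkeeping is a tidier packaging of the same computation, and it has the merit of treating the equal-index case $[\mathcal{P}_i,\Theta_i]=0$ explicitly; there the paper's term-by-term swap does not literally apply, since first-order terms must cancel across different terms.
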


\begin{proof}
    For any function $h\in C^\infty(\mathbb{C}^m)$,
\begin{align*}
\mathcal{P}_{i}\mathcal{P}_{k}h&=\frac{1}{4} x_{i}\frac{\partial}{\partial x_{i}}\left( x_{k}\frac{\partial h}{\partial x_{k}} \right)+\frac{1}{4}x_{i}\frac{\partial}{\partial x_{i}}\left( y_{k}\frac{\partial h}{\partial y_{k}} \right)\\
&+\frac{1}{4}y_{i}\frac{\partial}{\partial y_{i}}\left( x_{k}\frac{\partial h}{\partial x_{k}} \right)+\frac{1}{4}y_{i}\frac{\partial}{\partial y_{i}}\left( y_{k}\frac{\partial h}{\partial y_{k}} \right)\\
&= \frac{1}{4}x_{k}\frac{\partial}{\partial x_{k}}\left( x_{i}\frac{\partial h}{\partial x_{i}} \right)+\frac{1}{4}x_{k}\frac{\partial}{\partial x_{k}}\left( y_{i}\frac{\partial h}{\partial y_{i}} \right)\\
&+\frac{1}{4}y_{k}\frac{\partial}{\partial y_{k}}\left( x_{i}\frac{\partial h}{\partial x_{i}} \right)+\frac{1}{4}y_{k}\frac{\partial}{\partial y_{k}}\left( y_{i}\frac{\partial h}{\partial y_{i}} \right)\\&= 
\frac{1}{4}\left( x_{k}\frac{\partial}{\partial x_{k}}+y_{k}\frac{\partial}{\partial y_{k}} \right)\left( x_{i}\frac{\partial}{\partial x_{i}}+y_{i}\frac{\partial}{\partial y_{i}} \right)(h)\\&=\mathcal{P}_{k}\mathcal{P}_{i}h
\end{align*}
Thus, $[\mathcal{P}_{i},\mathcal{P}_{k}]=\mathcal{P}_{i}\Big(\mathcal{P}_{k}(h)\Big)-\mathcal{P}_{k}\Big(\mathcal{P}_{i}(h)\Big)=0$ for all $1\leq i,k\leq m$.
Similarly, $[\Theta_{j},\Theta _{l}]=0\quad [\mathcal{P}_{i},\Theta _{j}]=0 \quad \forall 1\leq i,j,l\leq m$
\end{proof}
\begin{corollary}\label{3.2}
	$$
		[\mathcal{P}_{i}\wedge \mathcal{P}_{j},\mathcal{P}_{k}\wedge \mathcal{P}_{l}]=0 \quad [\Theta_{i}\wedge\Theta_{j},\Theta_{k}\wedge\Theta _{l}]=0\quad [\mathcal{P}_{i}\wedge \mathcal{P}_{j},\Theta _{k}\wedge \Theta_{l}]=0 \quad \forall 1\leq i,j,k,l\leq m
	$$
\end{corollary}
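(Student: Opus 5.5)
The plan is to reduce everything to Lemma \ref{3.1} via the graded Leibniz rule for the Schouten bracket recalled in Section \ref{Schouten}. Throughout, $A,B,C,D$ stand for vector fields from the family $\{\mathcal{P}_1,\dots,\mathcal{P}_m,\Theta_1,\dots,\Theta_m\}$. By Lemma \ref{3.1} any two of these commute, and for vector fields the Schouten bracket coincides with the Lie bracket.

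The first step is a general computation: the Schouten bracket of two decomposable bivectors built from pairwise commuting vector fields vanishes. I would first compute $[A\wedge B, C]$ for a single vector field $C$. Using graded antisymmetry, $[A\wedge B,C]=\pm[C,A\wedge B]$. The Leibniz rule with $\alpha=C$ of degree one (so $k=0$) gives
\[
[C,A\wedge B]=[C,A]\wedge B + A\wedge[C,B] = 0 .
\]
Next I apply the Leibniz rule with $\alpha=A\wedge B$ (so $k=1$) to $\beta\wedge\gamma=C\wedge D$ with $l=m=0$:
\[
[A\wedge B,C\wedge D]=[A\wedge B,C]\wedge D + (-1)^{1}\,C\wedge[A\wedge B,D].
\]
Both terms vanish by the previous step. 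Here I only need that $[A\wedge B,C]$ is a scalar multiple (by $\pm1$) of $[C,A\wedge B]$, and that one is zero. So the signs in the graded antisymmetry do not need to be pinned down exactly.

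Specializing $A,B,C,D$ to $\mathcal{P}_i,\mathcal{P}_j,\mathcal{P}_k,\mathcal{P}_l$, to $\Theta_i,\Theta_j,\Theta_k,\Theta_l$, and to $\mathcal{P}_i,\mathcal{P}_j,\Theta_k,\Theta_l$ gives the three identities. When indices coincide, for example $i=j$, the bivector $\mathcal{P}_i\wedge\mathcal{P}_i$ is zero, so those cases are trivial.

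The only mildly delicate point is that the paper states the Leibniz rule only in the form $[\alpha,\beta\wedge\gamma]$, so the symmetry property of the bracket is needed to move a vector field into the first slot. If one prefers to avoid invoking graded antisymmetry, there is a direct check. Work on $\mathbb{C}^m_1$ in coordinates $(p,\theta)$. There all these fields are coordinate fields with constant coefficients, $\mathcal{P}_i = p_i\,\partial/\partial p_i$, or, after the change of variable $u_i=\log p_i$, $\partial/\partial u_i$. The Schouten bracket of constant-coefficient multivectors in a coordinate chart is zero, and by continuity and density of $\mathbb{C}^m_1$ the bracket vanishes on all of $\mathbb{C}^m$. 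I expect the sign bookkeeping of the first approach to be the only real obstacle; the coordinate argument serves as a fallback.
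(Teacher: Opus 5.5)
Your proposal is correct and takes the same route as the paper, whose proof just cites Lemma~\ref{3.1} and the graded Leibniz property. Your two-step expansion (first $[C,A\wedge B]=0$, then $[A\wedge B,C\wedge D]$, using graded antisymmetry to put $C$ and $D$ into the first slot) is that argument written out in full. The coordinate check on $\mathbb{C}^m_1$ is a valid but unnecessary backup.
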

\begin{proof}
	The result follows directly from Lemma \ref{3.1} and the graded Leibniz property.
\end{proof}
\begin{corollary}\label{3.3}
If $f_{ij},h_{kl}\in C^{\infty}(\mathbb{C}^m)$ such that $\mathcal{P}_{i}(f_{ij})=\mathcal{P}_{j}(f_{ij})=0$ and $\Theta_{k}(h_{kl})=\Theta_{l}(h_{kl})=0$, then
$$
[\alpha \mathcal{P}_{i}\wedge \mathcal{P}_{j},f_{ij}]=0\quad \text{and}\quad [\beta\Theta_{k}\wedge \Theta_{l},h_{kl}]=0
$$
where $\alpha,\beta\in C^{\infty}(\mathbb{C}^m)$

\end{corollary}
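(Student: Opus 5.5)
The plan is to compute the Schouten bracket of a bivector with a function (a $0$-vector) directly, and then apply the hypotheses on $f_{ij}$ and $h_{kl}$. For a bivector $\Lambda$ and a function $f$, the Schouten bracket $[\Lambda,f]$ is (up to a sign fixed by convention) the vector field $\Lambda^\sharp(df)$, i.e. the contraction $\iota_{df}\Lambda$. This is the case $l+1=0$ of the definition of the Schouten bracket recalled in Section \ref{Schouten}, and it is consistent with the graded Leibniz identity.

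The first step is to reduce to decomposable bivectors with a function coefficient. By the graded Leibniz identity, applied with the coefficient $\alpha$ treated as a $0$-vector, we get
\[
[\alpha\,\mathcal{P}_i\wedge\mathcal{P}_j, f] = \pm\,\alpha\,[\mathcal{P}_i\wedge\mathcal{P}_j,f] \pm [\alpha,f]\wedge(\mathcal{P}_i\wedge\mathcal{P}_j).
\]
The bracket of two functions is zero, since it would have degree $-1$. So it suffices to treat $\alpha=1$. Concretely, $[\alpha\Lambda,f]=\alpha\,\iota_{df}\Lambda$ up to sign, because contraction is $C^\infty$-linear in $\Lambda$.

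The second step is the contraction itself:
\[
\iota_{df}(X\wedge Y) = X(f)\,Y - Y(f)\,X,
\]
up to a global sign. Equivalently, applying the Leibniz rule twice gives
\[
[X\wedge Y,f] = X\wedge[Y,f] \pm [X,f]\wedge Y,
\]
together with $[X,f]=X(f)$ for a vector field $X$. With $X=\mathcal{P}_i$, $Y=\mathcal{P}_j$ and $f=f_{ij}$, the hypotheses $\mathcal{P}_i(f_{ij})=\mathcal{P}_j(f_{ij})=0$ make both terms vanish, so $[\alpha\,\mathcal{P}_i\wedge\mathcal{P}_j,f_{ij}]=0$. The angular case is identical, using $X=\Theta_k$, $Y=\Theta_l$ and $\Theta_k(h_{kl})=\Theta_l(h_{kl})=0$. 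This holds on all of $\mathbb{C}^m$, since $\mathcal{P}_i$ and $\Theta_k$ are globally smooth and no coordinate singularity arises.

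The only real obstacle is bookkeeping. The paper states the Leibniz identity with shifted degrees ($\alpha\in\mathfrak{X}^{k+1}$), so the signs must be tracked carefully, and one must confirm that the bracket with a function is indeed the contraction and not something else. Since the conclusion is that the bracket vanishes, the signs do not matter: every term carries a factor $X(f)$ or $Y(f)$, and each of these is zero. I would therefore state the contraction formula, note that it follows from the Leibniz rule and $[X,f]=X(f)$, and conclude.
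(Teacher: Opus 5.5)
Your proposal is correct and follows the paper's approach: the paper's proof simply says the result follows from the graded Leibniz identity. You supply the details of that argument by reducing to the contraction $\alpha\bigl(X(f)\,Y - Y(f)\,X\bigr)$, in which every term vanishes under the hypotheses, so the sign conventions indeed play no role.
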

\begin{proof}
	The statement follows directly from the graded Leibniz identity.
\end{proof}

We show in the following that $[\Pi_1,\Pi_1]=[\Pi_{3},\Pi_{3}]=[\Pi_{3}^R,\Pi_{3}^R] =0$ under the constraints on the coefficients $a_{ij},c_{ij}$ and $c_{i_{j}i_{k}}^R$ in Section \ref{str on Cm}.

\begin{equation*}
\begin{split}
	[\Pi_1,\Pi_1]&=\sum_{\substack{1\leq i<j\leq m\\ 1\leq k<l\leq m}}\Big[ a_{ij}\wedge \Theta_{i}\wedge \Theta_{j},a_{kl}\wedge \Theta_{k}\wedge \Theta_{l}\Big]\\
	&=\sum_{\substack{1\leq i<j\leq m\\ 1\leq k<l\leq m}}\Big(\Big[a_{ij}\wedge \Theta _{i}\wedge \Theta_{j},a_{kl}\Big]\wedge \Theta_{k}\wedge \Theta_{l}+a_{ij}\wedge a_{kl}\wedge\Big[\Theta_{i}\wedge \Theta_{j},\Theta_{k}\wedge \Theta_{l}\Big]\\
    &+a_{kl}\wedge\Big[a_{ij},\Theta_{k}\wedge \Theta_{l}\Big]\wedge \Theta_{i}\wedge \Theta_{j}\Big)
\end{split}
\end{equation*}
Since $\Theta_{k}(a_{ij})=0$ for all $1\leq i,j,k\leq m$, by Corollary \ref{3.2} and \ref{3.3},
$$
[\Pi_1,\Pi_1]=0
$$

\begin{equation*}
\begin{split}
	[\Pi_3,\Pi_3]&=\sum_{\substack{1\leq i<j\leq m\\ 1\leq k<l\leq m}}\Big[ c_{ij}\wedge\mathcal{P}_{i}\wedge \mathcal{P}_{j},c_{kl}\wedge\mathcal{P}_{k}\wedge \mathcal{P}_{l}\Big]\\
	&=\sum_{\substack{1\leq i<j\leq m\\ 1\leq k<l\leq m}}\Big(\Big[c_{ij}\wedge\mathcal{P}_{i}\wedge \mathcal{P}_{j},c_{kl}\Big]\wedge \mathcal{P}_{k}\wedge \mathcal{P}_{l}
	+c_{ij}\wedge c_{kl}\wedge\Big[\mathcal{P}_{i}\wedge \mathcal{P}_{j},\mathcal{P}_{k}\wedge \mathcal{P}_{l}\Big]\\
&+c_{kl}\wedge\Big[c_{ij},\mathcal{P}_{k}\wedge \mathcal{P}_{l}\Big]\wedge \mathcal{P}_{i}\wedge \mathcal{P}_{j}\Big)
\end{split}
\end{equation*}
Since for each $1\leq i,j\leq m$, $c_{ij}$ is a constant function, then by Corollary \ref{3.2} and \ref{3.3},

	$$
		[\Pi_3,\Pi_3]=0
	$$

$$
[\Pi^R_3,\Pi^R_3]=\Big[\sum_{1\leq k<j\leq 3} c^R_{i_ji_k}\mathcal{P}_{i_{j}}\wedge \mathcal{P}_{i_{k}},\sum_{1\leq k<j\leq 3} c^R_{i_ji_k}\mathcal{P}_{i_{j}}\wedge \mathcal{P}_{i_{k}}\Big]\quad \quad \quad\quad\quad\quad\quad\quad\quad\quad
$$
Using the linearity of the Schouten bracket, any term in the expansion of $[\Pi^R_3,\Pi^R_3]$ takes one of the following two forms,
$$
	\Big[c^R_{i_{j}i_{k}}\mathcal{P}_{i_{j}}\wedge \mathcal{P}_{i_{k}},c^R_{i_{j}i_{k}}\mathcal{P}_{i_{j}}\wedge \mathcal{P}_{i_{k}}\Big]\quad \text{or}\quad \Big[c^R_{i_{j}i_{k}}\mathcal{P}_{i_{j}}\wedge \mathcal{P}_{i_{k}},c^R_{i_{j}i_{l}}\mathcal{P}_{i_{j}}\wedge \mathcal{P}_{i_{l}}\Big]
$$
Without loss of generality, let $i_{1}=1,i_{2}=2$ and $i_{3}=3$ and we show that 
$$
	\Big[c^R_{12}\wedge \mathcal{P}_{1}\wedge\mathcal{P}_{2},c^R_{12}\wedge \mathcal{P}_{1}\wedge\mathcal{P}_{2}\Big]=0\quad \text{and} \quad \Big[c^R_{12}\wedge \mathcal{P}_{1}\wedge\mathcal{P}_{2},c^R_{13}\wedge \mathcal{P}_{1}\wedge\mathcal{P}_{3}\Big]=0
$$
\begin{equation*}
\begin{split}
	\Big[c^R_{12}\wedge \mathcal{P}_{1}\wedge\mathcal{P}_{2},c^R_{12}\wedge \mathcal{P}_{1}\wedge\mathcal{P}_{2}\Big]=&\Big[c^R_{12}\wedge \mathcal{P}_{1}\wedge\mathcal{P}_{2},c_{12}^R\Big]\wedge \mathcal{P}_{1}\wedge \mathcal{P}_{2}\\
    &+c_{12}^R\wedge\Big[c_{12}^R,\mathcal{P}_{1}\wedge\mathcal{P}_{2}\Big]\wedge \mathcal{P}_{1}\wedge \mathcal{P}_{2}\\
    &+ c_{12}^R\wedge c_{12}^R\wedge\Big[\mathcal{P}_{1}\wedge \mathcal{P}_{2},\mathcal{P}_{1}\wedge \mathcal{P}_{2}\Big]
\end{split}
\end{equation*}
Since $\mathcal{P}_{1}(c_{12}^R)=\mathcal{P}_{2}(c_{12}^R)=0$, by Corollary \ref{3.3},

$$
	\implies \Big[c_{12}^R\wedge\mathcal{P}_{1}\wedge \mathcal{P}_{2},c_{12}^R\Big]=\Big[\mathcal{P}_{1}\wedge \mathcal{P}_{2},c_{12}^R\Big]=0
$$
So by Corollary \ref{3.2},
$$
	\Big[c^R_{12}\wedge \mathcal{P}_{1}\wedge\mathcal{P}_{2},c^R_{12}\wedge \mathcal{P}_{1}\wedge\mathcal{P}_{2}\Big]=c_{12}^R\wedge c_{12}^R\wedge\Big[\mathcal{P}_{1}\wedge \mathcal{P}_{2},\mathcal{P}_{1}\wedge \mathcal{P}_{2}\Big]=0
$$
\begin{equation*}
\begin{split}
	\Big[c^R_{12}\wedge \mathcal{P}_{1}\wedge\mathcal{P}_{2},c^R_{13}\wedge \mathcal{P}_{1}\wedge\mathcal{P}_{3}\Big]
	=&\Big[c^R_{12}\wedge \mathcal{P}_{1}\wedge\mathcal{P}_{2},c_{13}^R\Big]\wedge \mathcal{P}_{1}\wedge \mathcal{P}_{3}
	\\
    &+c_{13}^R\wedge\Big[c_{12}^R,\mathcal{P}_{1}\wedge\mathcal{P}_{3}\Big]\wedge \mathcal{P}_{1}\wedge \mathcal{P}_{2}\\
	&+ c_{12}^R\wedge c_{13}^R\wedge\Big[\mathcal{P}_{1}\wedge \mathcal{P}_{2},\mathcal{P}_{1}\wedge \mathcal{P}_{3}\Big]
\end{split}
\end{equation*}
Since $\mathcal{P}_{1}(c_{13}^R)=0$
$$
	\implies \Big[c^R_{12}\wedge \mathcal{P}_{1}\wedge\mathcal{P}_{2},c_{13}^R\Big]=c_{12}^R\wedge \mathcal{P}_{1}(c_{13}^R)\wedge \mathcal{P}_{2}-c_{12}^R\wedge \mathcal{P}_{2}(c_{13}^R)\wedge \mathcal{P}_{1}=-c_{12}^R\wedge \mathcal{P}_{2}(
	c_{13}^R)\wedge \mathcal{P}_{1}
$$
Thus,
$$
\Big[c^R_{12}\wedge \mathcal{P}_{1}\wedge\mathcal{P}_{2},c_{13}^R\Big]\wedge \mathcal{P}_{1}\wedge \mathcal{P}_{3}=-c_{12}^R\wedge \mathcal{P}_{2}(
	c_{13}^R)\wedge \mathcal{P}_{1}\wedge \mathcal{P}_{1}\wedge \mathcal{P}_{3}=0
$$
Similarly,
$$
	c_{13}^R\wedge\Big[c_{12}^R,\mathcal{P}_{1}\wedge\mathcal{P}_{3}\Big]\wedge \mathcal{P}_{1}\wedge \mathcal{P}_{2}=0
$$
Thus, by Corollary \ref{3.2},
$$
	\Big[c^R_{12}\wedge \mathcal{P}_{1}\wedge\mathcal{P}_{2},c^R_{13}\wedge \mathcal{P}_{1}\wedge\mathcal{P}_{3}\Big]=c_{12}^R\wedge c_{13}^R\wedge\Big[\mathcal{P}_{1}\wedge \mathcal{P}_{2},\mathcal{P}_{1}\wedge \mathcal{P}_{3}\Big]=0
$$
As a result
$$
	[\Pi^R_3,\Pi^R_3]=0
$$

Similarly one can show that under the constraints on the coefficient functions, mentioned in Section \ref{str on Cm},
    $$ [\Pi_{2},\Pi_{2}]=[\Pi_{4},\Pi_{4}]=[\Pi^R_{4},\Pi^R_{4}]=0 $$

\section{Convex Polytopes}

\begin{definition}
A convex polytope of dimension $n$ is called simple if every vertex is the intersection of exactly $n$ facets (that is, codimension one faces). 
\end{definition}

Let $\Delta \subset \mathbb{R}^n$ be an $n$-dimensional simple polytope with $m$ facets, $F_1, \ldots, F_m$. We present $\Delta$ as the intersection of $m$ closed half-spaces in $\mathbb{R}^n$:
\begin{equation}\label{polytope in Rn}
    \Delta = \{ x \in \mathbb{R}^n \mid \langle a_i, x \rangle + b_i \ge 0 \quad \text{for } i = 1, \ldots, m \}
\end{equation}
where $a_i \in \mathbb{R}^n$ is an inward-pointing normal vector to the facet $F_i$, and $b_i \in \mathbb{R}$. We can also write this as a matrix inequality:
\[
    \Delta = \{ x \in \mathbb{R}^n \mid Ax + \mathbf{b} \ge \mathbf{0} \}
\]
where $A$ is the $m \times n$ matrix whose rows are $a_1,\dots,a_m$, and $\mathbf{b} \in \mathbb{R}^m$.
$$
A=\begin{pmatrix}
a_{1}\\..\\a_{m}
\end{pmatrix}
\quad
a_{i}=(a_{i1},\dots,a_{in})
$$

Consider the affine map $i_A: \mathbb{R}^n \to \mathbb{R}^m$ defined by
\[
    i_A(x) = Ax + \mathbf{b}.
\]
Since the normal vectors to the facets of $\Delta$ span 
$\mathbb{R}^n$, the matrix $A$ has full column rank $n$. Thus, $i_A$ is an affine embedding and the image $i_A(\mathbb{R}^n)$ is an $n$-dimensional affine plane in $\mathbb{R}^m$.
The condition $Ax + \mathbf{b} \ge \mathbf{0}$ implies that the intersection of the non-negative orthant $\mathbb{R}^m_+$ and the $n$ dimensional plane $i_A(\mathbb{R}^n)$ is the embedded copy of $\Delta$ in $\mathbb{R}^m$,
\[
    \Delta \cong i_A(\Delta) = i_A(\mathbb{R}^n) \cap \mathbb{R}_{+}^m \,.
\]


\subsection{Gale Dual}

An $n$-dimensional plane in $\mathbb{R}^m$ is defined by exactly $k = m - n$ linearly independent normal directions. These normal directions are precisely what the Gale dual captures. Let $\Gamma$ be a $k \times m$ matrix whose rows form a basis for the left null space of the matrix $A$. 
\[
\Gamma=\begin{pmatrix}
\gamma_{1}&..&..&\gamma_{m}
\end{pmatrix}=\begin{pmatrix}
    \lambda_1\\..\\\lambda_{m-n}
\end{pmatrix}
\]
That is, $\Gamma$ is a matrix of full rank $k = m - n$ satisfying:
\[
    \Gamma A = 0=A^T\Gamma^T.
\]
Geometrically, the rows $\lambda_i$ of $\Gamma$ are orthogonal to the column space of $A$. Since the column space of $A$ gives the directions of our affine plane $i_A(\mathbb{R}^n)$, the rows of $\Gamma$ generate the normal space to the embedded plane $i_A(\mathbb{R}^n)$ in $\mathbb{R}^m$.

Given $y \in \mathbb{R}^m$, it belongs to the plane $i_A(\mathbb{R}^n)$ if and only if its projection onto this orthogonal normal space is a specific constant. Specifically, if $y = Ax + \mathbf{b}$, multiplying by $\Gamma$ yields
\[
    \Gamma y = \Gamma A x + \Gamma \mathbf{b} = 0 + \Gamma \mathbf{b} = \mathbf{c}.
\]
where $\mathbf{c} = \Gamma \mathbf{b} \in \mathbb{R}^{m-n}$. 

Thus,
\begin{equation}\label{eq:polytope_Gale}
    i_A(\Delta)= \{ y \in \mathbb{R}_{+}^m \mid \Gamma y = \mathbf{c} \}.
\end{equation}

In the terminology of \cite{Buc}, the columns of $\Gamma$, denoted $\gamma_1, \ldots, \gamma_m \in \mathbb{R}^{m-n}$ where $\gamma_{j}^{T}=(\gamma_{1j},...,\gamma_{(m-n)j})$, form the Gale dual configuration of the rows $a_1, \ldots, a_m $ of $A$. 

\section{Moment-Angle Manifolds}
 Consider the map $\pi: \mathbb{C}^m \to \mathbb{R}_{+}^m$ defined by taking the squared magnitude of each coordinate,
\begin{equation}\label{eq: map pi}
    \pi(z_1, \ldots, z_m) = (|z_1|^2, \ldots, |z_m|^2) \,.
\end{equation}
 For every point $y$ strictly interior to the orthant (that is, $y_i > 0$ for each $i$), the preimage $\pi^{-1}(y)$ is an $m$-dimensional torus $\mathbb{T}^m=(S^1)^m$, where the radius of the $i$-th circle is $\sqrt{y_i}$. If a coordinate $y_i = 0$ (the point lies on a wall of the orthant), the corresponding circle collapses to a point.

The moment-angle manifold $\mathcal{Z}_{\Delta}$ is simply the preimage of the slice $i_A(\Delta)$ under this map,
\begin{equation}\label{eq: moment angle}
    \mathcal{Z}_{\Delta} = \pi^{-1}(i_A({\Delta})).
\end{equation}

As a result, the following diagram is commutative,
\begin{center}
\begin{tikzcd}
\mathcal{Z}_{\Delta} \arrow[r, "\mathcal{E}"] \arrow[d, "\pi_{\mathcal{Z}_{\Delta}}"]
& \mathbb{C}^m \arrow[d, "\pi" ] \\
{\Delta} \arrow[r, "i_A"]
& \mathbb{R}^m_{+}
\end{tikzcd}
\end{center}
where $\mathcal{E}:\mathcal{Z}_{\Delta}\to \mathbb{C}^m$ is an embedding.

To describe $\mathcal{Z}_{\Delta}$, we may combine \eqref{eq:polytope_Gale} with the map $\pi$ by substituting $y_i = |z_i|^2$ into the linear equation $\Gamma y = \mathbf{c}$.

\begin{theorem}[Buchstaber--Panov \cite{Buc}]\label{thm: momemt amgle}
The moment-angle manifold $\mathcal{Z}_{\Delta}$ is exactly the intersection of $m-n$ real quadrics (Hermitian quadrics) in $\mathbb{C}^m$, determined entirely by the Gale transform $\Gamma$ and translation vector $\mathbf{c}$,
\[
    \mathcal{Z}_{\Delta} = \left\{ \mathbf{z} \in \mathbb{C}^m \;\middle|\; \sum_{i=1}^m \gamma_i |z_i|^2 = \mathbf{c} \right\}.
\]
\end{theorem}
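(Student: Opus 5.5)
The statement is a set-theoretic identity, and I would prove it by unwinding the definition $\mathcal{Z}_{\Delta}=\pi^{-1}(i_A(\Delta))$ from \eqref{eq: moment angle} and applying the Gale description \eqref{eq:polytope_Gale} of $i_A(\Delta)$. The argument therefore reduces to the linear-algebra fact behind \eqref{eq:polytope_Gale}. I would prove that fact carefully, since it was only sketched above.

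\emph{Step 1 (the Gale description).} I claim that for $y\in\mathbb{R}^m$,
\[
y\in i_A(\mathbb{R}^n)\iff \Gamma y=\mathbf{c}, \qquad \mathbf{c}=\Gamma\mathbf{b}.
\]
The forward direction is the computation $\Gamma(Ax+\mathbf{b})=\Gamma\mathbf{b}$, using $\Gamma A=0$. For the converse, suppose $\Gamma y=\Gamma\mathbf{b}$. Then $\Gamma(y-\mathbf{b})=0$, so $y-\mathbf{b}$ is orthogonal to every row of $\Gamma$. The rows of $\Gamma$ form a basis of the left null space of $A$, which is the orthogonal complement of the column space of $A$. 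Hence $y-\mathbf{b}$ lies in $(\operatorname{col}A)^{\perp\perp}=\operatorname{col}A$, so $y-\mathbf{b}=Ax$ for some $x\in\mathbb{R}^n$, and $y=i_A(x)$.

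\emph{Step 2 (restricting to $\Delta$).} A point $x\in\mathbb{R}^n$ lies in $\Delta$ exactly when $i_A(x)\ge 0$. Therefore $i_A(\Delta)=i_A(\mathbb{R}^n)\cap\mathbb{R}^m_+$, and by Step 1 this equals $\{y\in\mathbb{R}^m_+ : \Gamma y=\mathbf{c}\}$, which is \eqref{eq:polytope_Gale}.

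\emph{Step 3 (pulling back along $\pi$).} The image of $\pi$ always lies in $\mathbb{R}^m_+$, so the positivity condition is automatic for $\pi(\mathbf{z})$. Hence $\mathbf{z}\in\mathcal{Z}_{\Delta}$ if and only if $\Gamma\pi(\mathbf{z})=\mathbf{c}$. Writing $\Gamma\pi(\mathbf{z})$ column by column gives
\[
\Gamma\pi(\mathbf{z})=\sum_{i=1}^m \gamma_i|z_i|^2 .
\]
Thus $\mathbf{z}\in\mathcal{Z}_{\Delta}$ exactly when $\sum_i\gamma_i|z_i|^2=\mathbf{c}$.

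Each of the $m-n$ component equations $\sum_i\gamma_{ri}|z_i|^2=c_r$ is a real Hermitian quadric, so $\mathcal{Z}_{\Delta}$ is their intersection. No step is a real obstacle. The only point needing care is the converse in Step 1, which depends on $\Gamma$ having full rank $m-n$ and on $A$ having full column rank $n$. If one also wants the "manifold'' part of the statement, the additional input is that simplicity of $\Delta$ makes the quadrics intersect transversally, following Buchstaber--Panov. The identity as stated does not require this.
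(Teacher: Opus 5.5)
Your proof is correct and follows the paper's route: unwind $\mathcal{Z}_{\Delta}=\pi^{-1}(i_A(\Delta))$, use the Gale description $i_A(\Delta)=\{y\in\mathbb{R}^m_+ \mid \Gamma y=\mathbf{c}\}$, and substitute $y_i=|z_i|^2$. Your Step 1 also supplies the converse inclusion via $(\operatorname{col}A)^{\perp\perp}=\operatorname{col}A$, which the paper only sketches.
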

Because ${\Delta}$ is a simple polytope, these quadrics intersect transversally, guaranteeing that $\mathcal{Z}_{\Delta}$ is a smooth closed manifold of dimension $m+n$. Furthermore, the $m$-dimensional compact torus $\mathbb{T}^m$ acts naturally on $\mathcal{Z}_{\Delta}$ by rotating the phases of the coordinates $z_i$, and the orbit space modulo this action is diffeomorphic as a manifold with corners to the original polytope ${\Delta}$.

\subsection{Quasitoric Manifolds}

We describe now a class of manifolds known as quasitoric manifolds obtained by taking the quotient of $\mathcal{Z}_{\Delta}$ by a freely acting $(m-n)$-dimensional subtorus.\\

Let ${\Delta} \subset \mathbb{R}^n$ be a simple $n$ polytope with $m$ facets $F_1, \dots, F_m$. A quasitoric manifold $Q$ over ${\Delta}$ is a $2n$-dimensional compact smooth manifold with a locally standard action of the $n$-dimensional torus $\mathbb{T}^n$, such that the orbit space modulo this action is diffeomorphic, as a manifold with corners, to ${\Delta}$.

The combinatorial data required to build $Q$ is captured by a {\it characteristic map} which assigns to each facet $F_i$ a primitive vector $\eta_i \in \mathbb{Z}^n$.  For the resulting torus action on the manifold to be locally standard, the characteristic vectors must satisfy the \textit{smoothness condition}: if $n$ facets $F_{i_1}, \dots, F_{i_n}$ intersect at a vertex of ${\Delta}$, their corresponding vectors $\eta_{i_1}, \dots, \eta_{i_n}$ must form a $\mathbb{Z}$-basis for $\mathbb{Z}^n$. The $n \times m$ integer matrix $\eta$ formed by these column vectors is called the {\it characteristic matrix}.

The characteristic map induces a surjective homomorphism of tori
\[
    \ell:\mathbb{T}^m \to \mathbb{T}^n
\]
where the standard basis vectors $\mathbf{e}_i$ of the Lie algebra of $\mathbb{T}^m$ are mapped to $\eta_i$.
The kernel of the characteristic map $\ell$ defines a closed subtorus $H \subset\mathbb{T}^m$ of dimension $k = m - n$:
\[
    H = \operatorname{Ker}(\ell) \cong\mathbb{T}^{m-n}.
\]
The smoothness condition guarantees that the $(m-n)$ dimensional subtorus $H$ acts freely on the moment-angle manifold $\mathcal{Z}_{\Delta}$. 

Because the action of $H$ on $\mathcal{Z}_{\Delta}$ is free and proper, the orbit space is a smooth manifold of dimension $(m+n) - (m-n) = 2n$. We define the quasitoric manifold  as this quotient,
\[
    Q = \mathcal{Z}_{\Delta} / H.
\]
The residual torus $\mathbb{T}^m / H \cong \mathbb{T}^n$ naturally acts on $Q$, and the orbit space is simply $(\mathcal{Z}_{\Delta} / H) / (\mathbb{T}^m / H) \cong \mathcal{Z}_{\Delta} /\mathbb{T}^m \cong {\Delta}$.

 The Marsden-Weinstein-Meyer theorem (symplectic reduction) induce a symplectic structure on a quasitoric manifold precisely when $\Delta$ comes from a Delzant polytope \cite{Del}. 

\section{Free Torus Action }\label{free torus action}


We will investigate the family of subtori of $\mathbb{T}^m$  that act freely on $\mathcal{Z}_{\Delta}$. 
A $d$-parameter subgroup of $\mathbb{T}^m$  is of the form
\begin{equation}\label{eq: torus T_D}
   \mathbb{T}^d_{D}=\{(e^{2\pi i\langle\delta_{1},\phi\rangle},\dots,e^{2\pi i\langle\delta_{m},\phi\rangle})\in\mathbb{T}^m\} 
\end{equation}
where $\phi =(\phi_{1},\dots,\phi_{d})\in \mathbb{R}^d$ and $D=(\delta_{1},\ldots,\delta_{m})$ is a tuple of $m$ vectors in $\mathbb{R}^d$. Denote any element of $\mathbb{T}^d_{D}$ as $$g(\phi)=(e^{2\pi i\langle\delta_{1},\phi\rangle},\dots,e^{2\pi i\langle\delta_{m},\phi\rangle})\,.$$
Define the linear map
$$
\mathscr{L}:\mathbb{R}^{d}\rightarrow \mathbb{R}^m
$$
to be the left multiplication by the matrix 
$$
(\mathscr{L})=\begin{pmatrix}
\delta_{1}\\..\\..\\ \delta_{m}
\end{pmatrix}
$$
$\mathbb{T}^d_{D}$ is a subtorus of dimension $d$ (in other words, a closed subgroup of $\mathbb{T}^m$)  if $\operatorname{rank}(\mathscr{L})=d$ and $D$ satisfies the following equivalent conditions:
\begin{enumerate}
\item  There exist $\mathbb{R}$-linearly independent vectors $z_{1},\ldots ,z_{d}\in \mathbb{Z}^m$ such that each $z_{i}\in \operatorname{Im}(\mathscr{L})$.
\item There exist linearly independent vectors $v_{1},...,v_{d}\in \mathbb{R}^m$ such that they form a basis of $\mathscr{L}(\mathbb{R}^d)$ and each vector lies in $\mathbb{Q}^m$.
\end{enumerate}

 It turns out that the freeness of the group action of $\mathbb{T}^d_{D}$ on $\mathcal{Z}_{\Delta}$ does not depend on the geometric realisation of the polytope, i.e. on the normal vectors to the facets of the polytope. It only depends on the combinatorial type of the polytope.

Suppose we have $z \in \mathcal{Z}_{\Delta}$ such that $z=(0,\dots,0,z_{{r+1}},\dots,z_{m})$ with $z_i\not=0$ for all $(r+1) \leq i\leq m$ and $\mathbb{T}^d_{D}$ is acting on it. Now, given any $g(\phi) \in \mathbb{T}^d_{D}$,  its first $r$ components 
will act trivially on the first $r$ components of $z$. So $g(\phi)$ will act  trivially on $z$ if and only if the last $m-r$ components are identity. 
For $\mathbb{T}^d_{D}$ to act freely on $z$, if there is an element $g(\phi)$ such that its last $m-r$ components are identity, then necessarily its first $r$ component must be identity. 
Suppose, there exists a $\phi$ such that $\langle\delta_{j},\phi\rangle \in \mathbb{Z} \quad \forall r+1 \leq j\leq m$. Then for it to act freely on $z$,  $\langle\delta_{i},\phi\rangle$ must be an integer for all $1\leq i\leq r$.  

To apply the above observation, 
we define a stratification of $\mathcal{Z}_{\Delta}$, based on a stratification of $\Delta$ using the relative interiors of its faces. Recall the definition of $\Delta$ by the system of inequalities \eqref{polytope in Rn}.



\begin{construction}\label{6.1}
Define a stratification of ${\Delta}$ by its faces. Let $F$ be a face of ${\Delta}$ with $\operatorname{dim} F = n-r$. Then $F$ is characterized as a subset of ${\Delta}$ by $r$ equations
\begin{equation}\label{eq: F_I}
    \langle x, a_i \rangle +b_{i} = 0, \quad i = i_1 , \ldots , i_r 
\end{equation} 
where $1 \leq i_1 < i_2 \ldots < i_r \leq m$. 
We write $F = F_I$ where $I = (i_1, \ldots, i_r)$. Note that $i_A(F_{I})$ is the intersection of $i_A({\Delta})$ and the plane $Pl_I=\{y \in \mathbb{R}^m_{+}\,|\, y_{i}=0 \quad i \in I\}$. 
Let $F_I^{\circ}$ denote the relative interior of $F_I$. 
If $I=\emptyset$, then $F^{\circ}_{\emptyset}$ represents the interior of the polytope $\Delta$. Denote by $\mathscr{I}$, the set of  tuples $I$ such that $F_I$ is a face of $\Delta$. The $F_I^{\circ}$'s such that $I\in \mathscr{I}$, form a stratification of $\Delta$.
Define 
\begin{equation}\label{eq: stratify}
    N_I := \pi^{-1}(i_A(F_I^{\circ}) ) \,.
\end{equation} 
Then, the $N_I$'s such that $I\in \mathscr{I}$, form a stratification of $\mathcal{Z}_\Delta$.
\end{construction}
 Each $N_{I}$ is closed under $\mathbb{T}^m$ action. So, this stratification is  the stratification of $\mathcal{Z}_{\Delta}$ into $\mathbb{T}^m$ orbit types. More specifically, the stabilizer of $N_{I}$ is 
\begin{equation}\label{TI}
    \mathbb{T}_I := \{(e^{2\pi i t_1}, \ldots, e^{2\pi i t_m}) \mid e^{2\pi i t_s} = 1, \forall s \notin I\}\,.
\end{equation}

So $\mathbb{T}^d_{D}$ will act freely on $\mathcal{Z}_{\Delta}$ if and only if $\mathbb{T}^d_{D}\cap \mathbb{T}_{I}= 1 \, \forall I$ such that $F_{I}$ is a face of ${\Delta}$.

For $I = (i_1, \ldots, i_r)$, define  its complement $J=(j_1,\dots ,j_{m-r})$ where $1\leq j_1<..<j_{m-r}\leq m$ and no $j_k$ is in $I$.
Define two matrices with certain $\delta_{i}$'s as column vectors,
$$ \mathscr{D}_I = \begin{pmatrix}
\delta_{i_{1}}& \dots & \delta_{i_{r}}
\end{pmatrix}
=\begin{pmatrix}
u_{1}\\..\\u_{d}
\end{pmatrix}
$$
$$
\mathscr{D}^c_{I}=\begin{pmatrix}
\delta_{j_{1}}&\dots&\delta_{j_{m-r}}
\end{pmatrix}
=\begin{pmatrix}
w_{1}\\..\\w_{d}
\end{pmatrix}
$$

\begin{theorem}   $\mathbb{T}^d_{D}\cap \mathbb{T}_{I}=1$ if and only if , whenever
there exists $(\phi_{1},\dots,\phi_{d})\in \mathbb{R}^{d}$ such that $\sum_{i}\phi_{i}w_{i}\in \mathbb{Z}^{m-r}$ then $\sum_{i}\phi_{i}u_{i}\in \mathbb{Z}^r$.
\end{theorem}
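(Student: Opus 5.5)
This is a direct unwinding of the definitions of $\mathbb{T}^d_D$ in \eqref{eq: torus T_D} and $\mathbb{T}_I$ in \eqref{TI}. An element of $\mathbb{T}^d_D$ is $g(\phi)=(e^{2\pi i\langle\delta_1,\phi\rangle},\dots,e^{2\pi i\langle\delta_m,\phi\rangle})$ for some $\phi\in\mathbb{R}^d$. By \eqref{TI}, $g(\phi)\in\mathbb{T}_I$ exactly when its $s$-th component is $1$ for every $s\notin I$, that is, for $s\in J$. This means $\langle\delta_{j_k},\phi\rangle\in\mathbb{Z}$ for $k=1,\dots,m-r$.

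The first step is to rewrite these inner products using the rows of $\mathscr{D}^c_I$ and $\mathscr{D}_I$. The $k$-th entry of $\sum_i\phi_i w_i$ is $\sum_i\phi_i(\delta_{j_k})_i=\langle\delta_{j_k},\phi\rangle$. Similarly, the $k$-th entry of $\sum_i\phi_i u_i$ is $\langle\delta_{i_k},\phi\rangle$. Put differently, $\sum_i\phi_i w_i=\phi^T\mathscr{D}^c_I$ and $\sum_i\phi_i u_i=\phi^T\mathscr{D}_I$. Hence
\[
g(\phi)\in\mathbb{T}_I \iff \sum_i\phi_i w_i\in\mathbb{Z}^{m-r},
\]
and
\[
g(\phi)=1 \iff \text{additionally } \sum_i\phi_i u_i\in\mathbb{Z}^r .
\]
The second condition holds because the $I$-components of $g(\phi)$ are $e^{2\pi i\langle\delta_{i_k},\phi\rangle}$, and these equal $1$ iff the exponents are integers.

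For the forward direction, assume $\mathbb{T}^d_D\cap\mathbb{T}_I=1$ and suppose $\phi$ satisfies $\sum_i\phi_i w_i\in\mathbb{Z}^{m-r}$. Then $g(\phi)\in\mathbb{T}^d_D\cap\mathbb{T}_I$, so $g(\phi)=1$, and therefore $\sum_i\phi_i u_i\in\mathbb{Z}^r$.

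For the converse, take any element of $\mathbb{T}^d_D\cap\mathbb{T}_I$ and write it as $g(\phi)$. Membership in $\mathbb{T}_I$ gives $\sum_i\phi_i w_i\in\mathbb{Z}^{m-r}$. The hypothesis then gives $\sum_i\phi_i u_i\in\mathbb{Z}^r$, so every component of $g(\phi)$ equals $1$.

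No step is a real obstacle; the argument is bookkeeping. The only point needing care is the indexing: the rows $u_i$, $w_i$ are indexed by $i=1,\dots,d$, while the columns are indexed by $I$ and $J$. The surjectivity of $\phi\mapsto g(\phi)$ onto $\mathbb{T}^d_D$ holds by definition, so it needs no separate argument.
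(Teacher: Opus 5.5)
Your proposal is correct and follows the paper's proof essentially verbatim. In both arguments, membership of $g(\phi)$ in $\mathbb{T}_I$ is identified with $\sum_i\phi_i w_i\in\mathbb{Z}^{m-r}$, and $g(\phi)=1$ with the additional condition $\sum_i\phi_i u_i\in\mathbb{Z}^r$; your explicit identification of the $k$-th entries with $\langle\delta_{j_k},\phi\rangle$ and $\langle\delta_{i_k},\phi\rangle$ just spells out a step the paper asserts without comment.
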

\begin{proof} Assume $\mathbb{T}^d_{D}\cap \mathbb{T}_{I}=1$. Suppose there exist $\phi$ such that $\sum_{i}\phi_{i}w_{i}\in \mathbb{Z}^{m-r}$. Now this is equivalent to $\langle\phi,\delta_{j_{k}}\rangle \in \mathbb{Z}$ for all $j_{k} \not \in I$. That means the corresponding $g(\phi) \in \mathbb{T}^d_{D}$ also belongs to $\mathbb{T}_{I}$. By the above assumption, $g(\phi)=1$ and thus $\langle\phi,\delta_{i_{k}}\rangle\in \mathbb{Z}$ for all $i_{k} \in I$ which is equivalent to $\sum_{i}\phi_{i}u_{i}\in \mathbb{Z}^r$.

Now assume that whenever
there exist $(\phi_{1},\dots,\phi_{d})\in \mathbb{R}^{d}$ such that $\sum_{i}\phi_{i}w_{i}\in \mathbb{Z}^{m-r}$ then $\sum_{i}\phi_{i}u_{i}\in \mathbb{Z}^r$.
Let $g(\phi)\in\mathbb{T}^d_{D}\cap \mathbb{T}_{I}$. Since $g(\phi) \in \mathbb{T}_{I}$, the component of $g(\phi)$ with index $j_{k} \not \in I$ are the identity. Thus  $\langle\phi,\delta_{j_{k}}\rangle \in \mathbb{Z}$ for all $j_{k} \not \in I$. This implies $\sum_{i}\phi_{i}w_{i}\in \mathbb{Z}^{m-r}$ and by assumption $\sum_{i}\phi_{i}u_{i}\in \mathbb{Z}^r$. Hence, $\langle\phi,\delta_{i_{k}}\rangle\in \mathbb{Z}$ for all $i_{k} \in I$ and so $g(\phi)=1$. 
\end{proof}

Note that $\mathbb{T}_{I}$ is maximal when $F_{I}$ is a vertex, because $\mathbb{T}_{J} \subset \mathbb{T}_{I}$  if and only if $F_{I} \subset F_{J}$. 

\begin{definition}\label{Tv}
     Let $v$ be a vertex of $\Delta$ and 
$\mathbb{T}_{v}$ be the stabilizer subgroup of the stratum $N_I$ corresponding to the face $v$. 
\end{definition}

\begin{proposition}\label{Prop: free action} The action of $\mathbb{T}^d_{D}$ is free if and only if $\mathbb{T}^d_{D}\cap \mathbb{T}_{v}=1\quad \forall$ vertices $v$ in ${\Delta}$.
\end{proposition}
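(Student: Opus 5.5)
The plan is to reduce the statement to the criterion stated just before Definition~\ref{Tv}: $\mathbb{T}^d_{D}$ acts freely on $\mathcal{Z}_{\Delta}$ if and only if $\mathbb{T}^d_{D}\cap \mathbb{T}_{I}=1$ for every $I\in\mathscr{I}$. This criterion holds because the strata $N_I$ of Construction~\ref{6.1} cover $\mathcal{Z}_{\Delta}$, and the stabilizer in $\mathbb{T}^m$ of any point of $N_I$ is exactly $\mathbb{T}_I$ as given in \eqref{TI}. With this in hand, it suffices to show that the family of conditions indexed by all faces is equivalent to the subfamily indexed by vertices.

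The forward direction is immediate. Each vertex $v$ is a face $F_I$ with $|I|=n$, so $\mathbb{T}_v=\mathbb{T}_I$ for some $I\in\mathscr{I}$. Freeness therefore gives $\mathbb{T}^d_{D}\cap\mathbb{T}_v=1$.

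For the converse, take an arbitrary face $F_I$. I first claim that $F_I$ contains a vertex $v$ of $\Delta$: it is a nonempty compact convex polytope, so it has an extreme point, and an extreme point of a face is an extreme point of $\Delta$. Write $v=F_{I'}$. Then $v\subset F_I$, and $I\subseteq I'$, because every facet equation $\langle x,a_i\rangle+b_i=0$ with $i\in I$ holds on $F_I$ and hence at $v$. From the explicit description \eqref{TI}, $\mathbb{T}_I$ consists of those elements whose coordinates outside $I$ are trivial. Since $I\subseteq I'$, the coordinates outside $I'$ are among those outside $I$, so these elements also lie in $\mathbb{T}_{I'}=\mathbb{T}_v$. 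This gives $\mathbb{T}_I\subseteq \mathbb{T}_v$, which is the inclusion noted just before Definition~\ref{Tv}. Consequently $\mathbb{T}^d_{D}\cap\mathbb{T}_I\subseteq \mathbb{T}^d_{D}\cap\mathbb{T}_v=1$, and the criterion then yields freeness.

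The only point needing care is the first step, namely justifying that the stabilizer of every point of $N_I$ is exactly $\mathbb{T}_I$. At a point of $N_I$, the coordinates $z_i$ with $i\in I$ vanish. The coordinates with $i\notin I$ do not vanish, because a point of the relative interior $F_I^{\circ}$ satisfies no further facet equations; this uses simplicity of $\Delta$ to identify the set of vanishing coordinates with $I$. A torus element fixes the point precisely when its nonvanishing coordinates act trivially, which is exactly the defining condition of $\mathbb{T}_I$. The remaining steps are formal.
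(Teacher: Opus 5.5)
Your proof is correct and takes the same route as the paper. Both arguments reduce freeness to $\mathbb{T}^d_{D}\cap\mathbb{T}_I=1$ for all $I\in\mathscr{I}$, get the forward direction immediately, and for the converse use that every $\mathbb{T}_I$ lies inside some vertex stabilizer $\mathbb{T}_v$. You also spell out details the paper leaves implicit: that every face contains a vertex, that $I\subseteq I'$ gives $\mathbb{T}_I\subseteq\mathbb{T}_{I'}$, and that the stabilizer of a point of $N_I$ is exactly $\mathbb{T}_I$.
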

\begin{proof}
    If the action of $\mathbb{T}^d_{D}$ is free then its intersection with each stabilizer subgroup $\mathbb{T}_I$, and in particular with each $\mathbb{T}_{v}$,  is trivial.
    Conversely, suppose the intersection of  $\mathbb{T}^d_{D}$ with each $\mathbb{T}_{v}$ is trivial.
    Since each stabilizer subgroup $\mathbb{T}_J$  is contained in the stabilizer subgroup of a vertex, the intersection of $\mathbb{T}^d_{D}$ with each $\mathbb{T}_{J}$ is trivial. Then the action of $\mathbb{T}^d_{D}$ is free.
\end{proof}

\section{Tangent Spaces of $\mathcal{Z}_{\Delta}$}\label{tangent spaces of Z}


Recall from Theorem \ref{thm: momemt amgle} that the moment-angle manifold $\mathcal{Z}_{\Delta}$ may be defined as
$$ \mathcal{Z}_{\Delta} = \{z\in\mathbb{C}^{m}|\quad\Gamma|z|^{2}=\Gamma \mathbf{b}=\mathbf{c}\} $$
where $|z|^{2}:=(|z_{1}|^{2}, \ldots ,|z_{m}|^{2})^T$. Define $\mu:\mathbb{C}^{m}\rightarrow\mathbb{R}^{m-n}$ by 
\begin{equation}\label{eq: mu}
    \mu(z)=\Gamma|z|^{2}.
\end{equation} Then, $\mathcal{Z}_{\Delta}=\mu^{-1}(\mathbf{c})$, and the tangent space of $\mathcal{Z}_{\Delta}$ at $p$ is given by $$T_{p}\mathcal{Z}_{\Delta}=\operatorname{Ker}(d\mu_{p})\subset T_{p}\mathbb{C}^{m}\,.$$

The tangent spaces of $\mathcal{Z}_{\Delta}$ can be analyzed by using the stratification \eqref{eq: stratify}, 
 $$ \mathcal{Z}_{\Delta} := \bigsqcup N_I\,.$$ 
Recall that given $I=(i_{1},...,i_{r})$ where $1\le i_{1}<i_{2}...<i_{r}\le m$ such that $F_I$ is a face of $\Delta$ \eqref{eq: F_I}, $F^{\circ}_{I}$ is the relative interior of $F_{I}$, and $N_{I}=\pi^{-1}(i_{A}(F^{\circ}_{I}))$.  Let $I^c=J=(j_{1},...,j_{m-r})$ where $1\le j_{1}<j_{2}...<j_{m-r}\le m$. By construction,
$$ N_{I}=\{z\in i(\mathcal{Z}_{\Delta})\,|\, z_{k}= 0,z_{l}\ne0 \text{ for } k\in I, l\in J\} \,.$$

At a point $p=(x_{1},y_{1},...,x_{m},y_{m})\in\mathcal{Z}_{\Delta}$, the differential of $\mu$ is expressed as
$$ d\mu_{p} = \begin{pmatrix} 2\gamma_{11}x_{1} & 2\gamma_{11}y_{1} & \cdots & 2\gamma_{1m}x_{m} & 2\gamma_{1m}y_{m} \\ \vdots & \vdots & \ddots & \vdots & \vdots \\ 2\gamma_{(m-n)1}x_{1} & 2\gamma_{(m-n)1}y_{1} & \cdots & 2\gamma_{(m-n)m}x_{m} & 2\gamma_{(m-n)m}y_{m} \end{pmatrix} $$

\begin{construction}\label{7.1}
Define a family of linear maps indexed by $I$, $\mathscr{E}_{I}:\mathbb{R}^{m-r}\rightarrow\mathbb{R}^{m-n}$, given by left multiplication by the matrix,
$$ \mathscr{E}_{I}=\begin{pmatrix}\gamma_{j_1}&...&\gamma_{j_{m-r}}\end{pmatrix} $$
where the column vectors $\gamma_{j_i}$ come from the Gale dual $\Gamma$.

If $F_{I}$ is a face of $\Delta$, then the vectors $\{a_{i_{1}},\ldots,a_{i_{r}}\}$ are linearly independent and by Theorem 1.2.4 of \cite{Buc}, the vectors $\{\gamma_{j_{1}},\ldots,\gamma_{j_{m-r}}\}$ span $\mathbb{R}^{m-n}$. By the rank-nullity theorem, the dimension of $\mathscr{F}_{I}=\operatorname{Ker}(\mathscr{E}_{I})$ is $(n-r)$. Let 
$$\{\nu_{1}^{I},\ldots,\nu_{n-r}^{I}\} \text{ denote a basis of } \mathscr{F}_{I}
$$
where $(\nu_{j}^{I})^{T}=(_{1}\nu_{j}^{I},...,_{(m-r)}\nu_{j}^{I})$. Define vectors $\mu_{1}^{I},...,\mu_{n-r}^{I}\in\mathbb{R}^{m}$, 
$$
(\mu_{k}^{I})^{T}:=(_{1}\mu_{k}^{I},...,_{m}\mu_{k}^{I}) \quad {}_{i_{\alpha}}\mu_{k}^{I}=0 \quad {}_{j_{\beta}}\mu_{k}^{I}=_{\beta}\nu_{k}^{I}
$$for $1\le\alpha\le r$ and $1\le\beta\le m-r$. Each $\mu^I_{k}$ is constructed by inserting zeros at the coordinate positions indexed by $I$, while the remaining components are derived from $\nu_{k}^{I}$, preserving their original order. 
The construction associates a pair of sets of vectors $\{\nu_{1}^{I},...,\nu_{n-r}^{I}\}$ and $\{\mu_{1}^{I},...,\mu_{n-r}^{I}\}$ to each index set $I$ for which $F_{I}$ is a face of $\Delta$. 
\end{construction}

\begin{proposition}\label{7.2}
$\Gamma\mu_{k}^{I}=0$ for all $1\leq k\leq n-r$. Consequently, $\operatorname{span}\{\mu_{1}^{I},...,\mu_{n-r}^{I}\}\subset\operatorname{Ker}(\Gamma)$.
\end{proposition}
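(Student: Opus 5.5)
The plan is to compute $\Gamma\mu_k^I$ directly, column by column, using the fact that $\mu_k^I$ is supported only on the coordinates indexed by $J=I^c$. The key identity is
\[
\Gamma\mu_k^I=\sum_{s=1}^m \gamma_s\,({}_{s}\mu_k^I).
\]
Here $\gamma_s\in\mathbb{R}^{m-n}$ denotes the $s$-th column of $\Gamma$, as in Section 4.1. The proof rests on splitting this sum over $s\in I$ and $s\in J$.

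For $s=i_\alpha\in I$, Construction \ref{7.1} sets ${}_{i_\alpha}\mu_k^I=0$, so these terms vanish. For $s=j_\beta\in J$, the construction gives ${}_{j_\beta}\mu_k^I={}_\beta\nu_k^I$. Hence the remaining sum is
\[
\sum_{\beta=1}^{m-r}\gamma_{j_\beta}\,({}_\beta\nu_k^I)=\begin{pmatrix}\gamma_{j_1}&\cdots&\gamma_{j_{m-r}}\end{pmatrix}\nu_k^I=\mathscr{E}_I\,\nu_k^I .
\]
Because $\nu_k^I$ was chosen in $\mathscr{F}_I=\operatorname{Ker}(\mathscr{E}_I)$, this is zero. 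So $\Gamma\mu_k^I=0$ for each $1\le k\le n-r$.

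The consequence follows from linearity of $\Gamma$. Every linear combination of the $\mu_k^I$ is also annihilated by $\Gamma$, which gives $\operatorname{span}\{\mu_1^I,\dots,\mu_{n-r}^I\}\subset\operatorname{Ker}(\Gamma)$.

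There is no real obstacle here. The only point that needs care is the bookkeeping of indices: the ordering convention that places $\nu_k^I$'s $\beta$-th entry at position $j_\beta$ must match the column ordering in $\mathscr{E}_I$. The construction builds both with $j_1<\dots<j_{m-r}$, so they agree. Nothing about faces or spanning (Theorem 1.2.4 of \cite{Buc}) is needed for this statement; that input only matters for the dimension count $\dim\mathscr{F}_I=n-r$.
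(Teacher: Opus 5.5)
Your proof is correct and is essentially the paper's argument: you split $\Gamma\mu_k^I=\sum_s {}_s\mu_k^I\,\gamma_s$ over $I$ and $J$, the $I$-terms vanish by construction, and the $J$-terms are exactly $\mathscr{E}_I\nu_k^I=0$. Your remarks are also accurate: the span statement follows by linearity, and the face condition is needed only for the dimension count $\dim\mathscr{F}_I=n-r$.
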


\begin{proof}
The result follows from direct computation:
$$ \Gamma\mu_{k}^{I}=\sum_{i=1}^{m}(_{i}\mu_{k}^{I}\gamma_{i})=\sum_{\alpha=1}^{r}(_{i_{\alpha}}\mu_{k}^{I}\gamma_{i_{\alpha}})+\sum_{\beta=1}^{m-r}(_{j_{\beta}}\mu_{k}^{I}\gamma_{j_{\beta}})=\sum_{\beta=1}^{m-r}(_{\beta}\nu_{k}^{I}\gamma_{j_{\beta}})=0 $$
\end{proof}
\begin{construction}\label{7.3}
Consider the vectors $$\chi_{1}^{I},\dots,\chi_{r}^{I},\rho_{1}^{I},...,\rho_{r}^{I},\zeta_{1}^{I},...,\zeta_{n-r}^{I},\phi_{1}^{I},...,\phi_{m-r}^{I} \in T_{p}\mathbb{C}^{m}$$ defined below, for $p\in N_{I}$, assuming that the derivatives below are evaluated at $p$ unless specified otherwise:
$$ \chi_{k}^{I}=\frac{\partial}{\partial x_{i_{k}}}, \quad \rho_{k}^{I}=\frac{\partial}{\partial y_{i_{k}}}, \quad \zeta_{k}^{I}=\sum_{l=1}^{m-r}\left({}_{l}\nu_{k}^{I}\left(\frac{x_{j_{l}}}{p_{j_{l}}}\frac{\partial}{\partial x_{j_{l}}}+\frac{y_{j_{l}}}{p_{j_{l}}}\frac{\partial}{\partial y_{j_{l}}}\right)\right), $$ 
$$\quad \phi_{k}^{I}=\left(y_{j_{k}}\frac{\partial}{\partial x_{j_{k}}}-x_{j_{k}}\frac{\partial}{\partial y_{j_{k}}}\right) $$
\end{construction}

\begin{theorem}\label{7.4}
The tangent vectors $\chi_{1}^{I},\dots,\chi_{r}^{I},\rho_{1}^{I},...,\rho_{r}^{I},\zeta_{1}^{I},...,\zeta_{n-r}^{I},\phi_{1}^{I},...,\phi_{m-r}^{I}$ form a basis for $T_{p}\mathcal{Z}_{\Delta}$.
\end{theorem}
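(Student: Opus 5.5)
The plan is to count the vectors, show that each of them lies in $\operatorname{Ker}(d\mu_p)=T_p\mathcal{Z}_{\Delta}$, and then show they are linearly independent in $T_p\mathbb{C}^m$. There are $2r+(n-r)+(m-r)=m+n$ of them, which equals $\dim\mathcal{Z}_{\Delta}$ because the quadrics intersect transversally (Theorem \ref{thm: momemt amgle} and the remark after it). So independence together with membership in the kernel gives a basis.

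\textbf{Step 1: membership in the kernel.} The $l$-th row of $d\mu_p$ applied to a vector $\sum_i(\alpha_i\partial_{x_i}+\beta_i\partial_{y_i})$ gives $\sum_i 2\gamma_{li}(x_i\alpha_i+y_i\beta_i)$. The vectors $\chi^I_k,\rho^I_k$ are supported on coordinates $i_k\in I$, where $x_{i_k}=y_{i_k}=0$ since $p\in N_I$; so they are killed. Each $\phi^I_k$ is tangent to the circle $|z_{j_k}|=\text{const}$, so $x\alpha+y\beta=y_{j_k}x_{j_k}-x_{j_k}y_{j_k}=0$. For $\zeta^I_k$ the contribution of coordinate $j_l$ is $2\gamma_{\cdot j_l}\,{}_l\nu^I_k(x_{j_l}^2+y_{j_l}^2)/p_{j_l}=2\gamma_{j_l}\,{}_l\nu^I_k$. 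Summing over $l$ gives $2\mathscr{E}_I\nu^I_k=0$, since $\nu^I_k\in\mathscr{F}_I$; this is equivalently Proposition \ref{7.2}. Note that $p_{j_l}\neq0$ on $N_I$, so $\zeta^I_k$ is well defined.

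\textbf{Step 2: linear independence.} Split $T_p\mathbb{C}^m=\bigoplus_i\mathbb{R}^2_i$ coordinatewise. The $\chi^I_k,\rho^I_k$ span exactly the $I$-coordinate planes, while all the other vectors live in the $J$-coordinate planes. It therefore suffices to show that the $\zeta$'s and $\phi$'s are independent. In each plane $\mathbb{R}^2_{j}$ with $z_j\neq0$, the vector $(x_j,y_j)/p_j$ (radial) and $(y_j,-x_j)$ (angular) are orthogonal and nonzero. Decompose the $J$-part into its radial and angular components. The $\phi^I_k$ are purely angular and occupy distinct coordinates, so they are independent. The $\zeta^I_k$ are purely radial, with the radial coefficient vector equal to $\nu^I_k$ (up to the scalar $1/r_{j_l}$ in each coordinate, which is an invertible diagonal rescaling). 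Since $\{\nu^I_k\}$ is a basis of $\mathscr{F}_I$, the $\zeta$'s are independent. Radial and angular subspaces intersect trivially, so the whole family is independent.

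\textbf{Main obstacle.} The only nontrivial input is the dimension count $\dim\mathscr{F}_I=n-r$, which was established in Construction \ref{7.1} from the spanning property of $\{\gamma_j\}_{j\in J}$ (Buchstaber--Panov, Theorem 1.2.4). The smoothness and dimension of $\mathcal{Z}_{\Delta}$ are likewise taken as given. Everything else is direct computation, so I would present Steps 1 and 2 and conclude by dimension.
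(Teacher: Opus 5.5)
Your proposal is correct and follows the same route as the paper: you check by direct computation that each vector lies in $\operatorname{Ker}(d\mu_p)$, prove linear independence, and conclude using $\dim\mathcal{Z}_{\Delta}=m+n$. Your radial/angular splitting inside the $J$-coordinate planes is a more careful version of the paper's brief remark that the four groups occupy ``disjoint coordinate blocks.'' That remark is not literally true for the $\zeta^I_k$ and $\phi^I_k$, since both live in the $J$-coordinates.
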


\begin{proof}
It must first be established that each vector belongs to the kernel of $d\mu_{p}$.
\vspace{0.3cm}
$$ d\mu_{p}(\chi_{k}^{I})=\begin{pmatrix}2\gamma_{1i_{k}}x_{i_{k}}\\ \vdots \\ 2\gamma_{(m-n)i_{k}}x_{i_{k}}\end{pmatrix}=\begin{pmatrix}0\\ \vdots \\ 0\end{pmatrix} $$
\vspace{0.5cm}
$$ d\mu_{p}(\rho_{k}^{I})=\begin{pmatrix}2\gamma_{1i_{k}}y_{i_{k}}\\ \vdots \\ 2\gamma_{(m-n)i_{k}}y_{i_{k}}\end{pmatrix}=\begin{pmatrix}0\\ \vdots \\ 0\end{pmatrix} $$
\vspace{0.5cm}
$$ d\mu_{p}(\zeta_{k}^{I})=\begin{pmatrix}\sum_{l}( 
{}_{l}\nu_{k}^{I}\gamma_{1j_{l}}(x_{j_{l}}^{2}+y_{j_{l}}^{2})/p_{j_{l}})\\ \vdots \\ \sum_{l}({}_{l}\nu_{k}^{I}\gamma_{(m-n)j_{l}}(x_{j_{l}}^{2}+y_{j_{l}}^{2})/p_{j_{l}})\end{pmatrix}=\begin{pmatrix}\sum_{l}({}_{l}\nu_{k}^{I}\gamma_{1j_{l}})\\ \vdots \\ \sum_{l}({}_{l}\nu_{k}^{I}\gamma_{(m-n)j_{l}})\end{pmatrix}=\begin{pmatrix}0\\ \vdots \\ 0\end{pmatrix} $$
\vspace{0.5cm}
$$ d\mu_{p}(\phi_{k}^{I})=\begin{pmatrix}2\gamma_{1j_{k}}x_{j_{k}}y_{j_{k}}-2\gamma_{1j_{k}}y_{j_{k}}x_{j_{k}}\\ \vdots \\ 2\gamma_{(m-n)j_{k}}x_{j_{k}}y_{j_{k}}-2\gamma_{(m-n)j_{k}}y_{j_{k}}x_{j_{k}}\end{pmatrix}=\begin{pmatrix}0\\ \vdots \\ 0\end{pmatrix} $$
\vspace{0.3cm}

Since the four groups involve disjoint coordinate blocks and the vectors in $\{\nu_{1}^{I},\ldots,\nu_{n-r}^{I}\}$ are linearly independent, the tangent vectors in Constructed \ref{7.3}  are linearly independent. As the dimension of $\mathcal{Z}_{\Delta}$ is $m+n$, the statement holds true.

\end{proof}

\begin{proposition}\label{7.5}
Let $K$ be an index tuple such that $K\subset I$, and $F_{K}$ is a face. Then $F_{I}\subset F_{K}$, and $\operatorname{span}(\zeta_{1}^{I},\dots,\zeta_{n-r}^{I})\subset T_{p}\mathcal{Z}_{\Delta}$ for all $p\in N_{K}$.
\end{proposition}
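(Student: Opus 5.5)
The plan is to prove the two claims separately. Both should follow from the definitions in Construction \ref{6.1} and Construction \ref{7.3}, together with the kernel computation already carried out in the proof of Theorem \ref{7.4}.

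\emph{The inclusion $F_I\subset F_K$.} By \eqref{eq: F_I}, $F_I$ is the set of points of $\Delta$ satisfying $\langle x,a_i\rangle+b_i=0$ for all $i\in I$, and $F_K$ is cut out in the same way by the indices in $K$. Since $K\subset I$, every point of $F_I$ satisfies the equations indexed by $K$, so $F_I\subset F_K$. Equivalently, in $\mathbb{R}^m$ we have $Pl_I\subset Pl_K$, hence $i_A(F_I)=i_A(\Delta)\cap Pl_I\subset i_A(\Delta)\cap Pl_K=i_A(F_K)$.

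\emph{The vectors $\zeta^I_k$ are defined at points of $N_K$.} The formula for $\zeta^I_k$ involves only the complementary indices $j_l\in J=I^c$, through the fields
\[
\frac{x_{j_l}}{p_{j_l}}\frac{\partial}{\partial x_{j_l}}+\frac{y_{j_l}}{p_{j_l}}\frac{\partial}{\partial y_{j_l}}=\frac{2}{p_{j_l}}\mathcal{P}_{j_l}.
\]
These fields are smooth wherever $z_{j_l}\neq 0$. Because $K\subset I$, we have $I^c\subset K^c$, and by the description of $N_K$ every coordinate indexed by $K^c$ is nonzero at $p\in N_K$. So $p_{j_l}\neq 0$ for all $j_l\in I^c$, and each $\zeta^I_k$ is a well-defined tangent vector of $\mathbb{C}^m$ at $p$. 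The coordinates indexed by $I\setminus K$ may also be nonzero at $p$, but they do not enter $\zeta^I_k$, so this causes no difficulty.

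\emph{Tangency.} We use $T_p\mathcal{Z}_\Delta=\operatorname{Ker}(d\mu_p)$ and repeat the computation from the proof of Theorem \ref{7.4}. The $s$-th entry of $d\mu_p(\zeta^I_k)$ is
\[
\sum_{l}{}_{l}\nu^I_k\,\gamma_{s j_l}\,\frac{2(x_{j_l}^2+y_{j_l}^2)}{p_{j_l}}=2\sum_l {}_{l}\nu^I_k\,\gamma_{sj_l}=2\,(\mathscr{E}_I\nu^I_k)_s .
\]
This vanishes because $\nu^I_k\in\mathscr{F}_I=\operatorname{Ker}\mathscr{E}_I$ (Construction \ref{7.1}); the overall factor of $2$ is harmless. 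The computation never used that the coordinates indexed by $I$ vanish at $p$, only that those indexed by $J$ do not. Hence $\zeta^I_k\in T_p\mathcal{Z}_\Delta$ for each $k$, and by linearity their span lies in $T_p\mathcal{Z}_\Delta$.

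\emph{Expected obstacle.} There is no substantive difficulty. The only point needing care is interpreting $\zeta^I_k$ at a point outside $N_I$, namely as the vector given by the same coordinate formula, which is legitimate exactly because $I^c\subset K^c$.
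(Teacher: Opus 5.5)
Your proof is correct and follows the same route as the paper, whose proof just says the argument proceeds as in Theorem~\ref{7.4}: one checks that $d\mu_p(\zeta^I_k)=2\,\mathscr{E}_I\nu^I_k=0$, using only the $I^c$-columns of $d\mu_p$. Your observation that $I^c\subset K^c$ forces $p_{j_l}\neq 0$ at points of $N_K$, so that $\zeta^I_k$ is defined there, is the detail the paper leaves implicit; your factor of $2$ is also correct, whereas the paper's display drops it.
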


\begin{proof}
The proof proceeds similarly to the preceding theorem.
\end{proof}

\begin{proposition}\label{7.6}
The space spanned by $\{\phi_{1},\dots,\phi_{m}\}$ is a subspace of $T_{p}\mathcal{Z}_{\Delta}$ for all $p\in\mathcal{Z}_{\Delta}$, where $\phi_{i}=y_{i}\frac{\partial}{\partial x_{i}}-x_{i}\frac{\partial}{\partial y_{i}}$.
\end{proposition}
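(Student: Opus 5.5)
The plan is to verify directly that each $\phi_i$ lies in $\operatorname{Ker}(d\mu_p)$, since $T_p\mathcal{Z}_{\Delta}=\operatorname{Ker}(d\mu_p)$ by the regular-value description of $\mathcal{Z}_{\Delta}=\mu^{-1}(\mathbf{c})$ recalled at the start of Section \ref{tangent spaces of Z}. Then $\operatorname{span}\{\phi_1,\dots,\phi_m\}\subset T_p\mathcal{Z}_{\Delta}$ follows by linearity of $d\mu_p$.

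For a fixed $i$ and any $p\in\mathcal{Z}_{\Delta}$, the vector $\phi_i$ has nonzero components only in the $(x_i,y_i)$ block, namely $y_i$ and $-x_i$. Applying the matrix of $d\mu_p$ displayed above, the $s$-th row gives
\[
2\gamma_{si}x_i\,y_i+2\gamma_{si}y_i\,(-x_i)=0,\qquad 1\le s\le m-n .
\]
This is exactly the computation done for $\phi^I_k$ in Theorem \ref{7.4}, but without assuming $i\in J$. So $d\mu_p(\phi_i)=0$.

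There are two cases, depending on the stratum $N_I$ containing $p$:
\begin{itemize}
\item If $i\in J=I^c$, then $\phi_i=\phi^I_k$ for the $k$ with $j_k=i$, which is already a basis element in Theorem \ref{7.4}.
\item If $i\in I$, then $x_i=y_i=0$ at $p$, so $\phi_i$ is the zero vector and trivially lies in $T_p\mathcal{Z}_{\Delta}$.
\end{itemize}
Either way the conclusion holds, so the direct computation and the case split give the same result.

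Conceptually, $\phi_i=-\Theta_i$ is (up to sign) the fundamental vector field of the $i$-th circle factor of $\mathbb{T}^m$. That torus preserves $|z_j|^2$ and hence $\mathcal{Z}_{\Delta}$, so its orbits lie in $\mathcal{Z}_{\Delta}$ and their velocity vectors are tangent. I would mention this as a remark rather than rely on it.

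I expect no real obstacle. The only point needing care is that $\mathbf{c}$ is a regular value of $\mu$, so that the kernel description of the tangent space is valid; this is taken as given from the transversality statement following Theorem \ref{thm: momemt amgle}.
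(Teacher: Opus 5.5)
Your proposal is correct and matches the paper's proof: the paper computes $d\mu_p(\phi_i)$ row by row, obtains $2\gamma_{si}x_iy_i-2\gamma_{si}y_ix_i=0$, and concludes that $\phi_i\in\operatorname{Ker}(d\mu_p)=T_p\mathcal{Z}_{\Delta}$. Your case split over the strata $N_I$ and your remark about torus orbits are harmless but unnecessary, since the direct computation already works at every $p\in\mathcal{Z}_{\Delta}$.
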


\begin{proof}
For $p=(x_{1},y_{1},...,x_{m},y_{m})$ in $\mathcal{Z}_{\Delta}$, we have,
$$ d\mu_{p}(\phi_{i})=\begin{pmatrix}2\gamma_{1i}x_{i}y_{i}-2\gamma_{1i}y_{i}x_{i}\\ \vdots \\ 2\gamma_{(m-n)i}x_{i}y_{i}-2\gamma_{(m-n)i}y_{i}x_{i}\end{pmatrix}=\begin{pmatrix}0\\ \vdots \\ 0\end{pmatrix} $$
\end{proof}

\section{Reducible Poisson Structures}\label{Reducible structures}

In this section, we construct Poisson bivectors on $\mathbb{C}^{m}$ that admit Marsden-Ratiu reduction (see Section \ref{subsec: reduction}) under the free action of a subtorus $\mathbb{T}^d_{D}$ \eqref{eq: torus T_D} on the submanifold $M=\mathcal{Z}_{\Delta}$ of $P=\mathbb{C}^{m}$. We construct four such families of Poisson bivector fields, denoted $\Pi_{1}, \Pi_{2}, {}_{K}\Pi_{3}^{R}$, and ${}_{K}\Pi_{4}^{R}$. 

Let $E = E_{D}^{d}$ be the distribution generated by the  action of $\mathbb{T}_{D}^{d}$ on $\mathbb{C}^{m}$. This distribution inherently satisfies the conditions (A1) and (A2) of Section \ref{subsec: reduction}, so it remains to check conditions (A3) and (A4). 

The first family of bivector fields is defined as
$$ \Pi_{1} = \sum_{1 \le i < j \le m} a_{ij} \Theta_i \wedge\Theta_j$$
where each $a_{ij} \in C^{\infty}(\mathbb{C}^{m})$ satisfies $\Theta_k(a_{ij}) = 0$ for all $1 \le k \le m$. This constraint dictates that each $a_{ij}$ is strictly a function of the radial coordinates $p_{k}$ for $k \in \{1, \dots, m\}$.
\vspace{0.5cm}

The second family of bivector fields is given by
$$ \Pi_{2} = \sum_{q=1}^{m} \frac{\partial}{\partial x_{q}} \wedge \frac{\partial}{\partial y_{q}} + \sum_{1 \le i < j \le m} b_{ij} \Theta_i \wedge\Theta_j $$
where $b_{ij} \in C^{\infty}(\mathbb{C}^{m})$ are smooth functions such that $b_{ij}$ depends exclusively on $p_{i} $ and $p_{j}$, meaning $b_{ij} = b_{ij}(p_{i}, p_{j})$. 
\vspace{0.5cm}

To describe the remaining two families, we consider index tuples $I = (i_{1}, \dots, i_{r})$ with $1 \le i_{1} < \dots < i_{r} \le m$ such that $F_{I}$ forms a codimension $r$ face of $\Delta$ and $m-r \le 3$. Let $I_{\Delta}^{3}$ denote the set of such tuples for $\Delta$. For each $K = (k_{1}, \dots, k_{q}) \in I_{\Delta}^{3}$, let $L = (l_{1}, \dots, l_{m-q})=K^c$, and  define $P^{K} = \prod_{i=1}^{m-q} p_{l_{i}}$. The third and fourth families of Poisson bivector fields are then defined as follows.

$$ {}_{K}\Pi_{3}^{R} = \sum_{1 \le i < j \leq (n-q)} P^{K} \sigma_{ij}^{K} \zeta_{i}^{K} \wedge \zeta_{j}^{K} $$
where $\sigma_{ij}^{K}$ are constant functions.

$$ {}_{K}\Pi_{4}^{R} = \sum_{1 \le i < j \leq (n-q)} P^{K} \upsilon_{ij}^{K} \zeta_{i}^{K} \wedge \zeta_{j}^{K} + \sum_{1 \le i < j \le m} \tau_{ij} \Theta_i \wedge\Theta_j $$
where $\upsilon_{ij}^{K}$ and $\tau_{ij}$ are constant functions.

\begin{theorem}\label{8.1}
The families  $\Pi_{1}, {}_{K}\Pi_{3}^{R}$, and ${}_{K}\Pi_{4}^{R}$ of Poisson structures on $\mathbb{C}^m$ are reducible in the sense of Marsden and Ratiu under the free  action of the torus $\mathbb{T}_{D}^{d}$ on $\mathcal{Z}_{\Delta} \subset \mathbb{C}^m$.
\end{theorem}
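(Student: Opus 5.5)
The plan is to check the four Marsden--Ratiu conditions of Section \ref{subsec: reduction} with $P=\mathbb{C}^m$, $M=\mathcal{Z}_{\Delta}$ and $G=\mathbb{T}^d_D$. As already noted, (A1) holds for any group action, and (A2) holds because $\mathbb{T}^d_D$ is compact and acts freely (Proposition \ref{Prop: free action}). It therefore remains to establish (A3) and (A4). We will in fact prove the stronger condition $(A4)'$, namely that $\Pi_p\in\wedge^2T_p\mathcal{Z}_{\Delta}$ for every $p\in\mathcal{Z}_{\Delta}$. Then Lemma \ref{2.8}, or Theorem \ref{2.5} directly, gives the reduced structure on $\mathcal{Z}_{\Delta}/\mathbb{T}^d_D$.

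For (A3) we use Lemma \ref{2.6}. It suffices to show that each of the three bivectors is invariant under all of $\mathbb{T}^m$, which contains $\mathbb{T}^d_D$.
\begin{itemize}
\item Each $\Theta_i$ is the fundamental field of a coordinate circle, so it commutes with the $\mathbb{T}^m$-action.
\item Each $\mathcal{P}_i$ is rotation invariant.
\item The coefficients $a_{ij}$, $P^K$, $\sigma^K_{ij}$, $\upsilon^K_{ij}$, $\tau_{ij}$ are functions of the $p_l$ alone, or constants.
\item On the set $\{p_{l}\neq 0,\ l\in L\}$, each $\zeta^K_i$ is a constant combination of the fields $\frac{1}{p_{l}}\bigl(x_{l}\partial_{x_{l}}+y_{l}\partial_{y_{l}}\bigr)=\frac{2}{p_l}\mathcal{P}_l$, so it is invariant there.
\end{itemize}
For the $\zeta$-terms, a density argument (invariance on an open dense set plus continuity) transfers invariance to all of $\mathbb{C}^m$.

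Before this we must confirm that ${}_K\Pi^R_3$ and ${}_K\Pi^R_4$ are genuinely smooth Poisson bivectors on $\mathbb{C}^m$. Write
\[
P^K\zeta^K_i\wedge\zeta^K_j=4\sum_{a,b\in L}{}_a\nu^K_i\,{}_b\nu^K_j\Bigl(\prod_{l\in L\setminus\{a,b\}}p_l\Bigr)\mathcal{P}_a\wedge\mathcal{P}_b ,
\]
using the indexing of ${}_\beta\nu$ by position in $L$. This expression is smooth, and it is a bivector of type $\Pi^R_3$ with $R=L$ and $|R|\le 3$. Its coefficient of $\mathcal{P}_a\wedge\mathcal{P}_b$ is a constant times $p_c$ with $\{c\}=L\setminus\{a,b\}$, so it is annihilated by $\mathcal{P}_a$ and $\mathcal{P}_b$. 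Hence it satisfies the sufficient condition of Section \ref{str on Cm}, and $[\cdot,\cdot]=0$ follows from the computation there for $\Pi^R_3$ and $\Pi^R_4$.

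The main work is $(A4)'$, which we verify stratum by stratum using the frames of Theorem \ref{7.4}.
\begin{itemize}
\item For $\Pi_1$ and the $\tau$-part of ${}_K\Pi^R_4$: at every $p\in\mathcal{Z}_\Delta$ we have $\Theta_i=-\phi_i$, which lies in $T_p\mathcal{Z}_\Delta$ by Proposition \ref{7.6}. So $\Theta_i\wedge\Theta_j\in\wedge^2T_p\mathcal{Z}_\Delta$.
\item For the radial parts, take $p\in N_I$. If some $l\in L$ has $p_l=0$, then every term in the expansion above vanishes at $p$ except possibly those with $\{a,b\}\ni l$; but $\mathcal{P}_l$ vanishes at $p$, so those terms vanish too, and the bivector is $0$ at $p$.
\item Otherwise $p_l\neq 0$ for all $l\in L$, i.e.\ $I\subset K$, which means $F_K\subset F_I$ and $p\in N_I$ with $I\subset K$. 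Then $\zeta^K_i$ is defined at $p$. Since $\Gamma\mu^K_i=0$ (Proposition \ref{7.2}), the computation of $d\mu_p(\zeta^K_i)$ in Theorem \ref{7.4} applies verbatim, because it only uses the coordinates in $L$, which are nonzero. Hence $\zeta^K_i\in T_p\mathcal{Z}_\Delta$, in the spirit of Proposition \ref{7.5}.
\end{itemize}

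The expected obstacle is exactly this boundary bookkeeping: we need the bivector to be tangent at points where some $p_l$ with $l\in L$ vanishes, where $\zeta^K_i$ itself is singular. The factor $P^K$ is designed to handle this. We will make it rigorous via the explicit polynomial expansion above rather than through the singular frame. If that expansion is awkward to present, an alternative is to argue by closedness: $\{p:\Pi_p\in\wedge^2T_p\mathcal{Z}_\Delta\}$ is closed in $\mathcal{Z}_\Delta$, and $N_\emptyset$ is dense in it.
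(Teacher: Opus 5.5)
Your proposal is correct and follows the same route as the paper. In both, (A1) and (A2) are automatic for the free compact action, (A3) comes from $\mathbb{T}^m$-invariance of $\Theta_i$, $\mathcal{P}_i$ and the radial coefficients via Lemma \ref{2.6}, and (A4) follows from the stronger tangency condition $(A4)'$. The paper only asserts that the bivectors are tangent to $\mathcal{Z}_{\Delta}$, whereas your expansion of $P^K\zeta^K_i\wedge\zeta^K_j$ into $p_c\,\mathcal{P}_a\wedge\mathcal{P}_b$ terms actually justifies smoothness, invariance and tangency on the boundary strata, in the spirit of Proposition \ref{7.5} and Theorem \ref{11.2}.
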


\begin{proof}
\textit{Verification of Condition (A3):} By Lemma \ref{2.6} it suffices to prove the invariance of the relevant Poisson bivector field under the action of each $g \in \mathbb{T}_{D}^{d}$. We prove the invariance of the relevant Poisson structures under the canonical action of $\mathbb{T}^{m}$ on $\mathbb{C}^{m}$; the result follows naturally for the subtorus $\mathbb{T}_{D}^{d}$. 

Let $z \in \mathbb{C}^{m}$ with
$$
z=(r_1e^{\iota\theta_{1}},\dots,r_me^{\iota\theta_{m}})=(x_1,y_1,\dots,x_m,y_m)
$$

For any element $g = (e^{\iota t_{1}}, \dots, e^{\iota t_{m}}) \in \mathbb{T}^{m}$, the group action does not alter the radial components: $r_{i}(z) = r_{i}(g \cdot z)$ for all $1 \le i \le m$. In the standard Cartesian coordinates $(x_{1}, y_{1}, \dots, x_{m}, y_{m})$, the action can be expressed as follows.
$$ g \cdot z = (x_{1}\cos(t_{1}) - y_{1}\sin(t_{1}), x_{1}\sin(t_{1}) + y_{1}\cos(t_{1}), \dots, x_{m}\sin(t_{m}) + y_{m}\cos(t_{m})) $$
 The differential $d\Psi_{g}$ is a block-diagonal matrix composed of $2 \times 2$ rotation blocks $R(t_{i})$. Computing the pushforward on angular and radial vector fields yields
$$ d\Psi_{g} \left( \Theta_i|_{z} \right) = \Theta_i|_{g \cdot z}, \quad d\Psi_{g} \left( \mathcal{P}_i|_{z} \right) = \mathcal{P}_i|_{g \cdot z} \quad \forall 1 \le i \le m
$$
 Applying these to the bivector $\Pi_{1}$, we have 
$$ d\Psi_{g}(\Pi_{1}|_{z}) = \Pi_{1}|_{g \cdot z} $$
since the coefficient functions $a_{ij}$ of $\Pi_{1}$  depend solely on the radial coordinates ensuring $a_{ij}(z) = a_{ij}(g \cdot z)$. Similar arguments for ${}_{K}\Pi_{3}^{R}$, ${}_{K}\Pi_{4}^{R}$, and $\Pi_{2}$ show that the canonical $\mathbb{T}^{m}$ action is via Poisson diffeomorphisms, thereby satisfying the condition (A3).\\

\textit{Verification of Condition (A4):} We must satisfy ${\displaystyle \Pi^{\sharp}_q({E}^0_q)\subset T_q \mathcal{Z}_{\Delta} + E_q}$. For $\Pi_{1}$, ${}_{K}\Pi_{3}^{R}$, and ${}_{K}\Pi_{4}^{R}$, the bivectors evaluated at any element of $T_q^*\mathbb{C}^m$ yield tangent vectors contained in $T_{q}\mathcal{Z}_{\Delta}$. Consequently, $\Pi^{\sharp}_{q}(E_{q}^{0}) \subset T_{q}\mathcal{Z}_{\Delta}$ holds independent of $\mathbb{T}_{D}^{d}$.

\end{proof}
\begin{remark}\label{re8.2} Similar arguments can be used to show that $\mathbb{T}^{m}$ acts via Poisson diffeomorphisms on each of the Poisson structure constructed in Section \ref{str on Cm} 
\end{remark}
For $\Pi_{2}$, the reduction necessitates additional constraints on $\Delta$ and the subtorus $\mathbb{T}_{D}^{d}$. The rationality and smoothness conditions formulated by Delzant on simple polytopes in his classification of symplectic toric manifolds \cite[Chapter XI]{Ana}, and the associated constraints that uniquely determine the quotienting torus, are sufficient to guarantee the reducibility of $\Pi_{2}$.
\begin{lemma}
The family  $\Pi_{2}$ of Poisson structures on $\mathbb{C}^m$ is reducible in the sense of Marsden and Ratiu under the free action of the torus $\mathbb{T}_{D}^{d}$ on $\mathcal{Z}_{\Delta} \subset \mathbb{C}^m$, where $\Delta$ is a Delzant polytope and $\mathbb{T}^d_D$ is the associated torus.  
\end{lemma}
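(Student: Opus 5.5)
The plan is to verify the four Marsden--Ratiu conditions of Section \ref{subsec: reduction} with $P=\mathbb{C}^m$, $M=\mathcal{Z}_{\Delta}$ and $G=\mathbb{T}^d_D$ the Delzant torus, and then apply Theorem \ref{2.5}.

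\emph{Conditions (A1)--(A3).} (A1) and (A2) hold because $\mathbb{T}^d_D$ is compact and acts freely on $\mathcal{Z}_{\Delta}$; for the Delzant torus, freeness follows from the smoothness condition via Proposition \ref{Prop: free action}. For (A3), I will argue as in the proof of Theorem \ref{8.1} and Remark \ref{re8.2}. The rotation blocks $R(t_i)$ preserve $\frac{\partial}{\partial x_q}\wedge\frac{\partial}{\partial y_q}$. They also preserve each $\Theta_i$. The coefficients $b_{ij}=b_{ij}(p_i,p_j)$ are $\mathbb{T}^m$-invariant. Hence $\Pi_2$ is invariant under $\mathbb{T}^m$, and in particular under $\mathbb{T}^d_D$, so Lemma \ref{2.6} gives (A3).

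\emph{Condition (A4).} I will treat the two summands of $\Pi_2$ separately. For $\alpha\in E^0_q$ we have
\[\Pi_2^\sharp(\alpha)=\Pi_{st}^\sharp(\alpha)+\sum_{i<j} b_{ij}\bigl(\alpha(\Theta_i)\Theta_j-\alpha(\Theta_j)\Theta_i\bigr).\]
The second term lies in $\operatorname{span}\{\Theta_i\}=\operatorname{span}\{\phi_i\}$, which is contained in $T_q\mathcal{Z}_{\Delta}$ by Proposition \ref{7.6}. It therefore suffices to show $\Pi_{st}^\sharp(E^0_q)\subset T_q\mathcal{Z}_{\Delta}$, which is the standard symplectic-reduction fact.

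\emph{The key step: identifying $E_q$ with the Hamiltonian directions of $\mu$.} A direct computation shows that $\Pi_{st}^\sharp(d|z_i|^2)=c\,\Theta_i$ for a universal nonzero constant $c$ (up to sign). Consequently, for the components $\mu_s=\sum_i\gamma_{si}|z_i|^2$ of \eqref{eq: mu},
\[\Pi_{st}^\sharp(d\mu_s)=c\sum_i\gamma_{si}\Theta_i .\]
For a Delzant polytope the associated torus is $\ker(\ell\colon\mathbb{T}^m\to\mathbb{T}^n)$, where $\ell$ is induced by the integral primitive normals $a_i$. Its Lie algebra is $\ker(A^T)\subset\mathbb{R}^m$, which is exactly the row space of the Gale dual $\Gamma$ since $\Gamma A=0$ and $\operatorname{rank}\Gamma=m-n$. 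So we may take $D$ with $\delta_i=\gamma_i$, and then $E_q=\operatorname{span}_s\{\Pi_{st}^\sharp(d\mu_s)_q\}$.

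With this identification, (A4) follows from linear algebra. Let $\alpha\in E^0_q$ and set $v=\Pi_{st}^\sharp\alpha$. Then
\[d\mu_s(v)=\Pi_{st}(\alpha,d\mu_s)=\pm c^{-1}\alpha\bigl(\Pi_{st}^\sharp d\mu_s\bigr)=0,\]
so $v\in\operatorname{Ker}(d\mu_q)=T_q\mathcal{Z}_{\Delta}$. Hence $\Pi_2^\sharp(E^0_q)\subset T_q\mathcal{Z}_{\Delta}\subset T_q\mathcal{Z}_{\Delta}+E_q$, and Theorem \ref{2.5} finishes the proof.

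The step I expect to need the most care is this identification of the Lie algebra of the Delzant torus with the row space of $\Gamma$, together with the bookkeeping of the constant $c$ and the sign. The sign and constant are harmless, since only spans matter. If the paper's "associated torus" is specified differently, I would fall back on the fact that any integral basis of $\ker A^T$ spans the same real subspace as the rows of $\Gamma$. This suffices because $E_q$ depends only on the Lie algebra.
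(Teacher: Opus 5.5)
Your proposal is correct and follows the paper's proof: (A3) comes from the $\mathbb{T}^m$-invariance of $\Pi_2$, and (A4) comes from splitting $\Pi_2$ into $\Pi_{st}$ plus an angular part that is tangent to $\mathcal{Z}_{\Delta}$ by Proposition~\ref{7.6}. The only difference is that the paper simply asserts $\Pi_{st}^\sharp(E^0_q)\subset T_q\mathcal{Z}_{\Delta}$ ``by construction'' of the Delzant data, whereas you justify it via $\Pi_{st}^\sharp(d\mu_s)=\pm 2\sum_i\gamma_{si}\Theta_i$ and $\operatorname{Lie}(\mathbb{T}^d_D)=\ker A^T=$ row space of $\Gamma$; the factor $c^{-1}$ in your final display should just be a sign, which is harmless since that term vanishes.
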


 \begin{proof}

\textit{Verification of Condition (A3):} The proof is the same as the verification of (A3) in Theorem \ref{8.1}.\\

\textit{Verification of Condition (A4):} By construction of $\mathbb{T}^d_{D}$ and $\mathcal{Z}_{\Delta}$ from a Delzant polytope, 
$$
	\Pi_{st}^{\sharp}|_{z}\Big({E^d_{D}}^0\Big)\subset T_{z}(\mathcal{Z}_{\Delta})\quad \forall z\in \mathcal{Z}_{\Delta}
$$
	As shown in verification of (A4) in Theorem \ref{8.1}, the component consisting of only angular directions also satisfies (A4). Thus,
	$$
		\Pi^{\sharp}_{2}|_{z}\Big({E^d_{D}}^0\Big)\subset T_{z}(\mathcal{Z}_{\Delta})\quad \forall z\in \mathcal{Z}_{\Delta}
	$$

\end{proof}

\section{Symplectic Leaves of Reducible Poisson Structures}\label{Symplectic leaves}

To ascertain the symplectic leaves of the quotient manifold with its induced Poisson structure, we first examine the symplectic foliation generated by the family of Poisson bivectors on the moment-angle manifold $\mathcal{Z}_{\Delta}$. As explained in Section \ref{subsec: symp foliation},  symplectic leaves are recovered by integrating along the corresponding Hamiltonian directions.

\begin{lemma}\label{9.1}
The manifold $\mathcal{Z}_{\Delta}$ admits Poisson structures $\Pi_{1}|_{\mathcal{Z}_{\Delta}}$, ${}_{K}\Pi_{3}^{R}|_{\mathcal{Z}_{\Delta}}$, and ${}_{K}\Pi_{4}^{R}|_{\mathcal{Z}_{\Delta}}$ induced by $\Pi_{1}$, ${}_{K}\Pi_{3}^{R}$, and ${}_{K}\Pi_{4}^{R}$ respectively.
\end{lemma}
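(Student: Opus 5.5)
The plan is to apply Lemma \ref{lem: Poisson on submanifold} with $P=\mathbb{C}^m$ and $M=\mathcal{Z}_{\Delta}$. We already know that $\Pi_1$, ${}_K\Pi_3^R$ and ${}_K\Pi_4^R$ are Poisson on $\mathbb{C}^m$ (Section \ref{str on Cm}, together with the analytic verification). So it suffices to check condition $(A4)'$: for every $p\in\mathcal{Z}_{\Delta}$, the bivector $\Pi_p$ lies in $\wedge^2 T_p\mathcal{Z}_{\Delta}$. Since each bivector is a $C^\infty(\mathbb{C}^m)$-linear combination of wedge products of vector fields, it is enough to show that each factor vector field is tangent to $\mathcal{Z}_{\Delta}$ at every point of $\mathcal{Z}_{\Delta}$. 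Then every summand, and hence $\Pi_p$, lies in $\wedge^2 T_p\mathcal{Z}_{\Delta}$.

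\textbf{Angular terms.} For $\Pi_1$ and for the angular part of ${}_K\Pi_4^R$, note that $\Theta_i=-\phi_i$. Proposition \ref{7.6} gives $\Theta_i|_p\in T_p\mathcal{Z}_{\Delta}$ for all $p\in\mathcal{Z}_{\Delta}$. This holds at every point, including those where $z_i=0$, since there $\Theta_i$ vanishes. Hence $a_{ij}\Theta_i\wedge\Theta_j$ and $\tau_{ij}\Theta_i\wedge\Theta_j$ are tangent to $\mathcal{Z}_{\Delta}$.

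\textbf{Radial terms.} This is the step that needs care. The fields $\zeta^K_i$ contain the factors $x_{l}/p_{l}$ and $y_l/p_l$ for $l\in L=K^c$, so they are singular wherever some $z_l=0$, $l\in L$. The prefactor $P^K=\prod_{l\in L}p_l$ is exactly what clears these denominators. First I will check this explicitly: each term $P^K\sigma^K_{ij}\zeta_i^K\wedge\zeta_j^K$, when expanded, involves
\[
P^K\frac{1}{p_{l}p_{l'}}\Big(x_l\tfrac{\partial}{\partial x_l}+y_l\tfrac{\partial}{\partial y_l}\Big)\wedge\Big(x_{l'}\tfrac{\partial}{\partial x_{l'}}+y_{l'}\tfrac{\partial}{\partial y_{l'}}\Big),
\]
which is $\prod_{s\in L\setminus\{l,l'\}}p_s\cdot 4\mathcal{P}_l\wedge\mathcal{P}_{l'}$ when $l\neq l'$. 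The diagonal terms $l=l'$ cancel in the wedge. So ${}_K\Pi_3^R$ is a globally smooth bivector, and it equals a combination of the $\mathcal{P}_l\wedge\mathcal{P}_{l'}$ with polynomial coefficients. This is consistent with $\Pi_3^R$ when $|L|\le 3$.

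\textbf{Tangency of the radial part, split by stratum.} Take $p\in N_I$.
\begin{itemize}
\item If $K\subseteq I$, Proposition \ref{7.5} shows that $\zeta_i^K|_p\in T_p\mathcal{Z}_{\Delta}$ whenever these fields are defined. On $N_K$ this covers every point, since there $p_l\neq0$ for all $l\in L$, and so $\Pi_p\in\wedge^2T_p\mathcal{Z}_{\Delta}$.
\item If $p\in N_I$ with $I\not\subseteq K$, some $l\in L$ has $p_l=0$. Then at least one factor $p_s$ survives in each coefficient $\prod_{s\in L\setminus\{l,l'\}}p_s$, except in the term where $\{l,l'\}$ exhausts the vanishing indices. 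In that surviving term, $\mathcal{P}_l$ or $\mathcal{P}_{l'}$ is itself zero at $p$. I therefore expect $\Pi_p=0$ at every point outside $N_K$, which is trivially tangent. The cases $|L|\le3$ have to be checked by hand here.
\item Alternatively, one can argue by density: $\mathcal{Z}_{\Delta}\cap\mathbb{C}^m_1$ is dense in $\mathcal{Z}_{\Delta}$, and the condition $\Pi_p\in\wedge^2T_p\mathcal{Z}_{\Delta}$ is closed. The reason is that it is equivalent to $\Pi_p(d\mu^a,\cdot)=0$ for the components $\mu^a$ of the submersion $\mu$ from \eqref{eq: mu}. So it suffices to check tangency on the open dense stratum $N_\emptyset$, using Proposition \ref{7.5} with $K\supseteq\emptyset$.
\end{itemize}
I will use the density argument as the main route, with the stratum-by-stratum vanishing as a check.

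Having established $(A4)'$ for all three families, Lemma \ref{lem: Poisson on submanifold} yields the induced Poisson structures $\Pi|_{\mathcal{Z}_{\Delta}}$. The main obstacle is the smooth extension of ${}_K\Pi_3^R$ across the strata where some $p_l=0$, and the corresponding tangency there. The density-plus-closedness argument is the step I would rely on.
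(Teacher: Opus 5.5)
Your proposal is correct and follows the paper's route. The paper's proof simply asserts that $(A4)'$ holds for $\Pi_1$, ${}_K\Pi_3^R$ and ${}_K\Pi_4^R$ along $\mathcal{Z}_{\Delta}$ and then invokes Lemma~\ref{lem: Poisson on submanifold}; your computation that ${}_K\Pi_3^R$ is a polynomial combination of the $\mathcal{P}_l\wedge\mathcal{P}_{l'}$ with $l\neq l'\in L$ (hence smooth on $\mathbb{C}^m$), together with the density/closedness argument through $\Pi(d\mu^a,\cdot)=0$, supplies the tangency justification the paper leaves implicit.

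In your auxiliary stratum-by-stratum check there is a slip, which does not affect the density route you rely on. The inclusion is reversed: Proposition~\ref{7.5} gives tangency of $\zeta^K_i$ on $N_I$ when $I\subseteq K$, not when $K\subseteq I$. Correspondingly, the radial part vanishes on the strata $N_I$ with $I\not\subseteq K$, not on all of $\mathcal{Z}_{\Delta}\setminus N_K$.
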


\begin{proof} 
    Since at each point $q\in \mathcal{Z}_{\Delta}$, the image of each of the bivectors, $\Pi_1,{}_K\Pi^R_{3}$ and ${}_K\Pi^R_{4}$, regarded as a map from 
    $T^{*}_q \mathbb{C}^m$ to $T_q\mathbb{C}^m$, lies inside $T_q(\mathcal{Z}_{\Delta})$, the statement follows directly from Lemma 
    \ref{lem: Poisson on submanifold}.
\end{proof}
\subsection{Sum of linear maps}

Let $W$ and $W'$ be finite-dimensional vector spaces. Consider two linear maps:
$$ A_{1}: W \rightarrow W', \quad A_{2}: W \rightarrow W' $$



\begin{definition}\label{9.2}
If $\operatorname{Ker}(A_1)+\operatorname{Ker}(A_2)=W$, then the pair of linear maps $A_{1}, A_{2}: W \rightarrow W'$  is said to be linearly independent by domain.
\end{definition}

Define the sum of the linear maps as $A = A_{1} + A_{2}: W \rightarrow W'$. 

\begin{lemma}\label{9.3}
If $A_{1}$ and $A_{2}$ are linearly independent by domain, then $\operatorname{Im}(A) = \operatorname{Im}(A_{1}) + \operatorname{Im}(A_{2})$.
\end{lemma}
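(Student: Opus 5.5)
The plan is to prove the two inclusions separately; only the inclusion $\operatorname{Im}(A_1)+\operatorname{Im}(A_2)\subset \operatorname{Im}(A)$ uses the hypothesis of Definition \ref{9.2}.

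For the easy inclusion $\operatorname{Im}(A) \subset \operatorname{Im}(A_1)+\operatorname{Im}(A_2)$: any $w' \in \operatorname{Im}(A)$ has the form $w' = A(w) = A_1(w) + A_2(w)$ for some $w \in W$. This is visibly a sum of an element of $\operatorname{Im}(A_1)$ and an element of $\operatorname{Im}(A_2)$. No hypothesis is needed here.

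For the reverse inclusion, it suffices, since $\operatorname{Im}(A)$ is a subspace, to show that $\operatorname{Im}(A_1)\subset\operatorname{Im}(A)$ and $\operatorname{Im}(A_2)\subset\operatorname{Im}(A)$. Take $A_1(w)$ with $w\in W$. By the hypothesis $\operatorname{Ker}(A_1)+\operatorname{Ker}(A_2)=W$, I will decompose $w = k_1 + k_2$ with $k_1 \in \operatorname{Ker}(A_1)$ and $k_2\in\operatorname{Ker}(A_2)$. Then
\[
A_1(w) = A_1(k_2), \qquad A(k_2) = A_1(k_2) + A_2(k_2) = A_1(k_2),
\]
so $A_1(w) = A(k_2)\in \operatorname{Im}(A)$. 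The same argument with the roles swapped gives $A_2(w) = A(k_1) \in \operatorname{Im}(A)$. Adding these shows that every element $A_1(w)+A_2(w'')$ of $\operatorname{Im}(A_1)+\operatorname{Im}(A_2)$ lies in $\operatorname{Im}(A)$.

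There is no real obstacle. The only step requiring thought is choosing the right decomposition $w=k_1+k_2$: the component $k_2$ is the one that kills $A_2$, so $A$ acts on it as $A_1$ alone. Finite-dimensionality is not actually needed for this argument.
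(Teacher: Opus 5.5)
Your proof is correct and complete: the decomposition $w=k_1+k_2$ with $k_i\in\operatorname{Ker}(A_i)$, which gives $A_1(w)=A(k_2)$ and $A_2(w)=A(k_1)$, settles the nontrivial inclusion, and you are right that finite-dimensionality is not used. The paper states this lemma without any proof, and your argument is the routine verification it leaves to the reader.
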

\begin{definition}\label{9.4}
	If $A_{1}$ and $A_{2}$ are  linear maps that are linearly independent by domain and $\operatorname{Im}(A_{1}) \cap \operatorname{Im}(A_{2})=0$, then they are said to be linearly independent maps.
\end{definition}
\begin{lemma}\label{9.5}
If $A_{1}$ and $A_{2}$ are linearly independent maps, then $\operatorname{Im}(A) = \operatorname{Im}(A_{1}) \oplus \operatorname{Im}(A_{2})$ and $\operatorname{dim}\Big(\operatorname{Im}(A)\Big) = \operatorname{dim}\Big(\operatorname{Im}(A_{1})\Big) + \operatorname{dim}\Big(\operatorname{Im}(A_{2})\Big)$
\end{lemma}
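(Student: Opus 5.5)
The plan is to reduce the statement to Lemma \ref{9.3} and then finish with a standard dimension count. Since $A_1$ and $A_2$ are linearly independent by domain, Lemma \ref{9.3} already gives $\operatorname{Im}(A)=\operatorname{Im}(A_1)+\operatorname{Im}(A_2)$. For completeness, here is the short argument behind that lemma. The inclusion $\operatorname{Im}(A)\subset \operatorname{Im}(A_1)+\operatorname{Im}(A_2)$ is immediate from $A(w)=A_1(w)+A_2(w)$. For the reverse inclusion, take $A_1(w_1)+A_2(w_2)$ and use $W=\operatorname{Ker}(A_1)+\operatorname{Ker}(A_2)$ to write
\[
w_1=k_1+k_2,\qquad w_2=k_1'+k_2',
\]
with $k_1,k_1'\in\operatorname{Ker}(A_1)$ and $k_2,k_2'\in\operatorname{Ker}(A_2)$. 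Then
\[
A_1(w_1)=A_1(k_2)=A(k_2),\qquad A_2(w_2)=A_2(k_1')=A(k_1'),
\]
so $A_1(w_1)+A_2(w_2)=A(k_2+k_1')\in\operatorname{Im}(A)$.

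Next I would use the second hypothesis in Definition \ref{9.4}, namely $\operatorname{Im}(A_1)\cap\operatorname{Im}(A_2)=0$. This is exactly the condition for the sum $\operatorname{Im}(A_1)+\operatorname{Im}(A_2)$ to be internal direct. So $\operatorname{Im}(A)=\operatorname{Im}(A_1)\oplus\operatorname{Im}(A_2)$.

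Finally, the dimension formula follows from the finite-dimensional identity
\[
\dim(U+V)=\dim U+\dim V-\dim(U\cap V),
\]
applied inside $W'$ with $U=\operatorname{Im}(A_1)$ and $V=\operatorname{Im}(A_2)$, since the intersection term is zero.

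There is no real obstacle in this argument. The only point needing care is the reverse inclusion in the first step, where decomposing both $w_1$ and $w_2$ along the two kernels converts each $A_i$ term into a value of $A$.
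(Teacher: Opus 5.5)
Your proof is correct: Lemma~\ref{9.3} gives $\operatorname{Im}(A)=\operatorname{Im}(A_1)+\operatorname{Im}(A_2)$, the trivial-intersection clause of Definition~\ref{9.4} makes the sum direct, and the dimension count then follows. The paper states this lemma (and Lemma~\ref{9.3}) without proof, treating it as exactly this immediate consequence, so your argument follows the paper's intended route and additionally supplies the kernel-decomposition check for the reverse inclusion.
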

One can extend the idea to vector bundles. Consider two vector bundle maps:
\begin{center}
\begin{tikzcd}
V\arrow[r, "B_{1}"] \arrow[d, "\pi"]
&V'  \arrow[d, "\pi'" ] \\
M\arrow[r, "id"]
& M
\end{tikzcd}
\begin{tikzcd}
V\arrow[r, "B_{2}"] \arrow[d, "\pi"]
&V'  \arrow[d, "\pi'" ] \\
M\arrow[r, "id"]
& M
\end{tikzcd}
\end{center}
where $V$ and $V'$ are vector bundles over the same base $M$. Let $q\in M$, $V_{q}=\pi ^{-1}(q)\subset V$ and $V'_{q}=\pi '^{-1}(q)\subset V'$. Then,
$$
	 B_{1}|_{V_{q}}: V_{q} \rightarrow V'_{q}, \quad B_{2}|_{V_{q}}: V_{q} \rightarrow V'_{q} 	
$$
are linear maps. Assume $B_{1}|_{V_{q}}$ and $B_{2}|_{V_{q}}$ are linearly independent by domain.
\begin{definition}\label{9.6}
If the above condition is satisfied for each $q\in M$, then the pair of bundle maps $B_{1}, B_{2}: V \rightarrow V'$  are said to be linearly independent by domain.
\end{definition}
Define the sum of bundle maps $B=B_{1}+B_{2}:V\rightarrow V'$.
\begin{lemma}\label{9.7}
If $B_{1}$ and $B_{2}$ are linearly independent by domain, then $\operatorname{Im}(B|_{V_{q}}) = \operatorname{Im}(B_{1}|_{V_{q}}) + \operatorname{Im}(B_{2}|_{V_{q}})$ for each $q\in M$.
\end{lemma}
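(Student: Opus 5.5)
The plan is to reduce Lemma \ref{9.7} to its fibrewise linear-algebra counterpart, Lemma \ref{9.3}, so the main work is proving Lemma \ref{9.3} (not proved in the text above) and then observing that everything is pointwise.

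\emph{Step 1 (Lemma \ref{9.3}).} Let $A_1,A_2:W\to W'$ satisfy $\operatorname{Ker}(A_1)+\operatorname{Ker}(A_2)=W$, and set $A=A_1+A_2$.

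The inclusion $\operatorname{Im}(A)\subset \operatorname{Im}(A_1)+\operatorname{Im}(A_2)$ holds with no hypothesis, since $Aw=A_1w+A_2w$.

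For the reverse inclusion, take $A_1w_1+A_2w_2$ with $w_1,w_2\in W$. By hypothesis write
\[
w_1=k_1+k_2,\qquad k_i\in\operatorname{Ker}(A_i).
\]
Then $A_1w_1=A_1k_2$. Since $A_2k_2=0$, we get $A_1w_1=A k_2$, so $A_1w_1\in\operatorname{Im}(A)$. Symmetrically, decompose $w_2=k_1'+k_2'$ with $k_i'\in\operatorname{Ker}(A_i)$, which gives $A_2w_2=A k_1'\in\operatorname{Im}(A)$. Because $\operatorname{Im}(A)$ is a subspace, the sum $A_1w_1+A_2w_2$ lies in it. Hence $\operatorname{Im}(A)=\operatorname{Im}(A_1)+\operatorname{Im}(A_2)$.

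\emph{Step 2 (bundle version).} Fix $q\in M$. Since $B_1,B_2$ cover the identity, $B|_{V_q}=B_1|_{V_q}+B_2|_{V_q}$ as linear maps $V_q\to V'_q$. By Definition \ref{9.6}, the pair $B_1|_{V_q},B_2|_{V_q}$ is linearly independent by domain in the sense of Definition \ref{9.2}. Applying Step 1 with $W=V_q$ and $W'=V'_q$ gives the claim at $q$. Since $q$ was arbitrary, the lemma follows.

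\emph{Expected obstacle.} There is no real analytic difficulty: the statement is purely fibrewise, so no smoothness or constant-rank assumptions are needed. The only point needing care is the reverse inclusion in Step 1. It requires the kernel decomposition of $w_1$ and of $w_2$ separately; it does not follow from decomposing a single common vector.
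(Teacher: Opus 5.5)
Your proof is correct and follows the paper's route: the paper states Lemma \ref{9.7} without proof as the fibrewise version of Lemma \ref{9.3}, and that is exactly your Step 2. Your kernel-decomposition argument in Step 1 also correctly proves Lemma \ref{9.3}, which the paper likewise leaves unproved.
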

\begin{definition}
	If $B_{1}$ and $B_{2}$ are  bundle maps that are linearly independent by domain and $\operatorname{Im}(B_{1}|_{V_{q}}) \cap \operatorname{Im}(B_{2}|_{V_{q}})= 0$ for each $q\in M$, then they are said to be linearly independent bundle maps.
\end{definition}
\begin{lemma}\label{9.9}
If $B_{1}$ and $B_{2}$ are linearly independent maps, then $\operatorname{Im}(B|_{V_{q}}) = \operatorname{Im}(B_{1}|_{V_{q}}) \oplus \operatorname{Im}(B_{2}|_{V_{q}})$ and $\operatorname{dim}\Big(\operatorname{Im}(B|_{V_{q}})\Big) = \operatorname{dim}\Big(\operatorname{Im}(B_{1}|_{V_{q}})\Big) + \operatorname{dim}\Big(\operatorname{Im}(B_{2}|_{V_{q}})\Big)$ at each $q\in M$.
\end{lemma}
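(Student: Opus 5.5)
The statement is pointwise, so I will fix $q\in M$ and argue with the linear maps $A_1:=B_1|_{V_q}$ and $A_2:=B_2|_{V_q}$ from $W:=V_q$ to $W':=V'_q$. The hypothesis that $B_1,B_2$ are linearly independent bundle maps says exactly that $A_1,A_2$ are linearly independent maps in the sense of Definition \ref{9.4}. Moreover, $B|_{V_q}=A_1+A_2=A$. So Lemma \ref{9.9} reduces to Lemma \ref{9.5}, applied at each $q$, and I will prove Lemma \ref{9.5} via Lemma \ref{9.3}.

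For Lemma \ref{9.3}, the inclusion $\operatorname{Im}(A)\subset\operatorname{Im}(A_1)+\operatorname{Im}(A_2)$ is immediate, since $Aw=A_1w+A_2w$. For the reverse inclusion, take $A_1w_1+A_2w_2$ with $w_1,w_2\in W$. Using $\operatorname{Ker}(A_1)+\operatorname{Ker}(A_2)=W$, decompose
\[
w_1=k_1+k_2,\qquad w_2=k_1'+k_2',
\]
where $k_1,k_1'\in\operatorname{Ker}(A_1)$ and $k_2,k_2'\in\operatorname{Ker}(A_2)$. Then $A_1w_1=A_1k_2$ and $A_2w_2=A_2k_1'$. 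Set $w:=k_2+k_1'$. Because $A_2k_2=0$ and $A_1k_1'=0$, we get $Aw=A_1k_2+A_2k_1'=A_1w_1+A_2w_2$, so this element lies in $\operatorname{Im}(A)$.

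For Lemma \ref{9.5}, Lemma \ref{9.3} gives $\operatorname{Im}(A)=\operatorname{Im}(A_1)+\operatorname{Im}(A_2)$. The extra hypothesis $\operatorname{Im}(A_1)\cap\operatorname{Im}(A_2)=0$ makes the sum direct. The dimension formula then follows from $\dim(U_1+U_2)=\dim U_1+\dim U_2-\dim(U_1\cap U_2)$. Applying this at every $q$ gives both claims of Lemma \ref{9.9}.

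The only step that needs any thought is the reverse inclusion in Lemma \ref{9.3}: one has to see that $w=k_2+k_1'$ is the right preimage, namely the component of $w_1$ lying in $\operatorname{Ker}(A_2)$ plus the component of $w_2$ lying in $\operatorname{Ker}(A_1)$. The rest is routine linear algebra, and no smoothness or constant-rank assumptions are needed since the conclusions are fiberwise.
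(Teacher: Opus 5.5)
Your proof is correct and takes the route the paper intends. The paper states Lemmas~\ref{9.3}, \ref{9.5}, \ref{9.7} and \ref{9.9} without proof, treating the bundle statement as the fiberwise application of Lemma~\ref{9.5} (itself resting on Lemma~\ref{9.3}), which is exactly what you do; your explicit preimage $w=k_2+k_1'$ supplies the one nontrivial step the paper leaves unstated.
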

\subsection{Hamiltonian distribution on $\mathbb{C}^m$}

\begin{definition}\label{def:9.2}
Let $(P, \Pi)$ be a Poisson manifold. A vector $v \in T_{q}P$ is called a Hamiltonian vector at $q \in P $ if $v \in \operatorname{Im}(\Pi^{\sharp}_{q})$, see \eqref{Poisson str as map}. The distribution $\mathcal{D}$, generated by Hamiltonian vectors at each point, is called the Hamiltonian distribution.
\end{definition}
By \cite[Theorem 4.1]{Cran}, $\mathcal{D}_q$ equals the tangent space of the symplectic leaf through $q$. Note that 
 if $\Pi_q = \sum_{1 \le i < j \le n} \alpha_{ij}w_{i} \wedge w_{j}$ for $w_{i} \in T_{q}(P)$ and $\alpha_{ij}\in \mathbb{R}$,  then any Hamiltonian vector at $q$ is expressible as a linear combination $ \sum_{i} a_{i} w_{i}$ where $a_{i} \in \mathbb{R}$.

The complex space $\mathbb{C}^m$
can be decomposed into a disjoint union of $\mathbb{T}^m$ orbits $\mathbb{O}_{p} := \pi^{-1}(p)$ of varying dimensions,
for $p = (p_{1},\dots,p_{m}) \in \mathbb{R}_{+}^{m}$. Similarly, $\mathcal{Z}_{\Delta}$ as an embedded submanifold of $\mathbb{C}^m$, can be decomposed as 
$$
	\mathcal{Z}_{\Delta}=\bigcup_{p\in i_{A}(\Delta)}\mathbb{O}_{p}
$$
For a Hamiltonian distribution $\mathcal{D}_{\Theta}$ on $\mathbb{C}^{m}$ containing exclusively angular directions, each maximal integral submanifold (symplectic leaf) lies within a single orbit $\mathbb{O}_{p}$. 

$\mathbb{C}^m$ can also be decomposed into a disjoint union of slices $\mathbb{S}^I_{t}$, transverse to the orbits,
$$
	\mathbb{S}^I_{t}=\{
    z=(r_1e^{\iota t_{1}},\dots,r_{m} e^{\iota t_{m}})\;\text{and} \; p_{i}=r^2_{i}=0 \iff i\in I\}
$$
where $t=(t_{1},\dots,t_{m})\in \mathbb{R}^m$,  $I=(i_{1},\dots,i_{j})$ with $1\leq i_{1}<\dots<i_{j}\leq m$. 
Similarly, $\mathcal{Z}_{\Delta}$ can be decomposed as
$$
	\mathcal{Z}_{\Delta}=\bigcup_{t,I
    } \mathbb{S}_{t}^I\cap \mathcal{Z}_{\Delta}
$$
For a Hamiltonian distribution $\mathcal{D}_{R}$ composed entirely of radial directions, each maximal integral submanifold (symplectic leaf) lies within a single slice. In other words, integrating along a trajectory shifts the point between different orbits $\mathbb{O}_{p}$ while strictly preserving its angular coordinates and the location of any zero entries.
\begin{lemma}\label{9.11}
	Let $\mathcal{D}_{R}$ be a Hamiltonian distribution produced by the Poisson bivector $\Pi_{R}$ such that
at each point $z\in \mathbb{C}^m$, an arbitrary Hamiltonian vector $v\in (\mathcal{D}_{R})_{z}$ is of the form
	$$
		v=\sum_{i=1}^ma_{i}(\mathcal{P}_{i})_{z}
	$$
	Then each symplectic leaf lies within a single slice. 
\end{lemma}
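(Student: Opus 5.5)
The plan is to use the orbit characterisation of leaves recalled in Section~\ref{subsec: symp foliation}. Two points $x,y$ lie on the same leaf if and only if $y=\phi_{X_{g_k}}\circ\dots\circ\phi_{X_{g_1}}(x)$ for some functions $g_1,\dots,g_k$. It therefore suffices to show that the flow of every Hamiltonian vector field $X_g=\Pi_R^\sharp(dg)$ preserves each slice $\mathbb{S}^I_t$. Concretely, we must show that along any integral curve the angular coordinates $\theta_i$ (where defined) are constant, and that the index set $I=\{i: p_i=0\}$ does not change.

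First we translate the hypothesis into a global statement. At each point $z$ the vector $X_g(z)$ lies in $(\mathcal{D}_R)_z$, so we can write $X_g(z)=\sum_i a_i(z)(\mathcal{P}_i)_z$. In Cartesian coordinates the $(x_i,y_i)$-component of $X_g$ is then $\tfrac12 a_i(z)(x_i,y_i)$. The functions $a_i$ need not be smooth, or even uniquely determined at points where $z_i=0$. This does not matter, because there $\mathcal{P}_i$ vanishes. Along an integral curve $z(s)$ we therefore get the ODE
\[
\dot x_i=\tfrac12 a_i x_i,\qquad \dot y_i=\tfrac12 a_i y_i ,
\]
where $a_i(s)$ is determined wherever $z_i(s)\neq 0$. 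From this, on any interval where $z_i\neq0$, we compute $\tfrac{d}{ds}\theta_i=(x_i\dot y_i-y_i\dot x_i)/p_i=0$, so $\theta_i$ is constant.

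The main obstacle is the vanishing set, since $a_i$ is not a priori regular. I would avoid $a_i$ entirely and use smoothness of $X_g$ instead. The component $(\dot x_i,\dot y_i)=X_g^{(i)}(z)$ is a smooth vector field that vanishes on the hyperplane-pair $\{z_i=0\}$, because it is proportional to $(x_i,y_i)$. Hence $\{z_i=0\}$, a closed submanifold of $\mathbb{C}^m$, is invariant under the flow: along the curve, $|\tfrac{d}{ds}p_i|=|2(x_i\dot x_i+y_i\dot y_i)|\le C p_i$ on compact pieces of the trajectory. To get the constant $C$, note that $X^{(i)}_g$ vanishes when $x_i=y_i=0$, so by Hadamard's lemma $|X_g^{(i)}|\le C|z_i|$ locally. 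Grönwall's inequality then gives $p_i(s)\ge p_i(0)e^{-C|s|}$ and $p_i(s)\le p_i(0)e^{C|s|}$. Consequently $p_i(0)=0$ if and only if $p_i(s)=0$, and both statements hold throughout the curve. So the zero set $I$ is preserved, and by the previous step the angles $t_i$ for $i\notin I$ are preserved as well.

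Combining these, each Hamiltonian flow maps $\mathbb{S}^I_t$ into itself. Here the angles $t_i$ for $i\in I$ are irrelevant, since the corresponding coordinates are zero. A finite composition of such flows also maps $\mathbb{S}^I_t$ into itself, so every equivalence class, that is, every symplectic leaf, is contained in a single slice. This concludes the plan. The same argument applies verbatim to $\mathcal{Z}_\Delta$, because the leaves of the restricted structure are leaves of the ambient one contained in $\mathcal{Z}_\Delta$.
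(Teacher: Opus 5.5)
Your proof is correct and follows the same route as the paper: you track integral curves of Hamiltonian vector fields and note that the form $\sum_i a_i\mathcal{P}_i$ forces each $(x_i,y_i)$-component to be proportional to $(x_i,y_i)$, so the angles $\theta_j$, $j\notin I$, and the set of vanishing coordinates are preserved. The only difference is technical: the paper integrates $\dot x_i = x_i f_i$ to get $p_i(s)=p_i(0)\exp\bigl(2\int_0^s f_i\bigr)$ and cites ODE uniqueness, tacitly assuming the $f_i$ are integrable along the curve, whereas your Hadamard--Gr\"{o}nwall bound $|\dot p_i|\le C p_i$ uses only the smoothness of $X_g$ and so proves the zero-set invariance without any regularity assumption on the $a_i$.
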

\begin{proof}
Let $c(s)=(x_{1}(s),y_{1}(s),\dots,x_{m}(s),y_{m}(s))$ be an integral curve of a Hamiltonian vector field in $(\mathbb{C}^m,\Pi_R)$ such that $c(0)\in \mathbb{S}^I_{t}$ and the tangent vectors to the path lie in $\mathcal{D}_{R}$.
$$
	c:[0,\alpha ]\rightarrow \mathbb{C}^m\quad \quad dc\left(\frac{d}{ds} \right)\in \mathcal{D}_{R}
$$
Thus, at each $s\in (0,\alpha)$,
$$
	\dot{x_{i}}(s)=x_{i}(s)f_{i}(c(s))\quad \dot{y_{i}}(s)=y_{i}(s)f_{i}(c(s))
$$
$$
	\implies x_{i}(s)=x_{i}(0)\operatorname{\exp}\left(\int_{0}^s f_{i}(c(u))du\right)\quad y_{i}(s)=y_{i}(0)\operatorname{\exp}\left(\int_{0}^s f_{i}(c(u))du\right)
$$
$$
	\implies p_{i}(s)=p_{i}(0)\operatorname{\exp}\left(2\int_{0}^s f_{i}(c(u))du\right) \quad \theta_{j}=\theta_{j}(0)
$$
for each $1\leq i\leq m$  and $j\in I^c$.
By uniqueness of solutions of an ODE, any trajectory like $c(t)$ preserves the angular coordinates and the location of any zero entries. Hence the statement holds true.
\end{proof}

\subsection{Leaf structure of $\Pi_{1}$}

The bivector $\Pi_{1}$ contains exclusively angular directions, and its coefficients $a_{ij}$ depend only on radial coordinates $p_{k}$. Within any specific torus orbit $\mathbb{O}_{p}$, $a_{ij}$ evaluates to a constant $a_{ij}(p)$. As each Hamiltonian direction is confined to the tangent space of $\mathbb{O}_{p}$, the resulting symplectic leaves lie inside the individual orbits. If $F_{\mathbb{O}_{p}}$ is the foliation generated as a result of integrating along the trajectories lying inside $\mathbb{O}_{p}$, then the foliation of $\mathbb{C}^m$ is,
$$
	F_{\mathbb{C}^m}=\bigcup_{p\in \mathbb{R}^m_{+}}F_{\mathbb{O}_{p}}
$$
 Restricting the foliation to the moment-angle manifold results in,
$$ F_{\mathcal{Z}_{\Delta}} = \bigcup_{p \in i_{A}(\Delta)} F_{\mathbb{O}_{p}} $$
\begin{remark} The foliation produced by $(\mathbb{C}^m,\Pi_{1})$ in turn foliates each torus orbit into subgroup orbits. Similarly, the foliation produced by $(\mathcal{Z}_{\Delta},\Pi_{1}|_{\mathcal{Z}_{\Delta}})$ in turn foliates each torus orbit, lying inside $\mathcal{Z}_{\Delta}$, into subgroup orbits.
\end{remark}
\subsection{Leaf structure of ${}_{K}\Pi_{3}^{R}$}

The bivector ${}_{K}\Pi_{3}^{R}$ contains exclusively radial directions and its coefficients depend strictly on radial coordinates $p_{k}$. As each Hamiltonian direction is confined to the tangent space of $\mathbb{S}^I_{t}$, by Lemma \ref{9.11}, the resulting symplectic leaves are completely immersed within these individual slices. 
If $F_{\mathbb{S}^I_{t}}$ is the foliation of $\mathbb{S}^I_{t}$, generated as a result of integrating the Hamiltonian distribution restricted to $\mathbb{S}^I_{t}$, then the foliation of $\mathbb{C}^m$ is, 
$$
		F_{\mathbb{C}^m}=\bigcup_{\substack{I\in \mathscr{I}\\t\in \mathbb{R}^m}}F_{\mathbb{S}^I_{t}}
$$
Since at each $q\in \mathcal{Z}_{\Delta}$, 
$$
{}_{K}\Pi_{3}^{R}|_{q}\in T_{q}(\mathcal{Z}_{\Delta})\wedge T_{q}(\mathcal{Z}_{\Delta})
$$the distribution $\mathcal{D}|_{\mathcal{Z}_{\Delta}}$ produced by ${}_{K}\Pi^R_{3}|_{\mathcal{Z}_{\Delta}}$ is the same as restricting the distribution $\mathcal{D}$ produced by ${}_{K}\Pi_{3}^{R}$ to $\mathcal{Z}_{\Delta}$ and,
$$
	(\mathcal{D}|_{\mathcal{Z}_{\Delta}})|_{q}\subset T_{q}(\mathcal{Z}_{\Delta})\cap T_{q}(\mathbb{S}^I_{t})\quad \forall q\in \mathcal{Z}_{\Delta}
$$
where $I$ and $t$ depend on $q$.
Thus, each symplectic leaf produced by $(\mathcal{Z}_{\Delta},{}_{K}\Pi^R_{3}|_{\mathcal{Z}_{\Delta}})$ will lies within individual $\mathbb{S}^I_{t}\cap\mathcal{Z}_{\Delta}$ for some $I,t$.
If $F_{\mathbb{S}^I_{t}\cap \mathcal{Z}_{\Delta}}$ is the foliation of $\mathbb{S}^I_{t}\cap \mathcal{Z}_{\Delta}$, generated as a result of integrating $\mathcal{D}|_{\mathcal{Z}_{\Delta}}$ restricted to $\mathbb{S}^I_{t}\cap \mathcal{Z}_{\Delta}$, then the foliation of $\mathcal{Z}_{\Delta}$ is, 
$$
		F_{\mathcal{Z}_{\Delta}}=\bigcup_{\substack{I\in \mathscr{I}\\t\in \mathbb{R}^m}}F_{\mathbb{S}^I_{t}\cap \mathcal{Z}_{\Delta}}
$$

Define $(\mathbb{R}^m_{+})^I$ as a face of $\mathbb{R}^m_{+}$,
$$
	(\mathbb{R}^m_{+})^I=\{(p_{1},\dots,p_{m})\in \mathbb{R}^m_{+}|\quad p_{i}=0\iff i\in I\}
$$
where $I=(i_{1},\dots,i_{j})$ with $1\leq i_{1}<\dots<i_{j}\leq m$. The quotient map, $\pi$, restricted to each slice $\mathbb{S}^I_{t}$ is a diffeomorphism onto its image, $(\mathbb{R}^m_{+})^I$.
$$
	\pi|_{\mathbb{S}^I_{t}}:\mathbb{S}^I_{t}\rightarrow (\mathbb{R}^m_{+})^I
$$
Similarly, the quotient map, $\pi$, restricted to each $\mathbb{S}^I_{t}\cap \mathcal{Z}_{\Delta}$ is a diffeomorphism onto its image, $(\mathbb{R}^m_{+})^I\cap i_{A}(\Delta )=i_{A}(F_{I}^o)$. As a result, since each leaf of $(\mathcal{Z}_{\Delta},{}_{K}\Pi^R_{3}|_{\mathcal{Z}_{\Delta}})$ lies inside $\mathbb{S}^I_{t}\cap \mathcal{Z}_{\Delta}$, its image under the quotient map, $\pi$, lies inside some $i_{A}(F^o_{I})$.

 \begin{remark} The foliation produced by $(\mathbb{C}^m, {}_{K}\Pi^R_{3})$ in turn foliates each slice $\mathbb{S}^I_{t}$. As a result, the diffeomorphism $\pi|_{\mathbb{S}^I_{t}}$ transports the foliation of $\mathbb{S}^I_{t}$ to a foliation of $(\mathbb{R}^m_{+})^I$ whose leaves are open subsets of affine subspaces. Similarly, the foliation produced by $(\mathcal{Z}_{\Delta},{}_{K}\Pi^R_{3}|_{\mathcal{Z}_{\Delta}})$ in turn foliates $\mathbb{S}^I_{t}\cap \mathcal{Z}_{\Delta}$ and $F^o_{I}$ such that each leaf in the foliation of $F^o_{I}$, is an open subset of an affine subspace, lying inside $F^o_{I}$.
 \end{remark}
\subsection{Leaf structure of ${}_{K}\Pi_{4}^{R}$}

The structure ${}_{K}\Pi_{4}^{R}$ combines radial components $\Pi_{Ra}$ and angular components $\Pi_{An}$, whose respective images in the tangent space are disjoint and linearly independent. Consequently, the image of ${}_{K}\Pi_{4}^{R}|_{z}$ is equal to the direct sum of the images of its radial and angular components.
The components $\Pi_{Ra}$ and $\Pi_{An}$, are themselves Poisson bivectors and form a pair of linearly independent bundle maps. Thus the flow generated by the Hamiltonian vector fields of the two components commutes and we can simply integrate along radial and angular Hamiltonian directions separately. The product of the resultant pair of maximal submanifolds containing $z$ is the desired symplectic leaf of $(\mathbb{C}^m,{}_{K}\Pi_{4}^{R})$.
Let $F_{An}$ and $F_{Ra}$ be the foliations produced by $(\mathbb{C}^m,\Pi_{An})$ and $(\mathbb{C}^m,\Pi_{Ra})$ respectively. Let $A$ and $B$ be elements of $F_{An}$ and $F_{Ra}$ respectively containing $w=(|w_{1}|e^{\iota s_{1}},\dots,|w_{m}|e^{\iota s_{m}})$. Then $A$ is a subgroup orbit in $\mathbb{O}_{p}$ containing $w$ where $p=(|w_{1}|^{2},\dots,|w_{m}|^{2})$ and $B$ is a submanifold of $\mathbb{S}^I_{s}$ containing $w$ where $s=(s_{1},\dots,s_{m})$. Let $C \subset \mathbb{C}^m$,
\[
	C=\left\{ z=(|z_{1}|e^{\iota t_{1}},\dots,|z_{m}|e^{\iota t_{m}})\in  \mathbb{C}^m\Bigg| \begin{aligned} \; &(|w_{1}|e^{\iota t_{1}},\dots,|w_{m}|e^{\iota t_{m}})\in A \\ & (|z_{1}|e^{\iota s_{1}},\dots,|z_{m}|e^{\iota s_{m}})\in B\end{aligned} \right\}
\]
\begin{lemma}\label{9.12}
	The map,
	$$
		\psi:C\rightarrow A\times B
	$$
	$$
		\psi(|z_{1}|e^{\iota t_{1}},\dots,|z_{m}|e^{\iota t_{m}})=\Big(|w_{1}|e^{\iota t_{1}},\dots,|w_{m}|e^{\iota t_{m}},|z_{1}|e^{\iota s_{1}},\dots,|z_{m}|e^{\iota s_{m}}\Big)
	$$
	is a diffeomorphism.
\end{lemma}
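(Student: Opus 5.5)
We will prove that $\psi$ is a bijection onto $A\times B$, that it is smooth, and that its inverse is smooth. The idea is that $\psi$ merely separates the angular data $t=(t_1,\dots,t_m)$ from the radial data $(|z_1|,\dots,|z_m|)$ of a point of $C$. Angles and moduli are independent coordinates on each stratum, so this separation is reversible.

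\textbf{Inverse map.} Define
\[
\chi:A\times B\to C,\qquad \chi\Big((|w_1|e^{\iota t_1},\dots,|w_m|e^{\iota t_m}),\,(|z_1|e^{\iota s_1},\dots,|z_m|e^{\iota s_m})\Big)=(|z_1|e^{\iota t_1},\dots,|z_m|e^{\iota t_m}).
\]
This is well defined for the following reasons.
\begin{enumerate}
\item A point of $A\subset\mathbb{O}_p$ has all moduli fixed to $|w_i|$, so it is determined by its angles $t_i$ for $i\notin I$. Here $I$ is the zero set of $w$, and the angles $t_i$ with $i\in I$ are immaterial.
\item A point of $B\subset\mathbb{S}^I_s$ has all angles fixed to $s_i$, by Lemma~\ref{9.11} and the leaf description of $\Pi_{Ra}$. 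It is therefore determined by its moduli, and it has the same zero set $I$.
\end{enumerate}
By the definition of $C$, the point $\chi(a,b)$ lies in $C$. Directly from the definitions, $\chi\circ\psi=\mathrm{id}_C$ and $\psi\circ\chi=\mathrm{id}_{A\times B}$. Hence $\psi$ is a bijection.

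\textbf{Smoothness.} We work on the stratum $\{z_i=0 \iff i\in I\}$. This stratum is an open subset of the coordinate subspace $\mathbb{C}^{I^c}$, and there the polar coordinates $(r_j,\theta_j)_{j\in I^c}$ form a smooth local chart: $r_j>0$, and $\theta_j$ is locally defined.
\begin{enumerate}
\item In this chart $\psi$ reads $(r,\theta)\mapsto\big((|w|,\theta),(r,s)\big)$, where $|w|$ and $s$ are constants.
\item In this chart $\chi$ reads $\big((|w|,t),(r,s)\big)\mapsto(r,t)$.
\end{enumerate}
Both expressions are smooth in these coordinates. The angles are only defined modulo $2\pi$, but the maps $e^{\iota\theta}\mapsto e^{\iota\theta}$ are globally smooth, so patching the local charts causes no difficulty.

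\textbf{Main obstacle.} The delicate point is to make precise in which smooth structure $C$, $A$ and $B$ are manifolds. The leaves $A$ and $B$ are in general only immersed submanifolds; $A$ may even be a dense subgroup orbit. To handle this, we will give $A$ and $B$ their leaf (immersed) manifold structures, and we will define the smooth structure on $C$ to be the one making its inclusion into the stratum an immersion.

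For $A$, the leaf structure can be described explicitly. It comes from the connected subgroup $\exp(\mathfrak{a})$ of $\mathbb{T}^m$, whose Lie algebra $\mathfrak{a}$ is spanned by the Hamiltonian directions of the constant-coefficient angular part $\Pi_{An}$ at $p$. Thus $A\cong\exp(\mathfrak{a})/\mathrm{Stab}$, and its charts are given by angle parameters $t$ in $\mathfrak{a}$.

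For $B$, the charts are given by the radial parameters $\log p_j$, $j\in I^c$, in which $B$ is an open piece of an affine subspace.

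In these parameters, $\psi$ and $\chi$ are the product of an identity map in the angular factor and an identity map in the radial factor. Consequently $\psi$ is a diffeomorphism for the induced structures. Finally, because $\mathcal{D}_{An}$ and $\mathcal{D}_{Ra}$ are complementary by Lemma~\ref{9.9}, $C$ is an immersed submanifold of dimension $\dim A+\dim B$, consistent with this identification.
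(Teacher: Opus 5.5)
Your proposal is correct and follows the paper's proof. The paper writes down the same inverse $\psi'(a,b)=(|z_1|e^{\iota t_1},\dots,|z_m|e^{\iota t_m})$ and simply asserts that $\psi$ and $\psi'$ are smooth and mutually inverse, and your treatment of the leaf (immersed) smooth structures on $A$, $B$ and $C$ supplies care the paper leaves implicit. One harmless slip: each $\zeta^K_i$ is a constant-coefficient combination of the fields $\frac{\partial}{\partial p_l}$, $l\in L$, so $B$ is an open piece of an affine subspace in the coordinates $p_j$ (as in the paper's remark on the leaves of ${}_K\Pi^R_3$), not in the coordinates $\log p_j$; your argument only needs some radial chart on $B$, so it is unaffected.
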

\begin{proof}
	Define,
	$$
		\psi':A\times B\rightarrow C
	$$
	$$
		\psi'\Big(|w_{1}|e^{\iota t_{1}},\dots,|w_{m}|e^{\iota t_{m}},|z_{1}|e^{\iota s_{1}},\dots,|z_{m}|e^{\iota s_{m}}\Big)=(|z_{1}|e^{\iota t_{1}},\dots,|z_{m}|e^{\iota t_{m}})
	$$
	Clearly, both maps are smooth and inverses of each other. 
\end{proof}

Then the foliation of $(\mathbb{C}^m,{}_{K}\Pi_{4}^{R})$ can be expressed using such triplets $(A,B,C)$,
$$
	F_{Ra}^{An}=\{A\times B\in F_{An}\times F_{Ra}|\quad A\cap B \not= \emptyset\}
$$
$$
F_{\mathbb{C}^m}=\{C\subset \mathbb{C}^m|\exists A\times B\in F_{Ra}^{An} \text{ s.t.} \psi:C\rightarrow A\times B\text{ is a diffeomorphism} \}
$$

Since at each point $q\in \mathcal{Z}_{\Delta}$,
$$
	{}_{K}\Pi_{4}^{R}|_{q}\in T_{q}(\mathcal{Z}_{\Delta})\wedge T_{q}(\mathcal{Z}_{\Delta})
$$
Using the similar method as above, one can separately integrate along angular and radial Hamiltonian directions while remaining inside $\mathcal{Z}_{\Delta}$ and the resultant pair of maximal submanifolds will lie inside $\mathcal{Z}_{\Delta}$.
Let $F_{An}|_{\mathcal{Z}_{\Delta}}$ and $F_{Ra}|_{\mathcal{Z}_{\Delta}}$ be the foliations produced by $(\mathcal{Z}_{\Delta},\Pi_{An}|_{\mathcal{Z}_{\Delta}})$ and $(\mathcal{Z}_{\Delta},\Pi_{Ra}|_{\mathcal{Z}_{\Delta}})$ respectively. Then,
$$
	F_{An}|_{\mathcal{Z}_{\Delta}}=\{A\in F_{An}|A\subset \mathcal{Z}_{\Delta}\}\quad F_{Ra}|_{\mathcal{Z}_{\Delta}}=\{B\in F_{Ra}|B\subset \mathcal{Z}_{\Delta}\}
$$
Utilizing similar arguments as above,the foliation of $(\mathcal{Z}_{\Delta},{}_{K}\Pi_{4}^{R}|_{\mathcal{Z}_{\Delta}})$ can be expressed as,

$$
		F_{Ra}^{An}|_{\mathcal{Z}_{\Delta}}=\{A\times B\in F_{An}|_{\mathcal{Z}_{\Delta}}\times F_{Ra}|_{\mathcal{Z}_{\Delta}}|\quad A\cap B \not= \emptyset\}
$$
$$
F_{\mathcal{Z}_{\Delta}}=\{C\subset \mathcal{Z}_{\Delta}|\exists A\times B\in F_{Ra}^{An}|_{\mathcal{Z}_{\Delta}} \text{ s.t. } \psi:C\rightarrow A\times B\text{ is a diffeomorphism} \}
$$

\section{Poisson Cohomology of $\mathbb{C}^m$}\label{P cohom Cm}
In this section, we will analyze the Poisson cohomology of the new families of Poisson structures on $\mathbb{C}^m$.
Let $z_{i}=x_{i}+\iota y_{i}$, where $\iota=\sqrt{ -1 }$ and $i=1,\dots,m$  be the standard complex coordinates of $\mathbb{C}^m$. Let $p_i=r_i^2=x_i^2+y_i^2$ and $\theta_i$, $1\leq i\leq m$, denote the radial and angular coordinates.

\begin{lemma}\label{10.1}
If $f_{i},h_{j}\in C^{\infty}(\mathbb{C}^m)$ such that $\mathcal{P}_{i}(f_{i})=0$ and $\Theta_{j}(h_{j})=0$, then
$$
	[f_{i}\Theta_{l},\mathcal{P}_{i}]=[f_{i}\mathcal{P}_{k},\mathcal{P}_{i}]=0
$$
and 
$$
	[h_{j}\Theta_{l},\Theta_{j}]=[h_{j}\mathcal{P}_{k},\Theta _{j}]=0
$$
for all $1\leq k,l\leq m$

\end{lemma}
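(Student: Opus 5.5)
The plan is to use the Leibniz rule for the Lie bracket of vector fields: for a smooth function $f$ and vector fields $X,Y$,
$$
[fX,Y]=f[X,Y]-Y(f)\,X .
$$
Applying this with $f=f_i$, $X=\Theta_l$ (respectively $X=\mathcal{P}_k$) and $Y=\mathcal{P}_i$ gives
$$
[f_i\Theta_l,\mathcal{P}_i]=f_i[\Theta_l,\mathcal{P}_i]-\mathcal{P}_i(f_i)\,\Theta_l ,\qquad
[f_i\mathcal{P}_k,\mathcal{P}_i]=f_i[\mathcal{P}_k,\mathcal{P}_i]-\mathcal{P}_i(f_i)\,\mathcal{P}_k .
$$
In each expression the first term vanishes by Lemma \ref{3.1}, since all the radial and angular fields pairwise commute. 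The second term vanishes by the hypothesis $\mathcal{P}_i(f_i)=0$.

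The identities for $h_j$ follow in exactly the same way. We expand
$$
[h_j\Theta_l,\Theta_j]=h_j[\Theta_l,\Theta_j]-\Theta_j(h_j)\,\Theta_l ,
$$
and similarly for $[h_j\mathcal{P}_k,\Theta_j]$. Both terms vanish, again by Lemma \ref{3.1} and by the hypothesis $\Theta_j(h_j)=0$.

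For completeness I would verify the Leibniz formula by applying both sides to a test function $g\in C^\infty(\mathbb{C}^m)$:
$$
[fX,Y]g=fX(Yg)-Y(fXg)=fX(Yg)-Y(f)\,Xg-fY(Xg).
$$
This is a routine check.

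There is no real obstacle here. The only point needing care is the sign and ordering convention in the Leibniz rule, and this is irrelevant because both terms vanish separately. The statement holds for all indices $k,l$, including the cases $k=i$ and $l=j$, since Lemma \ref{3.1} covers every pair of indices.
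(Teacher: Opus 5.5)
Your proposal is correct and follows essentially the same route as the paper, which likewise deduces the lemma from Lemma \ref{3.1} together with the Leibniz rule $[fX,Y]=f[X,Y]-Y(f)\,X$. Your explicit expansion, with both terms vanishing for all indices $k,l$, is exactly what the paper's one-line proof leaves implicit.
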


\begin{proof}
    The proof follows from Lemma \ref{3.1} and the graded Leibniz property.

\end{proof}
\begin{corollary}\label{10.2}
$$
	[\Theta_{k}\wedge\Theta_{l},\mathcal{P}_{t}]=[\Theta_{k}\wedge\Theta_{l},\Theta_{s}]=0
$$
and 
$$
	[\mathcal{P}_{i}\wedge\mathcal{P}_{j},\Theta _{s}]=[\mathcal{P}_{i}\wedge\mathcal{P}_{j},\mathcal{P}_{t}]=0
$$
for all $1\leq i,j,k,l,s,t\leq m$

\end{corollary}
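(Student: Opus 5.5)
The plan is to reduce each bracket to brackets of vector fields via the graded Leibniz identity for the Schouten bracket, and then apply Lemma \ref{3.1}. For a bivector $X\wedge Y$ and a vector field $Z$, all three of which are vector fields with pairwise vanishing Lie brackets, the Leibniz rule gives
$$
[X\wedge Y, Z] = \pm\big([X,Z]\wedge Y \pm X\wedge [Y,Z]\big).
$$
Equivalently, using graded antisymmetry, $[Z, X\wedge Y] = [Z,X]\wedge Y + X\wedge [Z,Y]$, where $[Z,\cdot]$ is simply the Lie derivative $L_Z$ acting as a derivation of the wedge product. The exact signs are irrelevant, since every term will vanish.

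First I take $X=\Theta_k$, $Y=\Theta_l$, and $Z=\mathcal{P}_t$ or $Z=\Theta_s$. Lemma \ref{3.1} gives $[\mathcal{P}_t,\Theta_k]=[\mathcal{P}_t,\Theta_l]=0$ and $[\Theta_s,\Theta_k]=[\Theta_s,\Theta_l]=0$. Hence $L_Z(\Theta_k\wedge\Theta_l)=0$ in both cases, which proves the first pair of identities.

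Next I take $X=\mathcal{P}_i$, $Y=\mathcal{P}_j$, and $Z=\Theta_s$ or $Z=\mathcal{P}_t$. Again Lemma \ref{3.1} gives $[\Theta_s,\mathcal{P}_i]=[\Theta_s,\mathcal{P}_j]=0$ and $[\mathcal{P}_t,\mathcal{P}_i]=[\mathcal{P}_t,\mathcal{P}_j]=0$, so both terms of the Leibniz expansion vanish. Degenerate cases such as $k=l$ or $i=j$ are trivial, because the wedge is then zero.

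No step here is a genuine obstacle. The only point needing care is to check that the Leibniz formula, as stated in Section \ref{Schouten} with the sign $(-1)^{k(l+1)}$, applies in the order written; if not, I would first use graded antisymmetry of the Schouten bracket to swap the arguments. Alternatively, the statement is the special case of Lemma \ref{10.1} in which the functions are constant, extended from vector fields to bivectors by the same Leibniz argument.
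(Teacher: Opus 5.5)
Your proposal is correct and follows the paper's own argument: the paper likewise derives the corollary directly from Lemma~\ref{3.1} together with the graded Leibniz property of the Schouten bracket. As you note, the signs in the expansion are irrelevant, since every term is a wedge involving a Lie bracket of two of the fields $\mathcal{P}_i,\Theta_j$, and Lemma~\ref{3.1} makes each such bracket vanish.
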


\begin{proof}
    The proof follows from Lemma \ref{3.1} and the graded Leibniz property.

\end{proof}




Suppose $X$ is a  vector field on a Poisson manifold $(P, \Pi)$. Then,
\begin{equation}\label{eq:Xpiongh}
    [X,\Pi](g,h)=X(\Pi(g,h))-\Pi(Xg,h)-\Pi(g,Xh)\quad \forall g,h \in C^\infty(P)
\end{equation}


\begin{lemma}\label{10.3}
Given a Poisson vector field $X$ on $(P, \Pi)$, and $f \in C^\infty(P)$,  $fX$ is a Poisson vector field on $(P, \Pi)$ if and only if
\begin{equation}
X(g)\Pi(f,h)+X(h)\Pi(g,f)=0 \quad \forall g,h \in C^\infty(P)
\end{equation}
\end{lemma}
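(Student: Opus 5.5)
The plan is to evaluate the bivector $[fX,\Pi]$ on an arbitrary pair of test functions using \eqref{eq:Xpiongh}, and to compare it with $f[X,\Pi]$, which vanishes because $X$ is Poisson. A bivector field is zero exactly when its value $[fX,\Pi](g,h)$ is zero for all $g,h\in C^\infty(P)$. So the lemma reduces to computing this value and identifying the leftover terms with the expression in the statement.

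Concretely, fix $g,h\in C^\infty(P)$ and apply \eqref{eq:Xpiongh} with $fX$ in place of $X$:
$$[fX,\Pi](g,h)=fX(\Pi(g,h))-\Pi(fX(g),h)-\Pi(g,fX(h)).$$
Next, expand the last two terms using the Leibniz rule for the biderivation $\Pi$ in each argument:
$$\Pi(fX(g),h)=f\,\Pi(X(g),h)+X(g)\,\Pi(f,h),\qquad \Pi(g,fX(h))=f\,\Pi(g,X(h))+X(h)\,\Pi(g,f).$$
Collecting the terms that carry a factor $f$ recovers $f\bigl(X(\Pi(g,h))-\Pi(Xg,h)-\Pi(g,Xh)\bigr)=f[X,\Pi](g,h)$. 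This vanishes by the hypothesis $L_X\Pi=0$ (Definition \ref{Poisson vf}). Hence
$$[fX,\Pi](g,h)=-\bigl(X(g)\Pi(f,h)+X(h)\Pi(g,f)\bigr).$$

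The equivalence follows immediately. $fX$ is Poisson, i.e. $[fX,\Pi]=0$ by \eqref{Poisson vf Schouten}, if and only if the right-hand side vanishes for all $g,h$, which is exactly the stated condition.

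There is no real obstacle. The only point needing care is consistent signs and ordering in $\Pi(g,f)$ versus $\Pi(f,g)$, which the computation above fixes so that it matches the statement.
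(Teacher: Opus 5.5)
Your proof is correct and matches the paper's argument: you apply \eqref{eq:Xpiongh} to $fX$, separate off $f[X,\Pi](g,h)$, which vanishes because $X$ is Poisson, and read off the remaining terms $-X(g)\Pi(f,h)-X(h)\Pi(g,f)$. The only difference is that you write out the Leibniz expansion of $\Pi(fX(g),h)$ and $\Pi(g,fX(h))$, which the paper leaves implicit.
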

\begin{proof}
    $fX$ is a Poisson vector field if and only if $[fX,\Pi]=0$. For any functions $g,h \in C^\infty(P)$, by \eqref{eq:Xpiongh} we have
$$
[fX,\Pi](g,h)=f([X,\Pi](g,h)) -X(g)\Pi(f,h) - X(h)\Pi(g,f)
$$
Since $[X,\Pi]=0$, 
$$
[fX,\Pi]=0 \iff X(g)\Pi(f,h)+X(h)\Pi(g,f)=0 \quad \forall h,g\in C^{\infty}(P) 
$$
\end{proof}

\begin{corollary}\label{10.4}
If $X$ is a Poisson vector field and $f\in C^{\infty}(P)$ is a Casimir function on $(P, \Pi)$, then $fX$ is a Poisson vector field on $(P,\Pi)$.
\end{corollary}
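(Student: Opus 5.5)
This follows directly from Lemma \ref{10.3}. Since $X$ is assumed Poisson, that lemma says $fX$ is Poisson if and only if
$$
X(g)\,\Pi(f,h)+X(h)\,\Pi(g,f)=0 \quad \forall g,h\in C^\infty(P).
$$
So the plan is simply to check that this identity holds whenever $f$ is a Casimir.

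By Definition \ref{Casimir function}, $f$ being a Casimir means $\Pi(df,dk)=\{f,k\}=0$ for every $k\in C^\infty(P)$. Applying this with $k=h$ gives $\Pi(f,h)=0$. By antisymmetry of $\Pi$, $\Pi(g,f)=-\Pi(f,g)=0$ as well. Both terms in the identity therefore vanish pointwise, for all $g,h$, and Lemma \ref{10.3} then yields that $fX$ is a Poisson vector field.

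As a sanity check independent of Lemma \ref{10.3}, one can use the Leibniz rule for the Schouten bracket:
$$
[fX,\Pi]=f[X,\Pi]-X\wedge[f,\Pi]
$$
up to sign conventions. Here $[f,\Pi]$ is, up to sign, the Hamiltonian vector field $X_f$, which is zero since $f$ is a Casimir. Together with $[X,\Pi]=0$, this gives $[fX,\Pi]=0$.

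There is no real obstacle. The only points to watch are the sign and antisymmetry conventions, and these are harmless because each term vanishes separately.
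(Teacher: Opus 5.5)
Your proposal is correct and follows the same route as the paper: invoke Lemma~\ref{10.3} and observe that both terms $X(g)\Pi(f,h)$ and $X(h)\Pi(g,f)$ vanish because $\Pi(f,\cdot)=X_f=0$ for a Casimir $f$. The Schouten--Leibniz sanity check is a valid alternative phrasing of the same computation.
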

\begin{proof}
	If $f$ is a Casimir function then $\Pi(f,\cdot)=X_{f}=0$. Thus 
	$$
		 X(g)\Pi(f,h)+X(h)\Pi(g,f)=X(g)X_{f}(h)-X(h)X_{f}(g)=0 \quad \forall h,g \in C^{\infty}(P)
	$$
\end{proof}
\begin{theorem}\label{10.5} Let $X$ be a Poisson vector field on $(P,\Pi)$.
	If there exists a Casimir function $c_{X} \in C^\infty(P)$ such that $X(c_{X})$ is non-zero almost everywhere, then
	$$
		\text{$fX$ is Poisson} \iff \text{$f$ is a Casimir function} 
	$$
\end{theorem}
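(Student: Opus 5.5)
The backward implication is exactly Corollary \ref{10.4}: if $f$ is a Casimir function and $X$ is Poisson, then $fX$ is Poisson. So the work lies in the forward direction. There we start from Lemma \ref{10.3}. Since $X$ is Poisson, $fX$ is Poisson if and only if
$$
X(g)\,\Pi(f,h)+X(h)\,\Pi(g,f)=0 \quad \forall\, g,h\in C^\infty(P).
$$
The plan is to feed the Casimir function $c_X$ into this identity as one of the test functions.

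Put $g=c_X$ and let $h$ be arbitrary. Because $c_X$ is a Casimir, $\Pi(c_X,f)=-\Pi(f,c_X)=0$, so the second term vanishes. The identity then reduces to
$$
X(c_X)\,\Pi(f,h)=0 \quad \forall\, h\in C^\infty(P).
$$
Let $U=\{q\in P : X(c_X)(q)\neq 0\}$. This set is open because $X(c_X)$ is continuous. It is dense because $X(c_X)$ is nonzero almost everywhere: its complement has measure zero, hence empty interior. On $U$ we may divide by $X(c_X)$, which gives $\Pi(f,h)=X_f(h)=0$ on $U$ for every $h$.

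Finally, for each fixed $h$ the function $X_f(h)=\Pi(df,dh)$ is smooth, and in particular continuous. It vanishes on the dense set $U$, so it vanishes on all of $P$. Hence $X_f=0$, that is, $f$ is a Casimir function in the sense of Definition \ref{Casimir function}.

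The only delicate step is turning ``nonzero almost everywhere'' into the density of $U$; the rest is the algebraic substitution above. Density needs a measure class in which nonempty open sets have positive measure, such as Lebesgue measure in charts, and this holds on any manifold. If desired, one can work pointwise in $T^*_qP$: the equation $X(c_X)(q)\,\Pi_q(df_q,\cdot)=0$ forces $\Pi^\sharp_q(df_q)=0$ on $U$, and continuity of the vector field $\Pi^\sharp(df)$ then extends this to all of $P$.
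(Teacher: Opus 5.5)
Your proposal is correct and follows the paper's proof: Corollary~\ref{10.4} gives the backward direction, and for the forward direction you substitute $g=c_X$ into the identity of Lemma~\ref{10.3} to get $X(c_X)\,\Pi(f,h)=0$, then extend from the set where $X(c_X)\neq 0$ by continuity. Your argument that this set is open and dense (its closed complement has measure zero, hence empty interior) spells out what the paper compresses into ``by smoothness.''
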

\begin{proof}
	If $f$ is a Casimir function, then by Corollary \ref{10.4}, $fX$ is a Poisson vector field on $(P,\Pi)$.\\
	Conversely, assume $fX$ is a Poisson vector field on $(P,\Pi)$. Then 
	$$
		X(g)\Pi(f,h)+X(h)\Pi(g,f)=0 \quad \forall h,g\in C^{\infty}(P)
	$$
	Let $g=c_{X}$, then  
	$$
		X(c_{X})\Pi(f,h)=0 \quad \forall h \in C^\infty(P)
	$$
Since $X(c_{X})$ is non-zero almost everywhere, $\Pi(f,h)$ is zero almost everywhere and by smoothness $\Pi(f,h)=0$ for all $h\in C^{\infty}(P)$. Thus $f$ is a Casimir function.
\end{proof}

\begin{theorem}\label{10.6}
    Let $X_i$,  $(1\leq i \leq n)$, be $n$ Poisson vector fields on $(P,\Pi)$. If there exist $n$ Casimir functions $c_{X_1},\dots,c_{X_{n}}\in C^{\infty}(P)$ such that 
    $$
		X_{i}(c_{X_{j}}) = \begin{cases} \text{Non-zero almost everywhere} & \text{if } i=j \\ 0 & \text{if } i>j \\
		  \end{cases}
	$$
    then,
    $$
      \text{$\sum_{i=1}^nf_iX_i$ is Poisson} \iff \text{Each $f_i$ is a Casimir function}   
      $$
\end{theorem}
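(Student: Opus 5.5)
The reverse implication is immediate: if each $f_i$ is a Casimir function, then each $f_iX_i$ is Poisson by Corollary \ref{10.4}. Since $d_\Pi$ is linear, the sum $\sum_i f_iX_i$ is then Poisson as well. The work is in the forward direction, which I would prove by generalizing the computation in Lemma \ref{10.3} and Theorem \ref{10.5} to a sum, and then peeling off the coefficients one at a time by backward induction using the triangular condition $X_i(c_{X_j})=0$ for $i>j$.

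First, for $Y=\sum_i f_iX_i$ with each $[X_i,\Pi]=0$, expand $[f_iX_i,\Pi](g,h)$ exactly as in the proof of Lemma \ref{10.3}. This gives
\begin{equation*}
[Y,\Pi](g,h)=-\sum_{i=1}^n\Big(X_i(g)\,\Pi(f_i,h)+X_i(h)\,\Pi(g,f_i)\Big)\qquad \forall g,h\in C^\infty(P).
\end{equation*}
Assume $Y$ is Poisson, so that the right-hand side vanishes for all $g,h$.

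Next, take $g=c_{X_j}$. Because $c_{X_j}$ is a Casimir function, $\Pi(c_{X_j},f_i)=0$ for every $i$, so the second group of terms drops out. This leaves
\begin{equation*}
\sum_{i=1}^n X_i(c_{X_j})\,\Pi(f_i,h)=\sum_{i\le j}X_i(c_{X_j})\,\Pi(f_i,h)=0\qquad\forall h,
\end{equation*}
where the first equality uses $X_i(c_{X_j})=0$ for $i>j$. This is a lower-triangular system in the unknowns $\Pi(f_i,h)$, with diagonal entries $X_j(c_{X_j})$ that are nonzero almost everywhere.

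Then solve the system by induction on $j$. For $j=1$ the equation reads $X_1(c_{X_1})\Pi(f_1,h)=0$. Hence $\Pi(f_1,h)$ vanishes on the dense set where $X_1(c_{X_1})\neq0$, and by continuity it vanishes everywhere, so $f_1$ is Casimir. Suppose $f_1,\dots,f_{j-1}$ have been shown to be Casimirs. Then the terms with $i<j$ vanish, leaving $X_j(c_{X_j})\Pi(f_j,h)=0$, and the same density argument shows $f_j$ is Casimir. (The statement's condition is on $i>j$, so the induction runs upward in $j$ starting from $1$.)

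The only delicate point is the density argument, namely that "nonzero almost everywhere" forces a continuous function that vanishes where $X_j(c_{X_j})\ne0$ to vanish identically. A set of full measure is dense, so this holds, exactly as in Theorem \ref{10.5}. The rest is bookkeeping of the triangular structure, which I expect to go through without difficulty.
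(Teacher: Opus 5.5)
Your proof is correct and follows the paper's argument essentially verbatim: expand $[Y,\Pi](c_{X_j},h)$, drop the second group of terms because $c_{X_j}$ is a Casimir, drop the terms with $i>j$ by the triangular hypothesis, and conclude that $f_1,f_2,\dots$ are Casimirs in turn via the density argument of Theorem \ref{10.5}. Your opening phrase ``backward induction'' is a slip: the induction runs upward from $j=1$, as you note yourself later and as the paper does.
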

\begin{proof}$(\impliedby)$
Let $Y=\sum_{i=1}^nf_iX_i$. Since each $X_{i}$ is a Poisson vector field on $(P,\Pi)$,
$$
	[Y,\Pi]\big(g,h\big)=\sum_{i=1}^n\Big(X_{i}(g)\Pi(f_{i},h)+X_{i}(h)\Pi(g,f_{i})\Big) \quad\forall g,h\in C^{\infty}(P)
$$
If each $f_{i}$ is a Casimir function then 
$$
X_{i}(g)\Pi(f_{i},h)+X_{i}(h)\Pi(g,f_{i})=0\quad \forall 1\leq i\leq n
$$
Thus $[Y,\Pi]=0$.\\
$(\implies)$
 If $Y$ is a Poisson vector field in $(P,\Pi)$, then,
 $$
		[Y,\Pi]\big(g,h\big)=\sum_{i=1}^n\Big(X_{i}(g)\Pi(f_{i},h)+X_{i}(h)\Pi(g,f_{i})\Big)=0 \quad\forall g,h\in C^{\infty}(P)
 $$
 
 Substituting $g$ with $c_{X_{1}}$,

$$
			[Y,\Pi]\big(c_{X_{1}},h\big)=X_{1}(c_{X_{1}})\Pi(f_{1},h)=0 \quad \forall h \in C^\infty(P)
$$

By Theorem \ref{10.5}, $f_{1}$ is a Casimir function in $(P,\Pi)$.
$$
	\implies		[Y,\Pi]\big(g,h\big)=\sum_{i=2}^n\Big(X_{i}(g)\Pi(f_{i},h)+X_{i}(h)\Pi(g,f_{i})\Big)=0 \quad\forall g,h\in C^{\infty}(P)
 $$
 
Substituting $g$ with $c_{X_{2}}$,
$$
				[Y,\Pi]\big(c_{X_{2}},h\big)=X_{2}(c_{X_{2}})\Pi(f_{2},h)=0 \quad \forall h \in C^\infty(P)
$$
By Theorem \ref{10.5}, $f_{2}$ is a Casimir function in $(P,\Pi)$.
Iterating this argument, each $f_{i}$ is a Casimir function.
    
\end{proof}

\begin{lemma}\label{10.8}
	Let $U\subset P$ be an open set such that $\operatorname{closure}(U)=P$. If $Y\in \mathfrak{X}^{k}(P)$ such that
	$$
		[\Pi|_{U},Y|_{U}]=0
	$$ then,
	$$
		\big[\Pi,Y\big]=0
	$$
\end{lemma}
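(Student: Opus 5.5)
The multivector field $[\Pi,Y]$ is smooth on all of $P$, and it vanishes on a dense open set. The plan is to combine these two facts through continuity.

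First, I will check that the Schouten bracket is local: for any open $U\subset P$,
$$[\Pi,Y]\big|_U=[\Pi|_U,Y|_U].$$
The definition in Section \ref{Schouten} expresses $L_{[\Pi,Y]}(f_1,\dots,f_{k+1})$ through compositions of the multiderivations $L_\Pi$ and $L_Y$. These are differential operators, so the value at a point $q$ depends only on the germs of the $f_i$ at $q$. Concretely, I will work in local coordinates on a chart around $q\in U$ and write
$$\Pi=\sum_{a<b}\Pi^{ab}\,\partial_a\wedge\partial_b,\qquad Y=\sum Y^{I}\,\partial_{I}.$$
The components of $[\Pi,Y]$ are then universal polynomial expressions in the coefficients $\Pi^{ab}$, $Y^I$ and their first partial derivatives. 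These components are computed the same way whether one works on $P$ or on $U$.

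Second, I will use the hypothesis $[\Pi|_U,Y|_U]=0$ together with locality to conclude that each component function of $[\Pi,Y]$ vanishes on $U\cap V$, for every coordinate chart $V$.

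Third, each component is a continuous function on $V$. Because $U$ is dense, $U\cap V$ is dense in $V$, so each component vanishes on all of $V$. Covering $P$ by charts gives $[\Pi,Y]=0$.

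An equivalent route, closer to the paper's own language, is to fix test functions $f_1,\dots,f_{k+1}\in C^\infty(P)$. The function $[\Pi,Y](df_1,\dots,df_{k+1})$ is smooth and vanishes on $U$, so by continuity it vanishes everywhere. Since differentials of smooth functions span each cotangent space, the trivector vanishes at every point.

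The only step that needs any care is the locality statement, that restricting to $U$ commutes with the Schouten bracket. I expect this to be the main (though mild) obstacle, and I will handle it by the coordinate formula above, or equivalently by noting that $L_\Pi$ and $L_Y$ are differential operators and hence commute with restriction to open sets.
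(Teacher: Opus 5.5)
Your argument is correct and is the standard one the paper implicitly relies on; Lemma~\ref{10.8} is stated there without proof. Locality of the Schouten bracket gives $[\Pi,Y]|_U=[\Pi|_U,Y|_U]=0$, and since the smooth $(k+1)$-vector field $[\Pi,Y]$ vanishes on the dense set $U$, continuity forces it to vanish everywhere. This is the same continuity-on-a-dense-set step the paper uses in Theorem~\ref{10.5}. One cosmetic slip: in your second route, $[\Pi,Y]$ is a $(k+1)$-vector field, not a trivector, unless $k=2$.
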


\begin{theorem}\label{10.9}
 	Let $X_i$  $(1\leq i \leq n)$ be $n$ Poisson vector fields on $(P,\Pi)$. If there exist an open set $U\subset P$ and $n$ functions $c_{X_1},\dots,c_{X_{n}}\in C^{\infty}(U)$ such that $\operatorname{closure}(U)=P$ and
    $$
		  X_{i}|_{U}(c_{X_{j}}) = \begin{cases} \text{Non-zero almost everywhere} & \text{if } i=j \\ 0 & \text{if } i\neq j \end{cases}\quad  \big[\Pi|_{U},c_{X_{i}}\big]=0\quad \forall 1\leq i\leq n
	$$
    then,
    $$
      \text{$\sum_{i=1}^nf_iX_i$ is Poisson} \iff \text{Each $f_i$ is a Casimir function}   
      $$
\end{theorem}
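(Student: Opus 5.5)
The plan is to reduce Theorem \ref{10.9} to the computation already carried out in the proof of Theorem \ref{10.6}, but performed on the dense open set $U$ rather than on all of $P$. At the end, Lemma \ref{10.8} and continuity will transport the conclusions back to $P$. Throughout, write $Y=\sum_{i=1}^n f_iX_i$.

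\emph{The direction ($\impliedby$).} If each $f_i$ is a Casimir function on $P$, then by Corollary \ref{10.4} each $f_iX_i$ is Poisson, since $X_i$ is Poisson. By linearity of the Schouten bracket, $[Y,\Pi]=0$. No use of $U$ is needed here.

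\emph{The direction ($\implies$).} Assume $[Y,\Pi]=0$. Since each $X_i$ is Poisson, the identity used in the proof of Theorem \ref{10.6} (which comes from \eqref{eq:Xpiongh}) gives
$$
0=[Y,\Pi](g,h)=\sum_{i=1}^n\Big(X_i(g)\Pi(f_i,h)+X_i(h)\Pi(g,f_i)\Big)
$$
for all $g,h\in C^\infty(P)$.

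The main obstacle is that the $c_{X_j}$ are only defined on $U$, so they cannot be inserted as $g$ directly. I would first observe that the identity above is pointwise and tensorial: its value at $q$ depends only on $dg_q$ and $dh_q$. It therefore holds on $U$ for arbitrary $g,h\in C^\infty(U)$. If one prefers to stay with global functions, one can instead multiply $c_{X_j}$ by bump functions supported in small balls inside $U$, which realizes the differential at each point of $U$. Either way, the identity is available on $U$ with $g=c_{X_j}$.

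Substitute $g=c_{X_j}$. The hypothesis $\Pi|_U(dc_{X_j},\cdot)=0$, which is how I read $[\Pi|_U,c_{X_j}]=0$, kills every term $\Pi(c_{X_j},f_i)$. The hypothesis $X_i(c_{X_j})=0$ for $i\neq j$ kills all remaining terms except one. What survives on $U$ is
$$
X_j(c_{X_j})\,\Pi(f_j,h)=0 \quad\text{for all } h.
$$
The diagonal condition in this theorem is stronger than the triangular one in Theorem \ref{10.6}, so no iteration over $j$ is required and each $j$ is handled independently.

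Now $X_j(c_{X_j})\neq 0$ almost everywhere on $U$. Hence $\Pi(f_j,h)$ vanishes on a set of full measure in $U$, and by continuity it vanishes on all of $U$. Since $U$ is dense in $P$, continuity again gives $\Pi(f_j,h)=0$ on $P$ for every $h$. Thus $X_{f_j}=0$, so each $f_j$ is a Casimir function.

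Equivalently, this last step can be phrased as $[\Pi|_U,f_j|_U]=0$, followed by Lemma \ref{10.8} applied with $k=0$ to conclude $[\Pi,f_j]=0$.
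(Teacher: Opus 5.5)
Your proof is correct and takes essentially the paper's route: restrict to the dense open set $U$, run the Theorem~\ref{10.6} computation there with the $c_{X_j}$ as Casimir functions on $U$, and extend $[\Pi,f_j]=0$ back to $P$ by density via Lemma~\ref{10.8}. Your explicit remarks are details the paper leaves implicit when it simply applies Theorem~\ref{10.6} to $(U,\Pi|_U)$: the identity is tensorial in $dg,dh$, so the locally defined $c_{X_j}$ may be inserted, and the diagonal hypothesis makes the iteration unnecessary.
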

\begin{proof}($\impliedby$)
	As shown in Theorem \ref{10.6}, if each $f_{i}$ is a Casimir function then $\sum_{i=1}^nf_iX_i$ is a Poisson vector field on $(P,\Pi)$.\\
    ($\implies$)
	Assume $Y=\sum_{i=1}^nf_iX_i$ is a Poisson vector field. Then each $X_{i}|_{U}$ and $Y|_{U}$ are Poisson vector fields on $(U,\Pi|_{U})$. By Theorem \ref{10.6} each $f_{i}|_{U}\in C^{\infty}(U)$ is a Casimir function in $(U,\Pi|_{U})$. Since for each $f_{i}\in \mathfrak{X}^{0}(P)$,
	$$
		[\Pi|_{U},f_{i}|_{U}]=0 \,.
	$$
	, Lemma \ref{10.8} gives
	$$
		[\Pi,f_{i}]=0\quad \forall 1\leq i\leq n \,.
	$$  
\end{proof}

\begin{lemma}\label{10.10}
	If $X\in \mathfrak{X}^{1}(\mathbb{C}^m)$ is a fundamental vector field produced by the action of $\mathbb{T}^m$ on $\mathbb{C}^m$, then $X$ is a Poisson vector field on $\mathbb{C}^m$ with respect to each of the  Poisson bivectors constructed in Section 3.
\end{lemma}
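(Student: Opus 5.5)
Every fundamental vector field of the standard $\mathbb{T}^m$-action on $\mathbb{C}^m$ has the form $X=\sum_{j=1}^m \lambda_j\Theta_j$ with constants $\lambda_j\in\mathbb{R}$, up to a normalising factor coming from $e^{2\pi i t}$. The Schouten bracket is $\mathbb{R}$-bilinear, so it suffices to prove $[\Theta_j,\Pi]=0$ for each $j$ and each bivector $\Pi$ of Section \ref{str on Cm}. I will give two arguments. The first is structural: Remark \ref{re8.2} (see also the verification of (A3) in Theorem \ref{8.1}) shows that every element of $\mathbb{T}^m$ acts by Poisson diffeomorphisms, i.e.\ $d\Psi_g(\Pi)=\Pi$. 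Differentiating along the one-parameter subgroup $\exp(tX)$ then gives $L_X\Pi=0$, exactly as in the discussion preceding Lemma \ref{2.6}. This already proves the statement. As a safeguard I will also check it by a direct computation, term by term, using the graded Leibniz rule.

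For the direct computation, each bivector is a sum of terms of three types: $f\,\Theta_k\wedge\Theta_l$, $f\,\mathcal{P}_k\wedge\mathcal{P}_l$, and the standard term $\Pi_{st}$. For the first two types, the Leibniz rule gives
\[
[\Theta_j, f\,V\wedge W]=\Theta_j(f)\,V\wedge W+f\,[\Theta_j,V\wedge W].
\]
The second summand vanishes by Corollary \ref{10.2}. The first vanishes because every coefficient is $\Theta_j$-invariant. The $a_{ij}$, $b_{ij}$, $c^R$, $e^R$ are functions of the $p_k$ only, and the rest are constants. For ${}_K\Pi^R_3$ and ${}_K\Pi^R_4$, the factor $P^K$ is a product of $p_l$'s, so it is also killed by $\Theta_j$.

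The step needing some care is that ${}_K\Pi^R_3$ and ${}_K\Pi^R_4$ are written with the vectors $\zeta^K_i$ rather than the $\mathcal{P}_l$. From Construction \ref{7.3}, $\zeta^K_i=\sum_l {}_l\nu^K_i\,\frac{2}{p_{l}}\mathcal{P}_{l}$, with $l$ running over $L=K^c$, and this holds where $p_l\neq0$. So $P^K\zeta^K_i\wedge\zeta^K_j$ is a combination of $\mathcal{P}_{l}\wedge\mathcal{P}_{l'}$ whose coefficients are polynomials in the $p$'s, since $P^K$ clears the denominators. This is a globally smooth expression of the type above, and its coefficients are $\Theta$-invariant.

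For $\Pi_{st}$, I will either compute $[\Theta_j,\partial_{x_j}\wedge\partial_{y_j}]=0$ directly, using that $\Theta_j$ is an infinitesimal rotation preserving $dx_j\wedge dy_j$, or note that $\Pi_{st}|_{\mathbb{C}^m_1}=2\sum\partial_{p_q}\wedge\partial_{\theta_q}$ commutes with $\partial_{\theta_j}$ and then extend by density via Lemma \ref{10.8}. Either way, all terms vanish, so $[X,\Pi]=0$ for every bivector constructed in Section 3.
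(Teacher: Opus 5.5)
Your proposal is correct, and your first argument is exactly the paper's proof: Remark~\ref{re8.2} gives invariance of each bivector under every $g\in\mathbb{T}^m$, and differentiating along one-parameter subgroups, as in the discussion before Lemma~\ref{2.6}, yields $L_X\Pi=0$. Your supplementary term-by-term Schouten computation is also valid. This includes rewriting $P^K\zeta^K_i\wedge\zeta^K_j$ as a polynomial combination of the $\mathcal{P}_l\wedge\mathcal{P}_{l'}$ and checking $[\Theta_j,\partial_{x_j}\wedge\partial_{y_j}]=0$, and it usefully fills in the invariance that Remark~\ref{re8.2} only asserts.
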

\begin{proof}
	As mentioned in Remark \ref{re8.2}, each $g\in \mathbb{T}^m$ acts as a Poisson diffeomorphism on $\mathbb{C}^m$ with respect to the Poisson bivectors in Section \ref{str on Cm}. Thus, as shown in the proof of Lemma \ref{2.6}, each fundamental vector field produced by $\mathbb{T}^m$ is Poisson with respect to these bivectors.
\end{proof}

\subsection{Poisson cohomology of $\Pi_{1}$}

$$
\Pi_{1}= \sum_{1\leq i<j\leq m} a_{ij}\Theta_{i}\wedge \Theta_{j}
$$

where each $a_{ij}\in C^\infty(\mathbb{C}^m)$ and $\Theta_k(a_{ij})=0$  for all $1\leq i,j,k \leq m$.
In other words, $a_{ij}$ is a function of $p_{1},\dots,p_m$ only.

\begin{lemma}\label{10.11}
	Each $p_{i}=x_{i}^{2}+y_{i}^{2}\in C^{\infty}(\mathbb{C}^m)$ is a Casimir function on $(\mathbb{C}^m,\Pi_{1})$, that is,
	$$
		\Pi_{1}(p_{i},\cdot)=0\quad \forall 1\leq i\leq m
	$$
\end{lemma}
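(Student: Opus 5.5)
The plan is to compute $\Pi_1(dp_i,\cdot)$ directly from the definition of $\Pi_1$ as a sum of wedge products of the angular fields $\Theta_k$. For any function $h$,
$$
\Pi_1(dp_i,dh)=\sum_{1\leq k<l\leq m} a_{kl}\big(\Theta_k(p_i)\,\Theta_l(h)-\Theta_l(p_i)\,\Theta_k(h)\big),
$$
so it suffices to show that $\Theta_k(p_i)=0$ for all $1\leq i,k\leq m$. This is an elementary computation in Cartesian coordinates, valid on all of $\mathbb{C}^m$ (no restriction to $\mathbb{C}^m_1$ is needed). If $k\neq i$, then $p_i=x_i^2+y_i^2$ does not involve $x_k,y_k$, so $\Theta_k(p_i)=0$ trivially. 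If $k=i$, then
$$
\Theta_i(p_i)=-y_i\cdot 2x_i+x_i\cdot 2y_i=0 .
$$
Geometrically, $p_i$ is invariant under the rotation flow of $\Theta_i$.

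Since every term in the expansion above carries a factor $\Theta_k(p_i)$ or $\Theta_l(p_i)$, we get $\Pi_1(dp_i,dh)=0$ for every $h\in C^\infty(\mathbb{C}^m)$. Hence $X_{p_i}=0$, so $p_i$ is a Casimir function in the sense of Definition~\ref{Casimir function}.

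Equivalently, one can phrase this through Corollary~\ref{3.3}: because $\Theta_k(p_i)=0$ for all $k$, the graded Leibniz rule gives $[a_{kl}\Theta_k\wedge\Theta_l,\,p_i]=0$ for each term, and therefore $[\Pi_1,p_i]=0$. There is no real obstacle here. The only point needing care is that the argument uses nothing about the coefficients $a_{ij}$ beyond smoothness: the constraint that they depend only on the radial variables is not used for this lemma, only for the integrability of $\Pi_1$.
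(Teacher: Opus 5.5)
Your proposal is correct and follows the same route as the paper, whose proof is the one-line observation that $\Theta_i(p_k)=0$ for all $1\leq i,k\leq m$; you simply spell out the expansion of $\Pi_1(dp_i,dh)$ and the coordinate computation behind that fact. Your remark that only the smoothness of the $a_{ij}$ is used, and not their radial dependence, is also accurate.
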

\begin{proof}
Follows since
	$\Theta _{i}(p_{k})=0\quad \forall 1\leq i,k\leq m$.
	
\end{proof}
\begin{construction}\label{10.20}
Let 
$$
\mathbb{C}_{-} =\mathbb{C}\setminus \{z=x+\iota y\in \mathbb{C}|y=0\text{ and }x\geq 0\}
$$
Then $\operatorname{closure}(\mathbb{C}_{-})=\mathbb{C}$ and the angular coordinate function $\theta$ is well-defined on $\mathbb{C}_{-}$. 
Let 
$$
\mathbb{C}_{-}^m :=\mathbb{C}_{-}\underset{\text{m times}}{\times\dots \times}\mathbb{C}_{-}\underset{\text{open}}{\subset} \mathbb{C}^m
$$
Then $\operatorname{closure}(\mathbb{C}^m_{-})=\mathbb{C}^m$ and the angular coordinate functions $\theta_{1},\dots,\theta _m$  are well-defined on $\mathbb{C}^m_{-}$.
\end{construction}
\begin{lemma}\label{10.20'}
$\theta_{l}$ is a Casimir function on $\Big(\mathbb{C}_{-}^m,\Pi_{1}|_{\mathbb{C}_{-}^m}\Big)$ if and only if $a_{lj}=0 \quad \forall 1\le j\le m$.
\end{lemma}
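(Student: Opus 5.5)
The plan is to compute the Hamiltonian vector field of $\theta_l$ with respect to $\Pi_1|_{\mathbb{C}^m_-}$ explicitly. Then I will use the fact that a function is Casimir exactly when its Hamiltonian vector field vanishes (Definition \ref{Casimir function}).

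First, observe that every point of $\mathbb{C}^m_-$ has all coordinates nonzero, since $0\in\mathbb{C}$ lies on the removed ray $\{y=0,\,x\ge 0\}$. Hence $\mathbb{C}^m_-\subset\mathbb{C}^m_1$. On this set $\theta_1,\dots,\theta_m$ are smooth coordinates and $\Theta_k|_{\mathbb{C}^m_-}=\partial/\partial\theta_k$, so $\Theta_k(\theta_l)=\delta_{kl}$.

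Adopt the convention $a_{ji}:=-a_{ij}$ for $i<j$ and $a_{ii}:=0$, so that $\Pi_1=\frac12\sum_{i,j}a_{ij}\Theta_i\wedge\Theta_j$. This is how the condition ``$a_{lj}=0$ for all $j$'' in the statement is to be read: it covers both $a_{lj}$ with $j>l$ and $a_{jl}$ with $j<l$. For any $h\in C^\infty(\mathbb{C}^m_-)$, expanding $(\Theta_i\wedge\Theta_j)(d\theta_l,dh)=\Theta_i(\theta_l)\Theta_j(h)-\Theta_j(\theta_l)\Theta_i(h)$ gives
$$
\Pi_1(\theta_l,h)=\sum_{j=1}^m a_{lj}\,\Theta_j(h),\qquad\text{i.e.}\qquad X_{\theta_l}=\sum_{j=1}^m a_{lj}\,\Theta_j \text{ on } \mathbb{C}^m_- .
$$

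The remaining step is to conclude that $X_{\theta_l}=0$ if and only if every $a_{lj}$ vanishes. This only needs the pointwise linear independence of $\Theta_1,\dots,\Theta_m$ on $\mathbb{C}^m_-$. That holds because $\Theta_j=-y_j\partial_{x_j}+x_j\partial_{y_j}$ is supported in the $j$-th coordinate block and is nonzero there whenever $z_j\neq0$.

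So $\theta_l$ is Casimir on $\mathbb{C}^m_-$ if and only if each $a_{lj}$ vanishes identically on $\mathbb{C}^m_-$. Since $\operatorname{closure}(\mathbb{C}^m_-)=\mathbb{C}^m$ (Construction \ref{10.20}) and the $a_{lj}$ are continuous, this is equivalent to $a_{lj}=0$ on all of $\mathbb{C}^m$. The converse direction is immediate from the displayed formula.

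There is no real obstacle. The only care needed is the sign and index convention for $a_{lj}$ with $j<l$, and noting that the origin of each factor is excluded from $\mathbb{C}_-$, which is what makes the $\Theta_j$ independent everywhere on the domain.
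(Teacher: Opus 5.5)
Your proposal is correct and is essentially the paper's argument: the paper computes $\Pi_1|_{\mathbb{C}^m_-}(\theta_l)=-\sum_{j<l}a_{jl}\Theta_j+\sum_{j>l}a_{lj}\Theta_j$, which is your $\sum_j a_{lj}\Theta_j$ under the antisymmetric convention, and then uses pointwise linear independence of the $\Theta_j$ on $\mathbb{C}^m_-$ together with density of $\mathbb{C}^m_-$ in $\mathbb{C}^m$. Your remark that the origin lies on the removed ray, so that $\mathbb{C}^m_-\subset\mathbb{C}^m_1$, justifies the independence claim, which the paper only asserts.
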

\begin{proof}
$\Pi_{1}\Big{|}_{\mathbb{C}_{-}^m}(\theta_{l})=\left(-\sum_{j=1}^{l-1} {a_{jl}}\Theta_{j}+ \sum_{j=l+1}^m {a_{lj}}\Theta_{j}\right)\Big{|}_{\mathbb{C}_{-}^m}$.
Since the vector fields $\Theta_{1}\Big{|}_{\mathbb{C}_{-}^m},\dots,\Theta_{m}\Big{|}_{\mathbb{C}_{-}^m}$ are pointwise linearly independent, 
$$
	\Pi_{1}\Big{|}_{\mathbb{C}_{-}^m}(\theta_{l})=0\iff a_{lj}\Big{|}_{\mathbb{C}_{-}^m}=0 \quad \forall 1\le j\le m
$$
Since $\operatorname{closure}\mathbb{C}_{-}^m=\mathbb{C}^m$,
$$
	\Pi_{1}\Big{|}_{\mathbb{C}_{-}^m}(\theta_{l})=0\iff a_{lj}=0 \quad \forall 1\le j\le m
$$
\end{proof}

\begin{lemma}\label{10.12}
$\mathcal{P}_{k}$ is a Poisson vector field on $(\mathbb{C}^m,\Pi_{1})$ if and only if  $\mathcal{P}_{k}(a_{ij})=(x_{k}\frac{\partial}{\partial x_{k}}+y_{k}\frac{\partial}{\partial y_{k}})(a_{ij})=0$ for all $1\leq i<j\leq m$.
\end{lemma}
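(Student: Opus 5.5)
We compute the Lie derivative $[\mathcal{P}_k,\Pi_1]$ directly and read off when it vanishes. Since $\mathcal{P}_k$ is a vector field, the Schouten bracket with a wedge product satisfies the ordinary Leibniz rule:
\begin{equation*}
[\mathcal{P}_k, a_{ij}\Theta_i\wedge\Theta_j] = \mathcal{P}_k(a_{ij})\,\Theta_i\wedge\Theta_j + a_{ij}[\mathcal{P}_k,\Theta_i]\wedge\Theta_j + a_{ij}\Theta_i\wedge[\mathcal{P}_k,\Theta_j].
\end{equation*}
By Lemma \ref{3.1} the last two terms vanish, since $[\mathcal{P}_k,\Theta_i]=[\mathcal{P}_k,\Theta_j]=0$. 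Summing over $i<j$ gives
\begin{equation*}
[\mathcal{P}_k,\Pi_1]=\sum_{1\leq i<j\leq m}\mathcal{P}_k(a_{ij})\,\Theta_i\wedge\Theta_j .
\end{equation*}
The ``if'' direction is then immediate.

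For the converse, suppose $[\mathcal{P}_k,\Pi_1]=0$. Restrict to the open dense set $\mathbb{C}^m_1$, where $z_i\neq0$ for all $i$. There the $\Theta_i=\partial/\partial\theta_i$ are pointwise linearly independent, so the bivectors $\Theta_i\wedge\Theta_j$ with $i<j$ are pointwise linearly independent in $\wedge^2T\mathbb{C}^m$. Hence each coefficient $\mathcal{P}_k(a_{ij})$ vanishes on $\mathbb{C}^m_1$. Since $\mathcal{P}_k(a_{ij})$ is continuous and $\mathbb{C}^m_1$ is dense, it vanishes on all of $\mathbb{C}^m$. This is the same density argument used in Lemma \ref{10.20'}.

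Finally, note that the statement writes $\mathcal{P}_k$ as $x_k\partial_{x_k}+y_k\partial_{y_k}$ rather than with the factor $\tfrac12$. Since vanishing is unaffected by a nonzero scalar, the condition is equivalent either way.

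The only step needing care is the pointwise independence argument, because at points where some $z_i=0$ the $\Theta_i$ degenerate. Working on $\mathbb{C}^m_1$ and then extending by continuity handles this.
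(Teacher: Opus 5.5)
Your proof is correct and takes essentially the same route as the paper: expand $[\mathcal{P}_k,\Pi_1]$ by the Leibniz rule, drop the terms $[\mathcal{P}_k,\Theta_i]$ using Lemma~\ref{3.1} (the paper cites Corollary~\ref{10.2}), and conclude from the pointwise independence of the $\Theta_i\wedge\Theta_j$ on a dense open set. Your explicit restriction to $\mathbb{C}^m_1$ with a continuity argument, and your remark about the factor $\tfrac12$, spell out details the paper leaves implicit.
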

\begin{proof}
     By the graded Leibniz rule and Corollary \ref{10.2}, 
    $$
		\Big[\mathcal{P}_{k},\Pi_{1}\Big]=\sum_{1\leq i<j\leq m} \Big[\mathcal{P}_{k},a_{ij}\Theta_{i}\wedge \Theta _{j}\Big]=\sum_{1\leq i<j\leq m} \Big[\mathcal{P}_{k},a_{ij}\Big]\Theta_{i}\wedge \Theta _{j}
	$$
	
Since $\Theta_{i}\wedge \Theta _j$ are pointwise linearly independent on a dense open set,
$$
\Big[\mathcal{P}_{k},\Pi_{1}\Big]=0 \iff \Big[\mathcal{P}_{k},a_{ij}\Big]=0 \quad \forall1\leq i<j\leq m \,.
$$
	
\end{proof}
\begin{theorem}\label{10.13}
	If there exist $K,L \subset\{1,\dots,m\}$ such that  $\mathcal{P}_{k}(a_{ij})= 0$ and $a_{lj}=0$ for all $1\leq i<j\leq m$, $k\in K$ and $l\in L$, then
	$$
		\sum_{k\in K}f_{k}\mathcal{P}_{k}+\sum_{l\in L}h_{l}\Theta_{l}\in\operatorname{Pois}(\mathfrak{X}^1(\mathbb{C}^m),\Pi_1)\iff \text{ Each } f_k,h_l\in \operatorname{Cas}(\mathbb{C}^m,\Pi_{1}) 
	$$
\end{theorem}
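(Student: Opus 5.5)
The plan is to apply Theorem \ref{10.9} with $U=\mathbb{C}^m_{-}$ from Construction \ref{10.20}. The vector fields are $X_k=\mathcal{P}_k$ for $k\in K$ and $X_l=\Theta_l$ for $l\in L$. For the candidate Casimir functions I take $c_{\mathcal{P}_k}=p_k$ and $c_{\Theta_l}=\theta_l$, both restricted to $U$.

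The verification splits into three checks.

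\emph{Poisson property.} Each $\mathcal{P}_k$ with $k\in K$ is a Poisson vector field by Lemma \ref{10.12}, since $\mathcal{P}_k(a_{ij})=0$ for all $i<j$. Each $\Theta_l$ is a fundamental vector field of the $\mathbb{T}^m$-action, so it is Poisson by Lemma \ref{10.10}.

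\emph{Casimir property on $U$.} Each $p_k$ is a Casimir function on all of $\mathbb{C}^m$ by Lemma \ref{10.11}, hence $[\Pi_1|_U,p_k]=0$. For $l\in L$ we have $a_{lj}=0$ for all $j$; I read this as also covering $a_{jl}$ with $j<l$, by antisymmetry of the convention. Lemma \ref{10.20'} then gives that $\theta_l$ is a Casimir function of $\Pi_1|_U$.

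\emph{Triangular (diagonal) condition.} On $U$ we have $\mathcal{P}_k=p_k\,\partial/\partial p_k$ and $\Theta_l=\partial/\partial\theta_l$. The off-diagonal pairings vanish:
\begin{itemize}
\item $\mathcal{P}_k(p_{k'})=0$ for $k\neq k'$;
\item $\mathcal{P}_k(\theta_l)=0$ for all $k,l$;
\item $\Theta_l(p_k)=0$ for all $l,k$;
\item $\Theta_l(\theta_{l'})=\delta_{ll'}$.
\end{itemize}
The diagonal pairings are $\mathcal{P}_k(p_k)=p_k$, which is nonzero off the measure-zero set $\{z_k=0\}$, and $\Theta_l(\theta_l)=1$. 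So the hypotheses of Theorem \ref{10.9} hold. It gives both directions at once: $\sum_{k\in K} f_k\mathcal{P}_k+\sum_{l\in L} h_l\Theta_l$ is Poisson if and only if every $f_k$ and $h_l$ is a Casimir function.

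The main obstacle is a mismatch at the last step. Theorem \ref{10.9} only yields $[\Pi_1|_U,f_i|_U]=0$ on $U$. The passage to all of $\mathbb{C}^m$ rests on Lemma \ref{10.8}, and I must confirm that the argument is just continuity: $\Pi_1(df_i,\cdot)$ is a smooth vector field vanishing on a dense open set, hence identically zero. A second point to check is that Theorem \ref{10.6}, invoked inside Theorem \ref{10.9}, uses the test functions $c_{X_i}$ only on $U$. Those functions ($\theta_l$ in particular) are not globally defined. The substitution $g=c_{X_i}$ into $[Y,\Pi](g,h)=0$ is therefore legitimate only as an identity of local derivations on $U$. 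This is fine because $[Y,\Pi]$ is a tensor, so it can be evaluated on $d\theta_l$ pointwise on $U$.
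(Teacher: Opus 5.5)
Your proposal is correct and is essentially the paper's own proof: it applies Theorem~\ref{10.9} on $U=\mathbb{C}^m_{-}$ to the Poisson fields $\mathcal{P}_k$ ($k\in K$, by Lemma~\ref{10.12}) and $\Theta_l$ ($l\in L$, by Lemma~\ref{10.10}). The Casimirs are $p_k$ and $\theta_l$, supplied by Lemmas~\ref{10.11} and~\ref{10.20'}, with the same pairings as the paper: $\mathcal{P}_k(p_{k'})=\delta_{kk'}p_k$, $\Theta_l(\theta_{l'})=\delta_{ll'}$, and the mixed pairings zero. The points you flag are left implicit in the paper and are resolved exactly as you say: passing from $U$ to $\mathbb{C}^m$ by continuity, evaluating the bivector $[Y,\Pi_1]$ on $d\theta_l$ locally on $U$, and reading $a_{lj}=0$ as covering $a_{jl}$.
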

\begin{proof}
	By Lemmas \ref{10.10}, \ref{10.12} and the hypotheses of the statement, each $\Theta_{l}$ and $\mathcal{P}_{k}$ is a Poisson vector field, where $l\in L$ and $k\in K$. By Construction \ref{10.20} and Lemmas \ref{10.11} and \ref{10.20'}, each $\theta_{l}$ and $p_{k}|_{\mathbb{C}_{-}^m}$ where $l\in L$ and $k\in K$ are Casimir functions on $\Big(\mathbb{C}_{-}^m,\Pi_{1}|_{\mathbb{C}_{-}^m}\Big)$ such that $\operatorname{closure}\big(\mathbb{C}_{-}^m\big)=\mathbb{C}^m$ and
$$
	\Theta_{l}|_{\mathbb{C}_{-}^m}(\theta_{l'}) = \begin{cases} 1 & \text{if } l=l' \\ 0 & \text{if } l\neq l' \end{cases} \quad \Theta_{l}|_{\mathbb{C}_{-}^m}(p_{j}|_{\mathbb{C}_{-}^m})=0
$$
$$
	\mathcal{P}_{{k}}|_{\mathbb{C}_{-}^m}(p_{k'}|_{\mathbb{C}_{-}^m}) = \begin{cases} p_{k}|_{\mathbb{C}_{-}^m} & \text{if } k=k' \\ 0 & \text{if } k\neq k' \end{cases} \quad \mathcal{P}_{k}|_{\mathbb{C}_{-}^m}(\theta _{j})=0
$$
Thus, by Theorem \ref{10.9}, the statement holds true.
\end{proof}

\begin{theorem}
	Under the same hypotheses as Theorem \ref{10.13} with $K=\{k_{1},\dots,k_{s}\}$ and $L=\{l_{1},\dots,l_{t}\}$, the map
	$$
		\bigoplus_{i=1}^{t+s} \mathrm{Cas}(\mathbb{C}^m,\Pi_{1})\to H^1_{\Pi_{1}}\quad \quad (f_{1},\dots,f_{s},f'_{1},\dots,f'_{t})\longmapsto \Big[\sum_{i=1}^{s}f_{i}\mathcal{P}_{k_{i}}+\sum_{j=1}^{t} f_{j}'\Theta_{l_{j}} \Big]
	$$
	is injective.
	
\end{theorem}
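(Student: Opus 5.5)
The map is well defined by Theorem \ref{10.13}: for Casimir coefficients, $Y=\sum_i f_i\mathcal{P}_{k_i}+\sum_j f'_j\Theta_{l_j}$ is a Poisson vector field, so it has a class in $H^1_{\Pi_1}$. The map is linear, so injectivity means its kernel is trivial. I will therefore assume that $Y$ is Hamiltonian, $Y=X_g=\Pi_1^{\sharp}(dg)$ for some $g\in C^\infty(\mathbb{C}^m)$, and show that every $f_i$ and $f'_j$ vanishes.

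The first step uses that $\Pi_1$ is purely angular. For any $g$,
\[
X_g=\sum_{i<j}a_{ij}\bigl(\Theta_i(g)\Theta_j-\Theta_j(g)\Theta_i\bigr),
\]
so $X_g$ lies in the span of $\Theta_1,\dots,\Theta_m$ at every point. In particular $X_g(p_k)=0$ for all $k$, because $\Theta_i(p_k)=0$ (Lemma \ref{10.11}). Evaluating both sides of $Y=X_g$ on $p_{k_i}$ gives $f_i\,\mathcal{P}_{k_i}(p_{k_i})=f_i p_{k_i}=0$, since $\mathcal{P}_{k}(p_{k'})=\delta_{kk'}p_k$ and $\Theta_l(p_k)=0$. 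Because $p_{k_i}\neq 0$ on the dense open set $\mathbb{C}^m_1$, we get $f_i=0$ there, and hence everywhere by continuity. Thus every $f_i$ vanishes.

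The second step handles the angular part. Now $\sum_j f'_j\Theta_{l_j}=X_g$. Work on the dense open set $\mathbb{C}^m_-$ of Construction \ref{10.20}, where $\theta_{l_j}$ is defined, and evaluate both sides on $\theta_{l_j}$. The left side gives $f'_j$. The right side gives $X_g(\theta_{l_j})=-X_{\theta_{l_j}}(g)=-\Pi_1(d\theta_{l_j},dg)$. By Lemma \ref{10.20'} and the hypothesis $a_{l_j\,\cdot}=0$ (with the antisymmetric convention, so both $a_{l_j i}$ and $a_{i l_j}$ vanish), $\theta_{l_j}$ is a Casimir on $\mathbb{C}^m_-$. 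Hence the right side is zero, so $f'_j=0$ on $\mathbb{C}^m_-$, and by density $f'_j=0$ on $\mathbb{C}^m$. The kernel is therefore zero.

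The one point needing care is the hypothesis $a_{lj}=0$. It must be read as the vanishing of all coefficients involving the index $l$, that is, $a_{lj}$ for $j>l$ and $a_{jl}$ for $j<l$; this is exactly what Lemma \ref{10.20'} requires. Alternatively, one can avoid this issue directly: the computation of $X_g$ above shows that $\Theta_l$ never appears in any Hamiltonian vector field when all coefficients involving $l$ vanish. Since $\Theta_1,\dots,\Theta_m$ are pointwise independent on $\mathbb{C}^m_1$, comparing coefficients of $\Theta_{l_j}$ gives $f'_j=0$ directly. I would present this coefficient comparison as the main argument, because it is cleaner than going through the Casimir property of $\theta_{l_j}$.
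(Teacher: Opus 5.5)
Your proposal is correct and is essentially the paper's argument: the paper also notes that, once all coefficients involving indices in $L$ vanish, every Hamiltonian vector field of $\Pi_1$ is a combination of the $\Theta_j$ with $j\notin L$. It then concludes using the pointwise linear independence of $\mathcal{P}_{k_1},\dots,\mathcal{P}_{k_s},\Theta_1,\dots,\Theta_m$ on a dense open set. Your evaluation on $p_{k_i}$ is a test-function version of the same coefficient comparison. Your remark that the hypothesis must be read as $a_{lj}=a_{jl}=0$ is exactly what the paper's claim about the form of Hamiltonian vector fields tacitly uses.
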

\begin{proof}
Let $(f_{1},\dots,f_{s},f'_{1}\dots,f'_{t}),(h_{1},\dots,h_{s},h'_{1}\dots,h'_{t})\in\bigoplus_{i=1}^{s+t} \mathrm{Cas}(\mathbb{C}^m,\Pi_{1})$ such that,
$$
\Big[\sum_{i=1}^{s}(f_{i}-h_{i})\mathcal{P}_{k_{i}}+\sum_{j=1}^{t} (f'_{j}-h'_{j})\Theta_{l_{j}}  \Big]=0
$$
By the hypotheses of the theorem, $a_{lj}=0$ for all $l\in L$ and $1\leq j\leq m$. As a result, each Hamiltonian vector field in $(\mathbb{C}^m,\Pi_{1})$ is of the form,
	$$
		\sum_{\substack{1\leq j\leq m\\j\not\in L}}\alpha_{j}\Theta_{j}
	$$
	where $\alpha_{{j}}\in C^{\infty}(\mathbb{C}^m)$. Thus, by assumption,
$$
	\sum_{i=1}^{s}(f_{i}-h_{i})\mathcal{P}_{k_{i}}+\sum_{j=1}^{t} (f'_{j}-h'_{j})\Theta_{l_{j}}=\sum_{\substack{1\leq j\leq m\\j\not\in L}}\beta_{j}\Theta_{j}
$$	for some $\beta_j\in C^\infty(\mathbb{C}^m)$. Since the vector fields $\mathcal{P}_{k_{1}},\dots,\mathcal{P}_{k_{s}},\Theta_{1},\dots,\Theta_{m}$ are pointwise linearly independent almost everywhere, 
	$$
		(f_{1}-h_{1},\dots,f_{s}-h_{s},f'_{1}-h'_{1},\dots,f'_{t}-h'_{t})=0
	$$
	$$
	\implies	(f_{1},\dots,f_{s},f'_{1},\dots,f'_{t})=(h_{1},\dots,h_{s},h'_{1},\dots,h'_{t})
	$$
\end{proof}

\subsection{Poisson cohomology of $\Pi_{3}$ and $\Pi^R_3$}

$$
\Pi_{3}=\sum_{1\leq i<j\leq m}c_{ij}\mathcal{P}_{i}\wedge \mathcal{P}_{j}
$$

where $c_{ij}$ are constant functions.
$$
\Pi^R_3= \sum_{1\leq k<j\leq 3} c^R_{i_ji_k}\mathcal{P}_{i_{j}}\wedge \mathcal{P}_{i_{k}}
$$

$$
c^R_{i_ji_k}=c^R_{i_ji_k}(x_{l}^2+y_{l}^2)\in C^{\infty}(\mathbb{C}^m)
$$
where $l\in\{1,\dots,m\}$ and $l\neq i_j,i_k$. In other words, $\mathcal{P}_{i_{j}}(c^R_{i_ji_k})=\mathcal{P}_{i_{k}}(c^R_{i_ji_k})=0$ for each pair $(k,j)\in \{(1,2),(2,3),(1,3)\}$.

\begin{lemma}\label{lemma:casimir_merged} 
The following properties hold for Casimir functions of the Poisson structures $\Pi_3$ and $\Pi_3^R$: 
\begin{enumerate}
\item $p_l = x_l^2 + y_l^2\in\operatorname{Cas}(\mathbb{C}^m, \Pi_3)$ if and only if $c_{lj} = 0 \quad \forall 1 \le j \le m$. 
\item For each $l \in \{1, \dots, m\} \setminus \{i_1, i_2, i_3\}$, the coordinate functions $x_l,y_l\in\operatorname{Cas}(\mathbb{C}^m, \Pi_3^R)$. Consequently, for each $l \in \{1, \dots, m\} \setminus \{i_1, i_2, i_3\}$, $p_l\in\operatorname{Cas}(\mathbb{C}^m, \Pi_3^R)$.
\item The angular coordinate functions $\theta_1, \dots, \theta_m \in C^\infty(\mathbb{C}_{-}^m)$ are Casimir functions on both $\left(\mathbb{C}_{-}^m, \Pi_3|_{\mathbb{C}_{-}^m}\right)$ and $\left(\mathbb{C}_{-}^m, \Pi_3^R|_{\mathbb{C}_{-}^m}\right)$. \end{enumerate} \end{lemma}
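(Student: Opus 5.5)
The plan is to compute, for each structure, the Hamiltonian derivation $\Pi(f,\cdot)$ of the relevant function $f$ directly from the definition of the bivector. For a bivector of the form $\sum_{i<j} c_{ij}\mathcal{P}_i\wedge\mathcal{P}_j$ we have
$$
\Pi(f,h)=\sum_{i<j} c_{ij}\big(\mathcal{P}_i(f)\,\mathcal{P}_j(h)-\mathcal{P}_j(f)\,\mathcal{P}_i(h)\big).
$$
So everything reduces to evaluating the radial fields $\mathcal{P}_i$ on the coordinate functions. From the definition $\mathcal{P}_i=\frac12(x_i\partial_{x_i}+y_i\partial_{y_i})$ one reads off
$$
\mathcal{P}_i(x_l)=\tfrac12\delta_{il}x_l,\qquad \mathcal{P}_i(y_l)=\tfrac12\delta_{il}y_l,\qquad \mathcal{P}_i(p_l)=\delta_{il}p_l .
$$
On $\mathbb{C}^m_{-}$ we also have $\mathcal{P}_i(\theta_l)=0$, because $\mathcal{P}_i=p_i\,\partial/\partial p_i$ in polar coordinates there; equivalently, $\mathcal{P}_i$ generates the flow $z_i\mapsto e^{s/2}z_i$, which preserves arguments.

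For (1), substitute $f=p_l$. Only the terms with $l\in\{i,j\}$ survive, giving
$$
\Pi_3(p_l,\cdot)=p_l\Big(\sum_{j>l}c_{lj}\mathcal{P}_j-\sum_{j<l}c_{jl}\mathcal{P}_j\Big).
$$
If all $c_{lj}=0$ this vanishes identically. Conversely, suppose this vector field vanishes. On the dense open set $\mathbb{C}^m_1$, the fields $\mathcal{P}_1,\dots,\mathcal{P}_m$ are pointwise linearly independent, being $p_j\partial_{p_j}$ there, and $p_l\neq0$. Hence every coefficient $c_{lj}$ (respectively $c_{jl}$) vanishes at those points. Since the $c_{ij}$ are constants, they vanish identically.

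For (2), take $l\notin\{i_1,i_2,i_3\}$. Every term of $\Pi_3^R$ involves only $\mathcal{P}_{i_j}$ and $\mathcal{P}_{i_k}$ with indices in $R$, and these annihilate $x_l$ and $y_l$. Therefore $\Pi_3^R(x_l,\cdot)=\Pi_3^R(y_l,\cdot)=0$, and this holds regardless of the functions $c^R_{i_ji_k}$. The statement for $p_l=x_l^2+y_l^2$ then follows either from the Leibniz rule, since Casimirs form a subalgebra, or directly from $\mathcal{P}_{i_j}(p_l)=0$.

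For (3), the vanishing $\mathcal{P}_i(\theta_l)=0$ on $\mathbb{C}^m_{-}$ kills every term in both $\Pi_3|_{\mathbb{C}^m_{-}}(\theta_l,\cdot)$ and $\Pi_3^R|_{\mathbb{C}^m_{-}}(\theta_l,\cdot)$. I expect the only mildly delicate point to be justifying $\mathcal{P}_i(\theta_l)=0$ on $\mathbb{C}^m_{-}$. I would do this by a direct computation with $\theta_l=\arg(x_l+\iota y_l)$: since $d\theta_l=(x_l\,dy_l-y_l\,dx_l)/p_l$, one gets $d\theta_l(\mathcal{P}_i)=\frac12\delta_{il}(x_ly_l-y_lx_l)/p_l=0$.
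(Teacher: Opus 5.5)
Your proposal is correct and follows essentially the route the paper intends. The paper states this lemma without proof, but its $\Pi_1$ analogues (Lemmas~\ref{10.11} and~\ref{10.20'}) are proved the same way you argue: evaluate the Hamiltonian derivation on the coordinate functions, then use pointwise linear independence of the generating fields on a dense open set. Your computations $\mathcal{P}_i(p_l)=\delta_{il}p_l$, $\mathcal{P}_i(x_l)=\tfrac12\delta_{il}x_l$ and $d\theta_l(\mathcal{P}_i)=0$ supply exactly the details the paper leaves implicit.
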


\begin{lemma}\label{lemma:poisson_vector_merged}
For each vector field $\mathcal{P}_l$ such that $1 \le l \le m$, the following hold:
\begin{enumerate}
    \item $\mathcal{P}_l\in \operatorname{Pois}(\mathfrak{X}^1(\mathbb{C}^m),\Pi_3)$.
    \item $\mathcal{P}_l\in\operatorname{Pois}(\mathfrak{X}^1(\mathbb{C}^m), \Pi_3^R)$ if and only if $\mathcal{P}_l(c_{i_j i_k}^R) = 0$ for all $1 \le j < k \le 3$.
\end{enumerate}
\end{lemma}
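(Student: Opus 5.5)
Both parts reduce to computing the Schouten bracket $[\mathcal{P}_l,\Pi]$ term by term. The tools are the graded Leibniz rule, the commutation relations of Lemma \ref{3.1}, and Corollary \ref{10.2}, which says $[\mathcal{P}_l,\mathcal{P}_i\wedge\mathcal{P}_j]=0$.

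For part (1), I write $\Pi_3=\sum_{i<j}c_{ij}\mathcal{P}_i\wedge\mathcal{P}_j$ and expand
$$[\mathcal{P}_l,c_{ij}\mathcal{P}_i\wedge\mathcal{P}_j]=\mathcal{P}_l(c_{ij})\,\mathcal{P}_i\wedge\mathcal{P}_j+c_{ij}[\mathcal{P}_l,\mathcal{P}_i\wedge\mathcal{P}_j].$$
The first term vanishes because each $c_{ij}$ is constant, and the second vanishes by Corollary \ref{10.2}. Hence $[\mathcal{P}_l,\Pi_3]=0$, so $\mathcal{P}_l$ is Poisson by \eqref{Poisson vf Schouten}.

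For part (2), the same expansion applied to $\Pi_3^R$ gives
$$[\mathcal{P}_l,\Pi_3^R]=\sum_{1\le k<j\le 3}\mathcal{P}_l(c^R_{i_ji_k})\,\mathcal{P}_{i_j}\wedge\mathcal{P}_{i_k}.$$
The implication ``$\Leftarrow$'' is then immediate. For ``$\Rightarrow$'', I would argue as in Lemma \ref{10.12}. On the dense open set $\mathbb{C}^m_1$, the vector fields $\mathcal{P}_i=p_i\partial/\partial p_i$ are pointwise linearly independent, so the three bivectors $\mathcal{P}_{i_j}\wedge\mathcal{P}_{i_k}$ are pointwise linearly independent there. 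Vanishing of the sum therefore forces each coefficient $\mathcal{P}_l(c^R_{i_ji_k})$ to vanish on $\mathbb{C}^m_1$, and by continuity and density it vanishes on all of $\mathbb{C}^m$.

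The only step that needs care is this last one. Pointwise independence holds only on $\mathbb{C}^m_1$, not everywhere, so one cannot read off the coefficients directly on $\mathbb{C}^m$. It must be combined with smoothness of the coefficients and density of $\mathbb{C}^m_1$ (closure equal to $\mathbb{C}^m$) to extend the vanishing. Finally, the index ranges $1\le j<k\le 3$ in the statement and $1\le k<j\le 3$ in the definition of $\Pi_3^R$ refer to the same unordered pairs, so the condition in (2) covers exactly the three coefficients in the expansion.
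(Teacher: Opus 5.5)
Your proposal is correct and follows essentially the paper's route. The paper gives no separate proof of this lemma; the intended argument is the one used for Lemma~\ref{10.12}: expand $[\mathcal{P}_l,\Pi]$ with the graded Leibniz rule and Corollary~\ref{10.2} so that only the terms $\mathcal{P}_l(c)\,\mathcal{P}_i\wedge\mathcal{P}_j$ survive, then read off each coefficient from pointwise linear independence on the dense open set $\mathbb{C}^m_1$ and extend by continuity, exactly as you do.
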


\begin{remark} Below is an example of a function that is not a Casimir but still produced a Poisson vector field. Take the Poisson bivector field on $\mathbb{C}^m$,
$$
\Pi=\mathcal{P}_{1}\wedge \mathcal{P}_{2}
$$
By Lemmas \ref{lemma:poisson_vector_merged}  and \ref{lemma:casimir_merged} , $\mathcal{P}_{1}$ is a Poisson vector field and $p_{2}$ is not a Casimir function for the given Poisson bivector. Since $p_{2}$ satisfies the condition in Lemma \ref{10.3}, $p_{2}\mathcal{P}_{1}$ is a Poisson vector field.
\end{remark}
\begin{theorem}\label{10.25}
If there exist $K \subset\{1,\dots,m\}\setminus\{i_{1},i_{2},i_{3}\}$ and $L \subset\{1,\dots,m\}$ such that  $\mathcal{P}_{k}(c_{i_j i_k}^R)= 0$ and $c_{ll'}=0$ for all $i_{j},i_{k}\in\{i_{1},i_{2},i_{3}\}$, $1\leq l'\leq m$, $k\in K$ and $l\in L$, then
	$$
		\sum_{l\in L}f_{l}\mathcal{P}_{l}+\sum_{i=1}^mh_{i}\Theta_{i}\in \operatorname{Pois}(\mathfrak{X}^1(\mathbb{C}^m), \Pi_3)\iff \text{ Each } f_l,h_i\in\operatorname{Cas}(\mathbb{C}^m,\Pi_{3}) 
	$$
	$$
		\sum_{k\in K}f_{k}\mathcal{P}_{k}+\sum_{i=1}^mh_{i}\Theta_{i}\in \operatorname{Pois}(\mathfrak{X}^1(\mathbb{C}^m), \Pi_3^R)\iff \text{ Each }f_k,h_i\in\operatorname{Cas}(\mathbb{C}^m,\Pi_{3}^R) 
	$$
\end{theorem}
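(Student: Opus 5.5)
We will deduce both equivalences from Theorem \ref{10.9}, in the same way as Theorem \ref{10.13}. We take $U=\mathbb{C}_{-}^m$ from Construction \ref{10.20}, which is open and dense in $\mathbb{C}^m$.

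For $\Pi_3$, the relevant vector fields are $X=\{\mathcal{P}_l\}_{l\in L}\cup\{\Theta_i\}_{1\le i\le m}$, with test functions $c_{\mathcal{P}_l}=p_l|_U$ and $c_{\Theta_i}=\theta_i$. For $\Pi_3^R$ we use $\{\mathcal{P}_k\}_{k\in K}$ in place of $\{\mathcal{P}_l\}_{l\in L}$, with $c_{\mathcal{P}_k}=p_k|_U$.

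\textbf{Step 1: the vector fields are Poisson.}
\begin{itemize}
\item Each $\Theta_i$ is a fundamental vector field of the $\mathbb{T}^m$-action, so it is Poisson for both structures by Lemma \ref{10.10}.
\item Each $\mathcal{P}_l$ is Poisson for $\Pi_3$ by Lemma \ref{lemma:poisson_vector_merged}(1).
\item For $k\in K$, the hypothesis $\mathcal{P}_k(c^R_{i_ji_k})=0$ together with Lemma \ref{lemma:poisson_vector_merged}(2) shows that $\mathcal{P}_k$ is Poisson for $\Pi_3^R$.
\end{itemize}

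\textbf{Step 2: the test functions are Casimirs on $U$.}
\begin{itemize}
\item For $\Pi_3$, the condition $c_{ll'}=0$ for all $l'$ gives $p_l\in\operatorname{Cas}(\mathbb{C}^m,\Pi_3)$ by Lemma \ref{lemma:casimir_merged}(1). Restricting to $U$, we get $[\Pi_3|_U,p_l|_U]=0$.
\item For $\Pi_3^R$, since $K\cap\{i_1,i_2,i_3\}=\emptyset$, Lemma \ref{lemma:casimir_merged}(2) gives that $p_k$ is a Casimir.
\item The angles $\theta_i$ are Casimirs on $U$ for both structures by Lemma \ref{lemma:casimir_merged}(3).
\end{itemize}

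\textbf{Step 3: the triangularity conditions of Theorem \ref{10.9}.} On $U$ we have $\mathcal{P}_{k}(p_{k'})=\delta_{kk'}p_k$, $\mathcal{P}_k(\theta_j)=0$, $\Theta_l(\theta_{l'})=\delta_{ll'}$ and $\Theta_l(p_j)=0$, exactly as computed in the proof of Theorem \ref{10.13}. The diagonal entries $p_k$ and $1$ are nonzero almost everywhere, since $p_k$ vanishes only on a measure-zero set. Theorem \ref{10.9} then yields both equivalences at once.

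The only point needing care is Step 2 in the $\Pi_3$ case. We must make sure the stated hypothesis is read as $c_{ll'}=0$ (with $c_{l'l}$ interpreted via antisymmetry) for every $l\in L$, so that Lemma \ref{lemma:casimir_merged}(1) applies. The one further check is that the $\mathbb{C}_-^m$ restriction preserves the Casimir property, which is immediate because a Casimir is characterised by the local condition $X_f=0$, and $X_f=0$ on $\mathbb{C}^m$ implies $X_f|_U=0$.
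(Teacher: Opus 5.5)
Your proposal is correct and matches the paper's approach. The paper gives no separate proof for this theorem, but the intended argument is the one written out for Theorem~\ref{10.13}: apply Theorem~\ref{10.9} on the dense open set $\mathbb{C}^m_{-}$, pairing the Poisson fields $\mathcal{P}_l$ (resp.\ $\mathcal{P}_k$) and $\Theta_i$ with the Casimirs $p_l$ (resp.\ $p_k$) and $\theta_i$, which is exactly what you do, including the correct antisymmetric reading of the hypothesis $c_{ll'}=0$.
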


\begin{theorem}
	Under the same hypotheses as Theorem \ref{10.25} with $K=\{k_{1},\dots,k_{s}\}$ and $L=\{l_{1},\dots,l_{t}\}$, the following maps,
	$$
		\bigoplus_{i=1}^{m+t} \mathrm{Cas}(\mathbb{C}^m,\Pi_{3})\to H^1_{\Pi_{3}}\quad \quad (f_{1},\dots,f_{t},f'_{1}\dots,f'_{m})\longmapsto \Big[\sum_{i=1}^{t}f_{i}\mathcal{P}_{l_{i}}+\sum_{j=1}^{m} f'_{j}\Theta_{j} \Big] \,,
	$$
	$$
		\bigoplus_{i=1}^{m+s} \mathrm{Cas}(\mathbb{C}^m,\Pi_{3}^R)\to H^1_{\Pi_{3}^R}\quad \quad (f_{1},\dots,f_{s},f'_{1}\dots,f'_{m})\longmapsto \Big[\sum_{i=1}^{s}f_{i}\mathcal{P}_{k_{i}}+\sum_{j=1}^{m} f_{j}'\Theta_{j} \Big] \,,
	$$
	are injective.
	
\end{theorem}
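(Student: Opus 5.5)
The argument mirrors the injectivity proof for $\Pi_1$ given just above, with the roles of radial and angular directions interchanged. First, by Theorem \ref{10.25}, the vector fields in the image of each map are Poisson, so both maps are well defined into $H^1$. Both maps are also linear. Hence it suffices to show that if a vector field of the form
$$
Y=\sum_{i=1}^{t}g_{i}\mathcal{P}_{l_{i}}+\sum_{j=1}^{m} g'_{j}\Theta_{j}
$$
with Casimir coefficients is Hamiltonian, then all coefficients vanish. Here the $g_i, g'_j$ are the differences of two tuples, and are still Casimirs. The case of $\Pi_3^R$ is the same, with $K$ in place of $L$.

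Next I would describe Hamiltonian vector fields for $\Pi_3$. For any $h\in C^\infty(\mathbb{C}^m)$,
$$
X_h=\Pi_3(dh,\cdot)=\sum_{i<j}c_{ij}\bigl(\mathcal{P}_i(h)\mathcal{P}_j-\mathcal{P}_j(h)\mathcal{P}_i\bigr).
$$
By hypothesis $c_{ll'}=0$ for $l\in L$ and all $l'$ (and $c$ is skew, so $c_{l'l}=0$ as well). Therefore
$$
X_h=\sum_{j\notin L}\beta_j\mathcal{P}_j
$$
for some smooth $\beta_j$. Thus every Hamiltonian field is radial and involves only the $\mathcal{P}_j$ with $j\notin L$. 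For $\Pi_3^R$, every Hamiltonian field is a combination of $\mathcal{P}_{i_1},\mathcal{P}_{i_2},\mathcal{P}_{i_3}$, and $K$ is disjoint from $\{i_1,i_2,i_3\}$ by hypothesis.

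Now suppose $Y=X_h$. Then
$$
\sum_{i}g_i\mathcal{P}_{l_i}+\sum_j g'_j\Theta_j-\sum_{j\notin L}\beta_j\mathcal{P}_j=0 .
$$
On the dense open set $\mathbb{C}^m_1$, the fields $\mathcal{P}_1,\dots,\mathcal{P}_m,\Theta_1,\dots,\Theta_m$ are pointwise linearly independent, since they restrict to $p_i\partial_{p_i}$ and $\partial_{\theta_i}$. The $\mathcal{P}_{l_i}$ with $l_i\in L$ do not occur in the Hamiltonian part. Hence $g_i=0$ and $g'_j=0$ on $\mathbb{C}^m_1$, and by continuity they vanish on all of $\mathbb{C}^m$. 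This gives injectivity.

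The only delicate point is the description of Hamiltonian fields. In particular, one must check that no $\Theta$-component and no $\mathcal{P}_l$ with $l\in L$ (respectively $l\in K$) can appear. This follows directly from the explicit form of $\Pi_3$ and $\Pi_3^R$ as wedges of the $\mathcal{P}$'s and from the vanishing hypotheses on the coefficients. The remaining step is the density argument, which is routine.
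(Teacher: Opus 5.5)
Your proof is correct and follows the same route the paper takes. The paper writes out no separate proof here; it relies on the argument it gave for $\Pi_1$, with radial and angular directions swapped: every Hamiltonian field of $\Pi_3$ (resp.\ $\Pi_3^R$) is a combination of the $\mathcal{P}_j$ with $j\notin L$ (resp.\ $j\in\{i_1,i_2,i_3\}$), and pointwise linear independence of $\mathcal{P}_1,\dots,\mathcal{P}_m,\Theta_1,\dots,\Theta_m$ on the dense set $\mathbb{C}^m_1$ then forces all Casimir coefficients to vanish. You are also right to read the hypothesis $c_{ll'}=0$ skew-symmetrically, which the argument needs.
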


\subsection{Poisson cohomology of $\Pi_{4}$ and $\Pi^R_4$}
$$
\Pi_{4}=\sum_{1\leq i<j\leq m} {e_{ij}}\mathcal{P}_{i}\wedge \mathcal{P}_{j}+\sum_{1\leq r<s\leq m} d_{rs}\Theta_{r}\wedge \Theta_{s}
$$

where each $e_{ij}$ and $d_{rs}$ are constant functions.

$$
\Pi^R_{4}=\sum_{1\leq k<j\leq 3} {e^R_{i_ji_k}}\mathcal{P}_{i_{j}}\wedge \mathcal{P}_{i_{k}}+\sum_{1\leq r<s\leq m} d^R_{rs}\Theta_{r}\wedge \Theta_{s}
$$

$$
e^R_{i_ji_k}=e^R_{i_ji_k}(x_{l}^2+y_{l}^2)\in C^{\infty}(\mathbb{C}^m)
$$
where $l\in\{1,\dots,m\}$ and $l\neq i_j,i_k$. In other words, $\mathcal{P}_{i_{j}}(e^R_{i_ji_k})=\mathcal{P}_{i_{k}}(e^R_{i_ji_k})=0$ for each pair $(k,j)\in \{(1,2),(2,3),(1,3)\}$ and $d^R_{rs}$ are constant functions.
\begin{lemma}\label{lemma:casimir_merged} 
The following properties hold for the Casimir functions of the Poisson structures $\Pi_3$ and $\Pi_3^R$: 
\begin{enumerate}
\item The function $p_l = x_l^2 + y_l^2\in\operatorname{Cas}(\mathbb{C}^m, \Pi_4)$ if and only if $e_{lj} = 0 \quad \forall 1 \le j \le m$. 
\item For each $l \in \{1, \dots, m\} \setminus \{i_1, i_2, i_3\}$, $p_l\in \operatorname{Cas}(\mathbb{C}^m, \Pi_4^R)$.
\item  $\theta_k \in C^\infty(\mathbb{C}_{-}^m)$ is a Casimir function on $\left(\mathbb{C}_{-}^m, \Pi_4|_{\mathbb{C}_{-}^m}\right)$ if and only if $d_{kj}=0\quad \forall 1 \le j \le m$. 
\item  $\theta_k \in C^\infty(\mathbb{C}_{-}^m)$ is a Casimir function on $\left(\mathbb{C}_{-}^m, \Pi_4^R|_{\mathbb{C}_{-}^m}\right)$ if and only if $d_{kj}^R=0\quad \forall 1 \le j \le m$.
\end{enumerate} 
\end{lemma}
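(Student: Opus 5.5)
The plan is to compute the Hamiltonian vector fields $\Pi_4(p_l,\cdot)$, $\Pi_4^R(p_l,\cdot)$ and $\Pi_4(\theta_k,\cdot)$, $\Pi_4^R(\theta_k,\cdot)$ explicitly, by contracting the differentials $dp_l$ and $d\theta_k$ into each wedge term. This uses two sets of elementary identities. The first set holds on all of $\mathbb{C}^m$:
\[
\Theta_i(p_l)=0, \qquad \mathcal{P}_i(p_l)=\delta_{il}\,p_l .
\]
The second set holds on $\mathbb{C}^m_{-}$, where $\theta_k$ is defined:
\[
\Theta_i(\theta_k)=\delta_{ik}, \qquad \mathcal{P}_i(\theta_k)=0 .
\]

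For (1), each angular term $d_{rs}\Theta_r\wedge\Theta_s$ kills $dp_l$. Each radial term contributes
\[
e_{ij}\bigl(\mathcal{P}_i(p_l)\mathcal{P}_j-\mathcal{P}_j(p_l)\mathcal{P}_i\bigr).
\]
Hence
\[
\Pi_4(p_l,\cdot)=p_l\Bigl(\sum_{j>l}e_{lj}\mathcal{P}_j-\sum_{j<l}e_{jl}\mathcal{P}_j\Bigr).
\]
The fields $\mathcal{P}_j$ are pointwise linearly independent on the dense open set $\mathbb{C}^m_1$, and $p_l\neq 0$ there. So this vanishes on $\mathbb{C}^m_1$ if and only if every $e_{lj}$ is zero, with $e$ extended antisymmetrically. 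By continuity and density, vanishing on $\mathbb{C}^m_1$ is equivalent to vanishing on $\mathbb{C}^m$; this mirrors Lemma \ref{10.20'}. The converse direction is immediate from the same formula.

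For (2), take $l\notin\{i_1,i_2,i_3\}$. The radial part of $\Pi_4^R$ involves only $\mathcal{P}_{i_1},\mathcal{P}_{i_2},\mathcal{P}_{i_3}$, and these annihilate $p_l$. The angular part also annihilates $p_l$. So $\Pi_4^R(p_l,\cdot)=0$ outright, with no density argument needed. This matches part (2) of the analogous lemma for $\Pi_3^R$.

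For (3) and (4), work on $\mathbb{C}^m_{-}$. Now the radial terms vanish because $\mathcal{P}_i(\theta_k)=0$. The angular terms give
\[
\Pi_4(\theta_k,\cdot)=\sum_{j>k}d_{kj}\Theta_j-\sum_{j<k}d_{jk}\Theta_j ,
\]
and likewise for $\Pi_4^R$ with $d^R$. The $\Theta_j$ are pointwise independent on $\mathbb{C}^m_{-}$ minus the coordinate hyperplanes, and the coefficients are constants. Therefore the bracket vanishes if and only if all $d_{kj}=0$ (respectively all $d^R_{kj}=0$), exactly as in Lemma \ref{10.20'}.

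The only point that needs care is the independence and density argument in (1). There the coefficient $p_l$ and the fields $\mathcal{P}_j$ degenerate on the coordinate hyperplanes, so I would restrict to $\mathbb{C}^m_1$ before comparing coefficients. Everything else is routine bookkeeping of the signs coming from the antisymmetry of the wedge.
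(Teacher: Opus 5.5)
Your proposal is correct and follows the same route the paper uses for the analogous Lemmas \ref{10.11} and \ref{10.20'} (the paper gives no separate proof of this lemma): you write the Hamiltonian vector fields of $p_l$ and $\theta_k$ as explicit combinations of the $\mathcal{P}_j$ or $\Theta_j$, then compare coefficients using pointwise linear independence on a dense open set. One small simplification: since $0\notin\mathbb{C}_{-}$, the set $\mathbb{C}^m_{-}$ already lies inside $\mathbb{C}^m_{1}$, so the $\Theta_j$ are independent at every point of $\mathbb{C}^m_{-}$ and you do not need to remove any coordinate hyperplanes in (3) and (4).
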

\begin{lemma}\label{lemma:poisson_vector_merged}
For each vector field $\mathcal{P}_l$ such that $1 \le l \le m$, the following hold:
\begin{enumerate}
    \item $\mathcal{P}_l\in\operatorname{Pois}(\mathfrak{X}^1(\mathbb{C}^m), \Pi_4)$.
    \item $\mathcal{P}_l\in \operatorname{Pois}(\mathfrak{X}^1(\mathbb{C}^m), \Pi_4^R)$ if and only if $\mathcal{P}_l(e_{i_j i_k}^R) = 0$ for all $1 \le j < k \le 3$.
\end{enumerate}
\end{lemma}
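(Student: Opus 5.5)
The plan is to compute the Schouten bracket $[\mathcal{P}_l,\Pi]$ term by term, using the graded Leibniz rule together with Corollary \ref{10.2}, exactly as in the proof of Lemma \ref{10.12}. For $\Pi=\Pi_4$ we expand
$$
[\mathcal{P}_l,\Pi_4]=\sum_{i<j}\big([\mathcal{P}_l,e_{ij}]\,\mathcal{P}_i\wedge\mathcal{P}_j+e_{ij}[\mathcal{P}_l,\mathcal{P}_i\wedge\mathcal{P}_j]\big)+\sum_{r<s}\big([\mathcal{P}_l,d_{rs}]\,\Theta_r\wedge\Theta_s+d_{rs}[\mathcal{P}_l,\Theta_r\wedge\Theta_s]\big).
$$
The two brackets with wedge products vanish by Corollary \ref{10.2}; recall that $[\mathcal{P}_l,\mathcal{P}_i\wedge\mathcal{P}_j]$ is, up to sign, the same as $[\mathcal{P}_i\wedge\mathcal{P}_j,\mathcal{P}_l]$. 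The remaining terms involve $[\mathcal{P}_l,c]=\mathcal{P}_l(c)$. Since $e_{ij}$ and $d_{rs}$ are constants, these derivatives are zero, so $[\mathcal{P}_l,\Pi_4]=0$, which proves (1).

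For (2) the same expansion applied to $\Pi_4^R$ reduces to
$$
[\mathcal{P}_l,\Pi_4^R]=\sum_{1\le k<j\le 3}\mathcal{P}_l\big(e^R_{i_ji_k}\big)\,\mathcal{P}_{i_j}\wedge\mathcal{P}_{i_k},
$$
because the $d^R_{rs}$ are constants. Sufficiency is then immediate: if every $\mathcal{P}_l(e^R_{i_ji_k})$ vanishes, the bracket is zero.

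For necessity we need that $\mathcal{P}_{i_1}\wedge\mathcal{P}_{i_2}$, $\mathcal{P}_{i_1}\wedge\mathcal{P}_{i_3}$ and $\mathcal{P}_{i_2}\wedge\mathcal{P}_{i_3}$ are pointwise linearly independent on a dense open set. This holds on $\mathbb{C}^m_1$, where $\mathcal{P}_i=p_i\,\partial/\partial p_i$ and the vectors $\partial/\partial p_i$ are part of a coordinate frame. So if the bracket vanishes, each coefficient $\mathcal{P}_l(e^R_{i_ji_k})$ vanishes on $\mathbb{C}^m_1$, and by continuity and density of $\mathbb{C}^m_1$ it vanishes on all of $\mathbb{C}^m$.

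The only point requiring care is this independence-and-density step, together with the sign bookkeeping in the graded Leibniz rule. Neither step presents a real obstacle: the statement is the direct analogue of Lemma \ref{lemma:poisson_vector_merged} for $\Pi_3$ and $\Pi_3^R$, since the additional angular part contributes nothing, its coefficients being constant and Corollary \ref{10.2} killing the wedge bracket.
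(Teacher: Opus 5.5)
Your proposal is correct and follows essentially the route the paper intends. The paper writes out no separate proof for this lemma; it mirrors the argument of Lemma~\ref{10.12}, which expands $[\mathcal{P}_l,\Pi]$ by the graded Leibniz rule, kills the wedge brackets with Corollary~\ref{10.2}, and reads off the coefficient derivatives using pointwise independence of the $\mathcal{P}_{i}\wedge\mathcal{P}_{j}$ on the dense set $\mathbb{C}^m_1$ — exactly your argument.
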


\begin{theorem}\label{10.25}
If there exist $K^R \subset\{1,\dots,m\}\setminus\{i_{1},i_{2},i_{3}\}$ and $K,L,L^R \subset\{1,\dots,m\}$ such that
\begin{enumerate}
  \item $e_{kj}=0$ and $d_{lj}=0$ for all $k\in K$, $l\in L$ and $1\leq j\leq m$
  \item $\mathcal{P}_{k}(e_{i_j i_k}^R)= 0$ and $d^R_{lj}=0$ for all $k\in K^R$, $l\in L^R$, $1\leq j\leq m$ and $i_{j},i_{k}\in\{i_{1},i_{2},i_{3}\}$.
\end{enumerate}
Then,
	$$
		\sum_{k\in K}f_{k}\mathcal{P}_{k}+\sum_{l\in L}h_{l}\Theta_{l}\in\operatorname{Pois}(\mathfrak{X}^1(\mathbb{C}^m),\Pi_{4})\iff \text{ Each }f_k,h_l\in\operatorname{Cas}(\mathbb{C}^m,\Pi_{4}) 
	$$
	$$
       \sum_{k\in K^R}f_{k}\mathcal{P}_{k}+\sum_{l\in L^R}h_{l}\Theta_{l}\in\operatorname{Pois}(\mathfrak{X}^1(\mathbb{C}^m),\Pi_{4}^R)\iff \text{ Each }f_k,h_l\in\operatorname{Cas}(\mathbb{C}^m,\Pi_{4}^R) 
    $$
\end{theorem}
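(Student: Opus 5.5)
The plan is to reduce both equivalences to Theorem \ref{10.9}, exactly as in the proof of Theorem \ref{10.13}. We take the open dense set $U=\mathbb{C}^m_{-}\cap\mathbb{C}^m_1$, whose closure is $\mathbb{C}^m$. The vector fields are $X=\mathcal{P}_k$ for $k\in K$ (resp.\ $K^R$) and $X=\Theta_l$ for $l\in L$ (resp.\ $L^R$). The test functions are $c_{\mathcal{P}_k}=p_k|_U$ and $c_{\Theta_l}=\theta_l$.

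\textbf{Step 1: the fields are Poisson.} Each $\Theta_l$ is a fundamental vector field of the $\mathbb{T}^m$-action, so it is Poisson for $\Pi_4$ and $\Pi_4^R$ by Lemma \ref{10.10}. For $\Pi_4$, each $\mathcal{P}_k$ is Poisson by part (1) of Lemma \ref{lemma:poisson_vector_merged}; this also follows from Corollary \ref{10.2}, since all coefficients are constant. For $\Pi_4^R$ and $k\in K^R$, the hypothesis $\mathcal{P}_k(e^R_{i_ji_k})=0$ together with part (2) of the same lemma shows that $\mathcal{P}_k$ is Poisson.

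\textbf{Step 2: the test functions are Casimirs on $U$.}
\begin{itemize}
\item The condition $e_{kj}=0$ for all $j$, with part (1) of Lemma \ref{lemma:casimir_merged}, makes $p_k$ a Casimir of $\Pi_4$. To see this directly, note that $\Theta_r(p_k)=0$ for every $r$, and the radial part of $\Pi_4(p_k,\cdot)$ is $\sum_j e_{kj}p_k\mathcal{P}_j$ up to sign, which vanishes.
\item For $\Pi_4^R$, part (2) applies because $K^R$ avoids $\{i_1,i_2,i_3\}$.
\item The conditions $d_{lj}=0$, resp.\ $d^R_{lj}=0$, with parts (3) and (4), make $\theta_l$ a Casimir on $(\mathbb{C}^m_-,\Pi_4|_{\mathbb{C}^m_-})$, resp.\ on $(\mathbb{C}^m_-,\Pi_4^R|_{\mathbb{C}^m_-})$. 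The radial terms cause no trouble, since $\mathcal{P}_i(\theta_l)=0$.
\end{itemize}
Restricting further to $U$ keeps all of these Casimirs.

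\textbf{Step 3: the pairing matrix is diagonal.} On $U$ we compute
\[
\mathcal{P}_k(p_{k'})=\delta_{kk'}p_k,\qquad \Theta_l(\theta_{l'})=\delta_{ll'},\qquad \mathcal{P}_k(\theta_l)=0,\qquad \Theta_l(p_k)=0 .
\]
The diagonal entries $p_k$ and $1$ are nonzero on $U$. So the hypotheses of Theorem \ref{10.9} hold, and it gives that $\sum f_k\mathcal{P}_k+\sum h_l\Theta_l$ is Poisson if and only if every $f_k$ and $h_l$ is a Casimir.

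\textbf{Main obstacle.} The one delicate point is applying Theorem \ref{10.9}, because the $c_X$ are only defined on $U$. The conclusion that each $f_i|_U$ is a Casimir on $U$ must be extended to $\mathbb{C}^m$, and this uses Lemma \ref{10.8} with density of $U$. This is the same mechanism already used in Theorem \ref{10.13}, so it is the step to watch, but no new input should be needed there.
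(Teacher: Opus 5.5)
Your proposal is correct and is essentially the paper's approach. The paper gives no separate proof of this theorem; the intended argument is the one written out for Theorem \ref{10.13}. That argument uses Lemma \ref{10.10} and the two lemmas stated just before this theorem to show the fields are Poisson and that $p_k$ and $\theta_l$ are Casimirs on $\mathbb{C}^m_-$, and then applies Theorem \ref{10.9} with the diagonal pairing you computed. (Minor remark: the origin is already excluded from $\mathbb{C}_-$, so $\mathbb{C}^m_-\subset\mathbb{C}^m_1$ and your $U$ is simply $\mathbb{C}^m_-$.)
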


\begin{theorem}
	Under the same hypotheses as Theorem \ref{10.25} with $K=\{k_{1},\dots,k_{s}\}$, $L=\{l_{1},\dots,l_{t}\}$, $K^R=\{k'_{1},\dots,k'_{s'}\}$ and $L^R=\{l'_{1},\dots,l'_{t'}\}$, the following maps
	$$
		\bigoplus_{i=1}^{s+t} \mathrm{Cas}(\mathbb{C}^m,\Pi_{4})\to H^1_{\Pi_{4}}\quad \quad (f_{1},\dots,f_{s},f'_{1}\dots,f'_{t})\longmapsto \Big[\sum_{i=1}^{s}f_{i}\mathcal{P}_{k_{i}}+\sum_{j=1}^{t} f'_{j}\Theta_{l_{j}} \Big]
	$$
	$$
		\bigoplus_{i=1}^{s'+t'} \mathrm{Cas}(\mathbb{C}^m,\Pi_{4}^R)\to H^1_{\Pi_{4}^R}\quad \quad (f_{1},\dots,f_{s'},f'_{1}\dots,f'_{t'})\longmapsto\Big[\sum_{i=1}^{s'}f_{i}\mathcal{P}_{k'_{i}}+\sum_{j=1}^{t'} f_{j}'\Theta_{l'_{j}} \Big]
	$$
	are injective.
	
\end{theorem}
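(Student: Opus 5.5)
We follow the pattern of the injectivity result for $\Pi_1$, treating $\Pi_4$ and $\Pi_4^R$ in parallel. First, by Theorem \ref{10.25} (the $\Pi_4$ version), the images of both maps consist of Poisson vector fields, so the maps into $H^1$ are well defined. Both maps are linear, so it suffices to show that their kernels are trivial. Suppose
\[
Y=\sum_{i=1}^{s}f_{i}\mathcal{P}_{k_{i}}+\sum_{j=1}^{t} f'_{j}\Theta_{l_{j}}
\]
is Hamiltonian, that is, $Y=\Pi_4^\sharp(dg)$ for some $g\in C^\infty(\mathbb{C}^m)$ with all $f_i,f'_j$ Casimir. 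We must deduce that every $f_i$ and every $f'_j$ vanishes. The argument for $\Pi_4^R$ is identical, with $(K,L,e,d)$ replaced by $(K^R,L^R,e^R,d^R)$.

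The key step is to describe the Hamiltonian vector fields explicitly. For any $g$,
\[
\Pi_4^\sharp(dg)=\sum_{i<j}e_{ij}\bigl(\mathcal{P}_i(g)\mathcal{P}_j-\mathcal{P}_j(g)\mathcal{P}_i\bigr)+\sum_{r<s}d_{rs}\bigl(\Theta_r(g)\Theta_s-\Theta_s(g)\Theta_r\bigr).
\]
By hypothesis (1), $e_{kj}=0$ for $k\in K$ and all $j$; by antisymmetry we also take $e_{jk}=0$. Hence no term involving $\mathcal{P}_k$ with $k\in K$ appears. In the same way, $d_{lj}=0$ for $l\in L$ removes every $\Theta_l$ with $l\in L$. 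So every Hamiltonian field has the form
\[
\sum_{j\notin K}\alpha_j\mathcal{P}_j+\sum_{j\notin L}\beta_j\Theta_j, \qquad \alpha_j,\beta_j\in C^\infty(\mathbb{C}^m).
\]
For $\Pi_4^R$ the radial part only involves $\mathcal{P}_{i_1},\mathcal{P}_{i_2},\mathcal{P}_{i_3}$. Since $K^R$ is disjoint from $\{i_1,i_2,i_3\}$, the $\mathcal{P}_k$ with $k\in K^R$ never occur, regardless of the coefficient functions $e^R$.

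Equating $Y$ with such an expression, we use that $\mathcal{P}_1,\dots,\mathcal{P}_m,\Theta_1,\dots,\Theta_m$ are pointwise linearly independent on the dense open set $\mathbb{C}^m_1$, where they equal $p_i\partial_{p_i}$ and $\partial_{\theta_i}$. This gives $f_i=0$ and $f'_j=0$ on $\mathbb{C}^m_1$, and by continuity on all of $\mathbb{C}^m$. Hence the kernel is trivial and both maps are injective.

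The main obstacle is the bookkeeping in the decomposition step. The hypothesis states $e_{kj}=0$ only for the index order written, so one must make sure both orderings $e_{kj}$ and $e_{jk}$ are covered. This holds either because the convention $e_{ij}=-e_{ji}$ for $i>j$ is adopted, or because the condition is read as quantifying over all $j$ with the coefficient attached to the unordered pair. With that convention made explicit, the rest is the same linear-independence argument used for $\Pi_1$.
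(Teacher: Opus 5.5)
Your argument is correct and is essentially the paper's. The paper writes out no separate proof here; the intended one is the $\Pi_1$ injectivity argument you follow. The hypotheses remove every $\mathcal{P}_k$ ($k\in K$, resp.\ $k\in K^R$, the latter automatically since $K^R\cap\{i_1,i_2,i_3\}=\emptyset$) and every $\Theta_l$ ($l\in L$, resp.\ $L^R$) from all Hamiltonian vector fields. Pointwise independence of $\mathcal{P}_1,\dots,\mathcal{P}_m,\Theta_1,\dots,\Theta_m$ on the dense set $\mathbb{C}^m_1$ then forces the coefficients to vanish. Your reading of $e_{kj}=0$ and $d_{lj}=0$ as covering both index orders matches the paper's own convention (compare the proof of Lemma \ref{10.20'}).
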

\section{Poisson Cohomology of $\mathcal{Z}_{\Delta}$}\label{P cohom of Z}
\begin{construction}\label{11.1}
Let $I=(i_1, \ldots, i_r),K=(k_1, \ldots, k_s)\subset \{1,\dots,m\}$ such that $1 \leq i_1 < i_2 \ldots < i_r \leq m$, $1\leq k_1< \ldots < k_{s}\leq m$ and $F_{I}$ and $F_{K}$ are faces of $\Delta$. Corresponding to them, let $J=(j_{1},\dots,j_{m-r})=I^c$  (complement of $I$) and $L=(l_1,\dots,l_{m-s})=K^c$ (complement of $K$) such that $1 \leq j_1 < j_2 \ldots < j_{m-r} \leq m$ , $1 \leq l_1 < \ldots < l_{m-s} \leq m$. The 4 possible cases are listed below:
\begin{itemize}
\item $I=K\iff J=L$
\item $I\subsetneq K\iff L\subsetneq J$ 
\item $K\subsetneq I\iff J\subsetneq L$
\item $I\not\subset K$ and $K\not\subset I \iff L\not\subset J$ and $J\not\subset L$
\end{itemize}

Recall Construction\ref{7.1} and \ref{7.3}. Construct a vector field $(P\zeta)^I_{u}\in \mathfrak{X}^{1}(\mathbb{C}^m)$ pointwise. At each point $z \in \mathbb{C}^m$,
$$
(P\zeta)^I_{u}|_{z}=P^I(z)\zeta^I_{u}|_{z}
$$
where $P^I=\prod_{v=1}^{m-r}p_{j_{v}}\in C^{\infty}(\mathbb{C}^m)$ and $\zeta^I_{u}=\sum^{m-r}_{l=1}\left( _{l}\nu^I_{u}\left( \frac{x_{j_{l}}}{p_{j_{l}}}\frac{\partial}{\partial x_{j_{l}}}+ \frac{y_{j_{l}}}{p_{j_{l}}} \frac{\partial}{\partial y_{j_{l}}} \right) \right)\in T_{z}(\mathbb{C}^m)$. 
\end{construction}
Clearly $(P\zeta)^I_{u}$ is smooth and well-defined at each point in $\mathbb{C}^m$;  thus $(P\zeta)^I_{u}$ is a vector field on $\mathbb{C}^m$. 
\begin{theorem}\label{11.2}
At each point $q\in\mathcal{Z}_{\Delta}$, $(P\zeta)^I_{u}|_{q} \in T_{q}(\mathcal{Z}_{\Delta})$.
\end{theorem}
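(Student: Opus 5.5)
We need to show that $(P\zeta)^I_u|_q \in \ker d\mu_q$ for every $q\in\mathcal{Z}_{\Delta}$, where $\mu(z)=\Gamma|z|^2$ as in \eqref{eq: mu}. The first step is to clear the denominators. Writing $\hat P^I_{l}=\prod_{v\neq l}p_{j_v}$, we have
$$
(P\zeta)^I_u=\sum_{l=1}^{m-r}{}_{l}\nu^I_u\,\hat P^I_{l}\left(x_{j_l}\frac{\partial}{\partial x_{j_l}}+y_{j_l}\frac{\partial}{\partial y_{j_l}}\right)=2\sum_{l=1}^{m-r}{}_{l}\nu^I_u\,\hat P^I_{l}\,\mathcal{P}_{j_l}.
$$
This expression is manifestly smooth on $\mathbb{C}^m$, and it is the form I would use throughout.

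The second step is to evaluate $d\mu_q$ on each term, using the matrix of $d\mu_q$ displayed before Construction \ref{7.1}. Since $d\mu_q(x_j\partial_{x_j}+y_j\partial_{y_j})=2p_j\gamma_j$, we get
$$
d\mu_q\big((P\zeta)^I_u|_q\big)=2\sum_{l=1}^{m-r}{}_{l}\nu^I_u\,\hat P^I_l(q)\,p_{j_l}(q)\,\gamma_{j_l}=2P^I(q)\sum_{l=1}^{m-r}{}_{l}\nu^I_u\,\gamma_{j_l}=2P^I(q)\,\mathscr{E}_I\nu^I_u .
$$
By Construction \ref{7.1}, $\nu^I_u\in\mathscr{F}_I=\operatorname{Ker}\mathscr{E}_I$, so this vanishes at every $q$, not only on $N_I$. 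This is the same computation as the one for $\zeta^I_k$ in Theorem \ref{7.4}, now carried out globally.

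The one delicate point is at points $q$ where some $p_{j_l}(q)=0$. There $\zeta^I_u$ itself is undefined, but the cleared form above is smooth and the same identity holds: both sides are simply $0$, since $P^I(q)=0$ and every term containing $\hat P^I_l p_{j_l}$ vanishes. Alternatively, one can argue by density: the identity holds on the open dense subset of $\mathcal{Z}_{\Delta}$ where all $p_{j}\neq 0$, and by continuity of the smooth vector field $(P\zeta)^I_u$ and of $d\mu$ it extends to all of $\mathcal{Z}_{\Delta}$. I would present the direct computation and note the continuity argument as a check. Finally, since $T_q\mathcal{Z}_{\Delta}=\operatorname{Ker}(d\mu_q)$, the result follows.
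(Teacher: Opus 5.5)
Your proof is correct, but it takes a different route from the paper. The paper argues stratum by stratum. For $q\in N_K$ it splits into four cases according to how $K$ compares with $I$. When $K\subseteq I$ (Cases 1 and 3), every $p_j$ with $j\in J$ is nonzero at $q$, so $\zeta^I_u|_q$ is defined and lies in $T_q\mathcal{Z}_{\Delta}$ by Theorem~\ref{7.4} or Proposition~\ref{7.5}. When $K\not\subseteq I$ (Cases 2 and 4), some index in $J\cap K$ forces $P^I(q)=0$, and hence $(P\zeta)^I_u|_q=0$.

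You instead clear denominators, writing $(P\zeta)^I_u=2\sum_l {}_{l}\nu^I_u\,\hat P^I_l\,\mathcal{P}_{j_l}$. You then compute $d\mu\big((P\zeta)^I_u\big)=2P^I\,\mathscr{E}_I\nu^I_u\equiv 0$ on all of $\mathbb{C}^m$ in one stroke; this is the computation of Theorem~\ref{7.4} done globally.

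Your route has several advantages:
\begin{itemize}
\item It needs no case analysis.
\item It gives an explicit polynomial formula, which justifies the paper's unproved remark that $(P\zeta)^I_u$ is smooth on $\mathbb{C}^m$.
\item It proves more: the field is tangent to every regular level set of $\mu$, not just $\mu^{-1}(\mathbf{c})$.
\item It uses nothing about $I$ beyond $\nu^I_u\in\operatorname{Ker}\mathscr{E}_I$.
\end{itemize}
Your cleared formula also shows the vanishing directly. If some $p_{j_w}(q)=0$, every term carries either a factor $p_{j_w}$ or the coefficients $x_{j_w},y_{j_w}$, so the vector itself is zero, in agreement with the paper's Cases 2 and 4. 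For the same reason, your density/continuity argument is redundant, though harmless.

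The paper's case split does buy something in exchange. It records where the field vanishes (on the strata $N_K$ with $K\not\subseteq I$). It also shows that elsewhere the field is a positive multiple of the frame vector $\zeta^I_u$ from Section~\ref{tangent spaces of Z}, which ties the field back to that tangent frame.
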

\begin{proof}
Let $q \in \mathcal{Z}_{\Delta}$. By Construction \ref{6.1}, there exists $K=(k_1, \ldots, k_s)$ such that $F_{K}$ is a face of $\Delta$ and $q\in N_{K}=\pi^{-1}(i_{A}(F^o_{K}))$ (a submanifold in the stratified $\mathcal{Z}_{\Delta}$).
\textbf{Case-1}\\
If $K=I$, then by Theorem \ref{7.4}, $\zeta^I_{u}|_{q}\in T_{q}(\mathcal{Z}_{\Delta})\implies(P\zeta)^I_{u}|_{q} \in T_{q}(\mathcal{Z}_{\Delta})$.
\\
\textbf{Case-2}\\
If $I\subsetneq K\iff L\subsetneq J$, then there exists $j_{w}\in J\setminus L$ and  $p_{j_{w}}=x^{2}_{j_{w}}+y^{2}_{j_{w}}\in C^{\infty}(\mathbb{C}^m)$ such that $(p_{j_{w}})|_{N_{K}}=0$. Since $q\in N_K$, we get $P^I(q)=\prod_{v=1}^{m-r}(p_{j_{v}}(q))=p_{j_{1}}(q)\times\cdots\times p_{j_{w}}(q)\times\cdots\times p_{j_{m-r}}(q)=0\implies(P\zeta)^I_{u}|_{q}=0 \in T_{q}(\mathcal{Z}_{\Delta})$.
\\
\textbf{Case-3}\\
If $K\subsetneq I\iff J\subsetneq L$, then by Proposition \ref{7.5}, $\zeta^I_{u}|_{q}\in T_{q}(\mathcal{Z}_{\Delta})\implies(P\zeta)^I_{u}|_{q} \in T_{q}(\mathcal{Z}_{\Delta})$.
\\
\textbf{Case-4}\\
If $I\not\subseteq K$ and $K\not\subseteq I$, then $J\cap K \not= \emptyset$, so there exists $j_{w}\in J\cap K$ such that $(p_{j_{w}})|_{N_{K}}=0$. Then by the same reasoning as in Case-2, $(P\zeta)^I_{u}|_{q}=0 \in T_{q}(\mathcal{Z}_{\Delta})$.
\end{proof}

\begin{corollary}\label{11.3}
	$(P\zeta)^I_{u}|_{\mathcal{Z}_{\Delta}}$ is a well-defined vector field on  $\mathcal{Z}_{\Delta }$.
\end{corollary}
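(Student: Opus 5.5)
The corollary should follow from Construction \ref{11.1} together with Theorem \ref{11.2}. The plan is to view $(P\zeta)^I_u$ as a smooth ambient vector field on $\mathbb{C}^m$ that is tangent to the embedded submanifold $\mathcal{Z}_\Delta$ at every point, and then to invoke the standard fact that such a field restricts to a smooth vector field on the submanifold.

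First, I will confirm smoothness on all of $\mathbb{C}^m$. Expanding, the coefficient of $\partial/\partial x_{j_l}$ in $(P\zeta)^I_u$ is
\[
{}_{l}\nu^I_u\, x_{j_l}\prod_{v\neq l} p_{j_v},
\]
and similarly for $\partial/\partial y_{j_l}$. The factor $p_{j_l}$ in $P^I$ cancels the denominator, so every coefficient is a polynomial in the $x_i,y_i$. This makes the field globally smooth, including where some $p_{j_l}$ vanish, which is the assertion made after Construction \ref{11.1}.

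Second, Theorem \ref{11.2} gives $(P\zeta)^I_u|_q\in T_q(\mathcal{Z}_\Delta)$ for every $q\in\mathcal{Z}_\Delta$. Hence the assignment $q\mapsto (P\zeta)^I_u|_q$ is a section of $T\mathcal{Z}_\Delta$, viewed as a subbundle of $T\mathbb{C}^m|_{\mathcal{Z}_\Delta}$.

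Third, I will show this section is smooth. Since $\mathcal{Z}_\Delta=\mu^{-1}(\mathbf{c})$ is a regular level set, it is an embedded submanifold, so around each point we may take slice coordinates for $\mathcal{Z}_\Delta$ in $\mathbb{C}^m$. In these coordinates, tangency means the components of the field normal to the slice vanish along $\mathcal{Z}_\Delta$. The remaining components are smooth functions restricted to the slice, and so they are smooth on $\mathcal{Z}_\Delta$. Equivalently, for $f\in C^\infty(\mathcal{Z}_\Delta)$ with smooth extension $F$, we set $(P\zeta)^I_u|_{\mathcal{Z}_\Delta}(f)=\bigl((P\zeta)^I_u F\bigr)|_{\mathcal{Z}_\Delta}$. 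This is smooth, and by tangency it is independent of the chosen extension.

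No step is a real obstacle, since Theorem \ref{11.2} already carries the content. The one point needing care is the case analysis across strata: the field is tangent on each stratum $N_K$ by different reasons (Cases 1–4), yet the resulting restriction must be smooth across strata. This is handled because smoothness comes from the ambient field, not from the stratum-wise description.
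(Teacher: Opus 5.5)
Your proposal is correct and follows the paper's own (implicit) argument: global smoothness of $(P\zeta)^I_u$ on $\mathbb{C}^m$, as noted after Construction \ref{11.1}, together with the pointwise tangency of Theorem \ref{11.2}, gives the result once one applies the standard restriction of a tangent ambient field to the embedded submanifold $\mathcal{Z}_{\Delta}=\mu^{-1}(\mathbf{c})$. Your explicit polynomial form of the coefficients is a useful addition: it is what actually justifies the claim $(P\zeta)^I_u|_q=0$ in Cases 2 and 4, where $\zeta^I_u|_q$ alone is undefined, and it shows directly that $d\mu\bigl((P\zeta)^I_u\bigr)=2P^I\,\mathscr{E}_I\nu^I_u=0$ on all of $\mathbb{C}^m$, so the stratum-by-stratum case analysis is not even needed.
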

 Recall that $\mathscr{I}$ denotes the set of  tuples $I$ such that $F_I$ is a face of $\Delta$.
\begin{definition}
$$
	\mathscr{I}^{\ast}:=\{(I,u)\in \mathscr{I}\times\{1,\dots,n\}\mid 1\leq u\leq n-r\text{ where $n-r$ is the dimension of $F^o_I$}\}
$$
\end{definition}
\begin{remark}
To each pair $(I,u)\in \mathscr{I}^*$, associate the vector field $(P\zeta)^I_{u}|_{\mathcal{Z}_{\Delta}}$, which involves only radial directions and coefficients depending only on radial coordinates.
\end{remark}
\begin{lemma}\label{11.5}
	Each $\Theta_{i}|_{\mathcal{Z}_{\Delta}}$ is a well-defined vector field on  $\mathcal{Z}_{\Delta }$.
\end{lemma}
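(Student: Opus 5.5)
The plan is to show that at every point $q\in\mathcal{Z}_{\Delta}$ the vector $\Theta_i|_q$ is tangent to $\mathcal{Z}_{\Delta}$, and then to conclude smoothness from the fact that $\Theta_i$ is already a smooth vector field on $\mathbb{C}^m$.

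First I would note that $\Theta_i=-y_i\frac{\partial}{\partial x_i}+x_i\frac{\partial}{\partial y_i}$ is exactly $-\phi_i$, where $\phi_i$ is the field of Proposition \ref{7.6}. That proposition gives $d\mu_q(\phi_i)=0$ for every $q\in\mathcal{Z}_{\Delta}$. Since $T_q\mathcal{Z}_{\Delta}=\operatorname{Ker}(d\mu_q)$, it follows that $\Theta_i|_q\in T_q\mathcal{Z}_{\Delta}$. The same tangency can be seen geometrically: $\Theta_i$ is the fundamental vector field of the $i$-th circle factor of $\mathbb{T}^m$. This action preserves each $|z_j|^2$, hence preserves $\mu$, so its orbits through points of $\mathcal{Z}_{\Delta}=\mu^{-1}(\mathbf{c})$ stay in $\mathcal{Z}_{\Delta}$.

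Second, since $\Theta_i$ is a smooth vector field on $\mathbb{C}^m$ that is tangent to the embedded submanifold $\mathcal{Z}_{\Delta}$ at every point, its restriction is a smooth section of $T\mathcal{Z}_{\Delta}$. This is the standard fact that a tangent ambient field restricts smoothly, and it can be checked in a slice chart for the regular level set $\mu^{-1}(\mathbf{c})$. This is the same reasoning the paper uses in Corollary \ref{11.3}.

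I expect no real obstacle. The only point needing care is at points where $z_i=0$, that is, on strata $N_I$ with $i\in I$. There $\Theta_i|_q=0$, which is trivially tangent, and the kernel computation in Proposition \ref{7.6} holds uniformly, so no case split over the stratification is required.
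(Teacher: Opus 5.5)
Your proposal is correct and matches the paper's proof: the paper simply invokes Proposition~\ref{7.6} (with $\Theta_i=-\phi_i$) to conclude $\Theta_i|_q\in T_q\mathcal{Z}_{\Delta}=\operatorname{Ker}(d\mu_q)$ at every $q\in\mathcal{Z}_{\Delta}$. Your remarks that the restriction is smooth (via a slice chart) and that the circle action preserves $\mu$ only make explicit what the paper leaves implicit.
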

\begin{proof}
	At each $q\in \mathcal{Z}_{\Delta}$, by Proposition \ref{7.6} , $\Theta _{i}|_{q}\in T_{q}(\mathcal{Z}_{\Delta})$ for all $1\leq i\leq m$.
\end{proof}

\begin{lemma}\label{11.6}
	For each $(I,u)\in \mathscr{I}^{\ast}$ and $1\leq i\leq m$, $(P\zeta)^I_{u}|_{\mathcal{Z}_{\Delta}}$ and $\Theta_{i}|_{\mathcal{Z}_{\Delta}}$ commute, 
	$$
		[(P\zeta)^I_{u}|_{\mathcal{Z}_{\Delta}},\Theta_{i}|_{\mathcal{Z}_{\Delta}}]=0
	$$
	As a consequence, 
	$$
		[(P\zeta)^I_{u}|_{\mathcal{Z}_{\Delta}},\Theta_{i}|_{\mathcal{Z}_{\Delta}}\wedge \Theta_{j}|_{\mathcal{Z}_{\Delta}}]=0
	$$
\end{lemma}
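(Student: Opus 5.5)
The plan is to prove the commutation on all of $\mathbb{C}^m$ and then restrict. By Corollary \ref{11.3} and Lemma \ref{11.5}, both $(P\zeta)^I_{u}$ and $\Theta_i$ are vector fields on $\mathbb{C}^m$ tangent to the embedded submanifold $\mathcal{Z}_{\Delta}$. For vector fields tangent to a submanifold, the bracket of the restrictions is the restriction of the bracket. It therefore suffices to show $[(P\zeta)^I_{u},\Theta_i]=0$ on $\mathbb{C}^m$.

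First I would rewrite $(P\zeta)^I_u$ in terms of the Euler fields $\mathcal{P}_j$. Since $\frac{x_j}{p_j}\partial_{x_j}+\frac{y_j}{p_j}\partial_{y_j}=\frac{2}{p_j}\mathcal{P}_j$ off $\{p_j=0\}$, we get
\[
(P\zeta)^I_{u}=\sum_{l=1}^{m-r} 2\,{}_{l}\nu^I_{u}\,\Big(\prod_{v\neq l}p_{j_v}\Big)\,\mathcal{P}_{j_l}.
\]
The coefficients $g_l:=2\,{}_{l}\nu^I_{u}\prod_{v\neq l}p_{j_v}$ are polynomials in the $p$'s, so this identity, valid on the dense set $\mathbb{C}^m_1$, holds everywhere by continuity. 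It also justifies the smoothness claimed after Construction \ref{11.1}.

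Next I would compute the bracket using the Leibniz rule:
\[
[g_l\mathcal{P}_{j_l},\Theta_i]=g_l[\mathcal{P}_{j_l},\Theta_i]-\Theta_i(g_l)\,\mathcal{P}_{j_l}.
\]
The first term vanishes by Lemma \ref{3.1}. The second vanishes because $\Theta_i(p_k)=0$ for all $i,k$, so $\Theta_i(g_l)=0$. This is essentially Lemma \ref{10.1} with $h_j=g_l$. Summing over $l$ gives $[(P\zeta)^I_u,\Theta_i]=0$ on $\mathbb{C}^m$, and hence on $\mathcal{Z}_{\Delta}$.

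For the consequence, I would use the graded Leibniz identity for the Schouten bracket of a vector field with a wedge product:
\[
[X,\Theta_i\wedge\Theta_j]=[X,\Theta_i]\wedge\Theta_j+\Theta_i\wedge[X,\Theta_j].
\]
Both terms vanish by the first part.

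The only delicate step is the passage from $\mathbb{C}^m$ to $\mathcal{Z}_{\Delta}$, namely that restriction commutes with the bracket. This follows because the restrictions are $i$-related to the ambient fields under the inclusion $i$. Alternatively, one could work intrinsically on $\mathcal{Z}_{\Delta}$ and test the bracket on restrictions of ambient functions, which locally generate $C^\infty(\mathcal{Z}_{\Delta})$.
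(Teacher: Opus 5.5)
Your proof is correct. The paper states Lemma \ref{11.6} without a written proof, but its intended route is exactly yours. This is signalled by the remark that $(P\zeta)^I_u$ involves only radial directions with radial coefficients, and by the reliance on Lemmas \ref{3.1} and \ref{10.1}.

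The route is: write $(P\zeta)^I_u=\sum_l 2\,{}_l\nu^I_u\bigl(\prod_{v\neq l}p_{j_v}\bigr)\mathcal{P}_{j_l}$, then use $[\mathcal{P}_j,\Theta_i]=0$ and $\Theta_i(p_k)=0$. Restrict to $\mathcal{Z}_{\Delta}$ via tangency, and finish with the Leibniz rule for the wedge product. Your explicit polynomial form is a useful addition, since it also makes precise the paper's otherwise ill-defined pointwise formula for $(P\zeta)^I_u$ at points where some $p_{j_l}=0$.
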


\begin{lemma}\label{11.7}
	If $X\in \mathfrak{X}^{1}(\mathcal{Z}_{\Delta})$ is a fundamental vector field produced by the action of $\mathbb{T}^m$ on $\mathcal{Z}_{\Delta}$, then $X$ is a Poisson vector field on $\mathcal{Z}_{\Delta}$ for each of the Poisson structures $\Pi_{1}|_{\mathcal{Z}_{\Delta}}$, ${}_{K}\Pi^R_{3}|_{\mathcal{Z}_{\Delta}}$ and ${}_{K}\Pi^R_{4}|_{\mathcal{Z}_{\Delta}}$
\end{lemma}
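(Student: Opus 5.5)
The plan is to reduce the statement to the corresponding fact on $\mathbb{C}^m$ (Lemma \ref{10.10}) together with the compatibility of the restriction in Lemma \ref{lem: Poisson on submanifold}. Let $X$ be the fundamental vector field of some $\xi\in Lie(\mathbb{T}^m)$ acting on $\mathcal{Z}_{\Delta}$. It is the restriction to $\mathcal{Z}_{\Delta}$ of the fundamental vector field $\tilde X=\sum_i \xi_i\Theta_i$ on $\mathbb{C}^m$; by Proposition \ref{7.6} and Lemma \ref{11.5}, $\tilde X$ is tangent to $\mathcal{Z}_{\Delta}$. Let $\Pi$ denote any of $\Pi_1$, ${}_K\Pi_3^R$, ${}_K\Pi_4^R$. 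By Lemma \ref{9.1} and Lemma \ref{lem: Poisson on submanifold}, the induced bracket is $\{f,h\}_{\mathcal{Z}_{\Delta}}=i^*\{F,H\}$ for arbitrary extensions $F,H$.

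The main computation checks the Poisson derivation identity of Definition \ref{Poisson vf} for $X$ on $\mathcal{Z}_{\Delta}$. Take $f,h\in C^\infty(\mathcal{Z}_{\Delta})$ and choose extensions $F,H\in C^\infty(\mathbb{C}^m)$. Since $\tilde X$ is tangent to $\mathcal{Z}_{\Delta}$, the function $\tilde X F$ is an extension of $Xf$, and $i^*(\tilde X G)=X(i^*G)$ for every $G$. Hence
$$X\{f,h\}_{\mathcal{Z}_{\Delta}}=i^*\big(\tilde X\{F,H\}\big)=i^*\big(\{\tilde XF,H\}+\{F,\tilde XH\}\big)=\{Xf,h\}_{\mathcal{Z}_{\Delta}}+\{f,Xh\}_{\mathcal{Z}_{\Delta}}.$$
The middle equality uses Lemma \ref{10.10}, which says $\tilde X$ is Poisson for $\Pi$. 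That lemma relies on Remark \ref{re8.2} and on the (A3) part of Theorem \ref{8.1}, where $\mathbb{T}^m$-invariance of $\Pi_1$, ${}_K\Pi^R_3$ and ${}_K\Pi^R_4$ was shown. The last equality uses independence of the choice of extension, which holds because $\Pi_q\in\wedge^2T_q\mathcal{Z}_{\Delta}$.

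An alternative route avoids vector fields entirely. Each $g\in\mathbb{T}^m$ preserves $\mathcal{Z}_{\Delta}$, since $|z_i|^2$ is invariant. The map $\Psi_g$ is Poisson on $\mathbb{C}^m$, so its restriction is a Poisson diffeomorphism of $(\mathcal{Z}_{\Delta},\Pi|_{\mathcal{Z}_{\Delta}})$, because $F\circ\Psi_g$ extends $f\circ\Psi_g|_{\mathcal{Z}_{\Delta}}$. Differentiating along $\exp(t\xi)$, exactly as in the argument before Lemma \ref{2.6}, then gives $L_X\Pi|_{\mathcal{Z}_{\Delta}}=0$.

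The only step needing care is the tangency and extension bookkeeping: $i^*(\tilde X G)=X(i^*G)$ requires $\tilde X$ to be tangent to $\mathcal{Z}_{\Delta}$, and this is exactly Proposition \ref{7.6}. Everything else is formal.
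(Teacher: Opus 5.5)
Your proposal is correct. Your ``alternative route'' is the paper's proof. The paper just says the argument is that of Lemma~\ref{10.10}: each $g\in\mathbb{T}^m$ acts by Poisson diffeomorphisms, and one differentiates along $\exp(t\xi)$ as in the discussion before Lemma~\ref{2.6}. You also spell out why $\Psi_g|_{\mathcal{Z}_{\Delta}}$ is Poisson for the restricted bracket, namely that $F\circ\Psi_g$ extends $f\circ\Psi_g|_{\mathcal{Z}_{\Delta}}$. The paper leaves this step implicit.

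Your main route does the two steps in the opposite order. It first differentiates on $\mathbb{C}^m$, where $\tilde X$ is Poisson by Lemma~\ref{10.10}. It then restricts the derivation identity, using that $\tilde X$ is tangent to $\mathcal{Z}_{\Delta}$ and that the induced bracket does not depend on the choice of extension. This is equally valid; global extensions exist because $\mathcal{Z}_{\Delta}$ is a closed embedded submanifold. Its advantage is that it needs only the infinitesimal statement on $\mathbb{C}^m$, not a separate check that the restricted diffeomorphisms are Poisson.

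You are also right to ground the invariance of ${}_K\Pi^R_3$ and ${}_K\Pi^R_4$ in the (A3) part of Theorem~\ref{8.1}. Lemma~\ref{10.10} is stated only for the bivectors of Section~\ref{str on Cm}, and those do not include these Section~8 structures.
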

\begin{proof}
	The proof is the same as that of Lemma \ref{10.10}.
\end{proof}

In the rest of the section, the proofs use Lemma \ref{lem: Poisson on submanifold} and the techniques used in Section \ref{P cohom Cm}. 
\subsection{Poisson Cohomology of $\Pi_{1}|_{\mathcal{Z}_{\Delta}}$}
$$
\Pi_{1}|_{\mathcal{Z}_{\Delta}}= \sum_{1\leq i<j\leq m} a_{ij}|_{\mathcal{Z}_{\Delta}}\Theta_{i}\wedge \Theta_{j}
$$
where each $a_{ij}|_{\mathcal{Z}_{\Delta}}\in C^\infty(\mathcal{Z}_{\Delta})$ and $\Theta_{k}|_{\mathcal{Z}_{\Delta}}(a_{ij})=0 \quad \forall1\leq k \leq m$. 
We have an embedding, 
$$
\mathcal{E}:\mathcal{Z}_{\Delta} \rightarrow \mathbb{C}^m
$$
defined in the commutative square in Section 5.

\begin{lemma}\label{11.8}
Each $\mathcal{E}^*(p_{i})=\mathcal{E}^*(x_{i}^{2}+y_{i}^{2})\in C^{\infty}(\mathcal{Z}_{\Delta})$ is a Casimir function on $(\mathcal{Z}_{\Delta},\Pi_{1}|_{\mathcal{Z}_{\Delta}})$.
\end{lemma}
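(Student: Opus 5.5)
The plan is to reduce the statement to Lemma \ref{10.11} on $\mathbb{C}^m$ and transport it to $\mathcal{Z}_{\Delta}$ through the submanifold Poisson structure of Lemma \ref{lem: Poisson on submanifold}. Recall that, by Lemma \ref{9.1}, the structure $\Pi_{1}|_{\mathcal{Z}_{\Delta}}$ is the one induced from $\Pi_1$ via Lemma \ref{lem: Poisson on submanifold}. Its bracket is computed by extension and restriction: for $f,h\in C^{\infty}(\mathcal{Z}_{\Delta})$,
$$
\{f,h\}_{\mathcal{Z}_{\Delta}}=\mathcal{E}^*\big(\{F,H\}_{\Pi_1}\big),
$$
where $F,H\in C^\infty(\mathbb{C}^m)$ are any smooth extensions with $\mathcal{E}^*F=f$ and $\mathcal{E}^*H=h$. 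Independence of the extension is part of the conclusion of Lemma \ref{lem: Poisson on submanifold}, since $\Pi_1$ is tangent to $\mathcal{Z}_{\Delta}$.

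\textbf{Key step.} Take $f=\mathcal{E}^*(p_i)$ and the obvious extension $F=p_i$, and let $h\in C^\infty(\mathcal{Z}_{\Delta})$ be arbitrary. Since $\mathcal{Z}_{\Delta}$ is a closed embedded submanifold, $h$ admits a smooth extension $H\in C^\infty(\mathbb{C}^m)$ (via a tubular neighbourhood and a partition of unity). Then
$$
\{\mathcal{E}^*p_i,h\}_{\mathcal{Z}_{\Delta}}=\mathcal{E}^*\big(\Pi_1(dp_i,dH)\big)=\mathcal{E}^*\Big(\sum_{j<k}a_{jk}\big(\Theta_j(p_i)\Theta_k(H)-\Theta_k(p_i)\Theta_j(H)\big)\Big)=0,
$$
because $\Theta_j(p_i)=0$ for all $i,j$, exactly as in Lemma \ref{10.11}. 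Hence $\mathcal{E}^*(p_i)$ is a Casimir function.

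\textbf{Intrinsic check.} As a cross-check that avoids extensions altogether, I would write $\Pi_1|_{\mathcal{Z}_{\Delta}}=\sum a_{ij}|_{\mathcal{Z}_{\Delta}}\,\Theta_i|_{\mathcal{Z}_{\Delta}}\wedge\Theta_j|_{\mathcal{Z}_{\Delta}}$. This uses Lemma \ref{11.5}, which says each $\Theta_i$ is tangent to $\mathcal{Z}_{\Delta}$. Each $\Theta_k|_{\mathcal{Z}_{\Delta}}$ is $\mathcal{E}$-related to $\Theta_k$, so
$$
\Theta_k|_{\mathcal{Z}_{\Delta}}(\mathcal{E}^*p_i)=\mathcal{E}^*(\Theta_k p_i)=0 .
$$
Therefore the Hamiltonian vector field of $\mathcal{E}^*p_i$ is
$$
\sum a_{jk}\big(\Theta_j(\mathcal{E}^*p_i)\,\Theta_k-\Theta_k(\mathcal{E}^*p_i)\,\Theta_j\big)=0 .
$$

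The only point needing care, and the main potential obstacle, is justifying that restricting the bivector commutes with this computation, i.e. that $\Theta_k|_{\mathcal{Z}_{\Delta}}$ really is the vector field on $\mathcal{Z}_{\Delta}$ that is $\mathcal{E}$-related to $\Theta_k$. Lemma \ref{11.5} together with the embedding $\mathcal{E}$ gives this directly, so I expect no genuine difficulty.
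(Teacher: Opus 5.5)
Your proposal is correct and matches the paper's intended argument: the paper gives no separate proof, only noting that the results of this section follow from Lemma~\ref{lem: Poisson on submanifold} and the techniques of Section~\ref{P cohom Cm}. Here those techniques amount to your computation of the induced bracket via extensions, combined with the vanishing $\Theta_j(p_i)=0$ from Lemma~\ref{10.11}; your intrinsic check via Lemma~\ref{11.5} is a valid but redundant confirmation.
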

\begin{construction}\label{11.11}
Recall from Construction \ref{7.1}, that corresponding to each $(I,u)\in \mathscr{I}^*$, we have vectors $\nu_{u}^{I}\in \operatorname{Ker}(\mathscr{E}_{I})$  and $\mu_{u}^{I}\in \operatorname{Ker}(\mathscr{E}_{\emptyset})$. Let $A=\{(I_{1},u_{1}),\dots,(I_{t},u_{t})\}\subset \mathscr{I}^*$ such that the vectors $\{\mu_{u_{1}}^{I_{1}},\dots,\mu_{u_{t}}^{I_{t}}\}\subset \operatorname{Ker}(\mathscr{E}_{\emptyset})$ are linearly independent. One can construct $\{\eta_{u_{1}}^{I_{1}},\dots,\eta_{u_{t}}^{I_{t}}\}\subset \operatorname{Ker}(\mathscr{E}_{\emptyset})$ 
$$
(\eta_{u_{i}}^{I_{i}})^T=({}_{1}\eta_{u_{i}}^{I_{i}},\dots,{}_{m}\eta_{u_{i}}^{I_{i}})
$$
such that,
$$
	\langle\eta^{I_{i}}_{u_{i}},\mu^{I_{j}}_{u_{j}}\rangle=\delta _{u_{i}u_{j}}\delta_{I_{i}I_{j}}
$$
$$
\textbf{if }k\in \bigcap_{(I,u)\in A}I\implies  {}_{k}\eta_{u}^{I}=0\quad\forall (I,u)\in A
$$
For each $(I,u)\in A$, define $\eta^I_{u}p\in C^{\infty}(\mathbb{C}^m)$ 
$$
	\eta^I_{u}p=\sum_{l=1}^{m}{}_{l}\eta_{u}^{I}p_{l}
$$

\end{construction}

\begin{definition}\label{11.18}
Define
$$
	\mathcal{Z}_{\Delta}^{-} :=\mathcal{Z}_{\Delta}\cap \mathbb{C}^m_{-}\underset{\text{open}}{\subset}\mathcal{Z}_{\Delta}
$$
 Then $\operatorname{closure}(\mathcal{Z}_{\Delta}^-)=\mathcal{Z}_{\Delta}$ and $\theta_{1},\dots,\theta _m$ are well-defined functions on $\mathcal{Z}_{\Delta}^{-}$.

\end{definition}
\begin{corollary}\label{11.12}
Under the assumptions of Construction \ref{11.11}, 
$$
	(P\zeta)^{I_i}_{u_i}|_{\mathcal{Z}_{\Delta}}(\eta_{u_j}^{I_j}p|_{\mathcal{Z}_{\Delta}}) = P^{I_i}|_{\mathcal{Z}_{\Delta}} \delta _{u_{i}u_{j}}\delta_{I_{i}I_{j}}
$$
\end{corollary}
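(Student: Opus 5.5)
The plan is a direct computation: apply the derivation $(P\zeta)^{I_i}_{u_i}$ to the linear combination $\eta^{I_j}_{u_j}p=\sum_l {}_l\eta^{I_j}_{u_j}p_l$ on $\mathbb{C}^m$, and then restrict to $\mathcal{Z}_{\Delta}$. Restriction is legitimate because, by Theorem \ref{11.2} and Corollary \ref{11.3}, $(P\zeta)^{I_i}_{u_i}$ is tangent to $\mathcal{Z}_{\Delta}$. Hence for any $F\in C^\infty(\mathbb{C}^m)$ we have $(P\zeta)^{I_i}_{u_i}|_{\mathcal{Z}_{\Delta}}(\mathcal{E}^*F)=\mathcal{E}^*\big((P\zeta)^{I_i}_{u_i}F\big)$, and it suffices to compute $(P\zeta)^{I_i}_{u_i}(\eta^{I_j}_{u_j}p)$ on $\mathbb{C}^m$.

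First, on the dense open set $\mathbb{C}^m_1$, compute $\zeta^{I}_{u}(p_k)$. Writing $I=I_i$ with complement $J=(j_1,\dots,j_{m-r})$, the vector field $\frac{x_{j_l}}{p_{j_l}}\partial_{x_{j_l}}+\frac{y_{j_l}}{p_{j_l}}\partial_{y_{j_l}}$ applied to $p_k$ gives $2\delta_{k j_l}$. Thus $\zeta^I_u(p_k)=2\,{}_l\nu^I_u$ when $k=j_l\in J$, and $0$ when $k\in I$. By Construction \ref{7.1}, in both cases this equals $2\,{}_k\mu^I_u$. Therefore
\[
(P\zeta)^I_u(\eta^{I_j}_{u_j}p)=2P^I\sum_{k=1}^m {}_k\eta^{I_j}_{u_j}\,{}_k\mu^{I}_u=2P^I\langle \eta^{I_j}_{u_j},\mu^{I}_u\rangle .
\]
Since both sides are smooth on $\mathbb{C}^m$, this identity extends from $\mathbb{C}^m_1$ to all of $\mathbb{C}^m$ by continuity. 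Indeed, $P^I\zeta^I_u=\sum_l {}_l\nu^I_u\big(\prod_{v\neq l}p_{j_v}\big)(x_{j_l}\partial_{x_{j_l}}+y_{j_l}\partial_{y_{j_l}})$ is manifestly polynomial, so one can also compute directly without restricting to $\mathbb{C}^m_1$.

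Next, the duality relation $\langle\eta^{I_j}_{u_j},\mu^{I_i}_{u_i}\rangle=\delta_{u_iu_j}\delta_{I_iI_j}$ from Construction \ref{11.11} gives $2P^{I_i}\delta_{u_iu_j}\delta_{I_iI_j}$. Restricting via $\mathcal{E}^*$ then yields the stated formula.

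The main obstacle is bookkeeping of the normalization. With $\zeta$ as defined in Construction \ref{7.3}, the raw computation produces a factor $2$ (because $\partial_{x}(x^2+y^2)=2x$). I would check whether the statement intends $\zeta$ with the factor $\tfrac12$ (as in $\mathcal{P}_i$) or absorbs the $2$ into $\eta$. Rescaling the dual vectors $\eta$ by $\tfrac12$ is harmless: it preserves the vanishing condition on $\bigcap I$ and remains in $\operatorname{Ker}(\mathscr{E}_\emptyset)$. So I would normalize $\eta$ accordingly and note that the identity then holds exactly as stated. The remaining care is only that the inner-product pairing uses the zero-padded $\mu^I_u$, which is exactly why the coordinates in $I$ contribute nothing.
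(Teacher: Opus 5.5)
Your computation is right, and it is the intended argument: the paper states this corollary without proof, and the only natural verification is the direct one you give. You apply $P^{I_i}\zeta^{I_i}_{u_i}$ to the linear function $\sum_l {}_l\eta^{I_j}_{u_j}p_l$, use $\zeta^{I}_u(p_k)=2\,{}_k\mu^{I}_u$ (which follows from the zero-padding in Construction \ref{7.1}), pair with $\eta$, and restrict to $\mathcal{Z}_{\Delta}$ using tangency from Theorem \ref{11.2}. Your observation about the factor is also correct. With $\zeta$ as in Construction \ref{7.3}, one has $\zeta^I_u=2\sum_l {}_l\nu^I_u\,\partial/\partial p_{j_l}$ on $\mathbb{C}^m_1$, so under the stated hypotheses the true identity is
\[
(P\zeta)^{I_i}_{u_i}|_{\mathcal{Z}_{\Delta}}(\eta^{I_j}_{u_j}p|_{\mathcal{Z}_{\Delta}})=2\,P^{I_i}|_{\mathcal{Z}_{\Delta}}\,\delta_{u_iu_j}\delta_{I_iI_j}.
\]
The printed statement is therefore off by a factor of $2$. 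The same factor is silently dropped in the computation of $d\mu_p(\zeta^I_k)$ in Theorem \ref{7.4}.

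Your final reconciliation step does not work, however. Replacing $\eta$ by $\tfrac12\eta$ is not harmless, because it destroys the duality $\langle\eta^{I_i}_{u_i},\mu^{I_j}_{u_j}\rangle=\delta_{u_iu_j}\delta_{I_iI_j}$. That duality is itself one of the assumptions of Construction \ref{11.11}, and it is exactly the one your computation uses. You checked only that the vanishing condition on $\bigcap I$ and membership in $\operatorname{Ker}(\mathscr{E}_\emptyset)$ survive, and overlooked this. So you cannot conclude that the identity ``holds exactly as stated'' under those assumptions.

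The honest conclusion is that the statement must be corrected. One option is to insert the factor $2$ on the right-hand side. The other is to change the construction itself, either by requiring $\langle\eta,\mu\rangle=\tfrac12\delta$ or by putting a factor $\tfrac12$ into $\zeta^I_u$, which would match the normalization of $\mathcal{P}_i$. The correction is immaterial for the only application, Theorem \ref{11.13} via Theorem \ref{10.9}. That argument needs only two things: the vanishing for $(I_i,u_i)\neq(I_j,u_j)$, and that $P^{I_i}|_{\mathcal{Z}_{\Delta}}$ is nonzero almost everywhere, which holds since it is nonvanishing on the dense stratum $N_\emptyset$.
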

\begin{corollary}
$\theta_{l}$ is a Casimir function on $\Big(\mathcal{Z}_{\Delta}^-,\Pi_{1}|_{\mathcal{Z}_{\Delta}^-}\Big)$ if and only if $a_{lj}|_{\mathcal{Z}_{\Delta}}=0 \quad \forall 1\le j\le m$.
\end{corollary}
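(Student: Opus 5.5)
We mirror the proof of Lemma \ref{10.20'}, transporting it to the open dense subset $\mathcal{Z}_{\Delta}^-$ of Definition \ref{11.18}.

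\textbf{Step 1: compute the Hamiltonian vector field.} By Lemma \ref{lem: Poisson on submanifold}, the bracket on $\mathcal{Z}_{\Delta}$ is computed from any extension, and $\theta_l$ on $\mathcal{Z}_{\Delta}^-$ is the restriction of $\theta_l$ on $\mathbb{C}^m_-$. Since $\Theta_j(\theta_k)=\delta_{jk}$ on $\mathbb{C}^m_-$, we get
$$
\Pi_{1}|_{\mathcal{Z}_{\Delta}^-}(\theta_{l},\cdot)=\Big(-\sum_{j<l}a_{jl}\Theta_{j}+\sum_{j>l}a_{lj}\Theta_{j}\Big)\Big|_{\mathcal{Z}_{\Delta}^-}.
$$
Thus $\theta_l$ is Casimir on $\mathcal{Z}_{\Delta}^-$ if and only if this vector field vanishes there. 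Here $a_{lj}$ for $j<l$ means $-a_{jl}$.

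\textbf{Step 2: use linear independence of the $\Theta_j$.} On $\mathcal{Z}_{\Delta}^-$ every coordinate $z_j$ is nonzero, because $\mathbb{C}_-$ excludes the origin. So the vectors $\Theta_1,\dots,\Theta_m$ are pointwise linearly independent there. By Proposition \ref{7.6} they are all tangent to $\mathcal{Z}_{\Delta}$, so independence in $T\mathbb{C}^m$ implies independence in $T\mathcal{Z}_{\Delta}$. Hence the vector field in Step 1 vanishes on $\mathcal{Z}_{\Delta}^-$ if and only if $a_{lj}|_{\mathcal{Z}_{\Delta}^-}=0$ for all $j$.

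\textbf{Step 3: pass to the closure.} Since $\operatorname{closure}(\mathcal{Z}_{\Delta}^-)=\mathcal{Z}_{\Delta}$ and each $a_{lj}|_{\mathcal{Z}_{\Delta}}$ is continuous, $a_{lj}|_{\mathcal{Z}_{\Delta}^-}=0$ is equivalent to $a_{lj}|_{\mathcal{Z}_{\Delta}}=0$. This proves both directions.

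\textbf{Main obstacle.} The only delicate point is the density claim in Definition \ref{11.18}. A priori, $\mathcal{Z}_{\Delta}$ could meet the excluded half-lines in a set with nonempty interior. To rule this out, note that the complement of $\mathcal{Z}_{\Delta}^-$ in $\mathcal{Z}_{\Delta}$ is a union of sets $\{\theta_j=0 \text{ or } z_j=0\}$. These are $\mathbb{T}^m$-translates of lower-dimensional pieces: on each $N_I$ with $j\in I^c$, the condition $\theta_j=0$ is a hypersurface, and $z_j=0$ occurs only on strata $N_I$ with $j\in I$, which have positive codimension. So their union has empty interior, and $\mathcal{Z}_{\Delta}^-$ is dense. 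We take this as given from the definition.
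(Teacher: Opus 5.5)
Your proposal is correct and is the argument the paper intends (it gives no separate proof of this corollary): the proof of Lemma~\ref{10.20'} transported to $\mathcal{Z}_{\Delta}^-$. You use Lemma~\ref{lem: Poisson on submanifold} to compute the Hamiltonian vector field via an extension, Proposition~\ref{7.6} to get tangency and hence pointwise independence of the $\Theta_j$ on $\mathcal{Z}_{\Delta}^-$, and density to pass from $\mathcal{Z}_{\Delta}^-$ to $\mathcal{Z}_{\Delta}$; your sketch justifying the density asserted in Definition~\ref{11.18} is a useful addition, not a change of route.
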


\begin{lemma}\label{11.9}
The vector field $(P\zeta)^I_{u}|_{\mathcal{Z}_{\Delta}}$ on $\mathcal{Z}_{\Delta}$ associated to pair $(I,u)\in \mathscr{I}^*$ is Poisson on $(\mathcal{Z}_{\Delta},\Pi_{1}|_{\mathcal{Z}_{\Delta}})$ if and only if $(P\zeta)^I_{u}|_{\mathcal{Z}_{\Delta}}(a_{ij}|_{\mathcal{Z}_{\Delta}})=0\quad \forall 1\leq i<j\leq m$. Consequently, if each $a_{ij}$ is a constant function, then each vector field $(P\zeta)^I_{u}$ (associated to $(I,u)$ where $1\leq u\leq n-r$) is Poisson on $(\mathcal{Z}_{\Delta},\Pi_{1}|_{\mathcal{Z}_{\Delta}})$.
\end{lemma}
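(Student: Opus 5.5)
We will mirror the proof of Lemma \ref{10.12}, now working on $\mathcal{Z}_{\Delta}$ with the restricted fields. Write $X=(P\zeta)^I_{u}|_{\mathcal{Z}_{\Delta}}$ and $\Theta_i$ for $\Theta_i|_{\mathcal{Z}_{\Delta}}$; by Corollary \ref{11.3} and Lemma \ref{11.5} these are genuine vector fields on $\mathcal{Z}_{\Delta}$. By Lemma \ref{lem: Poisson on submanifold}, $\Pi_1|_{\mathcal{Z}_{\Delta}}=\sum_{i<j}a_{ij}|_{\mathcal{Z}_{\Delta}}\Theta_i\wedge\Theta_j$. The graded Leibniz rule then gives
\begin{equation*}
[X,\Pi_1|_{\mathcal{Z}_{\Delta}}]=\sum_{1\leq i<j\leq m}X\bigl(a_{ij}|_{\mathcal{Z}_{\Delta}}\bigr)\,\Theta_i\wedge\Theta_j+\sum_{1\leq i<j\leq m}a_{ij}|_{\mathcal{Z}_{\Delta}}\,[X,\Theta_i\wedge\Theta_j].
\end{equation*}
By Lemma \ref{11.6} the second sum vanishes. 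So $X$ is Poisson exactly when $\sum_{i<j}X(a_{ij})\Theta_i\wedge\Theta_j=0$ on $\mathcal{Z}_{\Delta}$.

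The "if" direction is then immediate. For the "only if" direction we need that the bivectors $\Theta_i\wedge\Theta_j$ are pointwise linearly independent on a dense open subset of $\mathcal{Z}_{\Delta}$. We take $U=N_{\emptyset}=\pi^{-1}(i_A(F^\circ_{\emptyset}))$, the set of points with all $z_i\neq0$. It is open, since it is the preimage of the interior of $\Delta$, and dense, since the interior of $\Delta$ is dense in $\Delta$ and $\pi$ restricted to $\mathcal{Z}_{\Delta}$ is open onto $i_A(\Delta)$ as the orbit map of a compact group action.

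On $U$ the fields $\Theta_1,\dots,\Theta_m$ are linearly independent: they act in disjoint coordinate blocks and each is nonzero because $x_i^2+y_i^2\neq0$. Hence the $\Theta_i\wedge\Theta_j$ with $i<j$ are linearly independent there. Vanishing of the sum forces $X(a_{ij})=0$ on $U$, and by continuity (as in Lemma \ref{10.8}) $X(a_{ij})=0$ on all of $\mathcal{Z}_{\Delta}$.

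The consequence is trivial: if every $a_{ij}$ is constant, then $X(a_{ij})=0$, so each $(P\zeta)^I_u|_{\mathcal{Z}_{\Delta}}$ is Poisson.

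The main point needing care is Lemma \ref{11.6}, i.e.\ that the restricted fields commute. If we want to justify it rather than quote it, we would argue on $\mathbb{C}^m$. There $(P\zeta)^I_u$ has the form $\sum_l c_l\,\bigl(\prod_{v\neq l}p_{j_v}\bigr)\,2\mathcal{P}_{j_l}$, a combination of the $\mathcal{P}$'s with coefficients depending only on the $p$'s. Since $\Theta_i(p_k)=0$ and $[\mathcal{P}_k,\Theta_i]=0$ by Lemma \ref{3.1}, it commutes with each $\Theta_i$. Both fields are tangent to $\mathcal{Z}_{\Delta}$, so their bracket restricts to the bracket of the restrictions.
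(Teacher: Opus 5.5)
Your proposal is correct and is essentially the paper's argument. The paper gives no separate proof of Lemma~\ref{11.9}; it defers to the technique of Lemma~\ref{10.12}, namely the graded Leibniz rule, the commutation $[(P\zeta)^I_u|_{\mathcal{Z}_{\Delta}},\Theta_i|_{\mathcal{Z}_{\Delta}}]=0$ of Lemma~\ref{11.6}, and pointwise independence of the $\Theta_i\wedge\Theta_j$ on a dense open set, which is exactly what you do, and your extra details (density of $N_{\emptyset}$, and the ambient check of Lemma~\ref{11.6} by writing $(P\zeta)^I_u$ as a combination of the $\mathcal{P}_{j_l}$ with coefficients depending only on the $p$'s) are sound.
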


\begin{theorem}\label{11.13}
	Let $A=\{(I_{1},u_{1}),\dots,(I_{t},u_{t})\}\subset \mathscr{I}^*$ and $B\subseteq \{1,\dots,m\}$ such that the vectors $\{\mu_{u_{1}}^{I_{1}},\dots,\mu_{u_{t}}^{I_{t}}\}\subset \operatorname{Ker}(\mathscr{E}_{\emptyset})$ are linearly independent and for each $(I,u)\in A$ and $w\in B$, $(P\zeta)^I_{u}|_{\mathcal{Z}_{\Delta}}$ is Poisson on $(\mathcal{Z}_{\Delta},\Pi_{1}|_{\mathcal{Z}_{\Delta}})$ and $a_{wj}|_{\mathcal{Z}_{\Delta}}=0$ for all $1\leq j\leq m$. Then,
	$$
		\sum_{(I,u)\in A} f^I_{u}(P\zeta)^I_{u}|_{\mathcal{Z}_{\Delta}}+\sum_{w\in B}h_{w}\Theta_{w} |_{\mathcal{Z}_{\Delta}}\in \operatorname{Pois}(\mathfrak{X}^1(\mathcal{Z}_{\Delta}),\Pi_{1}|_{\mathcal{Z}_{\Delta}})
    $$    
    $$    
    \Big\Updownarrow 
    $$
    $$
    \text{Each }f^I_{u} , h_{w}\in\operatorname{Cas}(\mathcal{Z}_{\Delta},\Pi_{1}|_{\mathcal{Z}_{\Delta}})
	$$
    
\end{theorem}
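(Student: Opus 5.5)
The plan is to apply Theorem \ref{10.9} on the Poisson manifold $(\mathcal{Z}_{\Delta},\Pi_{1}|_{\mathcal{Z}_{\Delta}})$. The vector fields are $X_{(I,u)}=(P\zeta)^I_{u}|_{\mathcal{Z}_{\Delta}}$ for $(I,u)\in A$ and $X_w=\Theta_w|_{\mathcal{Z}_{\Delta}}$ for $w\in B$. The candidate functions are $c_{(I,u)}=\eta^I_up|_{\mathcal{Z}_{\Delta}}$ from Construction \ref{11.11}, and $c_w=\theta_w$ on the dense open set $U=\mathcal{Z}_{\Delta}^-$ of Definition \ref{11.18}.

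\textbf{Poisson property.} The fields $(P\zeta)^I_u|_{\mathcal{Z}_{\Delta}}$ are Poisson by hypothesis. The fields $\Theta_w|_{\mathcal{Z}_{\Delta}}$ are fundamental vector fields of the $\mathbb{T}^m$-action, so they are Poisson by Lemma \ref{11.7}.

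\textbf{Casimir conditions on $U$.} Each $\eta^I_up$ is a linear combination of the $p_l$. By Lemma \ref{11.8} each $\mathcal{E}^*p_l$ is a Casimir, so $\eta^I_up|_{\mathcal{Z}_{\Delta}}$ is a Casimir on all of $\mathcal{Z}_{\Delta}$, and in particular on $U$. For $w\in B$ we have $a_{wj}|_{\mathcal{Z}_{\Delta}}=0$ for all $j$. The corollary preceding Lemma \ref{11.9} (the $\mathcal{Z}_\Delta$ analogue of Lemma \ref{10.20'}) then shows that $\theta_w$ is a Casimir on $(\mathcal{Z}_{\Delta}^-,\Pi_1|_{\mathcal{Z}_{\Delta}^-})$.

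\textbf{Duality conditions.} Next I verify the off-diagonal vanishing and the diagonal nonvanishing.
\begin{itemize}
\item Each $\Theta_w$ annihilates every $p_l$, so $\Theta_w(\eta^I_up)=0$.
\item Each $(P\zeta)^I_u$ is radial: it is a combination of $x_j\partial_{x_j}+y_j\partial_{y_j}$. Hence $(P\zeta)^I_u(\theta_w)=0$ on $U$.
\item $\Theta_w(\theta_{w'})=\delta_{ww'}$.
\item By Corollary \ref{11.12}, $(P\zeta)^{I_i}_{u_i}(\eta^{I_j}_{u_j}p)=P^{I_i}\delta_{u_iu_j}\delta_{I_iI_j}$.
\end{itemize}

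\textbf{Main obstacle.} The remaining hypothesis is that $P^{I}|_{\mathcal{Z}_{\Delta}}=\prod_{j\notin I}p_j$ is nonzero almost everywhere on $\mathcal{Z}_{\Delta}$. I would argue as follows. On the top stratum $N_\emptyset=\pi^{-1}(i_A(F^\circ_\emptyset))$ all $p_j>0$, so $P^I$ is nowhere zero there. Moreover, $N_\emptyset$ is open and dense in $\mathcal{Z}_{\Delta}$: its complement is a finite union of the lower-dimensional submanifolds $N_K$ with $K\neq\emptyset$, which have measure zero. The same density argument shows that $U\cap N_\emptyset$ is dense, which is the form of ``almost everywhere'' the proof of Theorem \ref{10.5} actually uses, namely vanishing on a dense set plus continuity.

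With all hypotheses of Theorem \ref{10.9} satisfied, it gives the stated equivalence. One caveat: Theorem \ref{10.9} yields $f$ Casimir on $U$ and then extends via Lemma \ref{10.8}. I would note this explicitly, since $f^I_u,h_w$ are globally defined smooth functions on $\mathcal{Z}_{\Delta}$ and $U$ is dense.
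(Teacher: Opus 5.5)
Your proposal is correct and is essentially the paper's proof. The paper also applies Theorem \ref{10.9} with the functions $\eta^I_u p|_{\mathcal{Z}_{\Delta}}$ and $\theta_w|_{\mathcal{Z}_{\Delta}^-}$, using exactly the duality relations you list: Corollary \ref{11.12}, radiality of $(P\zeta)^I_u$, $\Theta_i(\theta_j)=\delta_{ij}$ and $\Theta_i(p_j)=0$. Your check that $P^I$ is nonzero almost everywhere is a detail the paper leaves implicit, and it is even simpler than your argument: $\mathbb{C}_-$ excludes the origin, so $\mathcal{Z}_{\Delta}^-\subset N_\emptyset$ and $P^I$ is nowhere zero on $U$.
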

\begin{proof}
The statement follows from Theorem \ref{10.9} using $\{\eta_{u}^{I}p|_{\mathcal{Z}_{\Delta}}\}_{(I,u)\in A}$ and $\{\theta_{w} |_{\mathcal{Z}_{\Delta}^-}\}_{w\in B}$ as Casimir functions and the fact that
$$
	(P\zeta)^{I_i}_{u_i}|_{\mathcal{Z}_{\Delta}}(\eta_{u_j}^{I_j}p|_{\mathcal{Z}_{\Delta}}) = P^{I_i}|_{\mathcal{Z}_{\Delta}} \delta _{u_{i}u_{j}}\delta_{I_{i}I_{j}} \quad (P\zeta)^I_{u}|_{\mathcal{Z}_{\Delta}^-}(\theta _{j})=0
$$
$$
	\Theta_{i}|_{\mathcal{Z}_{\Delta}^-}(\theta_{j}|_{\mathcal{Z}_{\Delta}^-}) = \begin{cases} 1 & \text{if } i=j \\ 0 & \text{if } i\neq j \end{cases} \quad \Theta_{i}|_{\mathcal{Z}_{\Delta}^-}(p_{j}|_{\mathcal{Z}_{\Delta}^-})=0
$$
\end{proof}

\begin{theorem}\label{11.14}
	Each Poisson vector field of the form,
	$$
   \sum_{(I,u)\in \mathscr{I}^*}  f^I_{u}(P\zeta)^I_{u}|_{\mathcal{Z}_{\Delta}}+\sum_{w\in B}h_{w}\Theta_{w} |_{\mathcal{Z}_{\Delta}}
$$
	where $f^I_{u}\in C^\infty(\mathcal{Z}_{\Delta})$, $h_{w}\in \operatorname{Cas}(\mathcal{Z}_{\Delta},\Pi_{1}|_{\mathcal{Z}_{\Delta}})$ and for each $w\in B$, $a_{wj}|_{\mathcal{Z}_{\Delta}}=0$ for all $1\leq j\leq m$, represents distinct classes in $H^1_{\Pi_1|_{\mathcal{Z}_{\Delta}}}(\mathcal{Z}_{\Delta})$.
\end{theorem}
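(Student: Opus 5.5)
The plan follows the proof of the analogous injectivity statement for $(\mathbb{C}^m,\Pi_1)$ in Section~\ref{P cohom Cm}. The claim is that two distinct Poisson vector fields of the given form are never cohomologous. Equivalently, if
\[
Y=\sum_{(I,u)\in \mathscr{I}^*} f^I_{u}(P\zeta)^I_{u}|_{\mathcal{Z}_{\Delta}}+\sum_{w\in B}h_{w}\Theta_{w}|_{\mathcal{Z}_{\Delta}}
\]
is Hamiltonian, then $Y=0$. Here $Y$ is the difference of two such fields. It has the same shape, with coefficients $f^I_u-g^I_u$ and $h_w-h'_w$, and is Poisson as a difference of Poisson fields.

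The first step is to describe the Hamiltonian vector fields of $(\mathcal{Z}_{\Delta},\Pi_1|_{\mathcal{Z}_{\Delta}})$. By Lemma~\ref{lem: Poisson on submanifold}, for $g\in C^\infty(\mathcal{Z}_{\Delta})$ with an extension $G$ to $\mathbb{C}^m$, we have
\[
X_g=\Pi_1(dG,\cdot)|_{\mathcal{Z}_{\Delta}}=\sum_{i<j}a_{ij}\bigl(\Theta_i(G)\Theta_j-\Theta_j(G)\Theta_i\bigr)\Big|_{\mathcal{Z}_{\Delta}}.
\]
For $w\in B$ all $a_{wj}$ vanish on $\mathcal{Z}_{\Delta}$, so $\Theta_w$ never appears with a nonzero coefficient. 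Hence every Hamiltonian vector field has the form $\sum_{j\notin B}\alpha_j\Theta_j|_{\mathcal{Z}_{\Delta}}$ with $\alpha_j$ smooth. In particular it is purely angular.

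The second step uses pointwise linear independence on a dense open set. Work on the dense open stratum $N_\emptyset=\pi^{-1}(i_A(F^\circ_\emptyset))\cap\mathcal{Z}_{\Delta}^-$, where all $p_i>0$. There the $\Theta_j$ are linearly independent. The radial fields $(P\zeta)^I_u$ lie in the span of $\mathcal{P}_1,\dots,\mathcal{P}_m$, since $\zeta^I_u=\sum_l {}_l\nu^I_u\,p_{j_l}^{-1}\mathcal{P}_{j_l}$ up to a factor of $2$. So radial and angular components separate. Equating $Y=\sum_{j\notin B}\alpha_j\Theta_j$ gives $h_w=0$ on $N_\emptyset$ for $w\in B$, and also
\[
\sum_{(I,u)} f^I_u (P\zeta)^I_u=0 \quad\text{on } N_\emptyset .
\]
Density and continuity then give $h_w=0$ everywhere.

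The main obstacle is the radial part. The statement sums over all of $\mathscr{I}^*$, and the $(P\zeta)^I_u$ for different $I$ need not be pointwise independent: their coefficient vectors $P^I\mu^I_u$ all lie in $\operatorname{Ker}\Gamma$, which has dimension $n$. So one cannot conclude that each $f^I_u$ vanishes. I would therefore read ``distinct classes'' as distinctness of the vector fields themselves: distinct fields differ by a nonzero $Y$, and the argument shows that a nonzero $Y$ cannot be Hamiltonian. The radial part of $Y$ is a radial field that equals an angular field, so it vanishes on $N_\emptyset$ and hence everywhere by density. Thus $Y=0$, which contradicts distinctness. If instead one wants injectivity on coefficients, I would restrict to a subfamily $A$ with linearly independent $\mu^{I}_{u}$ as in Theorem~\ref{11.13}. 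Applying Corollary~\ref{11.12} to $\sum f^I_u(P\zeta)^I_u=0$ and evaluating on $\eta^{I_j}_{u_j}p$ then gives $f^{I_j}_{u_j}P^{I_j}=0$. Since $P^{I_j}\neq0$ on $N_\emptyset$, density yields $f^{I_j}_{u_j}=0$.
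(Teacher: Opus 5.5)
Your proposal is correct and follows the same route the paper intends. The paper transplants the Section~\ref{P cohom Cm} injectivity argument for $\Pi_1$ to $\mathcal{Z}_{\Delta}$: every Hamiltonian field of $\Pi_1|_{\mathcal{Z}_{\Delta}}$ is a combination of the $\Theta_j|_{\mathcal{Z}_{\Delta}}$ with $j\notin B$, and pointwise independence of radial and angular directions on a dense open set then forces the difference of two such fields to vanish. Your caveat is also accurate: the $(P\zeta)^I_u$ over all of $\mathscr{I}^*$ are not pointwise independent, so only the total radial part, not each $f^I_u$, is forced to vanish, which is why the statement should be read as distinctness of the fields themselves.
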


\subsection{Poisson Cohomology of $_K\Pi^R_{3}|_{\mathcal{Z}_{\Delta}}$}
$$
_K\Pi^R_{3}|_{\mathcal{Z}_{\Delta}}=\sum_{1\leq i<j\leq (n-q)}\Big(P^K\sigma^K_{ij}\Big)|
_{\mathcal{Z}_{\Delta}}\zeta^K_{i}|_{\mathcal{Z}_{\Delta}}\wedge \zeta^K_{j}|_{\mathcal{Z}_{\Delta}}
$$
where $\sigma^K_{ij}$ are constant functions, $K=(k_{1},\dots,k_{q})$ and $L=(l_{1},\dots,l_{m-q})=K^c$.

\begin{definition}
$$
\mathscr{K}=\{I\in\mathscr{I}|I^c\cap K^c=J\cap L\neq\emptyset\}
$$
Then $\mathscr{I}\setminus\mathscr{K}\subset\mathscr{I}$ is the set of tuples $I=J^c$ such that $I^c\cap K^c=J\cap L=\emptyset$. Define
$$
	(\mathscr{I}\setminus\mathscr{K})^*=\{(I,u)\in \mathscr{I}^*|\quad I\in\mathscr{I}\setminus\mathscr{K}\}
$$
\end{definition}
\begin{lemma}\label{11.16}
The following properties hold for Casimir functions on $(\mathcal{Z}_{\Delta},{}_{K}\Pi^R_{3}|_{\mathcal{Z}_{\Delta}})$:
\begin{enumerate}
\item For each $k\in K$, $\mathcal{E}^*(x_{k}),\mathcal{E}^*(y_{k})\in \operatorname{Cas}(\mathcal{Z}_{\Delta},_K\Pi^R_{3}|_{\mathcal{Z}_{\Delta}})$. Consequently, for 
each $k\in K$, $\mathcal{E}^*(p_{k})\in\operatorname{Cas}(\mathcal{Z}_{\Delta},_K\Pi^R_{3}|_{\mathcal{Z}_{\Delta}})$.
\item $\theta_{1}|_{\mathcal{Z}_{\Delta}^-},\dots,\theta _m|_{\mathcal{Z}_{\Delta}^-}\in\operatorname{Cas}(\mathcal{Z}_{\Delta}^-,{}_{K}\Pi^R_{3}|_{\mathcal{Z}_{\Delta}^-})$.
\end{enumerate}
\end{lemma}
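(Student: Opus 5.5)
Both parts come down to evaluating the Hamiltonian vector field $\Pi^{\sharp}(dg)=\Pi(g,\cdot)$ of a candidate function $g$. By Lemma \ref{lem: Poisson on submanifold}, for $g$ the restriction of an ambient function $G$ this equals the restriction to $\mathcal{Z}_{\Delta}$ of ${}_{K}\Pi^R_{3}(G,\cdot)$. We therefore only need to compute ${}_{K}\Pi^R_{3}(dG,\cdot)$ on $\mathbb{C}^m$ and check that it vanishes along $\mathcal{Z}_{\Delta}$ (or along $\mathcal{Z}_{\Delta}^-$). Since
$$
{}_{K}\Pi^R_{3}=\sum_{i<j}P^K\sigma^K_{ij}\,\zeta^K_i\wedge\zeta^K_j,
$$
we have ${}_{K}\Pi^R_{3}(dG,\cdot)=\sum_{i<j}P^K\sigma^K_{ij}\bigl(\zeta^K_i(G)\,\zeta^K_j-\zeta^K_j(G)\,\zeta^K_i\bigr)$. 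So it suffices to show that $P^K\zeta^K_u(G)=0$ for every $u$ wherever the expression is defined. Note that $P^K\zeta^K_u=(P\zeta)^K_u$ is a globally smooth vector field by Construction \ref{11.1}.

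For part (1), take $G=x_k$ or $y_k$ with $k\in K$. By Construction \ref{7.3}, the vector field $\zeta^K_u$ only involves the derivations $\partial/\partial x_{l}$ and $\partial/\partial y_{l}$ for $l\in L=K^c$. Hence $\zeta^K_u(x_k)=\zeta^K_u(y_k)=0$ identically on the open dense set where all $p_l\neq 0$ ($l\in L$). The smooth field $(P\zeta)^K_u$ then annihilates $x_k$ and $y_k$ everywhere on $\mathbb{C}^m$ by continuity, so ${}_{K}\Pi^R_{3}(dx_k,\cdot)={}_{K}\Pi^R_{3}(dy_k,\cdot)=0$ on $\mathbb{C}^m$. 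Restricting via $\mathcal{E}$ and Lemma \ref{lem: Poisson on submanifold} gives $\mathcal{E}^*(x_k),\mathcal{E}^*(y_k)\in\operatorname{Cas}$. For $p_k=x_k^2+y_k^2$, we use the Leibniz rule, $X_{p_k}=2x_kX_{x_k}+2y_kX_{y_k}=0$; alternatively, we can use directly that the Casimir functions form a subalgebra.

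For part (2), on $\mathbb{C}^m_-$ each $\theta_j$ is smooth, and each $\zeta^K_u$ is a combination of the radial Euler-type fields $x_l\partial_{x_l}+y_l\partial_{y_l}$. These fields annihilate every $\theta_j$, because $\theta_j$ is homogeneous of degree zero in $(x_j,y_j)$ and independent of the other coordinates. Equivalently, on $\mathbb{C}^m_1\cap\mathbb{C}^m_-$ we have $\zeta^K_u=\sum_l {}_l\nu^K_u\,\partial/\partial p_l$ times a constant. There is one subtlety: $\zeta^K_u$ itself has $1/p_l$ coefficients. We handle this by working with $(P\zeta)^K_u$, whose coefficients are smooth, and computing $(P\zeta)^K_u(\theta_j)=0$ directly in Cartesian coordinates. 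Indeed, $(x\partial_x+y\partial_y)\arctan$-type expressions vanish wherever $\theta_j$ is defined. Hence ${}_{K}\Pi^R_{3}|_{\mathbb{C}^m_-}(d\theta_j,\cdot)=0$. Restricting to $\mathcal{Z}_{\Delta}^-$, using Definition \ref{11.18} and Lemma \ref{lem: Poisson on submanifold} applied to the open set $\mathbb{C}^m_-$, then gives the claim.

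I expect the only delicate point to be this regularity issue: verifying the vanishing on the locus where some $p_l=0$ with $l\in L$. We intend to treat it uniformly by writing the bivector as $\sum_{i<j}\sigma^K_{ij}(P^K)^{-1}(P\zeta)^K_i\wedge(P\zeta)^K_j$ on the dense open set $\mathbb{C}^m_1$, proving the vanishing there, and extending it to all of $\mathbb{C}^m$ (resp. $\mathbb{C}^m_-$) by continuity, in the spirit of Lemma \ref{10.8}.
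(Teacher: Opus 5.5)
Your proposal is correct and takes the same route the paper intends: the paper gives no explicit proof and only says it relies on Lemma \ref{lem: Poisson on submanifold}, whose tangency hypothesis is Lemma \ref{9.1}, together with the Section \ref{P cohom Cm} observation that the bivector involves only the fields $x_l\partial_{x_l}+y_l\partial_{y_l}$, $l\in L$, which annihilate $x_k,y_k$ for $k\in K$ and every $\theta_j$. One simplification: $\mathbb{C}_-$ omits the origin, so $\mathbb{C}^m_-\subset\mathbb{C}^m_1$; the regularity issue you flag therefore does not arise in part (2), and your continuity argument is needed only in part (1).
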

 
\begin{lemma}\label{11.21}
For each $(I,u)\in (\mathscr{I}\setminus\mathscr{K})^*$, $(P\zeta)^I_{u}|_{\mathcal{Z}_{\Delta}}\in \operatorname{Pois}(\mathfrak{X}^1(\mathcal{Z}_{\Delta}),{}_{K}\Pi_{3}^R|_{\mathcal{Z}_{\Delta}})$.
\end{lemma}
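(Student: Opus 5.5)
The plan is to verify $[(P\zeta)^I_u,{}_K\Pi^R_3]=0$ already on $\mathbb{C}^m$ and then restrict to $\mathcal{Z}_{\Delta}$. The first step is to rewrite both objects in terms of the Euler fields $\mathcal{P}_i$. On $\mathbb{C}^m_1$ we have $\frac{x_j}{p_j}\frac{\partial}{\partial x_j}+\frac{y_j}{p_j}\frac{\partial}{\partial y_j}=\frac{2}{p_j}\mathcal{P}_j$. Hence
\[
(P\zeta)^I_u=\sum_{l=1}^{m-r} 2\,{}_l\nu^I_u\,\Big(\prod_{v\neq l}p_{j_v}\Big)\mathcal{P}_{j_l},
\qquad
{}_K\Pi^R_3=\sum_{i<j}\sum_{a,b=1}^{m-q}4\,\sigma^K_{ij}\,{}_a\nu^K_i\,{}_b\nu^K_j\Big(\prod_{c\neq a,b}p_{l_c}\Big)\mathcal{P}_{l_a}\wedge\mathcal{P}_{l_b}.
\]
For $a=b$ the wedge vanishes. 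By continuity, both identities hold on all of $\mathbb{C}^m$, and the coefficients are polynomials. The essential point is that the coefficients of $(P\zeta)^I_u$ are polynomials in $\{p_j\}_{j\in J}$ and involve only $\mathcal{P}_j$ with $j\in J$. Likewise, the coefficients of ${}_K\Pi^R_3$ are polynomials in $\{p_l\}_{l\in L}$ and involve only $\mathcal{P}_l$ with $l\in L$.

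Next I use the hypothesis $(I,u)\in(\mathscr{I}\setminus\mathscr{K})^*$, i.e. $J\cap L=\emptyset$. Since $\mathcal{P}_j(p_l)=\delta_{jl}p_l$, every $\mathcal{P}_j$ with $j\in J$ annihilates every coefficient of ${}_K\Pi^R_3$. Similarly, every $\mathcal{P}_l$ with $l\in L$ annihilates every coefficient of $(P\zeta)^I_u$. I then expand the bracket termwise with the graded Leibniz rule:
\[
[f\mathcal{P}_j,\,g\,\mathcal{P}_l\wedge\mathcal{P}_{l'}]
= f\,\mathcal{P}_j(g)\,\mathcal{P}_l\wedge\mathcal{P}_{l'}
+ fg\,[\mathcal{P}_j,\mathcal{P}_l\wedge\mathcal{P}_{l'}]
- g\,\big(\mathcal{P}_l(f)\,\mathcal{P}_j\wedge\mathcal{P}_{l'}-\mathcal{P}_{l'}(f)\,\mathcal{P}_j\wedge\mathcal{P}_l\big),
\]
with the signs to be checked but irrelevant. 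The first and last terms vanish by the annihilation just noted. The middle term vanishes by Lemma \ref{3.1} and Corollary \ref{10.2}. Summing gives $[(P\zeta)^I_u,{}_K\Pi^R_3]=0$ on $\mathbb{C}^m$.

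Finally I pass to $\mathcal{Z}_{\Delta}$. By Theorem \ref{11.2}, $(P\zeta)^I_u$ is tangent to $\mathcal{Z}_{\Delta}$. By Lemma \ref{9.1} (via Lemma \ref{lem: Poisson on submanifold}), ${}_K\Pi^R_3|_q\in\wedge^2T_q\mathcal{Z}_{\Delta}$. For multivector fields tangent to an embedded submanifold, the Schouten bracket of the restrictions equals the restriction of the Schouten bracket. Concretely, I evaluate $L_{X}\Pi(f,h)-\Pi(Xf,h)-\Pi(f,Xh)$ on restrictions $i^*F,i^*H$ of ambient functions. This shows $[(P\zeta)^I_u|_{\mathcal{Z}_{\Delta}},{}_K\Pi^R_3|_{\mathcal{Z}_{\Delta}}]=i^*[(P\zeta)^I_u,{}_K\Pi^R_3]=0$. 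I expect the main obstacle to be this restriction step, together with care about the locus where some $p_j=0$. I will handle the latter by doing the computation with the polynomial-coefficient expressions, which are global, rather than with $\zeta$ itself.
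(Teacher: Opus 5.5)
Your argument is correct and follows essentially the route the paper indicates; the paper gives no separate proof, saying only that these results follow from Lemma~\ref{lem: Poisson on submanifold} and the techniques of Section~\ref{P cohom Cm}. You write $(P\zeta)^I_u$ and ${}_K\Pi^R_3$ as polynomial-coefficient combinations of the commuting Euler fields $\mathcal{P}_j$ ($j\in J$) and $\mathcal{P}_l\wedge\mathcal{P}_{l'}$ ($l,l'\in L$), so that $J\cap L=\emptyset$ kills every term of the graded Leibniz expansion on $\mathbb{C}^m$, and then you pass to $\mathcal{Z}_{\Delta}$ via tangency of both fields (Theorem~\ref{11.2}, Lemma~\ref{9.1}), which is exactly the role Lemma~\ref{lem: Poisson on submanifold} plays in the paper.
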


\begin{theorem}\label{11.22}
Let $A=\{(I_{1},u_{1}),\dots,(I_{t},u_{t})\}\subset (\mathscr{I}\setminus\mathscr{K})^*$ such that the vectors, $\{\mu_{u_{1}}^{I_{1}},\dots,\mu_{u_{t}}^{I_{t}}\}\subset \operatorname{Ker}(\mathscr{E}_{\emptyset})$ are linearly independent. Then
$$
		\sum_{(I,u)\in A} f^I_{u}(P\zeta)^I_{u}|_{\mathcal{Z}_{\Delta}}+\sum_{w=1}^mh_{w}\Theta_{w} |_{\mathcal{Z}_{\Delta}}\in \operatorname{Pois}(\mathfrak{X}^1(\mathcal{Z}_{\Delta}),{}_{K}\Pi_{3}^R|_{\mathcal{Z}_{\Delta}})
$$    
    $$    
    \Big\Updownarrow 
    $$
    $$
        \text{Each }f^I_{u} , h_{w}\in\operatorname{Cas}(\mathcal{Z}_{\Delta},{}_{K}\Pi_{3}^R|_{\mathcal{Z}_{\Delta}})
	$$
\end{theorem}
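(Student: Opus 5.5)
The plan is to apply Theorem \ref{10.9} on $P=\mathcal{Z}_{\Delta}$ with $\Pi={}_{K}\Pi^R_{3}|_{\mathcal{Z}_{\Delta}}$. This follows the template of Theorem \ref{11.13}, with the angular Casimirs now available for every $w$ since the structure is purely radial. The vector fields are $X_{(I,u)}=(P\zeta)^I_u|_{\mathcal{Z}_{\Delta}}$ for $(I,u)\in A$, together with $\Theta_w|_{\mathcal{Z}_{\Delta}}$ for $1\le w\le m$. Lemma \ref{11.21} makes each $X_{(I,u)}$ Poisson, since $A\subset(\mathscr{I}\setminus\mathscr{K})^*$, and Lemma \ref{11.7} does the same for each $\Theta_w|_{\mathcal{Z}_{\Delta}}$. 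The dense open set will be $U=\mathcal{Z}_{\Delta}^-$ from Definition \ref{11.18}, intersected if necessary with the locus where all $p_i\neq 0$; the intersection is still dense.

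For the test functions, I take $\theta_w|_{\mathcal{Z}_{\Delta}^-}$ paired with $\Theta_w$, and $\eta^I_u p|_{\mathcal{Z}_{\Delta}}$ from Construction \ref{11.11} paired with $(P\zeta)^I_u$. The first are Casimir on $U$ by Lemma \ref{11.16}(2). The duality relations needed for Theorem \ref{10.9} then come out as follows.
\begin{itemize}
\item Corollary \ref{11.12} gives $(P\zeta)^{I_i}_{u_i}(\eta^{I_j}_{u_j}p)=P^{I_i}\delta$. On $U$ the factor $P^{I_i}$ is a product of nonvanishing $p_{j}$, so it is nonzero almost everywhere.
\item $\Theta_w(\eta^I_u p)=0$, because $\Theta_w(p_l)=0$.
\item $\Theta_w(\theta_{w'})=\delta_{ww'}$.
\item $(P\zeta)^I_u(\theta_j)=0$, because $\zeta^I_u$ is purely radial.
\end{itemize}

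The main obstacle is to show that each $\eta^I_u p|_{\mathcal{Z}_{\Delta}}$ is Casimir on $U$ for ${}_K\Pi^R_3$. On $U$, the Hamiltonian vector field of $f$ is a combination $\sum_{i<j}P^K\sigma^K_{ij}\bigl(\zeta^K_i(f)\zeta^K_j-\zeta^K_j(f)\zeta^K_i\bigr)$. So it suffices to show $P^K\zeta^K_i(\eta^I_u p)=0$ for every $i$. By the computation in Corollary \ref{11.12}, $\zeta^K_i(p_l)={}_l\mu^K_i$ (up to the normalisation), and hence $\zeta^K_i(\eta^I_u p)=\langle \eta^I_u,\mu^K_i\rangle$. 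The step I would try first is to use $I\notin\mathscr{K}$, meaning $J\cap L=\emptyset$. Then $\mu^K_i$ is supported on $L$, while $P^I$ contains factors $p_j$ with $j\in J\subset K$. I would try to show either that $\langle\eta^I_u,\mu^K_i\rangle=0$, by choosing the $\eta$'s in Construction \ref{11.11} orthogonal to $\operatorname{span}\{\mu^K_i\}$ (enlarging $A$ by the $(K,i)$ as auxiliary elements where that keeps linear independence), or that the term is killed after multiplication by $P^K$ on the relevant strata. If neither works directly, my fallback is to replace $\eta^I_u p$ by its projection onto functions annihilated by all $\zeta^K_i$. This keeps the pairing with $\mu^I_u$, because $\mu^I_u$ is independent of the $\mu^K_i$ when the supports are disjoint.

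Once the Casimir property holds on $U$, Theorem \ref{10.9} applies. It says that the vector field in the statement is Poisson if and only if every $f^I_u$ and every $h_w$ lies in $\operatorname{Cas}(\mathcal{Z}_{\Delta},{}_K\Pi^R_3|_{\mathcal{Z}_{\Delta}})$. The extension from $U$ to all of $\mathcal{Z}_{\Delta}$ is handled by Lemma \ref{10.8}, using $\operatorname{closure}(U)=\mathcal{Z}_{\Delta}$.
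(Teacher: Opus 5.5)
Your proposal is correct and follows the paper's route. You apply Theorem~\ref{10.9} on the dense open set $\mathcal{Z}_{\Delta}^-$ with test functions $\eta^I_u p$ and $\theta_w$, exactly as in Theorem~\ref{11.13}, using Lemmas~\ref{11.21} and \ref{11.7} for the Poisson property.

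The step you flag as the main obstacle needs no hedging. Every $I\in\mathscr{I}\setminus\mathscr{K}$ has $J\cap L=\emptyset$, i.e.\ $L=K^c\subseteq I$, so the support clause in Construction~\ref{11.11} already forces ${}_l\eta^I_u=0$ for $l\in L$. Hence $\eta^I_u p$ is a linear combination of the Casimirs $\mathcal{E}^*(p_k)$, $k\in K$, given by item (1) of the Casimir lemma for ${}_K\Pi^R_3|_{\mathcal{Z}_{\Delta}}$. Equivalently, $\langle\eta^I_u,\mu^K_i\rangle=0$ because the supports are disjoint; this is what your first option and your fallback achieve, and the property of disjoint supports actually used there is orthogonality, not mere independence.
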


\begin{theorem}\label{11.24}
	Each Poisson vector field of the form,
	$$
   \sum_{(I,u)\in(\mathscr{I}\setminus\mathscr{K})^*} f^I_{u}(P\zeta)^I_{u}|_{\mathcal{Z}_{\Delta}}+	  	\sum_{j=1}^mh_{j}\Theta_{j}|_{\mathcal{Z}_{\Delta}}
	$$
	
where $f^I_{u}\in C^\infty(\mathcal{Z}_{\Delta})$ and $h_{j}\in \operatorname{Cas}(\mathcal{Z}_{\Delta},{}_{K}\Pi^R_{3}|_{\mathcal{Z}_{\Delta}})$, represents distinct classes in $H^1_{{}_{K}\Pi^R_{3}|_{\mathcal{Z}_{\Delta}}}(\mathcal{Z}_{\Delta})$.
\end{theorem}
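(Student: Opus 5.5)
The plan follows the pattern of the injectivity results in Section \ref{P cohom Cm}: two Poisson vector fields of the stated form represent the same class in $H^1$ exactly when their difference is Hamiltonian. So suppose
$$
Y=\sum_{(I,u)\in(\mathscr{I}\setminus\mathscr{K})^*} g^I_{u}(P\zeta)^I_{u}|_{\mathcal{Z}_{\Delta}}+\sum_{j=1}^m k_{j}\Theta_{j}|_{\mathcal{Z}_{\Delta}},
\qquad g^I_u=f^I_u-\tilde f^I_u,\ k_j=h_j-\tilde h_j,
$$
is the difference of two such Poisson fields and satisfies $Y=X_F$ for some $F\in C^\infty(\mathcal{Z}_{\Delta})$. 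I will show that then $Y=0$, so the two fields coincide as vector fields; this is the meaning of "distinct classes".

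\textbf{Step 1: describe the Hamiltonian fields.} By Lemma \ref{lem: Poisson on submanifold} we can write $X_F=\Pi^\sharp(dF)$. For ${}_K\Pi^R_3|_{\mathcal{Z}_{\Delta}}=\sum P^K\sigma^K_{ij}\zeta^K_i\wedge\zeta^K_j$, this gives
$$
X_F=\sum_{i<j}P^K\sigma^K_{ij}\bigl(\zeta^K_i(F)\,\zeta^K_j-\zeta^K_j(F)\,\zeta^K_i\bigr).
$$
Hence every Hamiltonian field is purely radial: it is a combination of $\zeta^K_1,\dots,\zeta^K_{n-q}$, each of which is a combination of the Euler fields $\mathcal{P}_l$ with $l\in L$, with coefficients carrying the factor $P^K$. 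In particular $X_F(\theta_w)=0$ on $\mathcal{Z}_{\Delta}^-$ for every $w$.

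\textbf{Step 2: the angular part vanishes.} On $\mathcal{Z}_{\Delta}^-$ we have $(P\zeta)^I_u(\theta_w)=0$ and $\Theta_j(\theta_w)=\delta_{jw}$. Applying both sides of $Y=X_F$ to $\theta_w$ therefore gives $k_w=0$ on $\mathcal{Z}_{\Delta}^-$. Since $\mathcal{Z}_{\Delta}^-$ is dense, $k_w=0$ everywhere.

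\textbf{Step 3: the radial part vanishes.} This is the main obstacle, because both sides are now radial. The first idea is pointwise linear independence. On the dense open stratum $N_\emptyset$, where all $p_i>0$, the fields $\mathcal{P}_1,\dots,\mathcal{P}_m,\Theta_1,\dots,\Theta_m$ are independent, so the comparison can be done there. But this does not finish the argument: a $(P\zeta)^I_u$ with $(I,u)$ in $(\mathscr{I}\setminus\mathscr{K})^*$ could a priori lie in the span of the $\zeta^K_j$.

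Instead I would use the test functions $\eta^I_u p$ of Construction \ref{11.11}, applied to a linearly independent subfamily $A$ as in Theorem \ref{11.22}. Corollary \ref{11.12} shows that $(P\zeta)^{I_i}_{u_i}(\eta^{I_j}_{u_j}p)$ is $P^{I_i}\delta$, so $Y(\eta^{I}_{u}p)=g^I_uP^I$. On the other side, $\eta^I_u p$ is a Casimir for ${}_K\Pi^R_3|_{\mathcal{Z}_{\Delta}}$. This should follow from the defining condition of $\mathscr{I}\setminus\mathscr{K}$, namely $J\cap L=\emptyset$, together with Lemma \ref{11.16}(1): the $\zeta^K$ directions only differentiate the $p_l$ with $l\in L\subset I$, and on those coordinates the relevant $\eta$-components are arranged to vanish. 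Granting this, $X_F(\eta^I_up)=-X_{\eta^I_up}(F)=0$. Hence $g^I_uP^I=0$, and since $P^I\neq0$ on a dense set, $g^I_u=0$.

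I expect the real work to be in justifying that the $\eta^I_u p$ are Casimir. This is the same point as in Theorem \ref{11.22}, so I would reuse that verification. If the full family $(\mathscr{I}\setminus\mathscr{K})^*$ is not linearly independent, I would first regroup the sum over a maximal independent subfamily $A$ and then apply the argument above to that subfamily.
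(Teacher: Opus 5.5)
Your proof is correct. For the radial part, though, it takes a different route from the one the paper has in mind.

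The paper gives no separate argument and defers to the technique of Section~\ref{P cohom Cm}. A Hamiltonian field of ${}_K\Pi^R_3|_{\mathcal{Z}_{\Delta}}$ is a combination of the $\zeta^K_i$, hence of the Euler fields $\mathcal{P}_l$ with $l\in L$. Each $(P\zeta)^I_u$ with $I\in\mathscr{I}\setminus\mathscr{K}$ is a combination of the $\mathcal{P}_j$ with $j\in J$. Since $J\cap L=\emptyset$, and $\mathcal{P}_1,\dots,\mathcal{P}_m,\Theta_1,\dots,\Theta_m$ are pointwise independent on the dense stratum $N_{\emptyset}$, the difference of two such Poisson fields can be Hamiltonian only if it vanishes. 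Your worry in Step 3, that some $(P\zeta)^I_u$ might lie in the span of the $\zeta^K_j$, is therefore exactly what the defining condition of $\mathscr{I}\setminus\mathscr{K}$ excludes. This direct route needs neither independence of the $\mu^I_u$ nor any regrouping.

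Your dual route is also valid. The $\eta^I_u p$ are indeed Casimirs: $L\subset I$ for every $I\in\mathscr{I}\setminus\mathscr{K}$, so Construction~\ref{11.11} forces ${}_l\eta^I_u=0$ for $l\in L$. Hence $\eta^I_u p$ is a combination of the Casimirs $p_k$, $k\in K$, and this step is immediate rather than the "real work" you anticipated. Testing $Y=X_F$ against Casimirs uses only the general fact that Hamiltonian fields annihilate them. That ties your argument neatly to Theorem~\ref{11.22}.

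One caveat concerns the case where the $\mu^I_u$ are dependent. Regrouping onto a maximal independent subfamily $A$ introduces coefficients involving ratios $P^{I'}/P^{I}$, which are smooth only on $N_{\emptyset}$, so you should run the argument there. What you obtain is that the radial part of $Y$ vanishes on $N_{\emptyset}$, hence everywhere by density. You do not get that each original $g^I_u$ vanishes, but $Y=0$ is all the statement needs.
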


\subsection{Poisson Cohomology of $_K\Pi^R_{4}$}
$$
_K\Pi^R_{4}=\sum_{1\leq i<j\leq (n-q)}(P^K\upsilon^K_{ij})|_{\mathcal{Z}_{\Delta}}\zeta^K_{i}|_{\mathcal{Z}_{\Delta}}\wedge \zeta^K_{j}|_{\mathcal{Z}_{\Delta}}+\sum_{1\leq i<j\leq m} \tau_{ij}\Theta_{i}|_{\mathcal{Z}_{\Delta}}\wedge \Theta_{j}|_{\mathcal{Z}_{\Delta}}
$$
where $\upsilon^K_{ij}$ and $\tau_{ij}$ are constant functions, $K=(k_{1},\dots,k_{q})$ and $L=(l_{1},\dots,l_{m-q})=K^c$.

\begin{lemma}\label{11.16}
The following properties hold regarding the Casimir functions on $(\mathcal{Z}_{\Delta},{}_{K}\Pi^R_{4}|_{\mathcal{Z}_{\Delta}})$:
\begin{enumerate}
\item For each $k\in K$, $\mathcal{E}^*(p_{k})\in \operatorname{Cas}(\mathcal{Z}_{\Delta},_K\Pi^R_{4}|_{\mathcal{Z}_{\Delta}})$.
\item $\theta_{l}|_{\mathcal{Z}_{\Delta}^-}\in \operatorname{Cas}\Big(\mathcal{Z}_{\Delta}^-,{}_{K}\Pi^R_{4}|_{\mathcal{Z}_{\Delta}^-}\Big)$ if and only if $\tau_{lj}=0 \quad \forall 1\le j\le m$.
\end{enumerate}
\end{lemma}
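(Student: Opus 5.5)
Both parts come down to evaluating the Hamiltonian vector field of the candidate function. For the restricted structure this is legitimate by Lemma \ref{lem: Poisson on submanifold}: for $f\in C^\infty(\mathcal{Z}_{\Delta})$ with extension $F$ we have $({}_K\Pi^R_4|_{\mathcal{Z}_{\Delta}})(f,\cdot)=\mathcal{E}^*\big({}_K\Pi^R_4(F,\cdot)\big)$, so we may compute on $\mathbb{C}^m$ with $F=p_k$ or $F=\theta_l$ and then restrict. We split
\[
{}_K\Pi^R_4=\Pi_{Ra}+\Pi_{An},\qquad \Pi_{Ra}=\sum_{i<j}P^K\upsilon^K_{ij}\,\zeta^K_i\wedge\zeta^K_j,\qquad \Pi_{An}=\sum_{i<j}\tau_{ij}\Theta_i\wedge\Theta_j,
\]
and treat the two summands separately.

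For (1), fix $k\in K$. The angular part contributes nothing, since $\Theta_i(p_k)=0$ for every $i$. For the radial part, $\zeta^K_i$ involves only the directions $x_{l_v}\partial_{x_{l_v}}+y_{l_v}\partial_{y_{l_v}}$ with $l_v\in L=K^c$. Each of these annihilates $p_k$ because $k\notin L$. Away from $\{p_{l_v}=0\}$ this is a direct computation. On that set the factor $P^K$ makes $P^K\zeta^K_i$ smooth (exactly as in Construction \ref{11.1}), and the identity extends by continuity. Hence $\Pi_{Ra}(dp_k,\cdot)=0$, and restricting gives $\mathcal{E}^*(p_k)\in\operatorname{Cas}$. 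This is the same argument as in Lemma \ref{11.16}(1) for ${}_K\Pi^R_3$.

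For (2), we work on $\mathcal{Z}_{\Delta}^-$, where $\theta_l$ is defined. The radial fields $\zeta^K_i$ are combinations of Euler-type fields, and these kill every angular coordinate, so $\Pi_{Ra}(d\theta_l,\cdot)=0$. For the angular part we get, as in Lemma \ref{10.20'},
\[
\Pi_{An}(d\theta_l,\cdot)=-\sum_{j<l}\tau_{jl}\Theta_j+\sum_{j>l}\tau_{lj}\Theta_j,
\]
restricted to $\mathcal{Z}_{\Delta}^-$. Each $\Theta_j$ is tangent to $\mathcal{Z}_{\Delta}$ by Proposition \ref{7.6}. If all $\tau_{lj}=0$, this vector field vanishes and $\theta_l$ is a Casimir.

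The converse is the main point: we need the vector fields $\Theta_1,\dots,\Theta_m$ to be pointwise linearly independent on a nonempty subset of $\mathcal{Z}_{\Delta}^-$. They are independent wherever all $z_i\neq 0$, that is, on $N_\emptyset=\pi^{-1}(i_A(F^\circ_\emptyset))$. This set is nonempty because $\Delta$ has nonempty interior, and it is open in $\mathcal{Z}_{\Delta}$. Its intersection with $\mathcal{Z}_{\Delta}^-$ is again nonempty, since $\mathcal{Z}_{\Delta}^-$ is open and dense. At any point of this intersection, the vanishing of the vector field above forces every coefficient $\tau_{lj}$ to be zero. Because the $\tau_{ij}$ are constants, no density or continuity argument is needed for this last step, unlike in Lemma \ref{10.20'}.
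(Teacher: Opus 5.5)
This is correct and is essentially the argument the paper intends, though the paper only says its proofs use Lemma \ref{lem: Poisson on submanifold} and the Section \ref{P cohom Cm} techniques: compute the Hamiltonian vector field of an extension on $\mathbb{C}^m$, note that the radial part $\Pi_{Ra}$ annihilates $p_k$ for $k\in K$ and every $\theta_l$, and handle the angular part through the pointwise independence of the $\Theta_j$, as in Lemmas \ref{10.11} and \ref{10.20'}. One simplification: $\mathbb{C}_-$ omits the origin, so $\mathcal{Z}_{\Delta}^-\subset N_\emptyset$ already; the $\Theta_j$ are therefore independent at every point of $\mathcal{Z}_{\Delta}^-$, and the density detour in your converse is unnecessary.
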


\begin{lemma}\label{11.27}
For each $(I,u)\in (\mathscr{I}\setminus\mathscr{K})^*$, $(P\zeta)^I_{u}|_{\mathcal{Z}_{\Delta}}\in \operatorname{Pois}(\mathfrak{X}^1(\mathcal{Z}_{\Delta}),{}_{K}\Pi_{4}^R|_{\mathcal{Z}_{\Delta}})$.
\end{lemma}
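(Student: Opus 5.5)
The structure ${}_{K}\Pi^R_{4}|_{\mathcal{Z}_{\Delta}}$ is the sum of the radial part ${}_{K}\Pi^R_{3}|_{\mathcal{Z}_{\Delta}}$ (with $\sigma^K_{ij}$ replaced by $\upsilon^K_{ij}$) and the angular part $\sum_{i<j}\tau_{ij}\Theta_i\wedge\Theta_j|_{\mathcal{Z}_{\Delta}}$. Since the Schouten bracket is bilinear, I would write
$$
[(P\zeta)^I_{u}|_{\mathcal{Z}_{\Delta}},{}_{K}\Pi^R_{4}|_{\mathcal{Z}_{\Delta}}]=[(P\zeta)^I_{u}|_{\mathcal{Z}_{\Delta}},\Pi_{Ra}]+\sum_{1\leq i<j\leq m}\tau_{ij}[(P\zeta)^I_{u}|_{\mathcal{Z}_{\Delta}},\Theta_i|_{\mathcal{Z}_{\Delta}}\wedge\Theta_j|_{\mathcal{Z}_{\Delta}}]
$$
and show that each summand vanishes. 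Because the $\tau_{ij}$ are constants, the angular terms vanish immediately by Lemma \ref{11.6}. The radial term has the same form as ${}_{K}\Pi^R_{3}|_{\mathcal{Z}_{\Delta}}$, only with different constant coefficients. So Lemma \ref{11.21}, applied with $\sigma^K_{ij}:=\upsilon^K_{ij}$, gives $[(P\zeta)^I_{u}|_{\mathcal{Z}_{\Delta}},\Pi_{Ra}]=0$ for every $(I,u)\in(\mathscr{I}\setminus\mathscr{K})^*$.

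The proof of Lemma \ref{11.21} is not reproduced above, so I would check its content directly. By Lemma \ref{10.8} it suffices to work on the dense open set $\mathcal{Z}_{\Delta}\cap\mathbb{C}^m_1$. There $\zeta^K_i=\sum_{l}{}_{l}\nu^K_i\,\partial_{p_{l}}$ up to a factor of $2$, and $(P\zeta)^I_u=P^I\sum_{v}{}_{v}\nu^I_u\,\partial_{p_{j_v}}$. Every field involved is a combination of the commuting fields $\partial_{p_l}$, so the bracket reduces to
$$
\sum_{i<j}\upsilon^K_{ij}\bigl((P\zeta)^I_u(P^K)\,\zeta^K_i\wedge\zeta^K_j-P^K[\zeta^K_i\wedge\zeta^K_j,P^I]\wedge\zeta^I_u\bigr).
$$
The condition $J\cap L=\emptyset$ says that the variables $p_{j}$ with $j\in J$, on which $\zeta^I_u$ acts, do not appear in $P^K=\prod_{l\in L}p_l$. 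Hence $(P\zeta)^I_u(P^K)=0$. Symmetrically, $\zeta^K_i$ differentiates only in the variables $p_l$ with $l\in L$, which are absent from $P^I$, so $\zeta^K_i(P^I)=0$. The constant coefficients ${}_{l}\nu$ contribute nothing further. Both terms therefore vanish on the dense set, and by continuity on all of $\mathcal{Z}_{\Delta}$.

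The step I expect to be the main obstacle is justifying the passage between brackets on $\mathcal{Z}_{\Delta}$ and brackets computed in ambient coordinates. I would handle this as follows: by Corollary \ref{11.3} and Lemma \ref{9.1}, the fields and bivectors are tangent to $\mathcal{Z}_{\Delta}$, and the Schouten bracket of tangent multivector fields restricts to the Schouten bracket on the submanifold. I would also need care with the factor $1/p_{j_l}$ in $\zeta^I_u$, which makes it singular where some $p_{j_l}=0$. It is exactly the prefactor $P^I$ that renders $(P\zeta)^I_u$ smooth, so I would carry out all computations on $\mathbb{C}^m_1$, where every object is smooth, and only then extend by density using Lemma \ref{10.8}.
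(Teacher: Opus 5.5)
Your proposal is correct and follows essentially the route the paper intends. The paper gives no separate proof here; it only points to Lemma~\ref{lem: Poisson on submanifold} and the Leibniz-rule techniques of Section~\ref{P cohom Cm}. You dispose of the constant-coefficient angular part via Lemma~\ref{11.6}, kill the radial part because $J\cap L=\emptyset$ forces $(P\zeta)^I_u(P^K)=0$ and $\zeta^K_i(P^I)=0$, and then pass to $\mathcal{Z}_{\Delta}$ by tangency. That is exactly the intended argument.

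You could skip the density step altogether. Write $(P\zeta)^I_u=2\sum_v {}_v\nu^I_u\bigl(\prod_{j\in J\setminus\{j_v\}}p_j\bigr)\mathcal{P}_{j_v}$, and express $P^K\zeta^K_i\wedge\zeta^K_j$ likewise as a combination of the smooth bivectors $\mathcal{P}_{a}\wedge\mathcal{P}_{b}$ with $a,b\in L$. The same cancellation then holds globally on $\mathbb{C}^m$ by Lemma~\ref{3.1} and the graded Leibniz rule.
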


\begin{theorem}\label{11.28}
Let $A=\{(I_{1},u_{1}),\dots,(I_{t},u_{t})\}\subset (\mathscr{I}\setminus\mathscr{K})^*$ and $B\subseteq \{1,\dots,m\}$ such that the vectors, $\{\mu_{u_{1}}^{I_{1}},\dots,\mu_{u_{t}}^{I_{t}}\}\subset \operatorname{Ker}(\mathscr{E}_{\emptyset})$ are linearly independent and for each $w\in B$, $\tau_{wj}=0$ for all $1\leq j\leq m$. Then,
$$
		\sum_{(I,u)\in A} f^I_{u}(P\zeta)^I_{u}|_{\mathcal{Z}_{\Delta}}+\sum_{w\in B}h_{w}\Theta_{w} |_{\mathcal{Z}_{\Delta}}\in \operatorname{Pois}(\mathfrak{X}^1(\mathcal{Z}_{\Delta}),{}_{K}\Pi_{4}^R|_{\mathcal{Z}_{\Delta}})
        $$    
    $$    
    \Big\Updownarrow 
    $$
    $$
        \text{Each }f^I_{u} , h_{w}\in\operatorname{Cas}(\mathcal{Z}_{\Delta},{}_{K}\Pi_{4}^R|_{\mathcal{Z}_{\Delta}})
	$$

\end{theorem}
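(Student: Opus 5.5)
The plan is to apply Theorem \ref{10.9} on $P=\mathcal{Z}_{\Delta}$ with $\Pi={}_{K}\Pi_{4}^R|_{\mathcal{Z}_{\Delta}}$, in the same way Theorem \ref{11.13} is obtained for $\Pi_1|_{\mathcal{Z}_{\Delta}}$. We take the open dense set $U=\mathcal{Z}_{\Delta}^-$ from Definition \ref{11.18}. The vector fields are $X_{(I,u)}=(P\zeta)^I_u|_{\mathcal{Z}_{\Delta}}$ for $(I,u)\in A$ and $X_w=\Theta_w|_{\mathcal{Z}_{\Delta}}$ for $w\in B$. The candidate Casimirs are $c_{(I,u)}=\eta^I_u p|_{\mathcal{Z}_{\Delta}}$, built from Construction \ref{11.11} using the linear independence of the $\mu^{I_i}_{u_i}$, and $c_w=\theta_w|_{\mathcal{Z}_{\Delta}^-}$.

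The first step is to check that all the $X$'s are Poisson. For the $\Theta_w$ this is Lemma \ref{11.7}. For the $(P\zeta)^I_u$ with $(I,u)\in(\mathscr{I}\setminus\mathscr{K})^*$ it is Lemma \ref{11.27}.

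The second step is to verify the Casimir property on $U$. For $\theta_w$ this is Lemma \ref{11.16}(2), since $\tau_{wj}=0$ for all $j$. For $\eta^I_u p$ we note that the angular part $\sum\tau_{ij}\Theta_i\wedge\Theta_j$ kills every function of the $p_l$. The radial part $\sum P^K\upsilon^K_{ij}\zeta^K_i\wedge\zeta^K_j$ applied to $\eta^I_u p$ gives a combination of terms $P^K\zeta^K_j(\eta^I_u p)$. Now $\zeta^K_j(p_l)$ is ${}_{\beta}\nu^K_j$ when $l=l_\beta\in L$ and $0$ otherwise, so $\zeta^K_j(\eta^I_u p)=\langle\eta^I_u,\mu^K_j\rangle$ up to the factor carried in the normalisation.

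This is the main obstacle: we must show $\langle\eta^I_u,\mu^K_j\rangle=0$, or that $P^K$ kills the term. The condition $J\cap L=\emptyset$ defining $\mathscr{I}\setminus\mathscr{K}$ should be what forces this. My first attempt is to use it together with the vanishing rule ${}_k\eta^I_u=0$ for $k\in\bigcap I$ in Construction \ref{11.11}. If necessary, we will enlarge $A$ by the pairs $(K,j)$ before constructing the dual family $\eta$, so that orthogonality to every $\mu^K_j$ holds by construction. Alternatively, we can argue that $P^K$ vanishes wherever the relevant coordinates vanish.

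The third step is the triangularity conditions. Corollary \ref{11.12} gives $(P\zeta)^{I_i}_{u_i}(\eta^{I_j}_{u_j}p)=P^{I_i}\delta_{u_iu_j}\delta_{I_iI_j}$, and $P^{I_i}$ is nonzero almost everywhere on $\mathcal{Z}_{\Delta}$. We also have $\Theta_i(\theta_j)=\delta_{ij}$, $\Theta_i(p_l)=0$ and $(P\zeta)^I_u(\theta_j)=0$, because radial fields preserve angles. This gives exactly the off-diagonal vanishing required.

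Theorem \ref{10.9} then gives both directions of the claimed equivalence. For the converse direction (each $f,h$ Casimir implies Poisson) this uses Corollary \ref{10.4} directly.
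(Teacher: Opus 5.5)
Your proof follows the paper's own route: apply Theorem \ref{10.9} on $\mathcal{Z}_{\Delta}^-$ with $\eta^I_u p$ and $\theta_w$ as the test Casimirs, exactly as in Theorem \ref{11.13}. The step you flag as the main obstacle is closed by your first attempt. Membership $I\in\mathscr{I}\setminus\mathscr{K}$ means $J\cap L=\emptyset$, i.e.\ $L\subset I$ for every $(I,u)\in A$, so $L\subset\bigcap_{(I,u)\in A}I$ and the vanishing rule of Construction \ref{11.11} forces ${}_l\eta^I_u=0$ for all $l\in L$. Since $\mu^K_j$ is supported on $L$, this gives $\zeta^K_j(\eta^I_u p)=2\langle\eta^I_u,\mu^K_j\rangle=0$. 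Your fallback of making $P^K$ kill the term would not work, because $P^K\neq 0$ on the dense stratum $N_\emptyset$.
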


\begin{theorem}\label{11.30}
	Under the assumptions of Theorem \ref{11.28}, each Poisson vector field of the form,
	$$
   \sum_{(I,u)\in (\mathscr{I}\setminus\mathscr{K})^*} f^I_{u}(P\zeta)^I_{u}|_{\mathcal{Z}_{\Delta}}+	  	\sum_{w\in B}h_{w}\Theta_{w}|_{\mathcal{Z}_{\Delta}}
	$$
	
where $f^I_{u},\in C^\infty(\mathcal{Z}_{\Delta})$, $h_{w}\in \operatorname{Cas}(\mathcal{Z}_{\Delta},{}_{K}\Pi^R_{4}|_{\mathcal{Z}_{\Delta}})$ and for each $w\in B$, $\tau_{wj}=0$ for all $1\leq j\leq m$, represents distinct classes in $H^1_{{}_{K}\Pi^R_{4}|_{\mathcal{Z}_{\Delta}}}(\mathcal{Z}_{\Delta})$.
\end{theorem}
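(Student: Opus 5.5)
The plan is to show that the assignment
\[
(f^I_u,h_w)\longmapsto \Big[\sum f^I_u(P\zeta)^I_u|_{\mathcal{Z}_{\Delta}}+\sum_{w\in B}h_w\Theta_w|_{\mathcal{Z}_{\Delta}}\Big]
\]
is injective. By linearity it suffices to show one thing: if such a Poisson vector field $Y$ with coefficients $F^I_u=f^I_u-f'^I_u$ and $H_w=h_w-h'_w$ is Hamiltonian, $Y=X_g$, then all $F^I_u$ and $H_w$ vanish. By Theorem~\ref{11.28} the coefficients are Casimirs, though we will mostly need only that $Y$ has this shape. I take the index set $A$ as in Theorem~\ref{11.28}, so that the $\mu^I_u$ are linearly independent.

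\emph{Step 1: the shape of Hamiltonian fields.} The first step is to write down the Hamiltonian vector fields of ${}_K\Pi^R_4|_{\mathcal{Z}_{\Delta}}$. From the formula for the bivector,
\[
X_g=\sum_{i<j}P^K\upsilon^K_{ij}\big(\zeta^K_i(g)\zeta^K_j-\zeta^K_j(g)\zeta^K_i\big)+\sum_{i<j}\tau_{ij}\big(\Theta_i(g)\Theta_j-\Theta_j(g)\Theta_i\big).
\]
Two features matter here.
\begin{itemize}
\item The coefficient of $\Theta_w$ is $\sum_j \pm\tau_{wj}\Theta_j(g)$. This vanishes for every $w\in B$ by the hypothesis $\tau_{wj}=0$.
\item The radial part is spanned by the $\zeta^K_i$. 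These involve only the Euler directions $x_l\partial_{x_l}+y_l\partial_{y_l}$ with $l\in L=K^c$.
\end{itemize}

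\emph{Step 2: the radial fields in $Y$.} For $(I,u)\in(\mathscr{I}\setminus\mathscr{K})^*$ we have $J\cap L=\emptyset$, that is $J\subset K$. Hence $(P\zeta)^I_u$ only involves Euler directions indexed by $k\in K$. So on the dense open set $U=\mathcal{Z}_{\Delta}^-\cap\mathbb{C}^m_1$, the equation $Y=X_g$ splits into blocks in the coordinate frame $\{\mathcal{P}_l,\Theta_l\}$, which is pointwise linearly independent on $U$.
\begin{itemize}
\item The $\Theta_w$-components with $w\in B$ give $H_w=0$ on $U$.
\item The $\mathcal{P}_k$-components with $k\in K$ give $\sum_{(I,u)\in A}F^I_u(P\zeta)^I_u=0$ on $U$.
\end{itemize}

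\emph{Step 3: separating the $F^I_u$.} To conclude that each $F^I_u$ vanishes, I would apply the previous identity to the functions $\eta^{I_j}_{u_j}p$ of Construction~\ref{11.11}. By Corollary~\ref{11.12} this yields
\[
F^{I_j}_{u_j}P^{I_j}=0 \quad\text{on } U.
\]
Since $P^{I_j}\neq 0$ on $U$, we get $F^{I_j}_{u_j}=0$ on $U$. Finally, density of $U$ (as in Lemma~\ref{10.8}) and continuity give $F^I_u=0$ and $H_w=0$ on all of $\mathcal{Z}_{\Delta}$.

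The main obstacle is Step 2. One must check carefully that the coordinate-block separation survives restriction to $\mathcal{Z}_{\Delta}$: the vector fields $\mathcal{P}_l$ are not individually tangent to $\mathcal{Z}_{\Delta}$. I would handle this by working in $T\mathbb{C}^m$ along $\mathcal{Z}_{\Delta}$. Both $Y$ and $X_g$ are tangent to $\mathcal{Z}_{\Delta}$, and they can be computed from extensions as in Lemma~\ref{lem: Poisson on submanifold}, so the comparison of components in the ambient frame is legitimate. If Step 3 instead leads to trouble with the $\eta^I_u$ having components in $L$, I would use an alternative for the radial part: use the linear independence of the $\nu^I_u$ restricted to the $K$-block directly, reading off components with respect to the $\mathcal{P}_k$, $k\in K$.
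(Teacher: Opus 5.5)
Your proposal is correct and follows the route the paper intends; the paper gives no separate proof of this theorem and defers to the technique of the injectivity results in Section~\ref{P cohom Cm}. That technique is exactly yours: Hamiltonian vector fields of ${}_K\Pi^R_4|_{\mathcal{Z}_{\Delta}}$ have radial part only along the $L$-indexed Euler directions carried by the $\zeta^K_i$, and no $\Theta_w$-component for $w\in B$. Your field has radial part only along $K$-indexed directions (since $J\subseteq K$) and angular part only along $B$, so pointwise independence of $\{\mathcal{P}_l,\Theta_l\}$ on a dense open subset of $\mathcal{Z}_{\Delta}$ forces the difference to vanish.

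Note that Step~2 alone already shows the difference is the zero vector field, even when the sum runs over all of $(\mathscr{I}\setminus\mathscr{K})^*$. Step~3, which needs the $\mu^I_u$ to be independent (i.e.\ restriction to $A$), only upgrades this to vanishing of each individual coefficient.
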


\section{Poisson Cohomology of $\mathcal{O}$}\label{P cohom of quotients}

So far, we have discussed the Poisson cohomology of different Poisson structures on $\mathbb{C}^m$ and $\mathcal{Z}_{\Delta}$.  The reduced Poisson structures on $\mathcal{Z}_{\Delta}$ are obtained from the reducible Poisson structures on $\mathbb{C}^m$ with the action of the trivial group,  under Marsden-Ratiu reduction theory. This section focuses on studying Poisson cohomology by taking non-trivial $G=\mathbb{T}^d_{D}$ acting freely on $\mathcal{Z}_{\Delta}$ and orbit space,
$$
\mathcal{O}=\mathcal{Z}_{\Delta}/\mathbb{T}^d_{D}
$$
Let $E^d_{D}$ denote the distribution produced by $\mathbb{T}^d_{D}$. Denote the reduced Poisson structures induced by $\Pi_{1}$, $_K\Pi^R_{3}$ and $_K\Pi^R_{4}$ on $\mathcal{O}$ as $\overline{\Pi_{1}}$, ${}_{K}\overline{\Pi^R_{3}}$ and ${}_{K}\overline{\Pi^R_{4}}$ respectively.

\subsection{Pushforward map on $\mathcal{O}$}
Let
$$
\mathcal{Q}:\mathcal{Z}_{\Delta} \rightarrow \mathcal{O}\,, \quad \mathcal{Q}(x)=[x] \,
$$
be the quotient map from $\mathcal{Z}_{\Delta}$ to the orbit space $\mathcal{O}$.
\begin{definition}\label{12.1}
A smooth function $F\in C^{\infty}(\mathcal{Z}_{\Delta})$ is $G$-invariant if 
$$
	F(gx)=F(x)\quad \forall g\in G,x \in \mathcal{Z}_{\Delta}
$$
Denote the subalgebra of such functions by $C^{\infty}(\mathcal{Z}_{\Delta})^G$.
\end{definition}
\begin{remark} If the smooth functions are viewed as $0$-th degree multivector fields on the manifold then the subalgebra of $G$-invariant smooth functions can also be denoted as $\mathfrak{X}^{0}(\mathcal{Z}_{\Delta})^G$.
\end{remark}

Let $F\in C^{\infty}(\mathcal{Z}_{\Delta})^G$. 
Define the pushforward function
$f = \mathcal{Q}_{*}(F)\in C^{\infty}(\mathcal{O})$   by
$$
	f\Big([x]\Big)=F(x)\quad \forall [x]\in \mathcal{O}
$$

\begin{lemma}\label{12.2}
	The pushforward map $\mathcal{Q}_{*}$ 
	$$
		\mathcal{Q}_{*}:\mathfrak{X}^0(\mathcal{Z}_{\Delta})^G\rightarrow \mathfrak{X}^0(\mathcal{O})\,, \quad \mathcal{Q}_{*}(F)=f \,,
	$$
	is a ring isomorphism, with the pullback map,
	$$
\mathcal{Q}^*: C^{\infty}(\mathcal{O})\rightarrow C^{\infty}(\mathcal{Z}_{\Delta})^G \,, \quad \mathcal{Q}^*(f)(x) = f ([x]) \,,
	$$
	as the inverse map.
\end{lemma}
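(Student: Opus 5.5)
The plan is to check four things in order: $\mathcal{Q}_*$ is well defined, it is a ring homomorphism, $\mathcal{Q}^*$ lands in the $G$-invariant smooth functions, and the two maps are mutually inverse. Throughout, $G=\mathbb{T}^d_D$ acts freely on the compact manifold $\mathcal{Z}_{\Delta}$. By condition (A2) of Section \ref{subsec: reduction}, which holds since the action is free and $G$ is compact, $\mathcal{O}$ is a smooth manifold and $\mathcal{Q}$ is a smooth surjective submersion.

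\emph{Well-definedness.} Since $F$ is constant on $G$-orbits, the value $f([x])=F(x)$ does not depend on the representative chosen. The real issue is smoothness of $f$. Because $\mathcal{Q}$ is a submersion, every $[x]\in\mathcal{O}$ has an open neighbourhood $V$ and a smooth local section $s:V\to\mathcal{Z}_{\Delta}$ with $\mathcal{Q}\circ s=\mathrm{id}_V$; this follows from the local normal form of submersions. On $V$ we then have $f=F\circ s$, which is smooth. Alternatively, one can use the slice theorem for free proper actions, which gives local trivialisations $\mathcal{Q}^{-1}(V)\cong G\times V$; in these, $F$ is independent of the $G$-factor.

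\emph{Ring homomorphism.} This is a pointwise statement: $(F_1F_2)(x)=F_1(x)F_2(x)$, $(F_1+F_2)(x)=F_1(x)+F_2(x)$, and the constant function $1$ is sent to $1$. Hence $\mathcal{Q}_*$ preserves sums, products and the unit.

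\emph{The pullback and the inverse relations.} For $f\in C^\infty(\mathcal{O})$, the function $\mathcal{Q}^*f=f\circ\mathcal{Q}$ is smooth as a composite of smooth maps. It is $G$-invariant because $\mathcal{Q}(gx)=\mathcal{Q}(x)$. The inverse relations are immediate:
\[
\mathcal{Q}_*(\mathcal{Q}^*f)([x])=f([x]), \qquad \mathcal{Q}^*(\mathcal{Q}_*F)(x)=F(x).
\]
So $\mathcal{Q}_*$ is bijective with inverse $\mathcal{Q}^*$, and it is therefore a ring isomorphism.

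The only step with content is the smoothness of $\mathcal{Q}_*(F)$. I expect to settle it with the local-section argument above; it relies only on $\mathcal{Q}$ being a submersion, which the paper has already noted follows from freeness and compactness of the torus action.
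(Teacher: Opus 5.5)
Your argument is correct and complete. The paper states this lemma without proof, relying only on the earlier observation that a free action of a compact torus gives (A2), i.e.\ that $\mathcal{Q}$ is a surjective submersion. Your proposal supplies the standard justification from exactly that input, and it correctly identifies smoothness of $\mathcal{Q}_*(F)$ as the only substantive step, settled by taking local sections $s$ of $\mathcal{Q}$ and using $f|_V=F\circ s$.
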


The idea of pushing forward $G$-invariant smooth functions
can be extended to pushing forward $G$-invariant multivector fields.
\begin{definition}\label{12.4}
	A multivector field $X\in \mathfrak{X}^{k}(\mathcal{Z}_{\Delta})$ is $G$-invariant if for each $g\in G$,
	$$
		dg(X)=X\circ g, \quad {\text{i.e.},} \quad dg_{x}(X_{x})=X_{gx} \quad  \forall x\in \mathcal{Z}_{\Delta}
	$$
	Denote the set of $G$-invariant multivector fields by $\mathfrak{X}^{k}(\mathcal{Z}_{\Delta})^G$
\end{definition}

\begin{lemma}\label{12.5}
	If $X\in \mathfrak{X}^{k}(\mathcal{Z}_{\Delta})$ is $G$-invariant then for arbitrary $F_{1},\dots,F_{k}\in \mathfrak{X}^0(\mathcal{Z}_{\Delta})^G$, $X(F_{1},\dots,F_{k})\in \mathfrak{X}^0(\mathcal{Z}_{\Delta})$ is $G$-invariant.
\end{lemma}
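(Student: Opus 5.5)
The plan is to evaluate $X(F_1,\dots,F_k)$ at a translated point $gx$ and push everything back to $x$ using the invariance hypotheses, essentially a chain-rule computation. Fix $g\in G$, $x\in\mathcal{Z}_{\Delta}$, and regard $X$ as an alternating multilinear map on one-forms, so that
$$
X(F_1,\dots,F_k)(y)=X_y\big(dF_1|_y,\dots,dF_k|_y\big).
$$
We must show this function takes the same value at $y=gx$ and at $y=x$.

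The first step uses invariance of the functions. Since $F_i\circ g=F_i$ (writing $g$ for the diffeomorphism $\Psi_g$), the chain rule gives $dF_i|_x=d(F_i\circ g)|_x=(dg_x)^*\big(dF_i|_{gx}\big)$. In other words, the covector $dF_i|_{gx}$ pulls back under $dg_x$ to $dF_i|_x$.

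The second step uses invariance of $X$. By Definition \ref{12.4}, $X_{gx}=dg_x(X_x)$, where $dg_x$ is extended to $\wedge^k T_x\mathcal{Z}_{\Delta}$. The pushforward of a $k$-vector paired with covectors equals the $k$-vector paired with the pulled-back covectors:
$$
(dg_x X_x)(\alpha_1,\dots,\alpha_k)=X_x\big((dg_x)^*\alpha_1,\dots,(dg_x)^*\alpha_k\big).
$$
This identity is checked on decomposable $k$-vectors $v_1\wedge\dots\wedge v_k$ via the determinant formula $\det(\alpha_i(dg_x v_j))=\det(((dg_x)^*\alpha_i)(v_j))$, and then extended by linearity.

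Combining the two steps,
$$
X(F_1,\dots,F_k)(gx)=X_x\big((dg_x)^*dF_1|_{gx},\dots\big)=X_x\big(dF_1|_x,\dots,dF_k|_x\big)=X(F_1,\dots,F_k)(x).
$$
Smoothness of $X(F_1,\dots,F_k)$ is automatic, so it lies in $\mathfrak{X}^0(\mathcal{Z}_{\Delta})^G$.

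There is no real obstacle. The only point needing care is the naturality of the pairing under $dg_x$ on $\wedge^k$, and that is routine multilinear algebra.
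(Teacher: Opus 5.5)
The paper states Lemma~\ref{12.5} without proof, so there is no argument of the authors' to compare yours against.

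Your proof is correct and complete. The two ingredients are exactly what is needed:
\begin{itemize}
\item the chain-rule identity $dF_i|_x=(dg_x)^*\,dF_i|_{gx}$, which follows from $F_i\circ g=F_i$;
\item the adjointness of $(dg_x)_*$ on $\wedge^k T_x\mathcal{Z}_{\Delta}$ and $(dg_x)^*$ on covectors, which you check on decomposable $k$-vectors via the determinant.
\end{itemize}
Any normalization constant in the pairing appears on both sides, so it does not matter. Your argument works for each $g\in G$ separately. It therefore needs neither connectedness of $G$ nor an infinitesimal Lie-derivative argument, which makes it the cleanest route here.
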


Define the pushforward map,
$$
	\mathcal{Q}_{*}:\mathfrak{X}^{k}(\mathcal{Z}_{\Delta})^G\rightarrow \mathfrak{X}^{k}(\mathcal{O})
$$
where for any $X\in \mathfrak{X}^{k}(\mathcal{Z}_{\Delta})^G$, $\mathcal{Q}_{*}(X)$ is defined as,
$$
	\mathcal{Q}_{*}(X):C^{\infty}(\mathcal{O})\times\dots \times C^{\infty}(\mathcal{O})\rightarrow C^{\infty}(\mathcal{O})
$$
$$
	\mathcal{Q}_{*}(X)\Big(f_{1},\dots,f_{k}\Big)=\mathcal{Q}_{*}\Big(X(F_{1},\dots,F_{k})\Big)
$$
where each $F_{i}=\mathcal{Q}^*(f_{i})$. $\mathcal{Q}_{*}(X)$ can also be defined pointwise by simply pushing forward the tangent vectors by $\mathcal{Q}_{*}$.
\begin{remark} The pushforward map,
$$
\mathcal{Q}_{*}:\mathfrak{X}^{k}(\mathcal{Z}_{\Delta})^G\rightarrow \mathfrak{X}^{k}(\mathcal{O})
$$
is surjective. The proof uses a principal connection on $\mathcal{Z}_{\Delta}$, considered as a principal 
$G$-bundle over $\mathcal{O}$, to lift the multivector fields on $\mathcal{O}$ to horizontal $G$-invariant multivector fields on $\mathcal{Z}_{\Delta}$.
\end{remark}
\begin{remark} By Theorem \ref{8.1} and Lemma \ref{11.7}, $\Pi_{1}|_{\mathcal{Z}_{\Delta}},{}_{K}\Pi^R_{3}|_{\mathcal{Z}_{\Delta}}$ and ${}_{K}\Pi^R_{4}|_{\mathcal{Z}_{\Delta}}$ are $G$-invariant bivectors lying in $\mathfrak{X}^{2}(\mathcal{Z}_{\Delta})^G$. The reduced Poisson structures $\overline{\Pi_{1}},{}_{K}\overline{\Pi^R_{3}}$ and ${}_{K}\overline{\Pi^R_{4}}$ constructed using Theorem \ref{2.5} can also be obtained by pushing forward $\Pi_{1}|_{\mathcal{Z}_{\Delta}},{}_{K}\Pi^R_{3}|_{\mathcal{Z}_{\Delta}}$ and ${}_{K}\Pi^R_{4}|_{\mathcal{Z}_{\Delta}}$, respectively, as shown in Lemma \ref{2.8}.
$$
	\overline{\Pi_{1}}=\mathcal{Q}_{*}\Big(\Pi_{1}|_{\mathcal{Z}_{\Delta}}\Big)\quad {}_{K}\overline{\Pi^R_{3}}=\mathcal{Q}_{*}\Big({}_{K}\Pi^R_{3}|_{\mathcal{Z}_{\Delta}}\Big)\quad {}_{K}\overline{\Pi^R_{4}}=\mathcal{Q}_{*}\Big({}_{K}\Pi^R_{4}|_{\mathcal{Z}_{\Delta}}\Big)
$$
\end{remark}

\begin{lemma}\label{12.6}
	If $X\in \mathfrak{X}^{k}(\mathcal{Z}_{\Delta})^G$ and $Y\in \mathfrak{X}^{l}(\mathcal{Z}_{\Delta})^G$ are $G$-invariant multivector fields, then $[X,Y]\in \mathfrak{X}^{k+l-1}(\mathcal{Z}_{\Delta})^G$ and 
	$$
		\mathcal{Q}_{*}[X,Y]=[\mathcal{Q}_{*}(X),\mathcal{Q}_{*}(Y)] \,.
	$$
\end{lemma}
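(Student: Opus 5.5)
The plan is to prove both assertions by working with the pointwise description of multivector fields, using the fact that $\mathcal{Q}$ is a surjective submersion (a principal $G$-bundle, since $G=\mathbb{T}^d_D$ acts freely and properly). First I will show $G$-invariance of the bracket. For $g\in G$ the map $\Psi_g$ is a diffeomorphism of $\mathcal{Z}_{\Delta}$. The Schouten bracket is natural under diffeomorphisms, $d\Psi_g[X,Y]=[d\Psi_g X,d\Psi_g Y]$. This follows from the defining formula $L_{[\alpha,\beta]}=L_\alpha\circ L_\beta-(-1)^{kl}L_\beta\circ L_\alpha$, because pulling back functions by $\Psi_g$ intertwines $L_{d\Psi_g X}$ with $L_X$. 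Hence if $d\Psi_g X=X\circ\Psi_g$ and likewise for $Y$, then $d\Psi_g[X,Y]=[X,Y]\circ\Psi_g$, so $[X,Y]\in\mathfrak{X}^{k+l-1}(\mathcal{Z}_{\Delta})^G$.

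For the compatibility with $\mathcal{Q}_*$ I will use the algebraic definition of the pushforward: $\mathcal{Q}_*(X)(f_1,\dots,f_k)=\mathcal{Q}_*\bigl(X(F_1,\dots,F_k)\bigr)$ with $F_i=\mathcal{Q}^*f_i$. By Lemma \ref{12.5}, each $X(F_{i_1},\dots,F_{i_k})$ is again $G$-invariant, so in the expansion
\[
L_{[X,Y]}(F_1,\dots,F_{k+l-1})=\sum_{\sigma}(-1)^{\sigma}L_X\bigl(F_{\sigma(1)},\dots,L_Y(F_{\sigma(\cdot)},\dots)\bigr)-(-1)^{(k-1)(l-1)}\sum_{\sigma}(\cdots)
\]
every inner term $L_Y(\dots)$ is $G$-invariant and equals $\mathcal{Q}^*\bigl(\mathcal{Q}_*(Y)(f_{\sigma(\cdot)},\dots)\bigr)$. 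Applying $L_X$ to invariant arguments and then $\mathcal{Q}_*$ gives $\mathcal{Q}_*(X)$ applied to the corresponding $f$'s. Using that $\mathcal{Q}^*$ and $\mathcal{Q}_*$ are inverse ring isomorphisms (Lemma \ref{12.2}), the whole expansion transforms term by term into the defining expansion of $[\mathcal{Q}_*X,\mathcal{Q}_*Y](f_1,\dots,f_{k+l-1})$. Since a multivector field on $\mathcal{O}$ is determined by its values on all tuples of functions, the equality follows.

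The main obstacle is justifying that the bracket is determined by, and computable from, its values on invariant (basic) functions alone, and that the pushforward defined algebraically agrees with a genuine multivector field. Both points rest on the fact that the differentials $dF$ of basic functions $F=\mathcal{Q}^*f$ span the annihilator $E^0$ of the orbit directions at each point. Together with the pointwise description of $\mathcal{Q}_*$, this means that evaluating on basic functions captures exactly $d\mathcal{Q}$ of the multivector. I would verify this locally using a slice chart $U\times G$ for the free action, where basic functions are just functions on $U$. In such a chart, the term-by-term identification above becomes a straightforward local check of the Schouten bracket formula in coordinates.
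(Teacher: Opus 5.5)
The paper states Lemma~\ref{12.6} without proof, so there is no argument of the authors to compare yours against. Your proof is correct, and it is the argument the paper's definitions are built for. You get invariance of $[X,Y]$ from naturality of the Schouten bracket under the diffeomorphisms $\Psi_g$. You get $\mathcal{Q}_*[X,Y]=[\mathcal{Q}_*X,\mathcal{Q}_*Y]$ by transporting the defining expansion of $L_{[X,Y]}$ on basic functions term by term. Lemma~\ref{12.5} keeps every inner term basic, and Lemma~\ref{12.2} identifies each such term with a pullback.

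The two halves are really one computation. Both say that if $X,Y$ are related to $X',Y'$ through a map $\phi$, in the sense
\[
X(\phi^*f_1,\dots,\phi^*f_k)=\phi^*\bigl(X'(f_1,\dots,f_k)\bigr),
\]
then $[X,Y]$ is related to $[X',Y']$ in the same way. The first half takes $\phi=\Psi_g$; the second takes $\phi=\mathcal{Q}$, tested only on basic functions.

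The ``main obstacle'' in your last paragraph is not actually an obstacle, and you do not need the slice chart:
\begin{itemize}
\item By the paper's definition, $\mathcal{Q}_*[X,Y]$ only sees $[X,Y]$ on basic functions. Your expansion expresses those values entirely through $X$ and $Y$ on basic functions, so non-basic functions never enter.
\item $\mathcal{Q}_*X$ is a genuine multivector field because $(f_1,\dots,f_k)\mapsto\mathcal{Q}_*\bigl(X(\mathcal{Q}^*f_1,\dots,\mathcal{Q}^*f_k)\bigr)$ is an alternating multiderivation of $C^\infty(\mathcal{O})$, since $\mathcal{Q}^*$ is a ring homomorphism.
\item It agrees with the pointwise pushforward because $d(\mathcal{Q}^*f)=\mathcal{Q}^*df$.
\end{itemize}
The local check does no harm, but it adds nothing the algebraic argument does not already give.
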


Let $d{\kappa}$ be the normalized Haar measure on $G$.
$$
		\int_{G}d{\kappa}(g)=1 \,.
$$
It defines the averaging map,
$$
	\mathcal{A}_{G}:\mathfrak{X}^{k}(\mathcal{Z}_{\Delta})\rightarrow \mathfrak{X}^{k}(\mathcal{Z}_{\Delta})^G
$$
$$
\mathcal{A}_{G}(Y)=\int_{G}dg(Y)d{\kappa}(g)\,,\quad \mathcal{A}_{G}(Y)|_{x}=\int_{G}dg(Y|_{g ^{-1}x})d{\kappa}(g)\,,
$$
such that $\mathcal{A}_{G}\circ \mathcal{A}_{G}=\mathcal{A}_{G}$. Consequently, if $X\in \mathfrak{X}^{k}(\mathcal{Z}_{\Delta})^G$, then
	$$
		\mathcal{A}_{G}(X)=X\,.
	$$ 
In general, $\mathcal{A}_{G}\Big([X,Y]\Big)\not= \Big[\mathcal{A}_{G}(X),\mathcal{A}_{G}(Y)\Big]$; however, the following holds.
\begin{lemma}\label{12.9}
	If $X\in \mathfrak{X}^{k}(\mathcal{Z}_{\Delta})^G$ and $Y\in \mathfrak{X}^{l}(\mathcal{Z}_{\Delta})$, then
	$$
	\mathcal{A}_{G}\Big([X,Y]\Big)= \Big[\mathcal{A}_{G}(X),\mathcal{A}_{G}(Y)\Big] =\Big[X,\mathcal{A}_{G}(Y)\Big]
	$$
\end{lemma}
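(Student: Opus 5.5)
The plan is to show that $\mathcal{A}_G$ commutes with the Schouten bracket against a fixed $G$-invariant argument, using the naturality of the Schouten bracket under diffeomorphisms and the linearity of integration. The second equality is immediate: since $X$ is $G$-invariant, $\mathcal{A}_G(X)=X$ by the idempotence remark preceding the lemma, so $[\mathcal{A}_G(X),\mathcal{A}_G(Y)]=[X,\mathcal{A}_G(Y)]$. It therefore remains to prove $\mathcal{A}_G([X,Y])=[X,\mathcal{A}_G(Y)]$.

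\textbf{Step 1 (naturality).} For each $g\in G$, the action map $\Psi_g$ is a diffeomorphism of $\mathcal{Z}_{\Delta}$. The Schouten bracket is natural under diffeomorphisms: $dg([X,Y])=[dg(X),dg(Y)]$. I would verify this from the derivation description in Section \ref{Schouten}. The pushforward of a multivector field satisfies $L_{dg(Z)}(f_1,\dots,f_k)=(g^{-1})^*\bigl(L_Z(g^*f_1,\dots,g^*f_k)\bigr)$, and substituting this into the defining formula $L_{[\alpha,\beta]}=L_\alpha\circ L_\beta-(-1)^{kl}L_\beta\circ L_\alpha$ shows that both sides agree term by term. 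Since $dg(X)=X$, we get $dg([X,Y])=[X,dg(Y)]$ for every $g$.

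\textbf{Step 2 (interchange with the integral).} Averaging gives
$$\mathcal{A}_G([X,Y])=\int_G [X,dg(Y)]\,d\kappa(g).$$
We must show this equals $\bigl[X,\int_G dg(Y)\,d\kappa(g)\bigr]$. In local coordinates, $[X,\cdot\,]$ is a first-order linear differential operator whose coefficients are smooth and independent of $g$. The family $g\mapsto dg(Y)$ is smooth in $(g,x)$, and $G$ is compact. Differentiation under the integral sign is therefore justified, so $[X,\cdot\,]$ commutes with the Haar integral. An equivalent check uses the derivation form: for test functions $f_1,\dots,f_{k+l-1}$, the expression $L_{[X,Z]}(f_1,\dots)$ involves only $X$, $Z$ and first derivatives of $Z$, and all of these are linear in $Z$ and commute with integration.

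\textbf{Main obstacle.} The only nontrivial point is Step 2, the justification for moving the bracket, a differential operator, inside the integral over $G$. I would handle it by working chartwise and noting that $(g,x)\mapsto (dg(Y))_x$ is jointly smooth, because the action $G\times\mathcal{Z}_{\Delta}\to\mathcal{Z}_{\Delta}$ is smooth. Its $x$-derivatives are then continuous on the compact set $G\times\overline{U}$ for a relatively compact chart $U$, so the standard theorem on differentiating parameter integrals applies.
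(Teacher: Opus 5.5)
Your proposal is correct. The paper states Lemma~\ref{12.9} without proof and then uses it in Lemma~\ref{12.16}, so there is no argument of the authors' to compare against, and yours supplies the missing one by the standard route.

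The argument rests on two ingredients. First, naturality $dg([X,Y])=[dg(X),dg(Y)]$ of the Schouten bracket under each diffeomorphism $\Psi_g$, which you verify via $L_{dg(Z)}=(g^{-1})^*\circ L_Z\circ (g^*)^{\times k}$; combined with $dg(X)=X$, this gives $dg([X,Y])=[X,dg(Y)]$. Second, commuting the first-order operator $[X,\cdot\,]$ with the Haar integral, which is justified by the joint smoothness of $(g,x)\mapsto (dg(Y))_x$ and the compactness of $G$. The second equality in the statement is, as you say, just $\mathcal{A}_G(X)=X$.
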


\subsection{Torus action on $\mathcal{O}$}

Since  $\mathbb{T}^m$ is abelian, its action on $\mathcal{Z}_{\Delta}$ induces an action of  $\mathbb{T}^{m-d}_{\mathcal{O}} := \mathbb{T}^m/\mathbb{T}^d_{D}$ 
 on  $\mathcal{O}=\mathcal{Z}_{\Delta}/\mathbb{T}^d_{D}$. 
\begin{lemma}\label{12.11}
If $f\in C^\infty(\mathcal{O})$ and $F=\mathcal{Q}^*(f)\in C^\infty(\mathcal{Z}_{\Delta})$, then for each $[g]\in \mathbb{T}^{m-d}_{\mathcal{O}}$,  $F\circ g =\mathcal{Q}^*(f\circ[g])$.
\end{lemma}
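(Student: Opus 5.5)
The plan is to prove the identity pointwise on $\mathcal{Z}_{\Delta}$. Both sides will be evaluated at an arbitrary point $x$, using only that the $\mathbb{T}^m$–action commutes with the $\mathbb{T}^d_{D}$–action. That commutation holds because $\mathbb{T}^m$ is abelian and $\mathbb{T}^d_{D}\subset\mathbb{T}^m$.

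First I will make the induced action precise. For $[g]\in\mathbb{T}^{m-d}_{\mathcal{O}}=\mathbb{T}^m/\mathbb{T}^d_{D}$ with representative $g\in\mathbb{T}^m$, the action on $\mathcal{O}$ is $[g]\cdot[x]:=[gx]$. Here $f\circ[g]$ denotes the function $[x]\mapsto f([g][x])$. The definition is independent of choices. If $g'=gh$ with $h\in\mathbb{T}^d_{D}$ and $x'=kx$ with $k\in\mathbb{T}^d_{D}$, then $g'x'=ghkx=(hk)gx$ by commutativity. This point lies in the same $\mathbb{T}^d_{D}$-orbit as $gx$, so $[g'x']=[gx]$.

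Next I will check that $f\circ[g]$ is smooth, so that $\mathcal{Q}^*(f\circ[g])$ makes sense. Since $g$ maps $\mathbb{T}^d_{D}$-orbits to $\mathbb{T}^d_{D}$-orbits, we have $\mathcal{Q}\circ g=[g]\circ\mathcal{Q}$. The map $\mathcal{Q}$ is a surjective submersion by (A2), and $\mathcal{Q}\circ g$ is smooth, so $[g]$ is smooth on $\mathcal{O}$. Hence $f\circ[g]\in C^\infty(\mathcal{O})$.

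The main step is then the direct computation. For every $x\in\mathcal{Z}_{\Delta}$,
\[
(F\circ g)(x)=F(gx)=(\mathcal{Q}^*f)(gx)=f([gx])=f([g][x])=(f\circ[g])([x])=\mathcal{Q}^*(f\circ[g])(x).
\]
This gives $F\circ g=\mathcal{Q}^*(f\circ[g])$. In particular $F\circ g$ is $\mathbb{T}^d_{D}$-invariant, which is consistent with Lemma \ref{12.2}.

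I do not expect a serious obstacle. The only point needing care is the well-definedness of the induced action and the smoothness of $[g]$, and both rest entirely on commutativity of $\mathbb{T}^m$ and on (A2).
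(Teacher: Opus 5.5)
Your proposal is correct and follows essentially the same route as the paper: a pointwise evaluation $F(gx)=f([gx])=(f\circ[g])([x])$ using $F=\mathcal{Q}^*(f)$. Your checks that the induced $\mathbb{T}^{m-d}_{\mathcal{O}}$-action is well defined (by commutativity of $\mathbb{T}^m$) and that $[g]$ is smooth (via $\mathcal{Q}\circ g=[g]\circ\mathcal{Q}$ with $\mathcal{Q}$ a surjective submersion) are steps the paper leaves implicit, and they are handled correctly.
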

\begin{proof}
 Let $x,y \in\mathcal{Z}_{\Delta}$ represent the same class $[x]$ in $\mathcal{O}$. So there exists $g'''\in \mathbb{T}^d_D$ with $x=g'''(y)$. To prove the statement of the lemma, it suffices to show, 
 $$
	\mathcal{Q}_{*}(F\circ g)([x])=f\circ [g]([x])\quad \forall [x]\in \mathcal{O}
 $$
 equivalently,
 $$
F\circ g(x)=f\circ [g]([x])\quad \forall x \in \mathcal{Z}_{\Delta}
$$
Since $F=\mathcal{Q}^*(f)$, 
$$
	 F(x)=f([x])\quad \forall x\in \mathcal{Z}_{\Delta}
$$
$$
\implies F\circ g(x)=F(g(x))=f([g(x)])=f\circ [g]([x]) \quad \forall x \in \mathcal{Z}_{\Delta}
$$

\end{proof}

\begin{theorem}\label{12.12}
Each $[g]\in \mathbb{T}^{m-d}_{\mathcal{O}}$ is a Poisson diffeomorphism on all three Poisson manifolds $(\mathcal{O},\overline{\Pi_{1}})$, $(\mathcal{O},{}_{K}\overline{\Pi^R_{3}})$ and $(\mathcal{O},{}_{K}\overline{\Pi^R_{4}})$.

\end{theorem}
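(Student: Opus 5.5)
The plan is to reduce the statement to the $\mathbb{T}^m$-invariance of the bivectors upstairs on $\mathcal{Z}_{\Delta}$, and then push that invariance down using the characterisation of the reduced bracket from Lemma \ref{2.8}. Write $\Pi$ for any one of $\Pi_{1}|_{\mathcal{Z}_{\Delta}}$, ${}_{K}\Pi^R_{3}|_{\mathcal{Z}_{\Delta}}$, ${}_{K}\Pi^R_{4}|_{\mathcal{Z}_{\Delta}}$, and $\overline{\Pi}$ for the corresponding reduced structure on $\mathcal{O}$. Fix $[g]\in\mathbb{T}^{m-d}_{\mathcal{O}}$ with representative $g\in\mathbb{T}^m$. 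The map $[g]:\mathcal{O}\to\mathcal{O}$ is a diffeomorphism: it is well defined and smooth because $\mathbb{T}^m$ is abelian, so $g$ commutes with $\mathbb{T}^d_D$ and descends through the submersion $\mathcal{Q}$, and its inverse is $[g^{-1}]$. It therefore remains to show
\[
\{f\circ[g],\,h\circ[g]\}_{\mathcal{O}}=\{f,h\}_{\mathcal{O}}\circ[g]\quad\text{for all } f,h\in C^\infty(\mathcal{O}).
\]

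First, the action of $g$ on $\mathbb{C}^m$ preserves $\mathcal{Z}_{\Delta}$, since it preserves each $|z_i|^2$. By the proof of Theorem \ref{8.1} and Remark \ref{re8.2}, $g$ is a Poisson diffeomorphism of $\mathbb{C}^m$ for $\Pi_1$, ${}_K\Pi_3^R$ and ${}_K\Pi_4^R$. Because the induced bracket on $\mathcal{Z}_{\Delta}$ is computed from extensions via Lemma \ref{lem: Poisson on submanifold}, the restriction $g|_{\mathcal{Z}_{\Delta}}$ is a Poisson diffeomorphism of $(\mathcal{Z}_{\Delta},\Pi)$. To see this, take $F,H$ on $\mathcal{Z}_{\Delta}$ with extensions $\tilde F,\tilde H$. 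Then $\tilde F\circ g$ and $\tilde H\circ g$ extend $F\circ g$ and $H\circ g$, so
\[
\{F\circ g,H\circ g\}_{\mathcal{Z}_{\Delta}}=i^*\bigl(\{\tilde F,\tilde H\}_{\mathbb{C}^m}\circ g\bigr)=\{F,H\}_{\mathcal{Z}_{\Delta}}\circ g .
\]

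Second, set $F=\mathcal{Q}^*f$ and $H=\mathcal{Q}^*h$. By Lemma \ref{12.11}, $\mathcal{Q}^*(f\circ[g])=F\circ g$, and $F\circ g$ is again $\mathbb{T}^d_D$-invariant by commutativity. Lemma \ref{2.8} then gives
\[
\mathcal{Q}^*\{f\circ[g],h\circ[g]\}_{\mathcal{O}}=\{F\circ g,H\circ g\}_{\mathcal{Z}_{\Delta}}=\{F,H\}_{\mathcal{Z}_{\Delta}}\circ g=\bigl(\mathcal{Q}^*\{f,h\}_{\mathcal{O}}\bigr)\circ g .
\]
Applying Lemma \ref{12.11} once more, the right-hand side equals $\mathcal{Q}^*\bigl(\{f,h\}_{\mathcal{O}}\circ[g]\bigr)$. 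Since $\mathcal{Q}^*$ is injective (Lemma \ref{12.2}), this proves the identity for all three structures at once.

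The main point needing care is the first step: that Poisson invariance of $g$ on $\mathbb{C}^m$ really restricts to $\mathcal{Z}_{\Delta}$. This requires that $g$ maps $\mathcal{Z}_{\Delta}$ to itself, so that composition with $g$ commutes with the restriction $i^*$. That holds because $\mathcal{Z}_{\Delta}$ is defined by the $\mathbb{T}^m$-invariant quadrics of Theorem \ref{thm: momemt amgle}. Once this is in place, the rest is formal bookkeeping with $\mathcal{Q}^*$.
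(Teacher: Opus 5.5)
Your proof is correct and takes essentially the same approach as the paper. You write both brackets on $\mathcal{O}$ via Lemma \ref{2.8} in terms of $\mathcal{Q}^*f$ and $\mathcal{Q}^*h$, use the $\mathbb{T}^m$-invariance of the bivector on $\mathcal{Z}_{\Delta}$, and convert $F\circ g$ back to $\mathcal{Q}^*(f\circ[g])$ with Lemma \ref{12.11}. Your added checks — that $[g]$ is a diffeomorphism, and that $g|_{\mathcal{Z}_{\Delta}}$ is Poisson (via extensions and the $g$-invariance of $\mathcal{Z}_{\Delta}$) — fill in points the paper simply asserts.
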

\begin{proof}
 Let $f,h\in C^\infty(\mathcal{O})$, $[g]\in \mathbb{T}^{m-d}_{\mathcal{O}}$. To prove the statement, it is sufficient to show that for each $[g]\in \mathbb{T}^{m-d}_{\mathcal{O}}$
$$
\overline{\Pi_{1}}(f,h)\circ[g]=\overline{\Pi_{1}}(f\circ[g],h\circ[g])
$$
$$
	{}_{K}\overline{\Pi^R_{3}}(f,h)\circ[g]={}_{K}\overline{\Pi^R_{3}}(f\circ[g],h\circ[g])
$$
$$
	{}_{K}\overline{\Pi^R_{4}}(f,h)\circ[g]={}_{K}\overline{\Pi^R_{4}}(f\circ[g],h\circ[g])
$$

Let $F=\mathcal{Q}^*(f)$ and $H=\mathcal{Q}^*(h)$. By Lemma \ref{2.8},
$$
	\overline{\Pi_{1}}(f,h)([x])=\Pi_{1}|_{\mathcal{Z}_{\Delta}}(F,H)(x) \quad \forall x\in \mathcal{Z}_{\Delta}
$$
$$
	\implies\overline{\Pi_{1}}(f,h)\Big([g]([x])\Big)=\Pi_{1}|_{\mathcal{Z}_{\Delta}}(F,H)\Big(g(x)\Big)\quad \forall g\in \mathbb{T}^m,x\in \mathcal{Z}_{\Delta}
$$
$$
\implies\overline{\Pi_{1}}(f,h)\circ[g]([x])=\Pi_{1}|_{\mathcal{Z}_{\Delta}}(F,H)\circ g(x)
$$
Since each $g\in \mathbb{T}^m$ is a Poisson diffeomorphism on $(\mathcal{Z}_{\Delta},\Pi_{1}|_{\mathcal{Z}_{\Delta}})$,
$$
\Pi_{1}|_{\mathcal{Z}_{\Delta}}(F,H)\circ g(x)=\Pi_{1}|_{\mathcal{Z}_{\Delta}}(F\circ g,H\circ g)(x)\quad \forall g\in \mathbb{T}^m,x\in \mathcal{Z}_{\Delta}
$$
By Lemma \ref{12.11}, $F\circ g=\mathcal{Q}^*(f\circ[g])$ and $H\circ g=\mathcal{Q}^*(h\circ[g])$. Thus by Lemma \ref{2.8},
$$
\Pi_{1}|_{\mathcal{Z}_{\Delta}}(F\circ g,H\circ g)(x)=\overline{\Pi_{1}}(f\circ[g],h\circ[g])([x]) \quad \forall g\in \mathbb{T}^m,x\in \mathcal{Z}_{\Delta}
$$
$$
	\implies\overline{\Pi_{1}}(f,h)\circ[g]([x])=\overline{\Pi_{1}}(f\circ[g],h\circ[g])([x]) \quad \forall [g]\in \mathbb{T}^{m-d}_{\mathcal{O}},[x]\in \mathcal{O}
$$
In case of $(\mathcal{O},{}_{K}\overline{\Pi^R_{3}})$ and $(\mathcal{O},{}_{K}\overline{\Pi^R_{4}})$, similar arguments apply. 
\end{proof}
\begin{corollary}\label{12.13}
	The fundamental vector fields generated by the action of $ \mathbb{T}^{m-d}_{\mathcal{O}}$ on $\mathcal{O}$ are Poisson in $(\mathcal{O},\overline{\Pi_{1}})$, $(\mathcal{O},{}_{K}\overline{\Pi^R_{3}})$ and $(\mathcal{O},{}_{K}\overline{\Pi^R_{4}})$.
\end{corollary}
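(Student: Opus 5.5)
This follows from Theorem \ref{12.12} by the argument used for Lemma \ref{2.6} and Lemma \ref{10.10}. Fix $\xi$ in the Lie algebra of $\mathbb{T}^{m-d}_{\mathcal{O}}$ and let $\xi^\#$ be its fundamental vector field on $\mathcal{O}$. The flow of $\xi^\#$ is $\varphi_t=[\exp(t\xi)]$, acting on $\mathcal{O}$. This flow is complete because $\mathcal{O}$ is compact and the action is smooth. Let $\overline{\Pi}$ denote any one of $\overline{\Pi_{1}}$, ${}_{K}\overline{\Pi^R_{3}}$, ${}_{K}\overline{\Pi^R_{4}}$. By Theorem \ref{12.12}, each $\varphi_t$ is a Poisson diffeomorphism of $(\mathcal{O},\overline{\Pi})$. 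Equivalently, for all $f,h\in C^\infty(\mathcal{O})$,
$$
\overline{\Pi}(f,h)\circ\varphi_t=\overline{\Pi}(f\circ\varphi_t,h\circ\varphi_t)\quad\forall t\in\mathbb{R}.
$$

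Next I differentiate this identity at $t=0$. Since $\frac{d}{dt}\big|_{t=0}(u\circ\varphi_t)=\xi^\#(u)$ for any smooth $u$, the left side gives $\xi^\#(\overline{\Pi}(f,h))$. The bracket $\overline{\Pi}$ is bilinear and is given pointwise by a smooth bivector, so it commutes with differentiation in $t$. The right side therefore gives $\overline{\Pi}(\xi^\# f,h)+\overline{\Pi}(f,\xi^\# h)$. Subtracting the two and using the formula \eqref{eq:Xpiongh},
$$
[\xi^\#,\overline{\Pi}](f,h)=\xi^\#(\overline{\Pi}(f,h))-\overline{\Pi}(\xi^\# f,h)-\overline{\Pi}(f,\xi^\# h)=0 .
$$
This holds for all $f,h$, so $L_{\xi^\#}\overline{\Pi}=0$. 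By Definition \ref{Poisson vf}, $\xi^\#$ is a Poisson vector field.

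The one point needing care is justifying the differentiation of the right-hand side. The function $(t,x)\mapsto f(\varphi_t(x))$ is jointly smooth. Writing $\overline{\Pi}(f\circ\varphi_t,h\circ\varphi_t)$ in local coordinates as $\overline{\Pi}^{ab}\,\partial_a(f\circ\varphi_t)\,\partial_b(h\circ\varphi_t)$, we may interchange $\partial_t$ with $\partial_a,\partial_b$. The derivative at $t=0$ then follows from the product rule. An alternative is to cite the standard fact that a one-parameter group of Poisson diffeomorphisms has a Poisson generator, exactly as in Lemma \ref{2.6}.
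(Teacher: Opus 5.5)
Your proposal is correct and follows the paper's route: the corollary is stated as an immediate consequence of Theorem~\ref{12.12}, with the passage from ``each $[g]$ is a Poisson diffeomorphism'' to ``fundamental vector fields are Poisson'' being exactly the argument of Lemma~\ref{2.6} and Lemma~\ref{10.10}, which you spell out by differentiating along the one-parameter subgroup. The sign convention for the flow of $\xi^\#$ and the appeal to compactness for completeness are both immaterial: the flow is given by the group action and so is defined for all $t$, and $\xi^\#$ is Poisson if and only if $-\xi^\#$ is.
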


\subsection{Pushing forward Poisson and Hamiltonian vector fields}

The pushforward and averaging maps induce a map,
$$
	\mathcal{Q}_{*}\circ \mathcal{A}_{G}:\mathfrak{X}^{k}(\mathcal{Z}_{\Delta})\rightarrow \mathfrak{X}^{k}(\mathcal{O}) \,.
$$

This section focuses on pushing forward Poisson and Hamiltonian multivector fields in $\mathcal{Z}_{\Delta}$ to Poisson and Hamiltonian multivector fields in $\mathcal{O}$ using $\mathcal{Q}_{*}\circ\mathcal{A}_{G}$. As a result, one can construct a map from Poisson cohomology classes in $\mathcal{Z}_{\Delta}$ to Poisson cohomology classes on $\mathcal{O}$. Suppose that $(\mathcal{Z}_{\Delta},\Pi|_{\mathcal{Z}_{\Delta}})$ and $(\mathcal{O},\overline{\Pi})$ are Poisson manifolds such that $\Pi|_{\mathcal{Z}_{\Delta}}$ is $G$-invariant and
$$
	\mathcal{Q}_{*}(\Pi|_{\mathcal{Z}_{\Delta}})=\overline{\Pi}\,.
$$
\begin{definition}\label{12.14} We say that
	$X\in \mathfrak{X}^{k}(\mathcal{Z}_{\Delta})$ is a Poisson multivector field on $(\mathcal{Z}_{\Delta},\Pi|_{\mathcal{Z}_{\Delta}})$ if
	$$
		[\Pi|_{\mathcal{Z}_{\Delta}},X]=0 \,.
	$$
	Denote the set of Poisson multivector fields of degree $k$ by $\operatorname{Pois}(\mathfrak{X}^{k}(\mathcal{Z}_{\Delta}),\Pi|_{\mathcal{Z}_{\Delta}})$ and the set of $G$-invariant Poisson multivector fields of degree $k$ by $\operatorname{Pois}(\mathfrak{X}^{k}(\mathcal{Z}_{\Delta})^G,\Pi|_{\mathcal{Z}_{\Delta}})$.
\end{definition}
\begin{definition}\label{12.15} We say that
	$X\in \mathfrak{X}^{k}(\mathcal{Z}_{\Delta})$ is a Hamiltonian multivector field on $(\mathcal{Z}_{\Delta},\Pi|_{\mathcal{Z}_{\Delta}})$ if there exists $Z\in \mathfrak{X}^{k-1}(\mathcal{Z}_{\Delta})$ such that
	$$
		[\Pi|_{\mathcal{Z}_{\Delta}},Z]=X \,.
	$$
	Denote the set of Hamiltonian multivector fields of degree $k$ by $\operatorname{Ham}(\mathfrak{X}^{k}(\mathcal{Z}_{\Delta}),\Pi|_{\mathcal{Z}_{\Delta}})$  and the set of $G$-invariant Hamiltonian multivector fields of degree $k$ by $\operatorname{Ham}(\mathfrak{X}^{k}(\mathcal{Z}_{\Delta})^G,$ $\Pi|_{\mathcal{Z}_{\Delta}})$.
\end{definition}
Similar definitions and notations are assumed for Poisson and Hamiltonian multivector fields on $(\mathcal{O},\overline{\Pi})$. Denote the differential operators induced by $\Pi|_{\mathcal{Z}_{\Delta}}$ and $\overline{\Pi}$ by $d_{\mathcal{Z}_{\Delta}}$ and $d_{\mathcal{O}}$ respectively.
\begin{remark} 
Note that 
$$
	\operatorname{Pois}(\mathfrak{X}^{k}(\mathcal{Z}_{\Delta}),\Pi|_{\mathcal{Z}_{\Delta}})=\operatorname{Ker}\Big(d_{\mathcal{Z}_{\Delta}}:\mathfrak{X}^{k}(\mathcal{Z}_{\Delta})\rightarrow \mathfrak{X}^{k+1}(\mathcal{Z}_{\Delta})\Big)
$$
$$
	\operatorname{Ham}(\mathfrak{X}^{k}(\mathcal{Z}_{\Delta}),\Pi|_{\mathcal{Z}_{\Delta}})=\operatorname{Im}\Big(d_{\mathcal{Z}_{\Delta}}:\mathfrak{X}^{k-1}(\mathcal{Z}_{\Delta})\rightarrow \mathfrak{X}^{k}(\mathcal{Z}_{\Delta})\Big)
$$
Similar statements hold for $\operatorname{Pois}(\mathfrak{X}^{k}(\mathcal{O}),\overline{\Pi})$ and $\operatorname{Ham}(\mathfrak{X}^{k}(\mathcal{O}),\overline{\Pi})$ with $d_{\mathcal{Z}_{\Delta}}$ replaced by $d_{\mathcal{O}}$.
\end{remark}
\begin{lemma}\label{12.16}
	The differential operator $d_{\mathcal{Z}_{\Delta}}$ and the averaging operator $\mathcal{A}_{G}$ commute,
$$
	d_{\mathcal{Z}_{\Delta}}\circ\mathcal{A}_{G}=\mathcal{A}_{G}\circ d_{\mathcal{Z}_{\Delta}}
$$
\end{lemma}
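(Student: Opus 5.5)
Since $d_{\mathcal{Z}_{\Delta}}(Y)=[\Pi|_{\mathcal{Z}_{\Delta}},Y]$ for $Y\in\mathfrak{X}^{k}(\mathcal{Z}_{\Delta})$, the plan is to reduce the statement directly to Lemma \ref{12.9}. The one input needed is that $\Pi|_{\mathcal{Z}_{\Delta}}$ is $G$-invariant, i.e. $\Pi|_{\mathcal{Z}_{\Delta}}\in\mathfrak{X}^{2}(\mathcal{Z}_{\Delta})^G$. This is part of the standing assumptions of this subsection, and for the concrete structures $\Pi_{1}|_{\mathcal{Z}_{\Delta}}$, ${}_{K}\Pi^R_{3}|_{\mathcal{Z}_{\Delta}}$, ${}_{K}\Pi^R_{4}|_{\mathcal{Z}_{\Delta}}$ it follows from Theorem \ref{8.1}, since $G=\mathbb{T}^d_D\subset\mathbb{T}^m$ acts by Poisson diffeomorphisms.

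With $X=\Pi|_{\mathcal{Z}_{\Delta}}$, Lemma \ref{12.9} gives
\[
\mathcal{A}_{G}\bigl(d_{\mathcal{Z}_{\Delta}}Y\bigr)=\mathcal{A}_{G}\bigl([\Pi|_{\mathcal{Z}_{\Delta}},Y]\bigr)=\bigl[\Pi|_{\mathcal{Z}_{\Delta}},\mathcal{A}_{G}(Y)\bigr]=d_{\mathcal{Z}_{\Delta}}\bigl(\mathcal{A}_{G}(Y)\bigr).
\]
This identity holds for every $Y\in\mathfrak{X}^{k}(\mathcal{Z}_{\Delta})$ and every $k$, which is exactly the claimed commutation.

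If Lemma \ref{12.9} is not taken as given, I would justify the middle equality directly. For each $g\in G$, the pushforward $dg$ is a morphism of the Schouten bracket, so $dg[\Pi,Y]=[dg\,\Pi,dg\,Y]=[\Pi,dg\,Y]$ by invariance of $\Pi$. One then integrates against the Haar measure $d\kappa$. The remaining step is to justify exchanging the integral over $G$ with the Schouten bracket $[\Pi,\cdot\,]$. Since $[\Pi,\cdot\,]$ is a first-order linear differential operator, this amounts to differentiating under the integral sign. That is legitimate because $G$ is compact and the integrand $(g,x)\mapsto dg(Y|_{g^{-1}x})$ is smooth in both variables, so all derivatives in $x$ are continuous on $G\times(\text{compact neighbourhood})$.

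I expect this interchange of integration and differentiation to be the only point needing care; everything else is formal.
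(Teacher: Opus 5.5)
Your proposal is correct and matches the paper's proof: the paper also uses the $G$-invariance of $\Pi|_{\mathcal{Z}_{\Delta}}$ and applies Lemma \ref{12.9} with $X=\Pi|_{\mathcal{Z}_{\Delta}}$ to obtain $[\Pi|_{\mathcal{Z}_{\Delta}},\mathcal{A}_{G}(Y)]=\mathcal{A}_{G}[\Pi|_{\mathcal{Z}_{\Delta}},Y]$. Your extra sketch of Lemma \ref{12.9} is a sound justification of a step the paper states without proof. It combines naturality of the Schouten bracket under $dg$ with differentiation under the integral over the compact group $G$.
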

\begin{proof}
	Let $X\in \mathfrak{X}^{k}(\mathcal{Z}_{\Delta})$. Since $\Pi|_{\mathcal{Z}_{\Delta}}$ is $G$-invariant, then by Lemma \ref{12.9}
	$$
		d_{\mathcal{Z}_{\Delta}}\circ\mathcal{A}_{G}\big(X\big)=\Big[\Pi|_{\mathcal{Z}_{\Delta}},\mathcal{A}_{G}(X)\Big]=\mathcal{A}_{G}\Big[\Pi|_{\mathcal{Z}_{\Delta}},X\Big]
=\mathcal{A}_{G}\circ d_{\mathcal{Z}_{\Delta}}\big(X\big)	$$

\end{proof}
\begin{corollary}\label{12.17}
	If $X\in \operatorname{Pois}(\mathfrak{X}^{k}(\mathcal{Z}_{\Delta}),\Pi|_{\mathcal{Z}_{\Delta}})$ $\big(\text{resp. }\operatorname{Ham}(\mathfrak{X}^{k}(\mathcal{Z}_{\Delta}),\Pi|_{\mathcal{Z}_{\Delta}})\big)$, then $\mathcal{A}_{G}(X)\in \operatorname{Pois}(\mathfrak{X}^{k}(\mathcal{Z}_{\Delta})^G,\Pi|_{\mathcal{Z}_{\Delta}})$ $\big(\text{resp.} \operatorname{Ham}(\mathfrak{X}^{k}(\mathcal{Z}_{\Delta})^G,\Pi|_{\mathcal{Z}_{\Delta}})\big)$
\end{corollary}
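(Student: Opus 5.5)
The plan is to derive Corollary~\ref{12.17} directly from Lemma~\ref{12.16}, together with the fact that $\mathcal{A}_G$ maps into $\mathfrak{X}^{k}(\mathcal{Z}_{\Delta})^G$. The invariance half needs no argument: $\mathcal{A}_G(X)$ lies in $\mathfrak{X}^k(\mathcal{Z}_{\Delta})^G$ by the very definition of the averaging map. So in both cases the only thing to prove is that the Poisson, respectively Hamiltonian, property survives averaging.

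For the Poisson case, assume $d_{\mathcal{Z}_{\Delta}}X=[\Pi|_{\mathcal{Z}_{\Delta}},X]=0$. Lemma~\ref{12.16} gives
$$d_{\mathcal{Z}_{\Delta}}\big(\mathcal{A}_G(X)\big)=\mathcal{A}_G\big(d_{\mathcal{Z}_{\Delta}}X\big)=\mathcal{A}_G(0)=0,$$
using that $\mathcal{A}_G$ is linear, being an integral of the linear maps $dg$. Hence $\mathcal{A}_G(X)\in\operatorname{Pois}(\mathfrak{X}^{k}(\mathcal{Z}_{\Delta})^G,\Pi|_{\mathcal{Z}_{\Delta}})$.

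For the Hamiltonian case, assume $X=d_{\mathcal{Z}_{\Delta}}Z$ for some $Z\in\mathfrak{X}^{k-1}(\mathcal{Z}_{\Delta})$. Lemma~\ref{12.16} again gives
$$\mathcal{A}_G(X)=\mathcal{A}_G(d_{\mathcal{Z}_{\Delta}}Z)=d_{\mathcal{Z}_{\Delta}}\big(\mathcal{A}_G(Z)\big),$$
so $\mathcal{A}_G(X)$ is Hamiltonian. Its primitive $\mathcal{A}_G(Z)$ is itself $G$-invariant, which is a useful bonus for the later passage to $\mathcal{O}$ via $\mathcal{Q}_*$ and Lemma~\ref{12.6}.

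The only point needing care is the justification of Lemma~\ref{12.16}, which rests on Lemma~\ref{12.9} and on the $G$-invariance of $\Pi|_{\mathcal{Z}_{\Delta}}$. For the structures of interest that invariance follows from Theorem~\ref{8.1}. Both facts are already available at this point, so I do not expect a genuine obstacle. The corollary amounts to the observation that a map commuting with the differential sends cocycles to cocycles and coboundaries to coboundaries.
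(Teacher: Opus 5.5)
Your proposal is correct and follows the same route as the paper, which states this corollary immediately after Lemma~\ref{12.16} with no separate proof. The argument is that $\mathcal{A}_G$ is linear, takes values in $G$-invariant fields, and commutes with $d_{\mathcal{Z}_{\Delta}}$, so it sends cocycles to cocycles and coboundaries to coboundaries; your remark that the primitive $\mathcal{A}_G(Z)$ can be taken $G$-invariant is exactly the paper's next result, Corollary~\ref{12.19}.
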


\begin{corollary}\label{12.19}
	If $X\in \operatorname{Ham}(\mathfrak{X}^{k}(\mathcal{Z}_{\Delta})^G,\Pi|_{\mathcal{Z}_{\Delta}})$, then there exists $Z\in \mathfrak{X}^{k-1}(\mathcal{Z}_{\Delta})^G$ such that,
	$$
		d_{\mathcal{Z}_{\Delta}}(Z)=X
	$$
\end{corollary}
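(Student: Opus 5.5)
Since $X$ is Hamiltonian, we can choose some $Z_0\in\mathfrak{X}^{k-1}(\mathcal{Z}_{\Delta})$ with $d_{\mathcal{Z}_{\Delta}}(Z_0)=X$. The natural candidate for an invariant primitive is the average $Z:=\mathcal{A}_{G}(Z_0)$. By construction $\mathcal{A}_{G}$ takes values in $\mathfrak{X}^{k-1}(\mathcal{Z}_{\Delta})^G$, so $Z$ is $G$-invariant.

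Next we show that $Z$ is still a primitive of $X$. Lemma \ref{12.16} says $d_{\mathcal{Z}_{\Delta}}$ commutes with averaging, so
\begin{equation*}
d_{\mathcal{Z}_{\Delta}}(Z)=d_{\mathcal{Z}_{\Delta}}\big(\mathcal{A}_{G}(Z_0)\big)=\mathcal{A}_{G}\big(d_{\mathcal{Z}_{\Delta}}(Z_0)\big)=\mathcal{A}_{G}(X).
\end{equation*}
Because $X\in\mathfrak{X}^{k}(\mathcal{Z}_{\Delta})^G$, averaging fixes it: $\mathcal{A}_{G}(X)=X$. Hence $d_{\mathcal{Z}_{\Delta}}(Z)=X$ with $Z$ invariant, which is the claim.

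The one step that needs care is the use of Lemma \ref{12.16}, which depends on $\Pi|_{\mathcal{Z}_{\Delta}}$ being $G$-invariant; this is part of the standing assumption of the section. We also need $\mathcal{A}_{G}$ to be well defined and smooth. This holds because $G=\mathbb{T}^d_{D}$ is compact and its action is smooth, so the integral of $dg(Z_0|_{g^{-1}x})$ against normalized Haar measure can be differentiated under the integral sign. The remaining steps are formal.
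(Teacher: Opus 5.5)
Your proof is correct and is the argument the paper intends. The paper states this corollary without proof, as an immediate consequence of Lemma \ref{12.16}: you average an arbitrary primitive $Z_0$ and use $d_{\mathcal{Z}_{\Delta}}(\mathcal{A}_{G}(Z_0))=\mathcal{A}_{G}(d_{\mathcal{Z}_{\Delta}}Z_0)=\mathcal{A}_{G}(X)=X$, which holds because $X$ is $G$-invariant.
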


\begin{lemma}\label{12.21}
The diagram below describes a commutative square.
	\begin{center}
\begin{tikzcd}
\mathfrak{X}^{k}(\mathcal{Z}_{\Delta})^G\arrow[r, "d_{\mathcal{Z}_{\Delta}}"] \arrow[d, "\mathcal{Q}_{*}"]
&\mathfrak{X}^{k+1}(\mathcal{Z}_{\Delta})^G  \arrow[d, "\mathcal{Q}_{*}" ] \\
\mathfrak{X}^{k}(\mathcal{O}) \arrow[r, "d_{\mathcal{O}}"]
& \mathfrak{X}^{k+1}(\mathcal{O})
\end{tikzcd}
\end{center}
\end{lemma}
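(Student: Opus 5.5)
The plan is to obtain the commutativity of the square, namely $\mathcal{Q}_{*}\circ d_{\mathcal{Z}_{\Delta}}=d_{\mathcal{O}}\circ \mathcal{Q}_{*}$ on $\mathfrak{X}^{k}(\mathcal{Z}_{\Delta})^G$, directly from Lemma \ref{12.6}. Since $d_{\mathcal{Z}_{\Delta}}(X)=[\Pi|_{\mathcal{Z}_{\Delta}},X]$ and $d_{\mathcal{O}}(Y)=[\overline{\Pi},Y]$, everything reduces to the compatibility of $\mathcal{Q}_{*}$ with the Schouten bracket, together with the standing hypothesis $\mathcal{Q}_{*}(\Pi|_{\mathcal{Z}_{\Delta}})=\overline{\Pi}$.

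The first step is to check that the square is well defined, i.e. that $d_{\mathcal{Z}_{\Delta}}$ maps $\mathfrak{X}^{k}(\mathcal{Z}_{\Delta})^G$ into $\mathfrak{X}^{k+1}(\mathcal{Z}_{\Delta})^G$. Since $\Pi|_{\mathcal{Z}_{\Delta}}$ is $G$-invariant by assumption, this is the first assertion of Lemma \ref{12.6} applied with bivector $\Pi|_{\mathcal{Z}_{\Delta}}$ and $X\in\mathfrak{X}^{k}(\mathcal{Z}_{\Delta})^G$. Alternatively, it follows from naturality of the Schouten bracket under diffeomorphisms: $dg[\Pi,X]=[dg\,\Pi,dg\,X]=[\Pi,X]$ for each $g\in G$.

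The second step is the computation itself. For $X\in \mathfrak{X}^{k}(\mathcal{Z}_{\Delta})^G$,
$$
\mathcal{Q}_{*}\big(d_{\mathcal{Z}_{\Delta}}X\big)=\mathcal{Q}_{*}[\Pi|_{\mathcal{Z}_{\Delta}},X]=[\mathcal{Q}_{*}(\Pi|_{\mathcal{Z}_{\Delta}}),\mathcal{Q}_{*}(X)]=[\overline{\Pi},\mathcal{Q}_{*}X]=d_{\mathcal{O}}\big(\mathcal{Q}_{*}X\big),
$$
using the second assertion of Lemma \ref{12.6} in the middle equality.

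The only substantive point, if one wishes not to take Lemma \ref{12.6} as a black box, is the bracket compatibility. I would verify it through the multiderivation description of $\mathcal{Q}_{*}$: evaluate both sides on $f_1,\dots,f_{k+1}\in C^\infty(\mathcal{O})$, pull back via Lemma \ref{12.2} to $G$-invariant $F_i=\mathcal{Q}^*(f_i)$, and expand $L_{[\Pi,X]}$ by the defining formula of the Schouten bracket. Each inner term $L_X(F_{\sigma(\cdot)})$ or $L_\Pi(F_{\sigma(\cdot)})$ is $G$-invariant by Lemma \ref{12.5}, and equals $\mathcal{Q}^*$ of the corresponding expression on $\mathcal{O}$ by the definition of $\mathcal{Q}_{*}$. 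Hence the whole expansion is $\mathcal{Q}^*$ of the expansion of $L_{[\overline{\Pi},\mathcal{Q}_{*}X]}(f_1,\dots,f_{k+1})$. Finally, since $\mathcal{Q}$ is a surjective submersion, differentials of pulled-back functions span the annihilator of the $G$-orbit directions. Therefore a multivector field on $\mathcal{O}$ is determined by its values on functions, which closes the argument.
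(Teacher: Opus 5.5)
Your proposal is correct and follows the paper's proof: both obtain commutativity in one line from the bracket compatibility of Lemma~\ref{12.6} together with the standing hypothesis $\mathcal{Q}_{*}(\Pi|_{\mathcal{Z}_{\Delta}})=\overline{\Pi}$. Your additional checks fill in details the paper leaves implicit: that $d_{\mathcal{Z}_{\Delta}}$ preserves $G$-invariance, and the multiderivation sketch of Lemma~\ref{12.6} via $G$-invariant pullbacks.
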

\begin{proof}
	Let $X\in \mathfrak{X}^{k}(\mathcal{Z}_{\Delta})^G$. Then by Lemma \ref{12.6},
	$$
		\mathcal{Q}_{*}\circ d_{\mathcal{Z}_{\Delta}}(X)=\Big[\mathcal{Q}_{*}(\Pi|_{\mathcal{Z}_{\Delta}}),\mathcal{Q}_{*}(X)\Big]=\Big[\overline\Pi,\mathcal{Q}_{*}(X)\Big]=d_{\mathcal{O}}\circ \mathcal{Q}_{*}(X)
	$$
\end{proof}
\begin{corollary}\label{12.22}
	If $X\in \operatorname{Pois}(\mathfrak{X}^{k}(\mathcal{Z}_{\Delta})^G,\Pi|_{\mathcal{Z}_{\Delta}})$ $\big(\text{or }\operatorname{Ham}(\mathfrak{X}^{k}(\mathcal{Z}_{\Delta})^G,\Pi|_{\mathcal{Z}_{\Delta}})\big)$ then $\mathcal{Q}_{*}(X)\in \operatorname{Pois}(\mathfrak{X}^{k}(\mathcal{O}),\overline{\Pi})$ $\big(\text{resp. }\operatorname{Ham}(\mathfrak{X}^{k}(\mathcal{O}),\overline{\Pi})\big)$.
\end{corollary}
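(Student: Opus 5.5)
The claim follows directly from the commutative square of Lemma \ref{12.21}, applied once for the Poisson case and once for the Hamiltonian case. In each case I will apply $\mathcal{Q}_{*}$ to the defining equation and read off the corresponding property on $\mathcal{O}$.

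\emph{Poisson case.} Let $X\in \operatorname{Pois}(\mathfrak{X}^{k}(\mathcal{Z}_{\Delta})^G,\Pi|_{\mathcal{Z}_{\Delta}})$, so $X$ is $G$-invariant and $d_{\mathcal{Z}_{\Delta}}(X)=[\Pi|_{\mathcal{Z}_{\Delta}},X]=0$.

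\begin{itemize}
\item By Lemma \ref{12.6}, $d_{\mathcal{Z}_{\Delta}}(X)$ lies in $\mathfrak{X}^{k+1}(\mathcal{Z}_{\Delta})^G$, because $\Pi|_{\mathcal{Z}_{\Delta}}$ and $X$ are both $G$-invariant. So $\mathcal{Q}_{*}$ may legitimately be applied to it.
\item Lemma \ref{12.21} then gives $d_{\mathcal{O}}(\mathcal{Q}_{*}X)=\mathcal{Q}_{*}(d_{\mathcal{Z}_{\Delta}}X)=\mathcal{Q}_{*}(0)=0$.
\item Hence $\mathcal{Q}_{*}(X)\in\operatorname{Pois}(\mathfrak{X}^{k}(\mathcal{O}),\overline{\Pi})$.
\end{itemize}

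\emph{Hamiltonian case.} Let $X\in \operatorname{Ham}(\mathfrak{X}^{k}(\mathcal{Z}_{\Delta})^G,\Pi|_{\mathcal{Z}_{\Delta}})$.

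\begin{itemize}
\item The primitive $Z$ with $d_{\mathcal{Z}_{\Delta}}Z=X$ must be $G$-invariant before we can push it forward. Corollary \ref{12.19} supplies exactly this: $Z\in\mathfrak{X}^{k-1}(\mathcal{Z}_{\Delta})^G$. Concretely, one replaces an arbitrary primitive by $\mathcal{A}_G(Z)$, using Lemma \ref{12.16} and $\mathcal{A}_G(X)=X$.
\item Lemma \ref{12.21} in degree $k-1$ then gives $d_{\mathcal{O}}(\mathcal{Q}_{*}Z)=\mathcal{Q}_{*}(d_{\mathcal{Z}_{\Delta}}Z)=\mathcal{Q}_{*}(X)$.
\item Hence $\mathcal{Q}_{*}(X)$ is $d_{\mathcal{O}}$-exact, i.e. Hamiltonian on $(\mathcal{O},\overline{\Pi})$.
\end{itemize}

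The only point needing care is this invariance of the primitive. Pushing forward a non-invariant $Z$ is not defined, so the argument genuinely depends on the averaging step of Corollary \ref{12.19}; once that is in place, both cases reduce to the commutative square.
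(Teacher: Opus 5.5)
Your proof is correct and is essentially the argument the paper intends. The paper states this corollary without proof immediately after the commutative square of Lemma \ref{12.21}. The Poisson case follows by pushing forward $d_{\mathcal{Z}_{\Delta}}X=0$, and the Hamiltonian case uses the $G$-invariant primitive supplied by Corollary \ref{12.19}, exactly as you do.
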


\begin{corollary}\label{12.24}
	The map,
	$$
		\mathcal{Q}_{*}:\operatorname{Ham}(\mathfrak{X}^{k}(\mathcal{Z}_{\Delta})^G,\Pi|_{\mathcal{Z}_{\Delta}})\rightarrow \operatorname{Ham}(\mathfrak{X}^{k}(\mathcal{O}),\overline{\Pi})
	$$
	is surjective.
\end{corollary}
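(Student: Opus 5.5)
We prove Corollary \ref{12.24} directly, using the surjectivity of $\mathcal{Q}_{*}:\mathfrak{X}^{k-1}(\mathcal{Z}_{\Delta})^G\to\mathfrak{X}^{k-1}(\mathcal{O})$ together with the commutative square of Lemma \ref{12.21}. The statement has two parts: the map is well defined (lands in Hamiltonian fields), and it is onto.

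\textbf{Well-definedness.} Let $X\in\operatorname{Ham}(\mathfrak{X}^{k}(\mathcal{Z}_{\Delta})^G,\Pi|_{\mathcal{Z}_{\Delta}})$. By Corollary \ref{12.19} there is a $G$-invariant $Z\in\mathfrak{X}^{k-1}(\mathcal{Z}_{\Delta})^G$ with $d_{\mathcal{Z}_{\Delta}}Z=X$. Lemma \ref{12.21} then gives
\[
\mathcal{Q}_{*}X=\mathcal{Q}_{*}d_{\mathcal{Z}_{\Delta}}Z=d_{\mathcal{O}}\mathcal{Q}_{*}Z .
\]
Hence $\mathcal{Q}_{*}X$ is Hamiltonian on $(\mathcal{O},\overline{\Pi})$; this is Corollary \ref{12.22}.

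\textbf{Surjectivity.} Take $Y\in\operatorname{Ham}(\mathfrak{X}^{k}(\mathcal{O}),\overline{\Pi})$, so $Y=d_{\mathcal{O}}W$ for some $W\in\mathfrak{X}^{k-1}(\mathcal{O})$. By the Remark on surjectivity of the pushforward, there is $\widetilde W\in\mathfrak{X}^{k-1}(\mathcal{Z}_{\Delta})^G$ with $\mathcal{Q}_{*}\widetilde W=W$. When $k-1=0$ this is Lemma \ref{12.2}, taking $\widetilde W=\mathcal{Q}^*W$. When $k-1\ge1$, choose a principal connection on the free $G$-bundle $\mathcal{Q}:\mathcal{Z}_{\Delta}\to\mathcal{O}$ and take the horizontal lift of $W$. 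This lift is $G$-invariant because the connection is $G$-equivariant, and it projects to $W$.

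Set $X:=d_{\mathcal{Z}_{\Delta}}\widetilde W$. Since $\Pi|_{\mathcal{Z}_{\Delta}}$ and $\widetilde W$ are both $G$-invariant, Lemma \ref{12.6} shows $X$ is $G$-invariant. By construction $X$ is Hamiltonian, so $X\in\operatorname{Ham}(\mathfrak{X}^{k}(\mathcal{Z}_{\Delta})^G,\Pi|_{\mathcal{Z}_{\Delta}})$. Applying Lemma \ref{12.21} once more,
\[
\mathcal{Q}_{*}X=d_{\mathcal{O}}\mathcal{Q}_{*}\widetilde W=d_{\mathcal{O}}W=Y ,
\]
which proves surjectivity.

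\textbf{Main obstacle.} The only step that is not formal is the existence of the $G$-invariant lift $\widetilde W$. To justify it, I would argue as follows. Pick a connection one-form on $\mathcal{Z}_{\Delta}$; one exists because $G$ is compact and acts freely, so $\mathcal{Z}_{\Delta}\to\mathcal{O}$ is a principal bundle, and an arbitrary connection can be averaged with $\mathcal{A}_G$-type integration to make it invariant. For each $x$, $d\mathcal{Q}_x$ restricted to the horizontal space $H_x$ is an isomorphism onto $T_{[x]}\mathcal{O}$. Its inverse extends to $\wedge^{k-1}$, which defines the lift pointwise. Smoothness follows in local trivializations, and invariance follows because $dg(H_x)=H_{gx}$ and $\mathcal{Q}\circ g=\mathcal{Q}$.
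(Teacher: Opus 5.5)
Your proposal is correct and follows essentially the same route as the paper. You lift a primitive $W$ of $Y$ to a $G$-invariant $\widetilde W$ using the surjectivity of $\mathcal{Q}_{*}$, set $X=d_{\mathcal{Z}_{\Delta}}\widetilde W$, and use the commutative square of Lemma~\ref{12.21} to get $\mathcal{Q}_{*}X=Y$. Your construction of the invariant horizontal lift and your well-definedness check (Corollary~\ref{12.22}) fill in details that the paper leaves to a remark and to the preceding corollary.
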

\begin{proof}
	 Let $A\in \operatorname{Ham}(\mathfrak{X}^{k}(\mathcal{O}),\overline{\Pi})$ and $B\in \mathfrak{X}^{k-1}(\mathcal{O})$ be such that,
	$$
		[\overline{\Pi},B]=A
	$$
	There exists $Z\in \mathfrak{X}^{k-1}(\mathcal{Z}_{\Delta})^G$ such that,
	$$
		\mathcal{Q}_{*}(Z)=B
	$$
	Since
	$$
		\mathcal{Q}_{*}\Big(\Big[\Pi|_{\mathcal{Z}_{\Delta}},Z\Big]\Big)=\Big[\overline{\Pi},B\Big]=A \,,
	$$
	$X=\Big[\Pi|_{\mathcal{Z}_{\Delta}},Z\Big]\in \operatorname{Ham}(\mathfrak{X}^{k}(\mathcal{Z}_{\Delta})^G,\Pi|_{\mathcal{Z}_{\Delta}})$ is the desired multivector field such that $\mathcal{Q}_{*}(X)=A$.
	
\end{proof}
\begin{theorem}\label{12.25}
	The map
	$$
			\mathcal{Q}_{*}\circ \mathcal{A}_{G}:\mathfrak{X}^{k}(\mathcal{Z}_{\Delta})\rightarrow \mathfrak{X}^{k}(\mathcal{O})
	$$
	induces a map 
	$$
		\mathcal{H}_{*}:H^{k}_{\Pi|_{\mathcal{Z}_{\Delta}}}(\mathcal{Z}_{\Delta})\rightarrow H^{k}_{\overline{\Pi}}(\mathcal{O}) \,,
	$$
    where
	$$
		\mathcal{H}_{*}\Big([X]\Big)=\Big[\mathcal{Q}_{*}\circ \mathcal{A}_{G}(X)\Big]
	$$
\end{theorem}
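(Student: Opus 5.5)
The plan is to show that the composite $\mathcal{Q}_{*}\circ \mathcal{A}_{G}$ is a cochain map from the Lichnerowicz--Poisson complex $(\mathfrak{X}^{\bullet}(\mathcal{Z}_{\Delta}), d_{\mathcal{Z}_{\Delta}})$ to $(\mathfrak{X}^{\bullet}(\mathcal{O}), d_{\mathcal{O}})$. Once this is established, the induced map on cohomology is automatically well defined. Concretely, it suffices to check two things:
\begin{itemize}
\item $\mathcal{Q}_{*}\circ\mathcal{A}_{G}$ sends $\operatorname{Pois}(\mathfrak{X}^{k}(\mathcal{Z}_{\Delta}),\Pi|_{\mathcal{Z}_{\Delta}})$ into $\operatorname{Pois}(\mathfrak{X}^{k}(\mathcal{O}),\overline{\Pi})$;
\item it sends $\operatorname{Ham}(\mathfrak{X}^{k}(\mathcal{Z}_{\Delta}),\Pi|_{\mathcal{Z}_{\Delta}})$ into $\operatorname{Ham}(\mathfrak{X}^{k}(\mathcal{O}),\overline{\Pi})$.
\end{itemize}
Linearity of $\mathcal{A}_{G}$ (integration against Haar measure) and of $\mathcal{Q}_{*}$ then gives the rest.

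The key identity is commutation with the differentials:
$$d_{\mathcal{O}}\circ\mathcal{Q}_{*}\circ\mathcal{A}_{G}=\mathcal{Q}_{*}\circ\mathcal{A}_{G}\circ d_{\mathcal{Z}_{\Delta}}.$$
To obtain it, take $X\in\mathfrak{X}^{k}(\mathcal{Z}_{\Delta})$. Since $\mathcal{A}_{G}(X)$ is $G$-invariant, Lemma \ref{12.21} gives $d_{\mathcal{O}}\mathcal{Q}_{*}\mathcal{A}_{G}(X)=\mathcal{Q}_{*}d_{\mathcal{Z}_{\Delta}}\mathcal{A}_{G}(X)$. Lemma \ref{12.16} then rewrites $d_{\mathcal{Z}_{\Delta}}\mathcal{A}_{G}(X)$ as $\mathcal{A}_{G}d_{\mathcal{Z}_{\Delta}}(X)$. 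Here one should note that $d_{\mathcal{Z}_{\Delta}}\mathcal{A}_{G}(X)$ is again $G$-invariant, by Lemma \ref{12.6} applied to the $G$-invariant $\Pi|_{\mathcal{Z}_{\Delta}}$, so that $\mathcal{Q}_{*}$ is defined on it.

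From this identity both items follow at once. If $d_{\mathcal{Z}_{\Delta}}X=0$, then $d_{\mathcal{O}}(\mathcal{Q}_{*}\mathcal{A}_{G}X)=0$. If $X=d_{\mathcal{Z}_{\Delta}}Z$, then $\mathcal{Q}_{*}\mathcal{A}_{G}X=d_{\mathcal{O}}(\mathcal{Q}_{*}\mathcal{A}_{G}Z)$. Alternatively, one can route the second item through Corollaries \ref{12.17} and \ref{12.22}. Consequently, if $[X]=[X']$ in $H^{k}_{\Pi|_{\mathcal{Z}_{\Delta}}}(\mathcal{Z}_{\Delta})$, then $\mathcal{Q}_{*}\mathcal{A}_{G}(X)-\mathcal{Q}_{*}\mathcal{A}_{G}(X')$ is a coboundary on $\mathcal{O}$, so $\mathcal{H}_{*}$ is well defined and linear.

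There is no serious obstacle here. The only point needing care is that $\mathcal{A}_{G}$ genuinely lands in smooth $G$-invariant multivector fields and preserves coboundaries. For this I rely on the compactness of $G=\mathbb{T}^{d}_{D}$ and on Lemma \ref{12.9}, which needs only that $\Pi|_{\mathcal{Z}_{\Delta}}$ is $G$-invariant (Theorem \ref{8.1}). I also rely on $\mathcal{Q}_{*}\Pi|_{\mathcal{Z}_{\Delta}}=\overline{\Pi}$, which is the standing assumption before Definition \ref{12.14} and is justified by Lemma \ref{2.8}.
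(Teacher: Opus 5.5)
Your proposal is correct and follows essentially the same route as the paper, which simply invokes Corollaries \ref{12.17} and \ref{12.22} to show that $\mathcal{Q}_{*}\circ\mathcal{A}_{G}$ sends Poisson fields to Poisson fields and Hamiltonian fields to Hamiltonian fields. Your cochain identity $d_{\mathcal{O}}\circ\mathcal{Q}_{*}\circ\mathcal{A}_{G}=\mathcal{Q}_{*}\circ\mathcal{A}_{G}\circ d_{\mathcal{Z}_{\Delta}}$ combines Lemmas \ref{12.16} and \ref{12.21}, which underlie those corollaries, and it has a small bonus: the primitive $\mathcal{A}_{G}(Z)$ is automatically $G$-invariant, so you never need Corollary \ref{12.19} separately.
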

\begin{proof}
	By Corollary \ref{12.17} and \ref{12.22}, if $X\in \operatorname{Pois}(\mathfrak{X}^{k}(\mathcal{Z}_{\Delta}),\Pi|_{\mathcal{Z}_{\Delta}})$ then
	
	$$
		\mathcal{Q}_{*}\circ \mathcal{A}_{G}(X)\in \operatorname{Pois}(\mathfrak{X}^{k}(\mathcal{O}),\overline{\Pi})
	$$
	Thus $\mathcal{Q}_{*}\circ \mathcal{A}_{G}$ induces a map,
	$$
		\mathcal{Q}_{*}\circ \mathcal{A}_{G}:\operatorname{Pois}(\mathfrak{X}^{k}(\mathcal{Z}_{\Delta}),\Pi|_{\mathcal{Z}_{\Delta}})\rightarrow \operatorname{Pois}(\mathfrak{X}^{k}(\mathcal{O}),\overline{\Pi})
	$$
	Similarly, 
	$$
				\mathcal{Q}_{*}\circ \mathcal{A}_{G}:\operatorname{Ham}(\mathfrak{X}^{k}(\mathcal{Z}_{\Delta}),\Pi|_{\mathcal{Z}_{\Delta}})\rightarrow \operatorname{Ham}(\mathfrak{X}^{k}(\mathcal{O}),\overline{\Pi})
	$$
	
As a result, $\mathcal{Q}_{*}\circ \mathcal{A}_{G}$ induces a map on $H^{k}_{\Pi|_{\mathcal{Z}_{\Delta}}}(\mathcal{Z}_{\Delta})$ to $H^{k}_{\overline{\Pi}}(\mathcal{O})$,
	$$
		\mathcal{Q}_{*}\circ \mathcal{A}_{G}: \frac{\operatorname{Pois}(\mathfrak{X}^{k}(\mathcal{Z}_{\Delta}),\Pi|_{\mathcal{Z}_{\Delta}})}{\operatorname{Ham}(\mathfrak{X}^{k}(\mathcal{Z}_{\Delta}),\Pi|_{\mathcal{Z}_{\Delta}})}\rightarrow \frac{\operatorname{Pois}(\mathfrak{X}^{k}(\mathcal{O}),\overline{\Pi})}{\operatorname{Ham}(\mathfrak{X}^{k}(\mathcal{O}),\overline{\Pi})} 
	$$
	$$
		\mathcal{Q}_{*}\circ \mathcal{A}_{G}\Big([X]\Big)=\Big[\mathcal{Q}_{*}\circ \mathcal{A}_{G}(X)\Big]
	$$
	which is the desired map $\mathcal{H}_{*}$.
	
\end{proof}

\subsection{Poisson cohomology classes}
\begin{lemma}\label{12.26}
	For each $1\leq i\leq m$, $\Theta_{i}|_{\mathcal{Z}_{\Delta}}$ is a $G$-invariant vector field on $\mathcal{Z}_{\Delta}$.
\end{lemma}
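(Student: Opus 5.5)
The plan is to verify the invariance condition of Definition \ref{12.4} directly: we need $dg_x(\Theta_i|_x)=\Theta_i|_{gx}$ for every $g\in G=\mathbb{T}^d_D$ and every $x\in\mathcal{Z}_{\Delta}$. It is enough to prove this for every $g\in\mathbb{T}^m$, since $\mathbb{T}^d_D\subset\mathbb{T}^m$.

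First we work on $\mathbb{C}^m$. Write $g=(e^{\iota t_1},\dots,e^{\iota t_m})$. Then $\Psi_g$ is linear, and its differential is the block-diagonal matrix of $2\times 2$ rotations $R(t_k)$, exactly as in the proof of Theorem \ref{8.1}. The field $\Theta_i$ has components only in the $i$-th block, and at $z$ these components are $(-y_i,x_i)^T=J(x_i,y_i)^T$, where $J=R(\pi/2)$. Since $R(t_i)$ commutes with $J$, we get
\[
R(t_i)J(x_i,y_i)^T=J R(t_i)(x_i,y_i)^T .
\]
The right-hand side is precisely the $i$-th block of $\Theta_i$ evaluated at $g\cdot z$, so $d\Psi_g(\Theta_i|_z)=\Theta_i|_{g\cdot z}$. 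This is the identity already asserted in the proof of Theorem \ref{8.1}. Alternatively, $\Theta_i$ is the fundamental vector field of the $i$-th circle factor, and $\mathbb{T}^m$ is abelian, so $\mathrm{Ad}_g$ is trivial and fundamental fields are invariant.

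Next we restrict to $\mathcal{Z}_{\Delta}$. By Lemma \ref{11.5}, $\Theta_i|_{\mathcal{Z}_{\Delta}}$ is a well-defined vector field, tangent to $\mathcal{Z}_{\Delta}$. Moreover, $\mathcal{Z}_{\Delta}=\pi^{-1}(i_A(\Delta))$ is $\mathbb{T}^m$-stable. The map $g|_{\mathcal{Z}_{\Delta}}$ is the restriction of $\Psi_g$, so its differential is the restriction of $d\Psi_g$ to $T_x\mathcal{Z}_{\Delta}$, which lands in $T_{gx}\mathcal{Z}_{\Delta}$. Combining this with the identity on $\mathbb{C}^m$ gives $d(g|_{\mathcal{Z}_{\Delta}})_x(\Theta_i|_x)=\Theta_i|_{gx}$ for all $x\in\mathcal{Z}_{\Delta}$, which proves the lemma.

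No step is a real obstacle. The only point requiring care is that the identity computed on $\mathbb{C}^m$ transfers to the submanifold, and that follows from tangency (Lemma \ref{11.5}) together with the $\mathbb{T}^m$-stability of $\mathcal{Z}_{\Delta}$.
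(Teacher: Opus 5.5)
Your proof is correct and takes the same approach as the paper. The paper's proof simply cites the identity $d\Psi_g(\Theta_i|_z)=\Theta_i|_{g\cdot z}$ from the proof of Theorem~\ref{8.1} and restricts it to $\mathcal{Z}_{\Delta}$. You additionally supply the computation behind that identity (the rotation $R(t_i)$ commutes with $J$) and spell out the restriction step via tangency and $\mathbb{T}^m$-stability of $\mathcal{Z}_{\Delta}$.
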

\begin{proof}
	As shown in the proof of Theorem \ref{8.1}, each $\Theta_{i}$ is $G$-invariant. The statement follows from that.
\end{proof}
\begin{lemma}\label{12.27}
	Each $(P\zeta)^I_{u}|_{\mathcal{Z}_{\Delta}}$ is a $G$-invariant vector field on $\mathcal{Z}_{\Delta}$.
\end{lemma}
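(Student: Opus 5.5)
The plan is to reduce the statement to the invariance of the two ingredients of $(P\zeta)^I_{u}|_{\mathcal{Z}_{\Delta}}=P^I\,\zeta^I_u$ under the whole torus $\mathbb{T}^m$; invariance under the subtorus $G=\mathbb{T}^d_D$ then follows. By Construction \ref{11.1}, on the open set $\mathbb{C}^m_1$ (all $z_i\neq 0$) we may write
\[
(P\zeta)^I_{u}=\sum_{l=1}^{m-r}{}_{l}\nu^I_u\Big(\prod_{v\neq l}p_{j_v}\Big)\Big(x_{j_l}\frac{\partial}{\partial x_{j_l}}+y_{j_l}\frac{\partial}{\partial y_{j_l}}\Big)=2\sum_{l=1}^{m-r}{}_{l}\nu^I_u\Big(\prod_{v\neq l}p_{j_v}\Big)\mathcal{P}_{j_l},
\]
because $P^I/p_{j_l}=\prod_{v\neq l}p_{j_v}$. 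Both sides are smooth on all of $\mathbb{C}^m$, and $\mathbb{C}^m_1$ is dense, so this identity holds globally.

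First I will express the vector field as $(P\zeta)^I_u=\sum_l \alpha_l\,\mathcal{P}_{j_l}$ on $\mathbb{C}^m$. Here each coefficient $\alpha_l=2\,{}_{l}\nu^I_u\prod_{v\neq l}p_{j_v}$ is a polynomial in the radial functions $p_i$ only.

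Next, take $g\in\mathbb{T}^m$ and let $\Psi_g$ be the induced diffeomorphism. The verification of (A3) in Theorem \ref{8.1} gives $d\Psi_g(\mathcal{P}_i|_z)=\mathcal{P}_i|_{g\cdot z}$, because $d\Psi_g$ is block diagonal with rotation blocks and each rotation commutes with the Euler field $x\partial_x+y\partial_y$ in the plane. Each $p_i$ is $\mathbb{T}^m$-invariant, so $\alpha_l(g\cdot z)=\alpha_l(z)$. By linearity,
\[
d\Psi_g\big((P\zeta)^I_u|_z\big)=(P\zeta)^I_u|_{g\cdot z}.
\]
Thus $(P\zeta)^I_u$ is a $\mathbb{T}^m$-invariant vector field on $\mathbb{C}^m$, and in particular it is $G$-invariant.

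Finally, I will restrict to $\mathcal{Z}_{\Delta}$. By Theorem \ref{11.2} and Corollary \ref{11.3}, $(P\zeta)^I_u$ is tangent to $\mathcal{Z}_{\Delta}$. The submanifold $\mathcal{Z}_{\Delta}$ is $\mathbb{T}^m$-stable, and the action on it is the restriction of the action on $\mathbb{C}^m$, so $dg$ on $T\mathcal{Z}_{\Delta}$ is the restriction of $d\Psi_g$. The invariance identity therefore holds for the restricted field, which is exactly Definition \ref{12.4} for $G$.

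The only delicate point is justifying the global identity, given that $\zeta^I_u$ alone is singular wherever some $p_{j_l}=0$. I handle this by working with the smooth expression $\sum_l\alpha_l\mathcal{P}_{j_l}$ from the start, which makes the identity valid everywhere and not only on the dense set $\mathbb{C}^m_1$.
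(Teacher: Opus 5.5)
Your proof is correct and follows the same route as the paper, whose proof simply says that the invariance follows from the argument in the proof of Theorem \ref{8.1}. You make that argument explicit: you write $(P\zeta)^I_u=2\sum_{l}{}_{l}\nu^I_u\big(\prod_{v\neq l}p_{j_v}\big)\mathcal{P}_{j_l}$, which also settles the smoothness at points with $p_{j_l}=0$ that the paper only asserts, then use $d\Psi_g(\mathcal{P}_i|_z)=\mathcal{P}_i|_{g\cdot z}$ together with the $\mathbb{T}^m$-invariance of the $p_i$, and finally restrict to the $\mathbb{T}^m$-stable submanifold $\mathcal{Z}_{\Delta}$, to which the field is tangent by Theorem \ref{11.2}.
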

\begin{proof}
	Using the proof of Theorem \ref{8.1}, one can show that each $(P\zeta)^I_{u}$ is $G$-invariant. The statement follows from that.
\end{proof}
\begin{lemma}\label{12.28}
	For $k=1$, the kernel of the map,
	$$
			\mathcal{Q}_{*}:\mathfrak{X}^{1}(\mathcal{Z}_{\Delta})^G\rightarrow \mathfrak{X}^{1}(\mathcal{O})
	$$
	consists of the $G$-invariant sections of $E^d_{D}$.
	$$
		\operatorname{Ker}\big(\mathcal{Q}_{*}:\mathfrak{X}^{1}(\mathcal{Z}_{\Delta})^G\rightarrow \mathfrak{X}^{1}(\mathcal{O})\big)=\big\{X\in \mathfrak{X}^{1}(\mathcal{Z}_{\Delta})^G|\text{ }X_{q}\in(E^d_{D})_{q}\quad \forall q\in \mathcal{Z}_{\Delta}\big\}
	$$
Denote $\operatorname{Ker}\big(\mathcal{Q}_{*}:\mathfrak{X}^{1}(\mathcal{Z}_{\Delta})^G\rightarrow \mathfrak{X}^{1}(\mathcal{O})\big)$ by $\Gamma(E^d_{D})^G$
\end{lemma}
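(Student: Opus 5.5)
The plan is to identify the kernel pointwise, using that $\mathcal{Q}:\mathcal{Z}_{\Delta}\to\mathcal{O}$ is a submersion whose fibres are the $G$-orbits, with $G=\mathbb{T}^d_D$. The two inclusions are handled separately.

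First I establish the pointwise description of $\mathcal{Q}_*$. For $X\in\mathfrak{X}^1(\mathcal{Z}_{\Delta})^G$, the pointwise pushforward $\mathcal{Q}_*(X)|_{[q]}=d\mathcal{Q}_q(X_q)$ is independent of the representative $q$. Indeed, if $q'=gq$ then $\mathcal{Q}\circ g=\mathcal{Q}$, so
\[
d\mathcal{Q}_{gq}(X_{gq})=d\mathcal{Q}_{gq}(dg_q X_q)=d\mathcal{Q}_q(X_q).
\]
This agrees with the algebraic definition $\mathcal{Q}_*(X)(f)=\mathcal{Q}_*(X(\mathcal{Q}^*f))$, since $X(f\circ\mathcal{Q})(q)=df_{[q]}(d\mathcal{Q}_q X_q)$. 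Hence $\mathcal{Q}_*(X)=0$ if and only if $d\mathcal{Q}_q(X_q)=0$ for every $q\in\mathcal{Z}_{\Delta}$, that is, $X_q\in\operatorname{Ker}(d\mathcal{Q}_q)$ for all $q$.

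The main step is to show $\operatorname{Ker}(d\mathcal{Q}_q)=(E^d_D)_q$.

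For the inclusion $(E^d_D)_q\subset\operatorname{Ker}(d\mathcal{Q}_q)$: each fundamental vector $Y^\#_q$ is the velocity of the curve $t\mapsto\exp(tY)q$. This curve lies in the fibre $\mathcal{Q}^{-1}([q])$, so $d\mathcal{Q}_q(Y^\#_q)=0$.

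For the reverse inclusion I use a dimension count. By (A2) and the freeness of the action (Proposition \ref{Prop: free action} together with the standing assumption that $\mathbb{T}^d_D$ acts freely), $\mathcal{Q}$ is a submersion onto a manifold of dimension $(m+n)-d$. So $\dim\operatorname{Ker}(d\mathcal{Q}_q)=d$. Freeness also makes the infinitesimal action injective: if $Y^\#_q=0$, then the one-parameter subgroup $\exp(tY)$ fixes $q$, so $Y=0$. Therefore $\dim(E^d_D)_q=d$, and equality of the two subspaces follows.

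Combining the two steps gives
\[
\operatorname{Ker}\bigl(\mathcal{Q}_*:\mathfrak{X}^1(\mathcal{Z}_{\Delta})^G\to\mathfrak{X}^1(\mathcal{O})\bigr)=\{X\in\mathfrak{X}^1(\mathcal{Z}_{\Delta})^G\mid X_q\in(E^d_D)_q\ \forall q\},
\]
which is the claimed set $\Gamma(E^d_D)^G$.

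The only delicate point I expect is the identification of the algebraic and pointwise definitions of $\mathcal{Q}_*$. It amounts to noting that $\{df_{[q]}: f\in C^\infty(\mathcal{O})\}$ spans $T^*_{[q]}\mathcal{O}$, so a vector on $\mathcal{O}$ vanishes if and only if it kills every function.
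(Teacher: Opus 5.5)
Your proposal is correct and takes essentially the same route as the paper. The paper's proof just asserts the pointwise identity $\operatorname{Ker}(d\mathcal{Q}_q)=(E^d_D)_q$ for the orbit map and concludes, while you also supply the details it leaves implicit: the agreement of the algebraic and pointwise definitions of $\mathcal{Q}_*$, and a justification of that identity via orbit curves plus a dimension count using freeness.
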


\begin{proof}
	Since $\mathcal{Q}$ is the quotient map to the orbit space, at each $q\in \mathcal{Z}_{\Delta}$,
	
	$$
\operatorname{Ker}\big(\mathcal{Q}_{*}|_{q}:T_{q}(\mathcal{Z}_{\Delta})\rightarrow T_{[q]}(\mathcal{O})\big)=  (E^d_{D})_{q}
$$
The statement follows from that.
\end{proof}
\begin{remark} Any section of $E^d_{D}$ in $\mathcal{Z}_{\Delta}$ is of the form
$$
	\sum_{l=1}^mh_{l}\Theta_{l}|_{\mathcal{Z}_{\Delta}}
$$
where $h_{l}\in C^{\infty}(\mathcal{Z}_{\Delta})$.
\end{remark}
\begin{theorem}\label{12.29}
	Let $X$ and $Y$ be distinct $G$-invariant Poisson vector fields in $(\mathcal{Z}_{\Delta},\Pi|_{\mathcal{Z}_{\Delta}})$.  $\mathcal{Q}_{*}\circ \mathcal{A}_{G}(X)$ and $\mathcal{Q}_{*}\circ \mathcal{A}_{G}(Y)$ represent the same class in $H^1_{\overline{\Pi}}(\mathcal{O})$ if and only if there exist $Z\in \operatorname{Ham}(\mathfrak{X}^{1}(\mathcal{Z}_{\Delta})^G,\Pi|_{\mathcal{Z}_{\Delta}})$ and $A \in\Gamma(E^d_{D})^G$ such that
	$$
   X-Y=Z+A \,.
$$
\end{theorem}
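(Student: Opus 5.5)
Since $X$ and $Y$ are $G$-invariant, $\mathcal{A}_{G}(X)=X$ and $\mathcal{A}_{G}(Y)=Y$, so the statement concerns only $\mathcal{Q}_{*}(X)$ and $\mathcal{Q}_{*}(Y)$. By linearity of $\mathcal{Q}_{*}$, these represent the same class in $H^1_{\overline{\Pi}}(\mathcal{O})$ if and only if $\mathcal{Q}_{*}(X-Y)\in \operatorname{Ham}(\mathfrak{X}^{1}(\mathcal{O}),\overline{\Pi})$. Note that $X-Y$ is again a $G$-invariant Poisson vector field. The plan is to prove the two implications separately, using three ingredients: the kernel description of Lemma \ref{12.28}, the surjectivity of Corollary \ref{12.24}, and the chain-map property of Lemma \ref{12.21} and Corollary \ref{12.22}.

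For ($\Leftarrow$), suppose $X-Y=Z+A$ with $Z\in \operatorname{Ham}(\mathfrak{X}^{1}(\mathcal{Z}_{\Delta})^G,\Pi|_{\mathcal{Z}_{\Delta}})$ and $A\in\Gamma(E^d_{D})^G$. Then $\mathcal{Q}_{*}(X-Y)=\mathcal{Q}_{*}(Z)+\mathcal{Q}_{*}(A)$. Lemma \ref{12.28} gives $\mathcal{Q}_{*}(A)=0$, and Corollary \ref{12.22} shows that $\mathcal{Q}_{*}(Z)$ is Hamiltonian on $(\mathcal{O},\overline{\Pi})$. Hence the two classes coincide.

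For ($\Rightarrow$), suppose $\mathcal{Q}_{*}(X-Y)=B\in\operatorname{Ham}(\mathfrak{X}^{1}(\mathcal{O}),\overline{\Pi})$. By Corollary \ref{12.24} there exists $Z\in\operatorname{Ham}(\mathfrak{X}^{1}(\mathcal{Z}_{\Delta})^G,\Pi|_{\mathcal{Z}_{\Delta}})$ with $\mathcal{Q}_{*}(Z)=B$. Concretely, $B=d_{\mathcal{O}}(b)$ for some $b\in C^\infty(\mathcal{O})$, and we may take $Z=d_{\mathcal{Z}_{\Delta}}(\mathcal{Q}^*b)$, which is $G$-invariant by Lemma \ref{12.5} and the $G$-invariance of $\Pi|_{\mathcal{Z}_{\Delta}}$. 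Set $A:=X-Y-Z$. Then $A$ is $G$-invariant, being a difference of $G$-invariant fields, and $\mathcal{Q}_{*}(A)=0$. Lemma \ref{12.28} then puts $A$ in $\Gamma(E^d_{D})^G$, which gives the required decomposition.

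The one step needing care is the surjectivity in Corollary \ref{12.24}. Its proof relies on lifting $b$ to a $G$-invariant preimage; in degree $0$ this is immediate from Lemma \ref{12.2}, without the principal connection of the earlier remark. The other point to check is that $\mathcal{Q}_{*}\circ d_{\mathcal{Z}_{\Delta}}=d_{\mathcal{O}}\circ\mathcal{Q}_{*}$ holds on functions. This is Lemma \ref{12.21} for $k=0$, which follows from Lemma \ref{2.8} by evaluating $[\Pi,F](H)=\pm\Pi(F,H)$ on $G$-invariant $H$. I will state these two facts explicitly as the key steps; everything else is linear algebra.
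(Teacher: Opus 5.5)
Your proposal is correct and follows essentially the same route as the paper: $\mathcal{A}_{G}$ acts as the identity on $X$ and $Y$, Corollary \ref{12.24} lifts the Hamiltonian difference on $\mathcal{O}$ to a $G$-invariant Hamiltonian field, and Lemma \ref{12.28} identifies the remainder as a section of $E^d_{D}$. The converse uses Corollary \ref{12.22} together with the same kernel description, and your explicit choice $Z=d_{\mathcal{Z}_{\Delta}}(\mathcal{Q}^*b)$ simply unpacks Corollary \ref{12.24} in degree one; its $G$-invariance comes from Lemma \ref{12.6}, not Lemma \ref{12.5}.
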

\begin{proof}
($\implies$)
Assume $\mathcal{Q}_{*}\circ \mathcal{A}_{G}(X)$ and $\mathcal{Q}_{*}\circ \mathcal{A}_{G}(Y)$ represent the same Poisson class in $H^1_{\overline\Pi}(\mathcal O)$.
Since $X$ and $Y$ are $G$-invariant and $\mathcal{Q}_{*}$ is surjective on Hamiltonian vector fields, there exist $W\in \operatorname{Ham}(\mathfrak{X}^{1}(\mathcal{O}),\overline{\Pi})$ and $Z\in \operatorname{Ham}(\mathfrak{X}^{1}(\mathcal{Z}_{\Delta})^G,\Pi|_{\mathcal{Z}_{\Delta}})$ such that
$$
	\mathcal{Q}_{*}\circ\mathcal{A}_{G}(X-Y)=\mathcal{Q}_{*}(X-Y)=\mathcal{Q}_{*}(Z)
$$
$$
	\implies \mathcal{Q}_{*}(X-Y-Z)=0\,.
$$
Thus there exists $A\in \Gamma(E^d_{D})^G$ such that
$$
	X-Y=Z+A\,.
$$
($\impliedby$)
Assume there exist $Z\in \operatorname{Ham}(\mathfrak{X}^{1}(\mathcal{Z}_{\Delta})^G,\Pi|_{\mathcal{Z}_{\Delta}})$ and $A \in\Gamma(E^d_{D})^G$ such that
	$$
   X-Y=Z+A
$$
$$
	\implies \mathcal{Q}_{*}(X-Y)=\mathcal{Q}_{*}(Z)
$$
$$
	\implies \mathcal{Q}_{*}\circ\mathcal{A}_{G}(X-Y)=W
$$
where $\mathcal{Q}_{*}(Z)=W\in \operatorname{Ham}(\mathfrak{X}^{1}(\mathcal{O}),\overline{\Pi})$.
$$
	\implies \mathcal{H}_{*}\big(X\big)= \mathcal{H}_{*}\big(Y\big)\,.
$$
\end{proof}
\begin{theorem}\label{12.30}
	Let $X$ and $Y$ be distinct $G$-invariant Poisson vector fields in $(\mathcal{Z}_{\Delta},\Pi_{1}|_{\mathcal{Z}_{\Delta}})$ such that $X-Y$ is a radial Poisson vector field of the form
	$$
   \sum_{(I,u)\in \mathscr{I}^*}  f^I_{u}(P\zeta)^I_{u}|_{\mathcal{Z}_{\Delta}}+\sum_{w\in B}h_{w}\Theta_{w} |_{\mathcal{Z}_{\Delta}}
$$
	where $f^I_{u}\in C^\infty(\mathcal{Z}_{\Delta})^G$, $h_{w}\in \operatorname{Cas}(\mathcal{Z}_{\Delta},\Pi_{1}|_{\mathcal{Z}_{\Delta}})^G$, $\sum_{w\in B}h_{w}\Theta_{w} |_{\mathcal{Z}_{\Delta}}\not\in \Gamma(E^d_{D})^G$ and for each $w\in B$, $a_{wj}|_{\mathcal{Z}_{\Delta}}=0$ for all $1\leq j\leq m$. Then $\mathcal{Q}_{*}\circ\mathcal{A}_{G}(X)$ and $\mathcal{Q}_{*}\circ\mathcal{A}_{G}(Y)$ represent distinct classes in $H^1_{\overline{\Pi_{1}}}(\mathcal{O})$.
\end{theorem}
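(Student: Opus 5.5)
By Theorem~\ref{12.29}, it suffices to show that $X-Y$ admits no decomposition $X-Y=Z+A$ with $Z\in \operatorname{Ham}(\mathfrak{X}^{1}(\mathcal{Z}_{\Delta})^G,\Pi_{1}|_{\mathcal{Z}_{\Delta}})$ and $A\in\Gamma(E^d_{D})^G$. I will argue by contradiction: suppose such $Z$ and $A$ exist.

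The first step is to describe $Z$. Every Hamiltonian vector field of $\Pi_{1}|_{\mathcal{Z}_{\Delta}}$ has the form $\Pi_1|_{\mathcal{Z}_{\Delta}}(F,\cdot)=\sum_{i<j}a_{ij}\big(\Theta_i(F)\Theta_j-\Theta_j(F)\Theta_i\big)$. Since $a_{wj}|_{\mathcal{Z}_{\Delta}}=0$ for $w\in B$, this reduces to $Z=\sum_{j\notin B}\beta_j\Theta_j|_{\mathcal{Z}_{\Delta}}$, which is purely angular and involves no $\Theta_w$ with $w\in B$. This is the same observation used in the injectivity arguments of Section~\ref{P cohom Cm} and Theorem~\ref{11.14}. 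By the Remark following Lemma~\ref{12.28}, $A=\sum_{l}h'_l\Theta_l|_{\mathcal{Z}_{\Delta}}$ is also angular, and pointwise $A_q\in (E^d_D)_q$.

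Next I split into radial and angular parts. On the dense open set $\mathcal{Z}_{\Delta}\cap\mathbb{C}^m_1$, the vector fields $(P\zeta)^I_u$ lie in the span of the radial fields $\mathcal{P}_i$, and the $\mathcal{P}_i$ are pointwise independent from the $\Theta_j$. Comparing components in the identity
$$\sum f^I_u(P\zeta)^I_u+\sum_{w\in B}h_w\Theta_w=Z+A,$$
the radial part gives $\sum f^I_u(P\zeta)^I_u=0$ on the dense set, and therefore on all of $\mathcal{Z}_{\Delta}$ by continuity. The angular part gives $\sum_{w\in B}h_w\Theta_w=\sum_{j\notin B}\beta_j\Theta_j+A$. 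Since the $\Theta_j$ are independent on the dense set, matching the $\Theta_w$ coefficients for $w\in B$ and the coefficients for $j\notin B$ should force $\sum_{w\in B}h_w\Theta_w$ to coincide with the $B$-component of $A$, up to terms that can be absorbed.

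The main obstacle is this last step. From $A$ being a section of $E^d_D$ we need to conclude that $\sum_{w\in B}h_w\Theta_w$ is itself a section of $E^d_D$, which would contradict the hypothesis $\sum_{w\in B}h_w\Theta_w\notin\Gamma(E^d_D)^G$. The difficulty is that $E^d_D$ is spanned by $\sum_l\delta_l\Theta_l$-type combinations, which mix $B$ and non-$B$ indices, so the $B$-coordinates of $A$ alone need not lie in $E^d_D$. I would first try to show that the $\beta_j$ part can be chosen so that $\sum_{w\in B}h_w\Theta_w - A=\sum_{j\notin B}\beta_j\Theta_j$, and then read the hypothesis as meaning $\sum_{w\in B}h_w\Theta_w\notin\Gamma(E^d_D)^G+\operatorname{span}\{\Theta_j\}_{j\notin B}$. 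If that reading is too strong for the literal statement, I would fall back on the case $f^I_u\neq0$, where the radial part of $X-Y$ is nonzero while $Z+A$ is purely angular, which already gives the contradiction. Throughout, $G$-invariance of $f^I_u$ and $h_w$ ensures that $X-Y$ lies in the domain of Theorem~\ref{12.29}.
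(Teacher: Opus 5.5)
Your reduction is the paper's own: by Theorem~\ref{12.29} you must rule out $X-Y=Z+A$. Since $Z+A$ is purely angular, with $Z$ carrying no $\Theta_w$ for $w\in B$, the radial part of $X-Y$ must vanish on the dense stratum $N_\emptyset$. The paper then simply asserts that no such decomposition exists. Your proposal stops exactly at the step that needs proof, and neither fallback proves the stated theorem.

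Reading the hypothesis as $V:=\sum_{w\in B}h_w\Theta_w\notin\Gamma(E^d_D)^G+\operatorname{span}\{\Theta_j\}_{j\notin B}$ changes the statement. Restricting to a nonzero radial part omits precisely the case that the hypothesis $V\notin\Gamma(E^d_D)^G$ is there for. Also, "some $f^I_u\neq 0$" does not make the radial part nonzero: at a point of $N_\emptyset$ only $n$ radial directions are tangent to $\mathcal{Z}_\Delta$, so the $(P\zeta)^I_u$, $(I,u)\in\mathscr{I}^*$, are linearly dependent.

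You are right that coefficient matching cannot close the gap. If all you use about $Z$ is that it is angular without $\Theta_w$-components, then, with $B=\{1\}$ and $E$ spanned by $\Theta_1+\Theta_2$, the identity $h\Theta_1=(-h\Theta_2)+h(\Theta_1+\Theta_2)$ is such a decomposition even though $h\Theta_1\notin\Gamma(E)$. The missing idea is to use that $Z$ is \emph{Hamiltonian} and that the $h_w$ are \emph{Casimirs}.

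Concretely, write $Z=X_F$ (up to sign), where $X_F=\sum_{i,j}a_{ij}\Theta_i(F)\,\Theta_j$ and $a_{ji}=-a_{ij}$. Take any $\lambda\in\mathbb{R}^m$ with $\sum_l\lambda_l\delta_l=0$. Such $\lambda$ annihilate $\mathscr{L}(\mathbb{R}^d)$, which is $E_q$ written in the frame $\Theta_1|_q,\dots,\Theta_m|_q$ for $q\in N_\emptyset$. Pairing $V=Z+A$ with $\lambda$ in that frame gives, on $N_\emptyset$,
\[
\sum_{w\in B}\lambda_w h_w = Y_\lambda(F), \qquad Y_\lambda:=\sum_{i}\Big(\sum_j a_{ij}\lambda_j\Big)\Theta_i .
\]
The Casimir condition $\sum_i a_{ij}\Theta_i(h_w)=0$ for all $j$ gives $Y_\lambda(h_w)=0$.

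Because the $a_{ij}$ depend only on $p$, $Y_\lambda$ is a constant-coefficient field on each compact torus orbit $\mathbb{O}_p\subset N_\emptyset$. Along an integral curve $c$ of $Y_\lambda$ this gives $F(c(s))=F(c(0))+s\sum_w\lambda_wh_w(c(0))$. Boundedness of $F$ then forces $\sum_w\lambda_wh_w=0$ on $N_\emptyset$.

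Letting $\lambda$ range over the annihilator yields $V_q\in E_q$ on the dense set $N_\emptyset$. This extends to all of $\mathcal{Z}_\Delta$, because $E^d_D$ is a subbundle for the free action. Since the $h_w$ are $G$-invariant, $V\in\Gamma(E^d_D)^G$, contradicting the hypothesis. This settles every case at once, and it does not even need $a_{wj}=0$.
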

\begin{proof}
Any vector field $Z+A$ where $Z\in \operatorname{Ham}(\mathfrak{X}^{1}(\mathcal{Z}_{\Delta})^G,{\Pi_{1}}|_{\mathcal{Z}_{\Delta}})$ and $A \in\Gamma(E^d_{D})^G$  is of the form
	$$
	  		\sum_{\substack{j=1\\j\not\in B}}^mh_{j}\Theta_{j}|_{\mathcal{Z}_{\Delta}}+\sum_{j=1}^me_{j}\Theta_{j}|_{\mathcal{Z}_{\Delta}}
	$$
	for some $e_{j},h_{j}\in C^{\infty}(\mathcal{Z}_{\Delta})^G$. Since $X$ and $Y$ are distinct, by the hypothesis there do not exist any $f^I_{u},h_{w}\in C^{\infty}(\mathcal{Z}_{\Delta})^G$ such that
	$$
		X-Y\not= 0  \quad\text{and}\quad X-Y=Z+A
	$$
	Thus, by Theorem \ref{12.29}, the statement holds.
\end{proof}
\begin{corollary}\label{12.31}
	Each Poisson vector field in $(\mathcal{O},\overline{\Pi_{1}})$ of the form
	$$
   \mathcal{Q}_{*}\left(\sum_{(I,u)\in \mathscr{I^*}} f^I_{u}(P\zeta)^I_{u}|_{\mathcal{Z}_{\Delta}}\right)
$$
	where $f^I_{u}\in C^\infty(\mathcal{Z}_{\Delta})^G$, represents a distinct class in $H^1_{\overline{\Pi_{1}}}(\mathcal{O})$.
\end{corollary}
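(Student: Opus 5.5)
The plan is to deduce Corollary \ref{12.31} from Theorem \ref{12.30}, through the criterion of Theorem \ref{12.29}. Take two distinct Poisson vector fields of the stated form,
\[
X=\sum_{(I,u)\in\mathscr{I}^*} f^I_u(P\zeta)^I_u|_{\mathcal{Z}_{\Delta}}, \qquad Y=\sum_{(I,u)\in\mathscr{I}^*} g^I_u(P\zeta)^I_u|_{\mathcal{Z}_{\Delta}},
\]
with $f^I_u,g^I_u\in C^\infty(\mathcal{Z}_{\Delta})^G$, and assume $X$ and $Y$ are Poisson on $(\mathcal{Z}_{\Delta},\Pi_1|_{\mathcal{Z}_{\Delta}})$.

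First I check that both are $G$-invariant. This follows from Lemma \ref{12.27}, because the coefficients are $G$-invariant. Hence $\mathcal{A}_G(X)=X$ and $\mathcal{A}_G(Y)=Y$, so $\mathcal{Q}_*(X)$ and $\mathcal{Q}_*(Y)$ are exactly the images $\mathcal{Q}_*\circ\mathcal{A}_G(X)$ and $\mathcal{Q}_*\circ\mathcal{A}_G(Y)$. By Corollary \ref{12.22} they are Poisson on $(\mathcal{O},\overline{\Pi_1})$. The difference $X-Y$ is a purely radial combination of the $(P\zeta)^I_u|_{\mathcal{Z}_{\Delta}}$ with invariant coefficients and $B=\emptyset$. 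I will not invoke Theorem \ref{12.30} verbatim, since its condition on the $\Theta$-part is vacuous or awkward when $B=\emptyset$. Instead I will rerun its argument directly.

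By Theorem \ref{12.29}, it suffices to show that $X-Y\neq Z+A$ for every $Z\in\operatorname{Ham}(\mathfrak{X}^1(\mathcal{Z}_{\Delta})^G,\Pi_1|_{\mathcal{Z}_{\Delta}})$ and every $A\in\Gamma(E^d_D)^G$. Every Hamiltonian vector field of $\Pi_1|_{\mathcal{Z}_{\Delta}}$ has the form $\Pi_1^\sharp(dh)=\sum_{i,j}a_{ij}\bigl(\Theta_i(h)\Theta_j-\Theta_j(h)\Theta_i\bigr)$, so it is a $C^\infty$-combination of the $\Theta_j|_{\mathcal{Z}_{\Delta}}$. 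By the remark after Lemma \ref{12.28}, every section of $E^d_D$ is also of the form $\sum_l h_l\Theta_l|_{\mathcal{Z}_{\Delta}}$. Therefore $Z+A$ is purely angular.

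It remains to show that a nonzero radial field $\sum (f^I_u-g^I_u)(P\zeta)^I_u$ cannot equal an angular field. On the dense open set $\mathcal{Z}_{\Delta}\cap\mathbb{C}^m_1$, each $(P\zeta)^I_u$ is a combination of the $\mathcal{P}_j=p_j\partial_{p_j}$, while the $\Theta_j=\partial_{\theta_j}$, and together these are pointwise linearly independent. An equality $X-Y=Z+A$ would therefore force both the radial part and the angular part to vanish on the dense set. By continuity this gives $X-Y=0$, contradicting distinctness. Consequently $\mathcal{Q}_*(X)$ and $\mathcal{Q}_*(Y)$ are not cohomologous.

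The main obstacle is the meaning of ``distinct''. Distinct coefficient tuples need not give distinct fields, because the $(P\zeta)^I_u$ for different $(I,u)$ may be linearly dependent. I will therefore interpret distinctness at the level of the vector fields $X\neq Y$, which is exactly what the argument uses. If the intended reading is injectivity on coefficients, I would restrict to a subset $A\subset\mathscr{I}^*$ with linearly independent $\mu^I_u$ as in Theorem \ref{11.13}, and use Corollary \ref{11.12} to separate the coefficients.
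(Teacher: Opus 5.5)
Your proposal is correct and is essentially the paper's argument. The paper derives the corollary from Theorem~\ref{12.30}, whose proof is exactly your reasoning: $Z+A$ is a combination of the $\Theta_j|_{\mathcal{Z}_{\Delta}}$, a nonzero combination of the $(P\zeta)^I_u|_{\mathcal{Z}_{\Delta}}$ is not, and Theorem~\ref{12.29} finishes. Rerunning this with $B=\emptyset$ is right, since the hypothesis of Theorem~\ref{12.30} on the $\Theta$-part fails when $B=\emptyset$, and reading ``distinct'' as $X\neq Y$ is also right. One refinement: you can drop your extra assumption that $X,Y$ are Poisson on $\mathcal{Z}_{\Delta}$, because the forward direction of Theorem~\ref{12.29} uses only the $G$-invariance of $X,Y$ and that $\mathcal{Q}_*X,\mathcal{Q}_*Y$ are Poisson on $\mathcal{O}$, which is what the statement assumes.
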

Using the same technique as in Theorem 
\ref{12.30}, similar results hold for ${}_{K}\overline{\Pi^R_{3}}$ and ${}_{K}\overline{\Pi^R_{4}}$.
\begin{theorem}\label{12.32}
	Let $X$ and $Y$ be distinct $G$-invariant Poisson vector fields in $(\mathcal{Z}_{\Delta},{}_{K}\Pi^R_{3}|_{\mathcal{Z}_{\Delta}})$ such that $X-Y$ is a Poisson vector field of the form
	$$
   \sum_{(I,u)\in (\mathscr{I}\setminus\mathscr{K})^*} f^I_{u}(P\zeta)^I_{u}|_{\mathcal{Z}_{\Delta}}+	  	\sum_{j=1}^mh_{j}\Theta_{j}|_{\mathcal{Z}_{\Delta}}
	$$
    
where $f^I_{u}\in C^\infty(\mathcal{Z}_{\Delta})^G$, $h_{j}\in \operatorname{Cas}(\mathcal{Z}_{\Delta},{}_{K}\Pi_{3}^R|_{\mathcal{Z}_{\Delta}})^G$ and $\sum_{j=1}^mh_{j}\Theta_{j}|_{\mathcal{Z}_{\Delta}}\not\in \Gamma(E^d_{D})^G$. Then $\mathcal{Q}_{*}\circ\mathcal{A}_{G}(X)$ and $\mathcal{Q}_{*}\circ\mathcal{A}_{G}(Y)$ represent distinct classes in $H^1_{{}_{K}\overline{\Pi^R_{3}}}(\mathcal{O})$.
\end{theorem}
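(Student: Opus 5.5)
The argument runs parallel to that of Theorem \ref{12.30}, with Theorem \ref{12.29} as the main tool. By Theorem \ref{12.29}, $\mathcal{Q}_{*}\circ\mathcal{A}_{G}(X)$ and $\mathcal{Q}_{*}\circ\mathcal{A}_{G}(Y)$ represent the same class in $H^1_{{}_{K}\overline{\Pi^R_{3}}}(\mathcal{O})$ if and only if $X-Y=Z+A$ for some $Z\in\operatorname{Ham}(\mathfrak{X}^1(\mathcal{Z}_{\Delta})^G,{}_{K}\Pi^R_3|_{\mathcal{Z}_{\Delta}})$ and some $A\in\Gamma(E^d_D)^G$. So I would assume such a decomposition exists and derive a contradiction with the hypotheses on $X-Y$.

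\textbf{Step 1 (shape of $Z$).} Since ${}_{K}\Pi^R_3|_{\mathcal{Z}_{\Delta}}=\sum P^K\sigma^K_{ij}\zeta^K_i\wedge\zeta^K_j$ is purely radial, every Hamiltonian vector field has the form $Z=\sum_i \beta_i\,P^K\zeta^K_i|_{\mathcal{Z}_{\Delta}}$ for smooth $\beta_i$. In other words, $Z$ is a combination of the fields $(P\zeta)^K_i$ attached to the index $K$ itself. By the remark after Lemma \ref{12.28}, $A=\sum_l e_l\Theta_l|_{\mathcal{Z}_{\Delta}}$ is purely angular.

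\textbf{Step 2 (split into radial and angular parts).} On the dense open set $\mathcal{Z}_{\Delta}\cap\mathbb{C}^m_1$, the fields $\mathcal{P}_1,\dots,\mathcal{P}_m,\Theta_1,\dots,\Theta_m$ are pointwise linearly independent. Writing $X-Y=Z+A$ in this frame, I would compare the angular and radial components separately:
\begin{itemize}
\item Angular part: $\sum_j h_j\Theta_j|_{\mathcal{Z}_{\Delta}}=A$ on a dense set, so by continuity everywhere. This contradicts the hypothesis $\sum_j h_j\Theta_j\notin\Gamma(E^d_D)^G$.
\item Radial part: $\sum_{(I,u)\in(\mathscr{I}\setminus\mathscr{K})^*}f^I_u(P\zeta)^I_u=Z$.
\end{itemize}
The angular part therefore already gives the contradiction, and hence distinct classes, provided the splitting in Step 2 is legitimate.

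\textbf{Main obstacle.} The delicate point is justifying that the radial and angular parts can be separated globally on $\mathcal{Z}_{\Delta}$, and not only on the dense open set. This needs all the $(P\zeta)^I_u$ and the Hamiltonian fields to be radial, i.e.\ spanned by the $\mathcal{P}_j$, which holds by construction (Construction \ref{11.1}). It also needs the coefficient identities obtained on the dense set to extend by continuity, which is the same argument as in Lemma \ref{10.8}.

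I would also note that, if one wants injectivity from the radial part alone, the fields $(P\zeta)^I_u$ with $I\in\mathscr{I}\setminus\mathscr{K}$ differ from the span of the $(P\zeta)^K_i$. This can be checked by evaluating at points of $N_I$ and using Corollary \ref{11.12} with the functions $\eta^I_u p$. For the statement as given, however, the angular hypothesis is what closes the argument, as in Theorem \ref{12.30}.
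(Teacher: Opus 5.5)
Your proposal is correct and follows the paper's route. The paper proves this "by the same technique as Theorem \ref{12.30}": apply Theorem \ref{12.29}, then use that Hamiltonian fields of ${}_{K}\Pi^R_3|_{\mathcal{Z}_{\Delta}}$ are radial while sections of $E^d_D$ are angular, which forces the angular part $\sum_j h_j\Theta_j|_{\mathcal{Z}_{\Delta}}$ to lie in $\Gamma(E^d_D)^G$; your splitting on the dense set $\mathcal{Z}_{\Delta}\cap\mathbb{C}^m_1$ followed by continuity makes explicit what the paper leaves implicit. One small tidy-up: Step 1 needs only that $Z$ is pointwise a combination of the $\mathcal{P}_l$, $l\in L$, which is immediate since ${}_{K}\Pi^R_3$ is a polynomial combination of the $\mathcal{P}_a\wedge\mathcal{P}_b$, so drop the claim that the coefficients $\beta_i$ of $P^K\zeta^K_i$ are smooth, which is not evident near $\{p_l=0\}$.
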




\begin{corollary}\label{12.33}
Each Poisson vector field in $(\mathcal{O},{}_{K}\overline{\Pi^R_{3}})$ of the form
	$$
   \mathcal{Q}_{*}\left(\sum_{(I,u)\in (\mathscr{I}\setminus\mathscr{K})^*} f^I_{u}(P\zeta)^I_{u}|_{\mathcal{Z}_{\Delta}}\right)
$$
	where $f^I_{u}\in C^\infty(\mathcal{Z}_{\Delta})^G$, represents a distinct class in $H^1_{{}_{K}\overline{\Pi^R_{3}}}(\mathcal{O})$.
\end{corollary}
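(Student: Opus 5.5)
The plan is to derive Corollary \ref{12.33} from Theorem \ref{12.32} (the ${}_K\Pi^R_3$ analogue of Theorem \ref{12.30}), in the same way Corollary \ref{12.31} follows from Theorem \ref{12.30}. Fix two distinct families $\{f^I_u\}$, $\{g^I_u\}$ in $C^\infty(\mathcal{Z}_{\Delta})^G$, indexed by $(\mathscr{I}\setminus\mathscr{K})^*$. Set $X=\sum f^I_u(P\zeta)^I_u|_{\mathcal{Z}_{\Delta}}$ and $Y=\sum g^I_u(P\zeta)^I_u|_{\mathcal{Z}_{\Delta}}$.

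\textbf{Step 1.} Check that $X$ and $Y$ are $G$-invariant vector fields. This follows from Lemma \ref{12.27}, since the coefficients are $G$-invariant. The statement presupposes that the pushforwards are Poisson on $(\mathcal{O},{}_K\overline{\Pi^R_3})$. To use Theorem \ref{12.32}, however, I need $X$ and $Y$ to be Poisson upstairs. By Lemma \ref{11.21} each $(P\zeta)^I_u|_{\mathcal{Z}_{\Delta}}$ is Poisson. By Theorem \ref{11.22} the combination is Poisson exactly when the coefficients are Casimirs, at least when the corresponding $\mu^I_u$ are linearly independent. So I will read the corollary as ranging over those $X$ that are Poisson on $\mathcal{Z}_{\Delta}$, with $G$-invariant coefficients.

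\textbf{Step 2.} Apply Theorem \ref{12.32} with all $h_j=0$. Then $X-Y=\sum (f^I_u-g^I_u)(P\zeta)^I_u$ has the required form. Since $\mathcal{A}_G(X)=X$ for invariant $X$, the conclusion reads $[\mathcal{Q}_*X]\neq[\mathcal{Q}_*Y]$.

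\textbf{Main obstacle.} The hypothesis $\sum h_j\Theta_j\notin\Gamma(E^d_D)^G$ in Theorem \ref{12.32} fails literally when $h=0$. So I would redo the argument of Theorem \ref{12.30} directly, via Theorem \ref{12.29}. Suppose $X-Y=Z+A$ with $Z$ Hamiltonian and $G$-invariant and $A\in\Gamma(E^d_D)^G$. Hamiltonian vector fields of ${}_K\Pi^R_3|_{\mathcal{Z}_{\Delta}}$ are combinations of $\zeta^K_i$, hence radial. Sections of $E^d_D$ are angular combinations of the $\Theta_l$. Since radial and angular frames are pointwise independent on the dense set where all $p_i\neq0$, $A=0$ on a dense set, hence everywhere, and $X-Y=Z$ is Hamiltonian.

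To finish I need that a nonzero combination of the $(P\zeta)^I_u$ with $I\in\mathscr{I}\setminus\mathscr{K}$ is not Hamiltonian. I would argue this as in Theorem \ref{11.24}, with the following key computation. Pair against the functions $\eta^I_u p$ of Construction \ref{11.11}; by Corollary \ref{11.12}, $(P\zeta)^{I_i}_{u_i}(\eta^{I_j}_{u_j}p)=P^{I_i}\delta$. Hamiltonian fields involve only $\zeta^K$ directions, scaled by $P^K$, which vanish on suitable strata. For $I\notin\mathscr{K}$ we have $J\cap L=\emptyset$, so the Casimirs $p_k$, $k\in K$ (Lemma \ref{11.16}), detect the $(P\zeta)^I_u$ directions, whereas every Hamiltonian field kills them. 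Hence $Z(p_k)=0$ forces each $f^I_u-g^I_u$ to vanish wherever $P^I\neq0$, which is a dense set, so the two families coincide. This separation step is the one I expect to need the most care.
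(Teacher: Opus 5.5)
Your reduction is the right one, and it is what the paper means by ``the same technique as in Theorem \ref{12.30}''. You pass through Theorem \ref{12.29} and discard the angular part $A$ by pointwise independence of radial and angular frames on the dense set where all $p_i\neq 0$. You then separate $X-Y$ from the Hamiltonian part $Z$: the radial components of $Z$ lie only in the $L$-directions, whereas for $I\in\mathscr{I}\setminus\mathscr{K}$ one has $J\subseteq K$. You are right that Theorem \ref{12.32} cannot be quoted literally with all $h_j=0$. Your restriction in Step 1 to fields that are Poisson on $\mathcal{Z}_{\Delta}$ is unnecessary, since the direction of Theorem \ref{12.29} you use needs only $G$-invariance.

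The gap is in the final step. The functions $\eta^I_u p$ of Construction \ref{11.11}, and with them Corollary \ref{11.12}, exist only for a subfamily whose vectors $\mu^I_u$ are linearly independent. The corollary, however, sums over all of $(\mathscr{I}\setminus\mathscr{K})^*$, and these $\mu^I_u$ all lie in the $n$-dimensional space $\operatorname{Ker}(\Gamma)$, so they are typically dependent. Your target, that the two coefficient families coincide, is then false. Take the $5$-simplex ($m=6$, $\Gamma=(1,\dots,1)$) with $K=(1,2,3)$, $L=(4,5,6)$, and choose $\nu^{(4,5,6)}_1=(1,-1,0)$ and $\nu^{(3,4,5,6)}_1=(1,-1)$. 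Then $(P\zeta)^{(4,5,6)}_1=p_3\,(P\zeta)^{(3,4,5,6)}_1$. So the family whose only nonzero coefficient is $1$ and the family whose only nonzero coefficient is $p_3$ (both $\mathbb{T}^m$-invariant Casimirs) give the same vector field and the same class.

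The corollary must therefore be read as: distinct fields $\mathcal{Q}_*X\neq\mathcal{Q}_*Y$ give distinct classes. This follows from your own ingredients without extracting individual coefficients:
\begin{itemize}
\item From $X-Y=Z$ you get $(X-Y)(p_k)=Z(p_k)=0$ for $k\in K$, because these $p_k$ are Casimirs of ${}_K\Pi^R_3|_{\mathcal{Z}_{\Delta}}$.
\item $(X-Y)(p_l)=0$ for $l\in L$ holds automatically, because every relevant $J$ is contained in $K$.
\item Since $X-Y$ is radial, on the dense set where all $p_i\neq 0$ it has the form $\sum_k c_k\mathcal{P}_k$ with $c_kp_k=(X-Y)(p_k)=0$.
\end{itemize}
Hence $X-Y=0$ on that dense set, so everywhere, and $\mathcal{Q}_*X=\mathcal{Q}_*Y$.
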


\begin{theorem}\label{12.34}
	Let $X$ and $Y$ be distinct $G$-invariant Poisson vector fields in $(\mathcal{Z}_{\Delta},{}_{K}\Pi^R_{4}|_{\mathcal{Z}_{\Delta}})$ such that $X-Y$ is a Poisson vector field of the form
	$$
   \sum_{(I,u)\in (\mathscr{I}\setminus\mathscr{K})^*} f^I_{u}(P\zeta)^I_{u}|_{\mathcal{Z}_{\Delta}}+\sum_{w\in B}h_{w}\Theta_{w} |_{\mathcal{Z}_{\Delta}}
	$$
	
where $f^I_{u}\in C^\infty(\mathcal{Z}_{\Delta})^G$,  $h_{w}\in \operatorname{Cas}(\mathcal{Z}_{\Delta},{}_{K}\Pi_{4}^R|_{\mathcal{Z}_{\Delta}})^G$, $\sum_{w\in B}h_{w}\Theta_{w} |_{\mathcal{Z}_{\Delta}}\not\in \Gamma(E^d_{D})^G$ and for each $w\in B$, $\tau_{wj}|_{\mathcal{Z}_{\Delta}}=0$ for all $1\leq j\leq m$. Then $\mathcal{Q}_{*}\circ\mathcal{A}_{G}(X)$ and $\mathcal{Q}_{*}\circ\mathcal{A}_{G}(Y)$ represent distinct classes in $H^1_{{}_{K}\overline{\Pi^R_{4}}}(\mathcal{O})$.
\end{theorem}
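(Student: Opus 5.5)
We follow the template of Theorem~\ref{12.30}, with the mixed structure ${}_{K}\Pi^R_{4}|_{\mathcal{Z}_{\Delta}}$ in place of $\Pi_{1}|_{\mathcal{Z}_{\Delta}}$. By Theorem~\ref{12.29}, it suffices to show that $X-Y$ cannot be written as $Z+A$ with $Z\in \operatorname{Ham}(\mathfrak{X}^{1}(\mathcal{Z}_{\Delta})^G,{}_{K}\Pi^R_{4}|_{\mathcal{Z}_{\Delta}})$ and $A\in\Gamma(E^d_{D})^G$. Both $X$ and $Y$ are $G$-invariant, so $\mathcal{A}_G(X-Y)=X-Y$, and the argument reduces to comparing components in a pointwise frame.

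\textbf{Step 1: describe $Z+A$.} A Hamiltonian field of ${}_{K}\Pi^R_{4}|_{\mathcal{Z}_{\Delta}}$ is $\Pi^\sharp(dF)$. Since the bivector is a sum of terms $P^K\upsilon^K_{ij}\zeta^K_i\wedge\zeta^K_j$ and $\tau_{ij}\Theta_i\wedge\Theta_j$, we get
\[
Z=\sum_{i} \alpha_i\, (P\zeta)^K_i|_{\mathcal{Z}_{\Delta}}+\sum_{j\notin B}\beta_j\,\Theta_j|_{\mathcal{Z}_{\Delta}}.
\]
The $\Theta_w$, $w\in B$, do not occur in $Z$, because $\tau_{wj}=0$ for all $j$; the radial part uses only the vectors $\zeta^K_i$ multiplied by $P^K$. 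By the remark after Lemma~\ref{12.28}, $A=\sum_{l} e_l\Theta_l|_{\mathcal{Z}_{\Delta}}$ with the coefficient vector $(e_l)$ lying in the Lie algebra directions of $\mathbb{T}^d_D$, i.e.\ in $\operatorname{Im}\mathscr{L}$ pointwise.

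\textbf{Step 2: separate the radial and angular parts.} On the dense open set $\mathbb{C}^m_1\cap\mathcal{Z}_{\Delta}$, the radial fields $(P\zeta)^I_u$ lie in $\operatorname{span}\{\mathcal{P}_i\}$ and the $\Theta_l$ are angular. The $2m$ fields $\mathcal{P}_1,\dots,\mathcal{P}_m,\Theta_1,\dots,\Theta_m$ are pointwise independent there, so an identity $X-Y=Z+A$ splits into a radial identity and an angular identity.

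For the angular identity, we compare the $\Theta_w$ coefficients for $w\in B$. On the right only $A$ contributes, so $\sum_{w\in B}h_w\Theta_w$ would have to agree with the angular part of $Z+A$. Here the plan needs care: I would project the angular identity onto the $B$-coordinates and invoke the hypothesis $\sum_{w\in B}h_w\Theta_w\notin\Gamma(E^d_D)^G$. The argument requires that the terms $\beta_j\Theta_j$ with $j\notin B$ cannot conspire with $A$ to produce the $B$-part.

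For the radial identity, we would show that $\sum f^I_u(P\zeta)^I_u$ can agree with $\sum\alpha_i(P\zeta)^K_i$ only in a degenerate way. For $(I,u)\in(\mathscr{I}\setminus\mathscr{K})^*$ we have $J\cap L=\emptyset$, so these fields are supported in directions complementary to $L$, whereas the $(P\zeta)^K_i$ involve only indices in $L$. The decomposition therefore forces both sides to vanish. If both parts vanish, then $X-Y=0$ off a nowhere-dense set, hence everywhere by continuity, which contradicts the assumption that $X\neq Y$.

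\textbf{Main obstacle.} The hardest step is the angular bookkeeping. The hypothesis $\sum_{w\in B}h_w\Theta_w\notin\Gamma(E^d_D)^G$ constrains the whole sum, not its restriction to the $B$-coordinates modulo the $\Theta_j$ with $j\notin B$. I would first try to read the theorem's hypotheses, exactly as in the proof of Theorem~\ref{12.30}, as excluding such coincidences. Failing that, I would strengthen the argument by evaluating against the Casimirs $\theta_w$ on $\mathcal{Z}_{\Delta}^-$ (Lemma~\ref{11.16}): a Hamiltonian field $Z$ satisfies $Z(\theta_w)=0$ for $w\in B$. Applying $\theta_w$ to $X-Y-Z=A$ then shows that $(h_w)_{w\in B}$ would have to be the $B$-part of a section of $E^d_D$, which the hypothesis is meant to rule out.
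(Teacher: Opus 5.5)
You identify the right weak spot and correctly say you have not closed it, so there is a genuine gap here. Your reduction via Theorem~\ref{12.29}, your description of $Z+A$, and your radial comparison are all fine. The radial comparison works because the $(P\zeta)^I_u$ with $J\subset K$ and the $(P\zeta)^K_i$ live on the disjoint index sets $K$ and $L=K^c$. This is also the paper's route: its proof is ``the same technique as in Theorem~\ref{12.30}'', and that proof simply asserts the angular step.

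\textbf{Where it fails.} The hypothesis $\sum_{w\in B}h_w\Theta_w\notin\Gamma(E^d_D)^G$ does not exclude the following. The vector $(h_w)_{w\in B}$ could be the $B$-part of a section $A=\sum_l e_l\Theta_l$ of $E^d_D$, while the remaining part $\sum_{j\notin B}e_j\Theta_j$ is cancelled by the angular part of a Hamiltonian field. For instance, if $E^d_D$ is spanned by $\Theta_1+\Theta_2$ and $B=\{1\}$, then $h_1\Theta_1\notin\Gamma(E^d_D)$, yet $h_1$ is the $B$-part of $h_1(\Theta_1+\Theta_2)$.

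Your fallback does not help. Evaluating $X-Y-Z=A$ on the Casimirs $\theta_w$, $w\in B$, yields only $h_w=e_w$, which is exactly this weaker statement.

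Your closing sentence (``both parts vanish, so $X-Y=0$, contradicting $X\neq Y$'') is also unjustified. Nothing forces the angular part of $X-Y$ to vanish; the contradiction has to come from the hypothesis on $\sum_{w\in B}h_w\Theta_w$.

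\textbf{What is missing.} Your argument never uses that the $h_w$ are Casimirs, and that hypothesis cannot be dropped. Suppose only $\tau_{23}\neq 0$ and $3\in K$. Then $X_{y_3/\tau_{23}}=-x_3\Theta_2$, so $x_3\Theta_1=X_{y_3/\tau_{23}}+x_3(\Theta_1+\Theta_2)$, even though $x_3\Theta_1\notin\Gamma(E^d_D)$. Here $x_3$ is not a Casimir.

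\textbf{One way to finish.}
\begin{enumerate}
\item Put $\Xi_i=\sum_j\tau_{ij}\Theta_j$, with $\tau_{ji}=-\tau_{ij}$. Let $S\subset\mathbb{T}^m$ be the closure of the subgroup generated by the flows of the $\Xi_i$; it is a subtorus.
\item On $N_\emptyset$ the angular part of $X_f$ is $-\sum_k\Xi_k(f)\Theta_k$, and it is pointwise independent of the radial part. So every Casimir $h_w$ satisfies $\Xi_k(h_w)=0$ and is therefore $S$-invariant. Likewise, for any $S$-invariant $\bar F$, the field $X_{\bar F}$ is purely radial.
\item The torus $S$ preserves $\Pi$, every $\Theta_l$, every $(P\zeta)^I_u$, $G$-invariance, and $\Gamma(E^d_D)$. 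Averaging $X-Y=X_F+A$ over $S$ therefore gives
\[\sum_{(I,u)}\mathcal{A}_S(f^I_u)(P\zeta)^I_u+\sum_{w\in B}h_w\Theta_w=X_{\mathcal{A}_S F}+\mathcal{A}_S(A).\]
\item Compare $\Theta$-components on the dense set $N_\emptyset$. This gives $\sum_{w\in B}h_w\Theta_w=\mathcal{A}_S(A)\in\Gamma(E^d_D)^G$ on $N_\emptyset$, hence on all of $\mathcal{Z}_{\Delta}$. That contradicts the hypothesis, and no separate radial argument is needed.
\end{enumerate}
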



\begin{corollary}\label{12.35}
	Each Poisson vector field in $(\mathcal{O},{}_{K}\overline{\Pi^R_{4}})$ of the form
	$$
   \mathcal{Q}_{*}\left( \sum_{(I,u)\in (\mathscr{I}\setminus\mathscr{K})^*} f^I_{u}(P\zeta)^I_{u}|_{\mathcal{Z}_{\Delta}}\right)
$$
	where $f^I_{u}\in C^\infty(\mathcal{Z}_{\Delta})^G$, represents a distinct class in $H^1_{{}_{K}\overline{\Pi^R_{4}}}(\mathcal{O})$.
\end{corollary}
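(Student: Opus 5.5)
The plan is to derive Corollary \ref{12.35} from Theorem \ref{12.34} (through the criterion of Theorem \ref{12.29}), in the same way Corollary \ref{12.31} is obtained from Theorem \ref{12.30}. Take two such vector fields on $\mathcal{O}$. They are $\mathcal{Q}_{*}(X)$ and $\mathcal{Q}_{*}(Y)$ with
\[
X=\sum_{(I,u)\in(\mathscr{I}\setminus\mathscr{K})^*} f^I_u(P\zeta)^I_u|_{\mathcal{Z}_{\Delta}},\qquad Y=\sum_{(I,u)\in(\mathscr{I}\setminus\mathscr{K})^*} g^I_u(P\zeta)^I_u|_{\mathcal{Z}_{\Delta}},
\]
where $f^I_u,g^I_u\in C^\infty(\mathcal{Z}_{\Delta})^G$. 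I assume the two vector fields on $\mathcal{O}$ are distinct, so $\mathcal{Q}_*(X)\neq\mathcal{Q}_*(Y)$.

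First I check that $X$ and $Y$ are $G$-invariant Poisson vector fields on $(\mathcal{Z}_{\Delta},{}_K\Pi^R_4|_{\mathcal{Z}_{\Delta}})$. $G$-invariance holds because each $(P\zeta)^I_u|_{\mathcal{Z}_{\Delta}}$ is $G$-invariant by Lemma \ref{12.27} and the coefficients are $G$-invariant. The Poisson property comes from Lemma \ref{11.27} together with Theorem \ref{11.28}. That theorem requires the coefficients to be Casimirs, and it also requires a subfamily whose $\mu^I_u$ are linearly independent. So I read the statement as tacitly requiring that the expression be Poisson, exactly as the phrase ``each Poisson vector field'' says, and I do not prove this part. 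Because $X$ and $Y$ are $G$-invariant, $\mathcal{A}_G(X)=X$ and $\mathcal{A}_G(Y)=Y$.

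Next, suppose $\mathcal{Q}_*(X)$ and $\mathcal{Q}_*(Y)$ are cohomologous. Theorem \ref{12.29} then gives $X-Y=Z+A$ with $Z\in\operatorname{Ham}(\mathfrak{X}^1(\mathcal{Z}_{\Delta})^G,{}_K\Pi^R_4|_{\mathcal{Z}_{\Delta}})$ and $A\in\Gamma(E^d_D)^G$. I split into radial and angular parts:
\begin{enumerate}
\item A Hamiltonian field $X_h$ of ${}_K\Pi^R_4|_{\mathcal{Z}_{\Delta}}$ decomposes as $\sum_i \alpha_i\zeta^K_i+\sum_j\beta_j\Theta_j$. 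Its radial part is a combination of the $\zeta^K_i$ carrying the factor $P^K$, and its angular part is a combination of the $\Theta_j$.
\item $A$ is purely angular, by the remark following Lemma \ref{12.28}.
\item $X-Y$ is purely radial.
\end{enumerate}

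On the dense open set where the radial frame and the $\Theta_j$ are pointwise independent (for instance $\mathcal{Z}_{\Delta}\cap\mathbb{C}^m_1$), the angular components must therefore cancel. Two conclusions follow. First, $X-Y$ equals the radial part of $Z$. Second, the radial part of $Z$ is itself Hamiltonian, since the radial and angular components of ${}_K\Pi^R_4$ are separately Poisson.

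This is the main obstacle: a radial Hamiltonian field of the ${}_K\Pi^R_3$-part is not obviously zero, so I need an argument that excludes the radial part of $X-Y$. My first attempt is to pair against the Casimir-like functions $\eta^I_up$ of Construction \ref{11.11}, using Corollary \ref{11.12}:
\begin{enumerate}
\item Applying $(X-Y)$ to $\eta^{I}_{u}p$ yields $(f^I_u-g^I_u)P^I$.
\item Applying a Hamiltonian field $X_h$ to $\eta^I_up$ yields $-X_{\eta^I_up}(h)$.
\item I argue this vanishes because the $\eta^I_up$ with $(I,u)\in(\mathscr{I}\setminus\mathscr{K})^*$ are Casimirs for ${}_K\Pi^R_4|_{\mathcal{Z}_{\Delta}}$. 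The reason is that $J\cap L=\emptyset$ forces the $\zeta^K$-directions to be killed, after possibly adjusting $\eta$ to vanish on the $L$-coordinates, as Construction \ref{11.11} allows.
\end{enumerate}
This gives $(f^I_u-g^I_u)P^I=0$ almost everywhere, hence everywhere, so $X=Y$. That contradicts the assumption that $\mathcal{Q}_*(X)\neq\mathcal{Q}_*(Y)$, which proves the two classes in $H^1_{{}_K\overline{\Pi^R_4}}(\mathcal{O})$ are distinct.
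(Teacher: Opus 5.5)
Your overall strategy is sound: reduce to Theorem~\ref{12.29}, then annihilate $Z+A$ by evaluating it on functions that are Casimirs and are killed by the $\Theta_j$. The gap is in the last step. The identity $(X-Y)(\eta^I_up)=(f^I_u-g^I_u)P^I$ needs $\langle\eta^I_u,\mu^{I'}_{u'}\rangle=\delta_{uu'}\delta_{II'}$ for \emph{every} pair $(I',u')$ occurring in the sum. That is, the whole family $\{\mu^I_u\}_{(I,u)\in(\mathscr{I}\setminus\mathscr{K})^*}$ must be linearly independent. Construction~\ref{11.11} and Corollary~\ref{11.12} apply only to independent subfamilies, but the corollary sums over all of $(\mathscr{I}\setminus\mathscr{K})^*$.

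That family is typically dependent. The $\mu^I_u$ span $\{v\in\ker\Gamma : v_i=0 \text{ for } i\in I\}$. So if $I\subsetneq I'$ both lie in $\mathscr{I}\setminus\mathscr{K}$ and $\dim F_{I'}\ge 1$, the $\mu^{I'}_{u'}$ lie in the span of the $\mu^I_u$. A concrete case is the $5$-simplex with $K=\{1,2,3\}$ and $L=\{4,5,6\}$:
\begin{itemize}
\item $(\mathscr{I}\setminus\mathscr{K})^*$ has five elements, namely $(L,1)$, $(L,2)$ and $(\{k\}\cup L,1)$ for $k=1,2,3$.
\item All the $\mu$'s lie in a $2$-dimensional space.
\item $p_1(P\zeta)^{\{1\}\cup L}_1$ is a constant-coefficient combination of $(P\zeta)^L_1$ and $(P\zeta)^L_2$.
\end{itemize}
So the coefficient tuple is not determined by the vector field, and no dual family $\eta$ exists for the full index set. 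The conclusion ``$f^I_u=g^I_u$'' can fail even when $X=Y$.

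You only need $X-Y=0$, and that is quick. For $I\in\mathscr{I}\setminus\mathscr{K}$ you have $J\subseteq K$, so $X-Y=\sum_{k\in K}c_k\mathcal{P}_k$ with smooth $c_k$. For $k\in K$, $p_k$ is a Casimir of ${}_K\Pi^R_4|_{\mathcal{Z}_{\Delta}}$, because $\zeta^K_i(p_k)=\Theta_j(p_k)=0$. Hence $(X-Y)(p_k)=(Z+A)(p_k)=0$, i.e.\ $c_kp_k=0$, and density of $\mathcal{Z}_{\Delta}\cap\mathbb{C}^m_1$ gives $X=Y$.

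This is essentially the paper's route (``the same technique as Theorem~\ref{12.30}''), and it shows that the obstacle you flagged is not there. The radial Hamiltonian directions of ${}_K\Pi^R_4$ are the $\zeta^K_i$, which involve only the $L$-coordinates, while $X-Y$ involves only $K$-coordinates. On $\mathcal{Z}_{\Delta}\cap\mathbb{C}^m_1$ the fields $\mathcal{P}_1,\dots,\mathcal{P}_m,\Theta_1,\dots,\Theta_m$ form a frame, so $X-Y=Z+A$ forces $X-Y=0$ directly.

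A smaller point, which your argument does not rely on: the radial part of $Z$ is Hamiltonian only for the radial component of ${}_K\Pi^R_4$, not for ${}_K\Pi^R_4$ itself.
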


\section*{Ackowledgements} We thank Dheeraj Kulkarni for useful discussions. We thank IISER Pune where most of the work was carried out as part of the master's thesis of the first author under the supervision of the second author. The first author also thanks Vignesh M Pai and Eshwar.

\bigskip

\noindent
Vansh Aggarwal,\\
Department of Mathematics, IISER-Pune, \\
Pune, India\\ 
e-mail: {\tt v.aggarwal.research@gmail.com}

\bigskip
\noindent
Mainak Poddar,\\
Department of Mathematics, IISER-Pune, \\
Pune, India\\ 
e-mail: {\tt mainak@iiserpune.ac.in}

\end{document}